\documentclass[11pt,reqno]{amsart}
\usepackage[utf8]{inputenc}

\usepackage{amssymb, mathtools}
\usepackage{tikz-cd}
\usepackage{dsfont}        %
\usepackage{newtxtext}
\usepackage{newtxmath}      %
\usepackage{mathrsfs}      %
\usepackage{stmaryrd}      %

\usepackage{xcolor}
\definecolor{themegreen}{RGB}{0,92,54}   %
\definecolor{themeorange}{RGB}{191,87,0} %
\usepackage[final, colorlinks=true, linkcolor=themegreen, citecolor=themegreen, urlcolor=themegreen,pagebackref]{hyperref}
\usepackage{mathtools}
\usepackage{aliascnt}
\usepackage{cleveref}

\usepackage{tikz}

\usepackage{enumitem}
\usepackage[margin=28mm,a4paper]{geometry}
\usepackage[indent]{parskip}

\newcommand{\newsharedtheorem}[2]{%
  \newaliascnt{#1}{theorem}%
  \newtheorem{#1}[#1]{#2}%
  \aliascntresetthe{#1}%
  \crefname{#1}{#2}{#2s}%
  \Crefname{#1}{#2}{#2s}}

\theoremstyle{plain}
\newtheorem{theorem}{Theorem}[section]
\crefname{theorem}{Theorem}{Theorems}
\Crefname{theorem}{Theorem}{Theorems}
\newtheorem{introtheorem}{Theorem}

\crefname{introtheorem}{Theorem}{Theorems}
\Crefname{introtheorem}{Theorem}{Theorems}
\newsharedtheorem{proposition}{Proposition}
\newsharedtheorem{lemma}{Lemma}
\newsharedtheorem{corollary}{Corollary}
\newsharedtheorem{conjecture}{Conjecture}

\theoremstyle{definition}
\newsharedtheorem{definition}{Definition}
\newsharedtheorem{example}{Example}
\newsharedtheorem{remark}{Remark}
\newsharedtheorem{question}{Question}

\newcommand{\bk}{k}                           %
\newcommand{\PP}{\mathds{P}}
\newcommand{\CC}{\mathds{C}}
\newcommand{\ZZ}{\mathds{Z}}
\newcommand{\Fp}{\mathds{F}_p}
\newcommand{\Oh}{\mathcal{O}}
\newcommand{\cX}{\mathcal{X}}

\newcommand{\cY}{\mathcal{Y}}
\newcommand{\cK}{\mathcal{K}}
\newcommand{\cZ}{\mathcal{Z}}
\newcommand{\Wit}{W}                           %
\newcommand{\exactforms}[2]{B_{#1}\Omega^1_{#2}} %
\newcommand{\Frob}{F}                           %
\newcommand{\Cart}{\mathbf{M}}                  %
\DeclareMathOperator{\Spec}{Spec}
\DeclareMathOperator{\Hom}{Hom}
\DeclareMathOperator{\Ext}{Ext}

\DeclareMathOperator{\HH}{H}
\DeclareMathOperator{\Hilb}{Hilb}
\DeclareMathOperator{\coker}{coker}
\DeclareMathOperator{\hgt}{ht}                 %
\DeclareMathOperator{\crys}{crys}
\DeclareMathOperator{\Sym}{Sym}
\DeclareMathOperator{\Fil}{Fil}
\DeclareMathOperator{\gr}{gr}
\DeclareMathOperator{\Pic}{Pic}
\DeclareMathOperator{\Alb}{Alb}
\DeclareMathOperator{\Lie}{Lie}
\DeclareMathOperator{\length}{length}
\DeclareMathOperator{\rank}{rank}
\DeclareMathOperator{\qF}{qF}
\DeclareMathOperator{\HG}{HG}            %
\DeclareMathOperator{\Fs}{Fs}            %
\DeclareMathOperator{\fin}{fin}          %
\newcommand{\Kzero}{K_0}
\newcommand{\GGm}{\mathds{G}_m}
\newcommand{\Qp}{\mathds{Q}_p}
\newcommand{\QQ}{\mathds{Q}}
\newcommand{\slopedefect}{\delta}
\newcommand{\dR}{\mathrm{dR}}
\newcommand{\et}{\mathrm{\acute et}}

\numberwithin{equation}{section}

\begin{document}

\title{Quasi-$F$-split primitive symplectic varieties in positive characteristic}

\author{Haitao Zou}
\address{Fakult\"at f\"ur Mathematik, Universit\"at Bielefeld, 33615, Bielefeld,Germany}
\email{hzou@math.uni-bielefeld.de}

\date{September 28, 2026}

\subjclass[2020]{14J42, 14G17, 13A35}
\keywords{Primitive symplectic varieties, hyperk\"ahler varieties,
  quasi-$F$-splitting, Frobenius splitting, positive characteristic, Witt vectors} \thanks{The author is supported by the Deutsche Forschungsgemeinschaft (DFG, German Research Foundation), Project-ID 491392403, TRR 358.}

\begin{abstract}
Let $X$ be the good reduction of a projective hyperk\"ahler variety of dimension $2n\ge4$.
We prove that $X$ is quasi-$F$-split if and only if it is Frobenius split, equivalently if $\HH^2_{\crys}(X/\Wit)[1/p]$ has a slope-zero part.
Thus its quasi-$F$-split height is $1$ or $\infty$.
The proof combines a Verbitsky slope comparison with a Witt--Euler identity and requires no crystalline torsion-freeness.
The same dichotomy holds for primitive symplectic varieties in characteristic $p$, and Hodge-goodness is open in smooth proper families.
Hodge-deformations of Hilbert schemes $S^{[n]}$ of $K3$ surfaces ($p>n$) and generalised Kummer varieties $K_n(A)$ ($p>n+1$) remain primitive symplectic, with torsion-free crystalline cohomology and unobstructed mixed-characteristic formal deformations.
\end{abstract}

\maketitle

\setcounter{tocdepth}{1} %
\tableofcontents

\newpage
\section{Introduction}
\label{sec:introduction}

Frobenius splitting is a binary condition, while quasi-$F$-splitting assigns a height through the sheaves of Witt vectors.
For $K3$ surfaces this height recovers the formal Brauer height and takes intermediate finite values.
We ask what happens for higher-dimensional hyperk\"ahler varieties after reduction to characteristic $p$, and for their intrinsic characteristic-$p$ counterparts.

We reserve \emph{hyperk\"ahler} for characteristic zero.
Over a perfect field $\bk$ of characteristic $p$, a smooth proper variety $X$ of dimension $2n$ is \emph{primitive symplectic} if $\HH^0(X,\Omega_X^2)$ is spanned by a nowhere-degenerate $2$-form and if cup product makes $\HH^\bullet(X,\Oh_X)$ the algebra $\bk[\eta]/(\eta^{n+1})$, with $\deg\eta=2$ (\Cref{def:hodge-good,def:primitive-symplectic}).
We call the latter condition \emph{Hodge-goodness}.
The homotopical motivation for Hodge-goodness comes from the unipotent homotopy theory of Mondal--Reinecke: their unipotent homotopy type is encoded by the derived algebra $R\Gamma(X,\Oh_X)$, and their formal-sphere model in the Calabi--Yau case illustrates the simplicity we seek \cite[Remark~1.0.5 and Proposition~7.2.14]{MondalReinecke2026}.
Guided by the characteristic-zero hyperk\"ahler case, we seek comparably simple unipotent homotopy in characteristic $p$.
Hodge-goodness records the expected cup-product algebra; the derived algebra carries further information.
This intrinsic condition lets us study quasi-$F$-splitting without a characteristic-zero lift.
Our other setting is the good reduction of a hyperk\"ahler variety, where Hodge-goodness need not hold at every prime (\Cref{rem:HG-not-automatic}).

\subsection{Quasi-\texorpdfstring{$F$}{F}-splitting}
\label{subsec:quasi-f-theory}

Recall that a variety $X$ in characteristic $p>0$ is \emph{$F$-split} if the Frobenius map $\Oh_X\to\Frob_*\Oh_X$ admits an $\Oh_X$-linear retraction \cite{MehtaRamanathan1985}.
Yobuko's \emph{quasi-$F$-splitting} replaces this map by $\Phi_{X,n}\colon\Oh_X\to Q_{X,n}$, constructed from Frobenius and restriction on the length-$n$ Witt vectors $\Wit_n\Oh_X$ \cite{YobukoQuasiFSplit}.
If $\Phi_{X,n}$ retracts, then $X$ is \emph{$n$-quasi-$F$-split}; the least such $n$ is its \emph{quasi-$F$-split height} $\hgt(X)$, taken to be $\infty$ if no such $n$ exists.
Since $\Phi_{X,1}$ is the Frobenius map, $\hgt(X)=1$ is equivalent to $F$-splitting.

For Calabi--Yau varieties, $F$-splitting is equivalent to ordinarity, and Yobuko identifies $\hgt(X)$ with the height of the top Artin--Mazur formal group \cite{YobukoQuasiFSplit,ArtinMazur1977}.
Quasi-$F$-splitting also constrains the canonical class: if a normal projective $X$ is $n$-quasi-$F$-split, then Grothendieck duality gives a nonzero section of $\Oh_X\bigl(-(p^n-1)K_X\bigr)$, hence $\kappa(X,-K_X)\ge0$ and $\kappa(X)\le0$ \cite[Proposition~3.14]{KTTWYY-BirGeom}.
For $n=1$ this is the usual section of $\omega_X^{1-p}$ associated with an $F$-splitting.
The varieties studied here have $\omega_X\simeq\Oh_X$, so their height is governed instead by cohomology.

For smooth proper varieties, quasi-$F$-splitting implies degeneration of the Hodge--de Rham spectral sequence at $E_1$ (\Cref{lem:petrov-degeneration}); with geometric connectedness and trivial canonical bundle, it also yields a $\Wit_2(\bk)$-lift (\Cref{prop:quasi F split is W2 litable}).
A Fedder-type criterion computes the height from defining equations \cite{KTY-Fedder}, and quasi-$F$-splitting has applications in birational geometry \cite{KTTWYY-BirGeom}.

\subsection{From \texorpdfstring{$K3$}{K3} surfaces to higher dimensions}
\label{subsec:motivation}

The $K3$ and abelian cases give different patterns for quasi-$F$-split height.
For a $K3$ surface, $\hgt(X)$ is the height $h$ of its formal Brauer group \cite[Theorem~4.5]{YobukoQuasiFSplit}.
Every value in $\{1,2,\dots,10\}\cup\{\infty\}$ occurs: $X$ is $F$-split exactly when $h=1$ (the ordinary case), and quasi-$F$-split exactly when $h<\infty$.

For an abelian variety $A$ of dimension $g$ over $\bk$, with $p$-rank $f(A)=\dim_{\Fp}A[p](\overline{\bk})$, one has
\[
  \hgt(A)=
  \begin{cases}
    1,      & f(A)=g,\\
    2,      & f(A)=g-1,\\
    \infty, & f(A)\le g-2,
  \end{cases}
\]
by \cite[Theorem~3.2]{Yobuko2023}.
Thus only $1$, $2$ and $\infty$ occur: $A$ is quasi-$F$-split exactly when it is Hodge--Witt, and $F$-split exactly when it is ordinary \cite[Theorem~3.1]{Yobuko2023}.

For a good reduction of a hyperk\"ahler variety of dimension $2n\ge4$, which heights occur, and does degree-two cohomology govern them?
The next theorem gives a dichotomy sharper than either example.

\subsection{Main results}
\label{subsec:main-results}

Our first result concerns good reductions of hyperk\"ahler varieties.

\begin{introtheorem}\label{thm:main-all-good}
Let $\Oh_K$ be the valuation ring of a finite extension $K/\Qp$, and let $\mathscr X\to\Spec(\Oh_K)$ be a smooth proper morphism of algebraic spaces with projective generic fibre $Y$ whose base change $Y_{\overline K}$ is a hyperk\"ahler variety of dimension $2n\ge4$.
For the special fibre $X$ the following are equivalent:
\begin{enumerate}[label=\textup{(\roman*)},leftmargin=2.2em]
\item $X$ is quasi-$F$-split;
\item $X$ is Frobenius split;
\item $\HH^2_{\crys}(X/\Wit)[1/p]$ has a nonzero slope-zero part.
\end{enumerate}
Consequently $\hgt(X)\in\{1,\infty\}$.
\end{introtheorem}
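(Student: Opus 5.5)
The plan is to prove (ii)$\Rightarrow$(i) trivially, then (i)$\Rightarrow$(iii) and (iii)$\Rightarrow$(ii). The last equivalence reduces to the statement $\hgt(X)\in\{1,\infty\}$.

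The implication (ii)$\Rightarrow$(i) holds by definition, since $\Phi_{X,1}$ is the Frobenius map.

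For (iii)$\Leftrightarrow$(ii) I would use Verbitsky's theorem in characteristic zero. On $Y_{\overline K}$, the map $\Sym^\bullet\HH^2\to\HH^\bullet$ is injective in degrees $\le 2n$. By the crystalline comparison ($C_{\crys}$ for the smooth proper $\mathscr X$ over $\Oh_K$), this yields an injection of $F$-isocrystals $\Sym^n\HH^2_{\crys}(X/\Wit)[1/p]\hookrightarrow\HH^{2n}_{\crys}(X/\Wit)[1/p]$, compatible with Frobenius. This is the ``Verbitsky slope comparison''.

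Consider the top Hodge/Newton data. $\HH^{2n}(X,\Oh_X)$ is one-dimensional, because Hodge numbers are preserved under good reduction when degeneration and torsion-freeness hold. The paper claims it avoids torsion-freeness, so instead I would use semicontinuity together with the Hodge--de Rham degeneration supplied by quasi-$F$-splitting (\Cref{lem:petrov-degeneration}). The $F$-split property of a variety with $\omega_X\simeq\Oh_X$ is equivalent to Frobenius acting bijectively on $\HH^{2n}(X,\Oh_X)$, i.e. to $\HH^{2n}_{\crys}[1/p]$ having a slope-zero part of multiplicity one. By the Mazur--Ogus Newton-above-Hodge bound and the Verbitsky injection, a slope-zero line $L$ in $\HH^2$ gives $L^{\otimes n}$ of slope zero in $\HH^{2n}$. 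Conversely, slope-zero in $\HH^{2n}$ forces slope zero in $\HH^2$, since the symmetric power contains the slope-zero line and the $\Oh$-part of $\HH^{2n}$ is the $n$-th power of that of $\HH^2$ (the $\eta^n$ structure). Here I would use the Hodge filtration on $\HH^2_{\dR}$ and the fact that the slope $<1$ part of $\HH^2$ is controlled by $\HH^2(X,\Oh_X)$, which is one-dimensional.

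The core is (i)$\Rightarrow$(ii), i.e. excluding finite heights $h\ge2$. By Yobuko's description, for a Calabi--Yau-type $X$ the height $\hgt(X)$ equals the height of the Artin--Mazur formal group $\Phi^{2n}$, whose Dieudonné module is computed by $\HH^{2n}(X,\Wit\Oh_X)$. If $h<\infty$, then $\HH^{2n}(X,\Wit\Oh_X)[1/p]$ is the slope part $[0,1)$ of $\HH^{2n}_{\crys}$, of dimension $h$ with slope $1-1/h$. I would then use the ``Witt--Euler identity'': compare the Witt-vector Euler characteristic of $\Wit\Oh_X$, computed via the algebra $\HH^\bullet(X,\Oh_X)=\bk[\eta]/(\eta^{n+1})$ and the cup product on Witt cohomology, with the slope $<1$ part of $\Sym^n\HH^2$. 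If $\HH^2$ has a slope-$(1-1/h)$ piece of dimension $h$, then $\Sym^n$ of it produces slopes $n(1-1/h)$ spread over an $\HH^{2n}$-contribution. Its slope-$<1$ part is either empty or contains slopes $<1-1/h$, for $n\ge2$ and $h\ge2$. This contradicts the injection of isocrystals into a module whose slope $<1$ part is pure of slope $1-1/h$ and of dimension $h$. For $n\ge2$ this forces $h=1$.

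The main obstacle is this step: showing that the Artin--Mazur slope in degree $2n$ is really the $n$-fold product of the degree-$2$ slope. This requires that the cup product on $\HH^\bullet(X,\Wit\Oh_X)$ is nondegenerate modulo torsion, without assuming crystalline torsion-freeness. I would first try to prove multiplicativity of the slope-$<1$ parts directly on $\HH^\bullet(X,\Wit\Oh_X)\otimes\QQ$ via the de Rham--Witt slope spectral sequence, which degenerates rationally.
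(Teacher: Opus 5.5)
\textbf{The gap is in your core step (i)$\Rightarrow$(ii).} Your (ii)$\Rightarrow$(i) and your use of Verbitsky's injection are fine.

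\Cref{thm:yobuko-height} does not apply here. It needs $\HH^i(X,\Oh_X)=0$ for $0<i<2n$, whereas $\HH^{2i}(X,\Oh_X)\ne0$ for $0<i<n$. Any substitute (\Cref{thm:nakkajima-inequality}, or finite generation plus \Cref{rem:AM-infinite-height}) needs $\Phi^{2n}_X$ to be a smooth one-dimensional formal group. \Cref{lem:AM-prorepresentable} guarantees this only when $\HH^{2n-1}(X,\Oh_X)\simeq\HH^1(X,\Oh_X)^\vee$ vanishes. For an arbitrary good reduction that vanishing is not known (\Cref{rem:h1-not-automatic}); the paper gets it only under small ramification or as a consequence of the theorem itself (\Cref{prop:h1-vanishing-small-e,cor:qF-Picard}). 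You also use $\HH^\bullet(X,\Oh_X)\simeq\bk[\eta]/(\eta^{n+1})$, which is Hodge-goodness and is not assumed (\Cref{rem:HG-not-automatic}). Likewise $\HH^2(X,\Oh_X)$ need not be a line: the low slopes of $D_2$ are controlled by weak admissibility with the Hodge filtration of $Y$ (\Cref{prop:automatic-D2-block}), not by Mazur--Ogus on $X$.

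More fundamentally, the obstruction is integral, not rational, so no top-degree slope comparison can reach the non-ordinary case. There $(\Sym^nD_2)_{[0,1)}=0$, and \Cref{prop:quotient} gives $(D_{2n})_{[0,1)}=0$. (That proposition needs the cokernel of the Verbitsky map to be weakly admissible with $\gr^0=0$; injectivity alone says nothing about the low slopes of $D_{2n}$.) So $H=\HH^{2n}(X,\Wit\Oh_X)$ has $H[1/p]=0$, and there is no slope-$(1-\tfrac1h)$ block to contradict. A finitely generated torsion $H$ with $H/VH\simeq\bk$ is ruled out once $V$ is injective on $H$. But $\ker V$ is a quotient of $\HH^{2n-1}(X,\Oh_X)$, which need not vanish.

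Your route does go through if you add $\HH^1(X,\Oh_X)=0$. Then $\Phi^{2n}_X$ is a smooth one-dimensional formal group, and quasi-$F$-splitting gives it finite height $h$ (\Cref{lem:basic-properties}(2), \Cref{rem:AM-infinite-height}). So $(D_{2n})_{[0,1)}$ has dimension $h$, and \Cref{cor:automatic-top-slopes} forces $h=1$. The theorem, however, does not assume this vanishing.

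\textbf{The missing idea is to use all degrees at once.} What you call the Witt--Euler identity never actually enters your contradiction. By \Cref{prop:Witt-Euler}, once quasi-$F$-splitting makes every $\HH^j(X,\Wit\Oh_X)$ finitely generated, the long exact sequences of $0\to\Wit\Oh_X\xrightarrow{V}\Wit\Oh_X\to\Oh_X\to0$ telescope to $\chi(X,\Oh_X)=\sum_j(-1)^j\slopedefect_j$. The torsion cancels degree by degree, because $\ker V$ and $\coker V$ on a finite-length module have equal length. No vanishing of $\HH^1(X,\Oh_X)$ is needed.

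Weak admissibility then evaluates the right side:
odd degrees give $0$ (\Cref{lem:HK-odd-low-slopes});
degree $0$ gives $1$;
degree $2$ gives $1$ or $0$;
degrees $2i$ with $2\le i\le n$ give $1$ only in the ordinary case.
So the sum is $n+1$, $2$ or $1$ (\Cref{prop:HK-Euler-slopes}), while $\chi(X,\Oh_X)=\chi(Y,\Oh_Y)=n+1\ge3$ (\Cref{prop:canonical-good}). This gives (i)$\Rightarrow$(iii).

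Your (iii)$\Rightarrow$(ii) also needs an integral step that your ``i.e.'' hides. First, $H/VH\simeq\HH^{2n}(X,\Oh_X)=\bk$ by Serre duality and $\omega_X\simeq\Oh_X$; invoking Hodge--de Rham degeneration from quasi-$F$-splitting would be circular here. Then $T/VT=0$ and $V$-adic separatedness kill the torsion $T$ of $H$. Hence $F$ is a unit on $H$ and so nonzero on $\HH^{2n}(X,\Oh_X)$, which gives $F$-splitting as in \Cref{prop:unconditional-ordinary}.
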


Two calculations prove \Cref{thm:main-all-good}.
First, Verbitsky's theorem embeds $\Sym^i\HH^2$ into $\HH^{2i}$ for $i\le n$.
On a good reduction, weak admissibility shows that the crystalline cokernel has no Newton slopes below one (\Cref{prop:quotient,lem:WA}).
Thus degree-two cohomology controls the slopes in $[0,1)$ in every even degree; in particular, $\HH^{2n}_{\crys}$ has a slope-zero line exactly when $\HH^2_{\crys}$ does (\Cref{cor:automatic-top-slopes}).

Second, quasi-$F$-splitting makes each $\HH^j(X,\Wit\Oh_X)$ finitely generated over $\Wit$ (\Cref{lem:basic-properties}).
The Witt--Euler identity equates $\chi(X,\Oh_X)$ with the alternating sum of their Verschiebung indices; finite Witt torsion cancels in this sum (\Cref{prop:Witt-Euler}).
On a good reduction the sum is $n+1$, $2$, or $1$, according as the degree-two slopes below one form a slope-zero line, a nonordinary finite-height block, or no block at all (\Cref{prop:HK-Euler-slopes}).
Since $\chi(X,\Oh_X)=n+1\ge3$, only the first case can be quasi-$F$-split.
This argument assumes no integral torsion-freeness of crystalline cohomology.

The first theorem makes no Hodge-goodness assumption.
The coherent cohomology algebra of a complex hyperk\"ahler variety has the form in our definition, and this persists away from finitely many primes of a fixed spread, but it need not hold at every good reduction (\Cref{rem:HG-not-automatic}).
Indeed, the special fibre is simply connected (\Cref{prop:automatic-picard}), yet its Picard scheme may be nonreduced, giving $\HH^1(X,\Oh_X)\ne0$ (\Cref{rem:h1-not-automatic}).
The vanishing holds under small ramification or quasi-$F$-splitting (\Cref{prop:h1-vanishing-small-e,cor:qF-Picard}), while the standard Hilbert and Kummer families are Hodge-good in the stated characteristic ranges (\Cref{prop:hilbert-HG,prop:kummer-HG}).
Imposing Hodge-goodness intrinsically in characteristic $p$ yields a second dichotomy.

\begin{introtheorem}\label{thm:main-family}
A primitive symplectic variety $Z$ of dimension $2n\ge4$ over a perfect field is quasi-$F$-split if and only if it is Frobenius split; in particular $\hgt(Z)\in\{1,\infty\}$.
Moreover the Hodge-good locus is open in any smooth proper family with geometrically connected fibres of dimension $2n\ge4$ and trivial relative canonical sheaf, so the dichotomy holds near any Hodge-good fibre.
\end{introtheorem}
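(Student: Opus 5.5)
The dichotomy part of \Cref{thm:main-family} has no crystalline input, so the plan is to replace the Verbitsky slope comparison of \Cref{thm:main-all-good} with Hodge-goodness and run the Witt--Euler identity purely in coherent terms. The direction ``$F$-split $\Rightarrow$ quasi-$F$-split'' is formal, since $\Phi_{Z,1}$ is Frobenius. For the converse, suppose $Z$ is $m$-quasi-$F$-split. By \Cref{lem:basic-properties} each $\HH^j(Z,\Wit\Oh_Z)$ is then finitely generated over $\Wit$. The Witt--Euler identity (\Cref{prop:Witt-Euler}) gives
\[
n+1=\chi(Z,\Oh_Z)=\sum_j(-1)^j\,\iota_j,
\]
where $\iota_j$ is the Verschiebung index (the rank of the free part) of $\HH^j(Z,\Wit\Oh_Z)$. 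Here $\chi(Z,\Oh_Z)=n+1$ comes directly from $\HH^\bullet(Z,\Oh_Z)=\bk[\eta]/(\eta^{n+1})$, and the odd groups vanish.

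Next I will bound each $\iota_{2i}$ using the cup-product structure. The sequences $0\to\Wit_{r-1}\Oh_Z\xrightarrow{V}\Wit_r\Oh_Z\to\Oh_Z\to0$ together with $\HH^{\mathrm{odd}}(\Oh_Z)=0$ show that all odd $\HH^j(\Wit_r\Oh_Z)$ vanish, so the even long exact sequences split into short ones. Hence $\iota_{2i}\le\dim\HH^{2i}(\Oh_Z)=1$, with equality exactly when $\HH^{2i}(Z,\Wit\Oh_Z)\to\HH^{2i}(Z,\Oh_Z)$ is surjective and the free part has rank one. Because the sum must equal $n+1$ and there are $n+1$ even degrees $0,2,\dots,2n$, every $\iota_{2i}$ must equal $1$. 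In particular $\iota_{2n}=1$, so $\HH^{2n}(Z,\Wit\Oh_Z)$ has a free rank-one summand that surjects onto $\HH^{2n}(Z,\Oh_Z)$.

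Then I need to pass from this to $F$-splitting. Since $\omega_Z\simeq\Oh_Z$ (the symplectic form trivialises it), $F$-splitting is equivalent to bijectivity of $F$ on $\HH^{2n}(Z,\Oh_Z)$. My first approach is to show that $F$ acts on the rank-one free part of $\HH^{2n}(\Wit\Oh_Z)$ as multiplication by a unit. The idea is that $FV=p$, together with $V$-torsion-freeness, forces the $F$-slope of a rank-one $F$-crystal quotient of $\Wit$-cohomology to lie in $[0,1)$ and hence, being rank one, to equal $0$; reducing mod $V$ then gives bijectivity of $F$ on $\HH^{2n}(\Oh_Z)$. As a backup, I would use Yobuko's identification of the height with the height of the Artin--Mazur formal group $\Phi^{2n}$. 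Finite generation of $\HH^{2n}(\Wit\Oh_Z)$ of rank one forces this Dieudonn\'e module to have finite height $h$, with $\iota_{2n}=h$ or a comparable count. Rank one then gives $h=1$, which is ordinarity, hence $F$-splitting. I expect this step, getting from the index count to an actual slope or height-one statement, to be the main obstacle. It may be cleaner to invoke the corresponding part of \Cref{prop:HK-Euler-slopes}, adapted to the coherent setting.

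For openness of the Hodge-good locus in a family $f\colon\mathcal Z\to S$, the plan is as follows. At a Hodge-good fibre, $h^j(\Oh)$ is $1$ for even $j\le 2n$ and $0$ for odd $j$, and $\chi$ is locally constant, so by upper semicontinuity these dimensions persist nearby. Cohomology and base change then make $R^{2i}f_*\Oh$ line bundles near the point. The condition that $\eta^i$ generates $R^{2i}f_*\Oh$ is the nonvanishing of a section of a line bundle, which is an open condition. Applying the first part to the fibres in this neighbourhood gives the final claim.
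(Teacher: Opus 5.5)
There is a genuine gap in the dichotomy step. You identify the Verschiebung index $\iota_j$ with the rank of the free part of $\HH^j(Z,\Wit\Oh_Z)$. In fact it is $\length_\Wit\coker(V)$ on the free quotient, i.e.\ the slope defect $\slopedefect_j(Z)$, which is not the rank.

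Under Hodge-goodness the Witt--Euler identity is vacuous. Your own computation shows $\HH^{\mathrm{odd}}(Z,\Wit\Oh_Z)=0$, that $V$ is injective on $M_{2i}=\HH^{2i}(Z,\Wit\Oh_Z)$, and that $M_{2i}/VM_{2i}\simeq\HH^{2i}(Z,\Oh_Z)=\bk$. So each even degree contributes exactly $1$ no matter what, and $n+1=n+1$ holds tautologically. With finite generation, $M_{2i}$ is the free Cartier module of a one-dimensional formal group of some finite height $h_i$. Its rank is $h_i$, its slope is $1-1/h_i$, and its Verschiebung index is $1$ for every $h_i$. So the count cannot distinguish $\widehat{\GGm}$ from a height-$h$ group with $h>1$, and your conclusion that the top-degree free part has rank one is unsupported. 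Under your rank reading, the bound $\iota_{2i}\le\dim\HH^{2i}(Z,\Oh_Z)=1$ is simply false: for a $K3$ surface of height $h$ the rank is $h$.

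The Euler argument works for good reductions (\Cref{prop:HK-Euler-slopes}) only because the Verbitsky comparison (\Cref{prop:quotient}) kills the higher even slope defects when $h_2^{\crys}>1$. An intrinsic primitive symplectic variety has no lift, so that input is unavailable. Your fallbacks do not replace it. \Cref{thm:yobuko-height} needs the Calabi--Yau vanishing $\HH^i(\Oh)=0$ for $0<i<2n$, which fails here. Nakkajima's inequality only bounds $\hgt(\Phi^{2n}_Z)$ above by $\hgt(Z)$.

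The missing idea is multiplicative: you must use cup products on Witt-vector cohomology, not just dimensions. The paper shows that cup product induces $\Phi^{2a}_Z\boxtimes\Phi^{2b}_Z\to\Phi^{2a+2b}_Z$ for the Cartier box product. Its differential is the coherent cup product $\eta^a\otimes\eta^b\mapsto\eta^{a+b}\ne0$, so it is an isomorphism and $\Phi^{2i}_Z\simeq(\Phi^2_Z)^{\boxtimes i}$. Since $\widehat{\GGm}$ is the unit and the box product of two one-dimensional groups of height $>1$ is $\widehat{\mathds G}_a$, the even heights are $(1,\dots,1)$ or $(h,\infty,\dots,\infty)$.

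With this in hand the argument closes. Quasi-$F$-splitting makes $\HH^{2n}(Z,\Wit\Oh_Z)$ finitely generated, so $\hgt(\Phi^{2n}_Z)<\infty$. This forces $\hgt(\Phi^2_Z)=1$, hence $\hgt(\Phi^{2n}_Z)=1$. From there your ``first approach'' is exactly the paper's ending: $F$ is a unit, $V=pF^{-1}$, $F\ne0$ on $M/VM=\HH^{2n}(Z,\Oh_Z)$, then \Cref{lem:Fsplit}.

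Your openness argument matches \Cref{lem:HG-open} and is fine, modulo passing to $S_{\mathrm{red}}$ before invoking Grauert. Note that the nearby fibres are only Hodge-good with trivial canonical bundle, which is all the dichotomy uses.
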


Only Hodge-goodness and $\omega_Z\simeq\Oh_Z$ enter the first assertion, which therefore holds for smooth proper geometrically connected algebraic spaces with those properties.
The proof uses the Cartier box product to identify $\Phi_Z^{2i}$ with $(\Phi_Z^2)^{\boxtimes i}$ and obtain the even Artin--Mazur height patterns $(1,\dots,1)$ and $(h,\infty,\dots,\infty)$ (\Cref{cor:boxheights}).
Openness follows from semicontinuity of coherent cohomology and constancy of $\chi(\cX_s,\Oh_{\cX_s})$ (\Cref{lem:HG-open}).
Hodge-goodness also forces $\HH^1(X,\Oh_X)=0$, so on a good reduction the top Witt-vector cohomology detects the dichotomy (\Cref{prop:top-witt-criterion}).

For a smooth proper family $f\colon\cX\to B$ as in \Cref{sec:quasiFspliting in family}, suppose one fibre is Hodge-good and $h^1(\cX_b,\Oh)=0$ on every fibre.
If $B$ is irreducible and its geometric generic fibre is not Frobenius split, no fibre is quasi-$F$-split (\Cref{thm:generic-dichotomy}).
If $B$ is also smooth of finite type, $n<p$, and $h^2(\cX_b,T_{\cX_b})$ is locally constant, every fibre has height $1$ or $\infty$ (\Cref{cor:dichotomy-all-of-S}).
In the latter setting, either every closed fibre lifts to $\Wit_2(\kappa(b))$ and all fibres are Hodge-good, or a closed fibre fails to lift and no fibre is quasi-$F$-split.

\newpage
The Hilbert schemes $S^{[n]}$ of $K3$ surfaces and the generalised Kummer varieties $K_n(A)$ give standard examples and their Hodge-deformations give further ones.
A \emph{Hodge-deformation family} over $\bk$ is a smooth proper morphism $g\colon\cY\to B$, with $B$ smooth, connected and of finite type over $\bk$, such that every Hodge number $h^{a,b}(\cY_t)$ is constant on the closed points $t\in B$.
Two smooth proper varieties are \emph{Hodge-deformation equivalent} if a finite chain of such families joins them.

\begin{introtheorem}\label{thm:deformation type of hilbert and kummer}
Let $n\ge2$, and let $X/\bk$ be a smooth proper variety Hodge-deformation equivalent to either
\begin{enumerate}[label=\textup{(\alph*)},leftmargin=2.2em]
\item $S^{[n]}$ for a $K3$ surface $S/\bk$, with $p>n$; or
\item $K_n(A)$ for an abelian surface $A/\bk$, with $p>n+1$.
\end{enumerate}
Then $X$ is primitive symplectic, its Hodge--de Rham spectral sequence degenerates at $E_1$, and $\HH^j_{\crys}(X/\Wit)$ is torsion-free for every $j$.
Its mixed-characteristic formal deformations are unobstructed; in particular, $X$ admits a smooth proper formal lifting over $\Wit$ and a smooth proper lifting over $\Wit_2$.
For every $1\le i\le n$, the Artin--Mazur functor $\Phi_X^{2i}$ is prorepresentable by a smooth one-dimensional formal group over $\bk$, and
\[
  X\text{ is quasi-}F\text{-split}
  \quad\Longleftrightarrow\quad X\text{ is }F\text{-split}
  \quad\Longleftrightarrow\quad \hgt(\Phi_X^2)=1.
\]
If $p>2n$, the Hodge-deformation hypothesis may be replaced by a finite chain of smooth proper families joining $X$ to the same standard model, over smooth connected finite-type bases, for which $b\mapsto h^2(\cY_b,T_{\cY_b})$ is locally constant on closed points (condition \eqref{eq:ct2}).
\end{introtheorem}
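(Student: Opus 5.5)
The plan is to first establish the statement for the standard models $S^{[n]}$ and $K_n(A)$, and then propagate each property along a Hodge-deformation family. For the standard models with $p>n$ (resp.\ $p>n+1$), I will show Hodge-goodness, torsion-freeness of crystalline cohomology and $E_1$-degeneration directly. The route is the decomposition of the cohomology of $S^{[n]}$, via the Göttsche--Soergel / de Cataldo--Migliorini type description through the Hilbert--Chow morphism and symmetric products $S^{(\nu)}$. This description stays valid integrally when $p>n$, because the orders of the relevant symmetric groups are then invertible. For $K_n(A)$ I use the fibre of $A^{[n+1]}\to A$, where $p>n+1$ makes multiplication by $n+1$ étale. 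From the decomposition, $\HH^\bullet(\Oh)$ is computed from $\HH^\bullet(S,\Oh_S)=\bk[\eta]/(\eta^2)$, which gives $\bk[\eta]/(\eta^{n+1})$. The symplectic form is Beauville's. Torsion-freeness will be the subtle point: I will deduce it from the corresponding facts for $S$ and $A$, where crystalline cohomology of $K3$ and abelian surfaces is torsion-free, together with the tame-quotient decomposition.

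The propagation to an arbitrary $X$ runs along a single Hodge-deformation family $g\colon\cY\to B$ and then by induction along the chain. Constancy of Hodge numbers is exactly the hypothesis of \Cref{lem:HG-open}'s semicontinuity argument, so $h^{i}(\Oh)$ and $h^0(\Omega^2)$ stay constant. The cup-product structure $\eta^i\neq0$ is a closed-and-open condition once $\HH^{2i}(\Oh)$ has constant rank $1$, so Hodge-goodness spreads over connected $B$. Nondegeneracy of the $2$-form is open, and closed because $\omega$ is trivial: $\sigma^n$ is a section of $\omega\simeq\Oh$ that is nonzero on one fibre, hence nowhere zero. Constant Hodge numbers together with $E_1$-degeneration at one fibre give constancy of the de Rham numbers. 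This forces degeneration everywhere, since the sum of Hodge numbers equals $h^j_{\dR}$ at one point, $h^j_{\dR}$ is upper semicontinuous, and it is always bounded by that sum. For torsion-freeness, constancy of $h^j_{\dR}$ combined with constancy of the $\ell$-adic (equivalently crystalline rational) Betti numbers gives $\dim\HH^j_{\dR}=\rank\HH^j_{\crys}$. By the universal coefficient sequence this forces torsion-freeness.

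For unobstructedness, I will run a $T^1$-lifting argument in the style of Ran--Kawamata. With $T_X\simeq\Omega^1_X$ via $\sigma$, the deformation spaces $\HH^1(\Omega^1_{X_A/A})$ are free and commute with base change by $E_1$-degeneration of the relative Hodge--de Rham sequence over Artinian $\Wit$-algebras. The degeneration lifts by Deligne--Illusie, since $\dim<p$ or a $\Wit_2$-lift exists, using \Cref{prop:quasi F split is W2 litable} or the family itself. Grothendieck existence is not needed for a formal lifting, and the $\Wit_2$-lift is its truncation. For the Artin--Mazur functors, Hodge-goodness gives $\HH^{2i+1}(\Oh)=0$ and $\HH^{2i}(\Oh)$ of dimension $1$, so $\Phi^{2i}_X$ is prorepresented by a smooth one-dimensional formal group by Artin--Mazur. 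The equivalences then come from \Cref{thm:main-family} together with \Cref{cor:boxheights}: $F$-split means all heights equal $1$, which is equivalent to $\hgt(\Phi^2)=1$.

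For the last sentence with $p>2n$, I will replace constancy of Hodge numbers by \eqref{eq:ct2}. Locally constant $h^2(T)$ together with the unobstructedness argument gives smoothness of the local moduli. Openness from \Cref{lem:HG-open} and closedness, which needs the dichotomy results such as \Cref{cor:dichotomy-all-of-S} where $n<p$ is used, then show that the Hodge-good locus is all of $B$; the bound $p>2n=\dim$ makes Deligne--Illusie available. I expect the main obstacle to be the integral torsion-freeness for the standard models at small $p$, specifically controlling $p$-torsion in the symmetric-product decomposition when only $p>n$ is assumed.
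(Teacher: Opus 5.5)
Your handling of the standard models and of the Artin--Mazur and height equivalences follows the paper. The propagation step, however, has a genuine gap.

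\textbf{The main gap: closedness of the cup-power conditions.} You assert that, once $h^{2i}(\Oh)=1$ is constant, nonvanishing of $\eta^i$ is open and closed. You also assert that $\sigma^n\ne0$ spreads ``because $\omega$ is trivial''. On the reduced base, constant Hodge numbers make
\[(R^2g_*\Oh_{\cY})^{\otimes n}\to R^{2n}g_*\Oh_{\cY}\quad\text{and}\quad(g_*\Omega^2_{\cY/B})^{\otimes n}\to g_*\omega_{\cY/B}\]
morphisms of line bundles, that is, sections of line bundles on $B$. Their nonvanishing loci are open, but nothing prevents them from vanishing along a divisor. Horizontality does not rescue this in characteristic $p$: multiplication by $t^p$ is horizontal for the trivial connection and drops rank at $t=0$. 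Triviality of $\omega_{Y_b}$ only shows that $\sigma_b^n$ is nowhere zero \emph{if} it is nonzero; it cannot show that $\sigma_b^n\ne0$. You also use $\omega_{Y_b}\simeq\Oh$ on every fibre without proof. The paper deduces it from $h^{2n,0}=1$: $g_*\omega_{\cY/B}$ is a line bundle, and the zero divisor of its evaluation map is flat with open and closed image missing $Y_0$.

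\textbf{The missing idea: a rigid integral invariant.} The paper puts on the standard model a perfect, Frobenius-compatible crystalline Beauville--Bogomolov pairing satisfying \eqref{eq:integral-fujiki}. It exists because the discriminants $2(n-1)$ and $2(n+1)$ are units in the stated ranges. The paper then propagates this pairing through the $F$-crystal $R^2g_{\crys*}\Oh$ (\Cref{lem:fujiki-propagation}). Mazur's description of the Hodge filtration then gives $q(\Fil^2,\Fil^1)=0$, hence $q(\sigma,\widetilde\eta)\ne0$ and $\operatorname{tr}(\sigma^n\widetilde\eta^{\,n})=c\,n!\,q(\sigma,\widetilde\eta)^n\ne0$. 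This forces $\sigma^n\ne0$ and $\eta^n\ne0$ on every fibre. \Cref{rem:family-hypotheses} shows that without input of this kind, even the paper cannot establish constancy of Hodge-goodness.

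\textbf{Unobstructedness.} Your justification of Hodge--de Rham degeneration over Artinian $\Wit$-algebras fails on three counts.
\begin{enumerate}
\item $\dim X=2n$ need not be $<p$, since only $p>n$ (resp.\ $p>n+1$) is assumed.
\item \Cref{prop:quasi F split is W2 litable} presupposes quasi-$F$-splitting, which fails for every non-ordinary $X$ by the theorem itself.
\item Deligne--Illusie is a statement over characteristic-$p$ bases. Freeness of $\HH^\ast_{\dR}(X_A/A)$ over mixed-characteristic Artinian $A$ is exactly where crystalline torsion-freeness must enter.
\end{enumerate}
The paper instead feeds the fibrewise outputs ($\omega\simeq\Oh$, $E_1$-degeneration, torsion-free $\HH^\ast_{\crys}$) into \cite[Corollary~7.19]{BrantnerTaelman2025}.

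\textbf{The $p>2n$ clause.} \Cref{cor:dichotomy-all-of-S} is the wrong tool here. Its hypotheses ($\HH^1(\Oh)=0$ on all fibres and trivial relative canonical sheaf) are not available a priori for a family in the chain. Moreover, Hodge-goodness alone does not make the family a Hodge-deformation family. The paper proceeds in three steps:
\begin{enumerate}
\item Condition \eqref{eq:ct2} makes the $\Wit_2$-obstruction a section of a vector bundle.
\item Fibrewise liftability near the standard fibre (Brantner--Taelman) kills this section, giving local relative liftings (\Cref{lem:relative-W2-lifting,cor:relative-W2-from-one-fibre}).
\item Relative Deligne--Illusie with $2n<p$ (\Cref{prop:HG-locally-constant}(ii)) makes every $R^bg_*\Omega^a_{\cY/B}$ locally free, so all Hodge numbers are constant and the first part applies.
\end{enumerate}

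\textbf{Two smaller points.} In your de Rham step, upper semicontinuity plus the upper bound $h^j_{\dR}\le\sum h^{a,b}$ does not give constancy. You first need the lower bound $b_j\le h^j_{\dR}(Y)$, from the crystalline universal-coefficient sequence and constancy of $\ell$-adic Betti numbers; then $b_j\le h^j_{\dR}(Y)\le\sum_{a+b=j}h^{a,b}(Y)=b_j$. Finally, torsion for the standard models is not the real obstacle. The de Cataldo--Migliorini diagonal identity has denominators $a_\lambda|G_\lambda|$ dividing $n!$ (resp.\ $(n+1)!$). It therefore holds with $\ZZ_{(p)}$-coefficients and exhibits each $\HH^j_{\crys}$ as a direct summand of a free $\Wit$-module.
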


The proof propagates an integral Beauville--Bogomolov--Fujiki pairing through each family, keeping the coherent cup powers and the symplectic form nonzero (\Cref{lem:fujiki-propagation}).
The symplectic form identifies $T_X$ with $\Omega_X^1$, so condition \eqref{eq:ct2} becomes constancy of $h^{1,2}$ on these families.

The paper is organised as follows.
\Cref{sec:preliminaries,sec:hyperkahler} introduce quasi-$F$-splitting and good reduction.
\Cref{sec:slopes} develops the crystalline slope comparisons and computes Artin--Mazur heights under Hodge-goodness.
\Cref{sec:main-results} applies the Witt--Euler identity to prove \Cref{thm:main-all-good}.
\Cref{sec:quasiFspliting in family} treats Hodge-goodness and heights in families.
\Cref{sec:examples} proves \Cref{thm:deformation type of hilbert and kummer}; Appendix \ref{sec:appendix} supplies the torsion-freeness results and integral Fujiki propagation lemma used there.

\subsection{Conventions}
\label{subsec:conventions}

A \emph{variety} is an integral, separated scheme of finite type over a field.
Good-reduction models and their special fibres are allowed to be algebraic spaces; the generic hyperk\"ahler fibre remains a projective variety.
Unless stated otherwise $\bk$ is a perfect field of characteristic $p>0$; all the properties we consider are insensitive to extension of the perfect base field, so we pass freely to $\overline{\bk}$ when convenient.
We write $\Frob\colon X\to X$ for the absolute Frobenius, $\Wit=\Wit(\bk)$ for the ring of Witt vectors, $\Wit_n=\Wit_n(\bk)$ for its length-$n$ truncations, and $\Kzero=\Wit[1/p]$.
For a smooth proper $X/\bk$ we abbreviate
\[
  D_q \;=\; \HH^q_{\crys}(X/\Wit)\otimes_{\Wit}\Kzero
      \;=\; \HH^q_{\crys}(X/\Wit)[1/p],
\]
an isocrystal for the crystalline Frobenius induced by the absolute $p$-power map.
Slopes are normalised by $v_p(p)=1$, so that a divisor class in $\HH^2_{\crys}$ has slope $1$, and $(D_q)_{[0,1)}\subseteq D_q$ denotes the sum of the slope subspaces with slope in $[0,1)$.
Coherent cohomology and the sheaves of Witt vectors are taken on the small \'etale site.
For schemes, coherent and finite Witt-vector cohomology agree with their Zariski counterparts; infinite Witt-vector cohomology is the inverse limit of the finite-level groups.

\subsection*{Acknowledgements and declarations}

This project began at a workshop organized by Zhiyuan Li at SYSU (Zhuhai) in 2023, where the author was asked to give a series of lectures on the theory of quasi-$F$-splitting of \cite{YobukoQuasiFSplit} and to investigate the hyperk\"ahler case.
Our expectation at the time was that for examples such as $S^{[n]}$ and generalised Kummer varieties, quasi-$F$-splitting would characterise finiteness of the height in degree two, as it does for $K3$ surfaces.
One day before the author's talk, however, the preprint \cite{Yobuko2023} appeared and showed this naive expectation to be false, already for higher-dimensional Hilbert schemes.
The aim then became to understand the phenomenon in general.

The first ideas behind \Cref{sec:slopes} and \Cref{sec:quasiFspliting in family} date from shortly after that workshop, but at the time nothing substantial could be proved without assuming torsion-freeness of crystalline cohomology and degeneration of the Hodge--de Rham spectral sequence.
In December 2024, Fuetaro Yobuko invited the author to visit him in Japan; the discussions there led to the notion of Hodge-goodness used in this paper.
During this visiting, we also completed the computation of the quasi-$F$-split heights of generalised Kummer varieties together, by the methods of \cite{Yobuko2023}, but could not extend it to their deformation types.
The paper presented here doesn't include this computation, but the ingredients from Yobuko help a lot.

In September 2026, the author put the question to current AI tools.
After several prompts, they suggested that the Witt--Euler identity of \Cref{prop:Witt-Euler} could settle it for good reductions of hyperk\"ahler varieties, with no hypothesis on the torsion-freeness of crystalline cohomology.
The same tools were used in proving the technical results in \Cref{sec:quasiFspliting in family}, for example \Cref{lem:relative-W2-lifting} and \Cref{prop:HG-locally-constant}. Other works were finished before these prompts.

The torsion-free results for crystalline cohomology of generalized Kummer in Appendix \ref{sec:appendix} were produced with the help of ChatGPT Sol 5.6 in August 2026, during the summer school on algebraic geometry in SCMS, Shanghai. The author claims no credits on these results and records them here for the math community useage.

Codex and Claude Code were also used for copy-editing, proofreading and reference searching.

\newpage
\section{Preliminaries on quasi-\texorpdfstring{$F$}{F}-splitting}
\label{sec:preliminaries}

We collect the definitions and results on Frobenius splitting, quasi-$F$-splitting, and Artin--Mazur formal groups used below.
Unless specified otherwise, $X$ is a smooth proper algebraic space over $\bk$.

\subsection{Frobenius splitting}
\label{subsec:frobenius-splitting}

\begin{definition}
\label{def:f-split}
$X$ is \emph{$F$-split} if the natural map $\Oh_X \to \Frob_* \Oh_X$ splits as a morphism of $\Oh_X$-modules.
\end{definition}

For a trivial canonical bundle, Frobenius splitting is detected on top coherent cohomology \cite[Lemma~2.11(1)]{KTTWYY-BirGeom}.

\begin{lemma}
\label{lem:Fsplit}
Let $Z$ be a smooth proper geometrically connected algebraic space of dimension $d$ with $\omega_Z\simeq\Oh_Z$ over a field $\bk$ of characteristic $p$ that is $F$-finite, that is $[\bk:\bk^p]<\infty$.
Then $Z$ is Frobenius split if and only if the absolute Frobenius acts nontrivially on $\HH^d(Z,\Oh_Z)$.
\end{lemma}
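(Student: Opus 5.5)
The plan is to reduce Frobenius splitting to a statement about the trace map of Frobenius, using Grothendieck duality for the finite morphism $\Frob$, and then use $\omega_Z\simeq\Oh_Z$ together with Serre duality to move this to top cohomology.

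Since $\bk$ is $F$-finite and $Z$ is of finite type over $\bk$, the absolute Frobenius $\Frob\colon Z\to Z$ is finite. Duality for finite morphisms gives
\[
\Hom_{\Oh_Z}(\Frob_*\Oh_Z,\Oh_Z)\simeq \Frob_*\mathcal{H}om(\Oh_Z,\Frob^!\Oh_Z).
\]
Since $Z$ is smooth, $\Frob^!\omega_Z\simeq\omega_Z$. Here the $F$-finiteness of $\bk$ is used: it lets us write $\omega_Z$ as the dualizing sheaf up to a twist that is invariant under $\Frob$, because $\Frob^!$ on $\Spec\bk$ sends $\bk$ to $\Hom_\bk(\bk^{1/p},\bk)\simeq\bk^{1/p}$, noncanonically. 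With $\omega_Z\simeq\Oh_Z$, the sheaf $\mathcal{H}om(\Frob_*\Oh_Z,\Oh_Z)$ becomes $\Frob_*\Oh_Z$. It is generated by the trace map $T\colon\Frob_*\Oh_Z\to\Oh_Z$, i.e. the Cartier operator on $\Frob_*\omega_Z$.

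Now take global sections. Because $Z$ is proper and geometrically connected, $\HH^0(Z,\Oh_Z)=\bk'$ is a finite purely inseparable extension of $\bk$. It equals $\bk$ after we reduce, which we may do if $\bk$ is perfect, and in general we work over $\bk'$. So every $\Oh_Z$-linear map $\Frob_*\Oh_Z\to\Oh_Z$ is $c\cdot T$ for some constant $c$. The question becomes whether $T(1)=T\circ\Frob(1)$ is a nonzero constant, since after rescaling a nonzero constant gives a splitting. Equivalently, $Z$ is $F$-split if and only if $\HH^0(T)\colon\HH^0(Z,\Frob_*\omega_Z)\to\HH^0(Z,\omega_Z)$ is nonzero.

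Finally, Grothendieck--Serre duality for the finite map $\Frob$ identifies $\HH^0(T)$ with the $\bk$-dual (twisted by Frobenius) of
\[
\Frob^*\colon\HH^d(Z,\Oh_Z)\to\HH^d(Z,\Frob_*\Oh_Z)=\HH^d(Z,\Oh_Z),
\]
because $T$ is dual to $\Oh_Z\to\Frob_*\Oh_Z$. Here $\HH^d(Z,\Oh_Z)$ is one-dimensional, being dual to $\HH^0(\omega_Z)$. So one map is nonzero exactly when the other is, which proves both directions.

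The main obstacle will be the bookkeeping over a non-perfect $F$-finite field: the relation $\Frob^!\simeq$ twist, the possible inseparable $\HH^0$, and the $p$-linearity in the duality pairing. Ideally these can be handled by citing \cite[Lemma~2.11(1)]{KTTWYY-BirGeom}. The step I would check first is that algebraic-space duality for finite morphisms works as it does for schemes, which follows by étale descent.
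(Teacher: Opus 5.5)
Your argument is correct and is essentially the paper's. The paper uses Grothendieck duality for the finite Frobenius and Serre duality to identify the evaluation map $\Hom_{\Oh_Z}(\Frob_*\Oh_Z,\Oh_Z)\to\Hom_{\Oh_Z}(\Oh_Z,\Oh_Z)=\bk$ with the dual of Frobenius on the one-dimensional $\HH^d(Z,\Oh_Z)$; under $\mathcal{H}om(\Frob_*\Oh_Z,\Oh_Z)\simeq\Frob_*\Oh_Z$, that evaluation map is exactly your $\HH^0(T)$. Two small simplifications apply: smoothness and geometric connectedness already give $\HH^0(Z,\Oh_Z)=\bk$, so no purely inseparable $\bk'$ arises, and over a non-perfect $\bk$ the right criterion is your $\HH^0(T)\neq0$ rather than $T(1)\neq0$.
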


\begin{proof}
The $F$-finiteness of $\bk$ makes the absolute Frobenius of $Z$ finite.
Grothendieck and Serre duality for proper algebraic spaces \cite[Tags~0E58 and~0E61]{stacks-project} identify the restriction map
\[
 \Hom_{\Oh_Z}(\Frob_*\Oh_Z,\Oh_Z)\longrightarrow
 \Hom_{\Oh_Z}(\Oh_Z,\Oh_Z)=\bk
\]
with the dual of Frobenius on $\HH^d(Z,\Oh_Z)$; here the dualizing complex is $\omega_Z[d]$.
The map $\Oh_Z\to\Frob_*\Oh_Z$ splits precisely when $1\in\bk$ has a preimage under this restriction map.
Since its target is one-dimensional, this is equivalent to the map, and hence Frobenius on $\HH^d(Z,\Oh_Z)$, being nonzero.
\end{proof}

\subsection{Witt vectors and quasi-\texorpdfstring{$F$}{F}-splitting}
\label{subsec:witt}

Let $\Wit_n\Oh_X$ be the sheaf of length-$n$ Witt vectors, with Frobenius $F$, Verschiebung $V$, and restriction $R$.
Yobuko defines $Q_{X,n}$ as the pushout of $F$ and $R_{n-1}$ in the category of $\Wit_n\Oh_X$-modules:
\[
\begin{tikzcd}
\Wit_n \Oh_X \ar[r,"F"] \ar[d,two heads,"R_{n-1}"] & F_* \Wit_n \Oh_X \\
\Oh_X
\end{tikzcd}
\]
The pushout gives a canonical map $\Phi_{X,n}\colon\Oh_X\to Q_{X,n}$.
The ideal $V\Wit_{n-1}\Oh_X$ acts trivially on $Q_{X,n}$, so its $\Wit_n\Oh_X$-module structure factors through $\Wit_n\Oh_X/V\Wit_{n-1}\Oh_X\simeq\Oh_X$ \cite[Proposition~2.9(2)]{KTTWYY-BirGeom}.
For $n=1$, $Q_{X,1}=\Frob_*\Oh_X$ and $\Phi_{X,1}$ is the map of \Cref{def:f-split}.

Witt restriction induces maps $R\colon Q_{X,n+1}\to Q_{X,n}$ satisfying $R\circ\Phi_{X,n+1}=\Phi_{X,n}$; see \cite[\S3]{YobukoQuasiFSplit} and \cite[\S3]{KTTWYY-BirGeom}.

\begin{definition}
\label{def:quasi-f-split}
$X$ is \emph{$n$-quasi-$F$-split} if $\Phi_{X,n}$ admits an $\Oh_X$-linear retraction.
The \emph{quasi-$F$-split height} (or \emph{Yobuko height}) is
\[
  \hgt(X) \;=\; \min\{\, n \ge 1 \mid X \text{ is } n\text{-quasi-}F\text{-split} \,\}
  \;\in\; \ZZ_{\ge 1} \cup \{\infty\},
\]
with $\hgt(X)=\infty$ if no such $n$ exists; $X$ is \emph{quasi-$F$-split} if $\hgt(X)<\infty$.
\footnote{The same invariant is denoted $h_F(X)$ elsewhere in the literature.}
In particular, $X$ is $F$-split if and only if $\hgt(X)=1$.
\end{definition}

\begin{lemma}
\label{lem:basic-properties}
Let $X$ be a smooth proper algebraic space over $\bk$.
\begin{enumerate}
\item If $X$ is $n$-quasi-$F$-split then it is $(n+1)$-quasi-$F$-split; in particular $F$-split $\Rightarrow$ quasi-$F$-split.
\item If $X$ is quasi-$F$-split, then $\HH^j(X,\Wit\Oh_X)\coloneqq\varprojlim_r \HH^j(X,\Wit_r\Oh_X)$ is a finitely generated $\Wit$-module for every $j$.
\end{enumerate}
\end{lemma}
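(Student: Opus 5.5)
For (1), I would build a map of pushout diagrams and let it carry a retraction from level $n$ to level $n+1$. The first step is to check that restriction $R\colon \Wit_{n+1}\Oh_X\to\Wit_n\Oh_X$ commutes with $F$ and satisfies $R_{n-1}\circ R=R_n$. By the universal property of the pushout, this gives a $\Wit_{n+1}\Oh_X$-linear map $R\colon Q_{X,n+1}\to Q_{X,n}$ with $R\circ\Phi_{X,n+1}=\Phi_{X,n}$, as recalled above. Since both module structures factor through $\Oh_X$, this map is $\Oh_X$-linear. Now take a retraction $\sigma$ of $\Phi_{X,n}$; then $\sigma\circ R$ is an $\Oh_X$-linear retraction of $\Phi_{X,n+1}$. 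Induction gives the statement for all larger levels, and the case $n=1$ shows that $F$-split implies quasi-$F$-split.

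For (2), I would follow Yobuko's argument \cite{YobukoQuasiFSplit}. Suppose $X$ is $m$-quasi-$F$-split. The goal is to show that $F\colon \HH^j(X,\Wit\Oh_X)\to\HH^j(X,\Wit\Oh_X)$ becomes injective, up to bounded kernel, after composing with $V$ powers. The cleaner route is through the standard criterion that $\HH^j(X,\Wit\Oh_X)$ is finitely generated over $\Wit$ if and only if it is $V$-adically separated with finite-dimensional graded pieces, which happens exactly when the $V$-torsion is controlled.

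\begin{enumerate}
\item Apply $\HH^j$ to the exact sequences $0\to F_*^{r}\Oh_X\xrightarrow{V^{r}}\Wit_{r+1}\Oh_X\to\Wit_r\Oh_X\to 0$. This expresses each finite-level group as a successive extension of finite-dimensional $\bk$-spaces, possibly twisted by Frobenius, so each $\HH^j(X,\Wit_r\Oh_X)$ has finite length.
\item Use the splitting to show that $\Phi_{X,m}$ is injective on all $\HH^j(X,\Oh_X)$. Dualising via $\omega$ is not needed here; the splitting of $\Oh_X\to Q_{X,m}$ directly makes $\HH^j(\Oh_X)\to\HH^j(Q_{X,m})$ split injective. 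Since $Q_{X,m}$ is built from $F_*\Wit_m\Oh_X$ modulo $V$-relations, this says that classes in $\HH^j(\Oh_X)$ cannot be killed by the combination of $F$ and $V^{m-1}$ beyond level $m$. Following \cite{YobukoQuasiFSplit}, this yields that the kernel of $V^{k}\colon \HH^j(\Wit\Oh_X)\to\HH^j(\Wit\Oh_X)$ stabilises and that the Mittag-Leffler property holds.
\item Conclude with Serre's and Illusie's structure theory: $\HH^j(\Wit\Oh_X)$ is then a Dieudonné-type module whose reduction modulo $V$ injects into the finite-dimensional space $\HH^j(\Oh_X)$ up to bounded error. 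Being $V$-adically complete, it is therefore finitely generated over $\Wit$.
\end{enumerate}

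The main obstacle is step (2). Translating the quasi-$F$-splitting into the statement that $\HH^j(\Wit\Oh_X)$ has no infinitely $V$-divisible or unbounded $V$-torsion part requires a careful diagram chase between $Q_{X,m}$ and the $V$-filtration. The first thing I would try is to show that the cokernel of $\Oh_X\to Q_{X,m}$ is isomorphic to $F_*B_{m}\Omega^1$-type sheaves, following \cite{KTTWYY-BirGeom}, and then argue on the associated long exact sequence.
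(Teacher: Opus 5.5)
Your argument for (1) is correct and is the paper's own: $\sigma\circ R$ retracts $\Phi_{X,n+1}$ because $R\circ\Phi_{X,n+1}=\Phi_{X,n}$.

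For (2) there is a genuine gap, and the finiteness criterion you intend to finish with is false. Suppose $M$ is $V$-adically complete and separated, $V$ is injective on $M$, and $M/VM$ is finite-dimensional, so every graded piece is finite-dimensional. Such an $M$ need not be finitely generated over $\Wit$. A counterexample is the Cartier module $\bk[[V]]$ of $\widehat{\mathds{G}}_a$, on which $F=0$ and $p=0$. It actually occurs as $\HH^2(X,\Wit\Oh_X)$ of a supersingular $K3$ surface, where $M/VM\simeq\HH^2(X,\Oh_X)=\bk$.

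Moreover, the properties your steps invoke hold for \emph{every} smooth proper $X$. Mittag--Leffler follows from finite length of the finite-level groups. The quotient $M/VM$ always injects into $\HH^j(X,\Oh_X)$. The kernel $\ker(V\mid M)$ is the image of the Bockstein map from $\HH^{j-1}(X,\Oh_X)$, hence always finite. So none of this distinguishes quasi-$F$-split varieties, and your step (2) never converts the splitting into a quantitative bound.

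The operator that must be controlled is $F$, not $V$. Since $p=VF$, you need bounds, uniform in $r$, on the kernel and cokernel of $F$ on $\HH^j(X,\Wit_r\Oh_X)$. By the sequence $0\to\Wit_r\Oh_X\xrightarrow{F}F_*\Wit_r\Oh_X\to\exactforms{r}{X}\to0$, this is uniform boundedness of $h^j(\exactforms{r}{X})$, where $h^j(\mathcal G)=\dim_\bk\HH^j(X,\mathcal G)$.

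The missing step is as follows. Besides $0\to\Oh_X\to Q_{X,r}\to\exactforms{r}{X}\to0$, which your last sentence anticipates, you need the second sequence $0\to F_*\exactforms{r-1}{X}\to Q_{X,r}\to F_*\Oh_X\to0$ for $r\ge2$, which compares level $r$ with level $r-1$.

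By part (1), the first sequence splits for \emph{every} $r\ge h=\hgt(X)$. Hence $h^j(Q_{X,r})=h^j(\Oh_X)+h^j(\exactforms{r}{X})$. The second sequence gives $h^j(Q_{X,r})\le h^j(\exactforms{r-1}{X})+h^j(\Oh_X)$. Therefore $h^j(\exactforms{r}{X})\le h^j(\exactforms{r-1}{X})$ for all $r\ge\max\{h,2\}$, so these dimensions are bounded.

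Serre's finite-generation criterion, in the form used by Nakkajima, then yields finite generation of $\HH^j(X,\Wit\Oh_X)$. Without the second sequence and the splitting at all levels $r\ge h$, a single splitting at level $m$ gives no uniform control as $r\to\infty$.
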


\begin{proof}
(1) A retraction $\sigma$ of $\Phi_{X,n}$ gives a retraction $\sigma\circ R$ of $\Phi_{X,n+1}$, since $R\circ\Phi_{X,n+1}=\Phi_{X,n}$; the case $n=1$ gives the last assertion.

(2) For schemes this is \cite[Theorem~1.2]{Nakkajima2022}; we follow its proof on the \'etale site to extend it to algebraic spaces.
Let $\exactforms{r}{X}$ be the cokernel of $F\colon\Wit_r\Oh_X\to F_*\Wit_r\Oh_X$.
The Cartier operator and the pushout defining $Q_{X,r}$ give exact sequences
\[
\begin{aligned}
  0&\longrightarrow\Oh_X\longrightarrow Q_{X,r}
    \longrightarrow\exactforms{r}{X}\longrightarrow0,\\
  0&\longrightarrow F_*\exactforms{r-1}{X}\longrightarrow Q_{X,r}
    \longrightarrow F_*\Oh_X\longrightarrow0\qquad(r\ge2).
\end{aligned}
\]
They hold on $X$ because they can be checked on \'etale scheme charts.
For $h=\hgt(X)<\infty$, the first sequence splits when $r\ge h$ by (1); the second then gives
\[
  \dim_\bk\HH^j(X,\exactforms{r}{X})
  \le\dim_\bk\HH^j(X,\exactforms{r-1}{X})
  \qquad(r\ge\max\{h,2\}).
\]
Thus these dimensions are bounded.
Finite-level Witt cohomology has finite length and satisfies the Mittag--Leffler condition \cite[Proposition~4.5.2]{Olsson2007Crystalline}.
Serre's finite-generation criterion \cite[arXiv version~2, Remark~4.5]{Nakkajima2022}, applied to the inverse limit of $\HH^j(X,\exactforms{r}{X})$, now yields the claim.
\end{proof}

\begin{remark}
\label{rem:F-finite}
The criterion of \Cref{lem:Fsplit}, the definition of \Cref{def:quasi-f-split}, and the implication in \Cref{lem:basic-properties}(1) apply over any $F$-finite field, without assuming perfection.
Every residue field of a scheme of finite type over the perfect field $\bk$ is $F$-finite.
The Witt-cohomology and Cartier-module results below are stated over perfect fields, for which $\Wit(\bk)$ is a complete discrete valuation ring.
\end{remark}

Petrov's theorem connects quasi-$F$-splitting to Hodge--de Rham degeneration.
For a smooth proper variety $X/\bk$, consider the spectral sequence
\[
E_1^{p,q} = \HH^q(X,\Omega_{X}^p) \Longrightarrow \HH_{\dR}^{p+q}(X).
\]

\begin{lemma}[Petrov]
\label{lem:petrov-degeneration}
If $X$ is quasi-$F$-split, then the Hodge--de Rham spectral sequence degenerates at the first page.
\end{lemma}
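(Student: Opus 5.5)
The plan is to follow Petrov's argument, which rests on the Deligne--Illusie method: an $n$-quasi-$F$-splitting stands in for a $\Wit_2$-lift together with the dimension bound. First, fix $h=\hgt(X)<\infty$, and note that since the statement concerns dimensions of finite-dimensional $\bk$-vector spaces, we may pass to $\overline{\bk}$.

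The key reduction is to show that $F_*\Omega^\bullet_X$ is decomposable in the derived category of $\Oh_{X'}$-modules, or at least that enough of it splits. Precisely, we want the conjugate spectral sequence $E_2^{a,b}=\HH^a(X',\Omega^b_{X'})\Rightarrow\HH^{a+b}_{\dR}(X)$, obtained via the Cartier isomorphism $\mathcal H^b(F_*\Omega^\bullet_X)\simeq\Omega^b_{X'}$, to degenerate. Once it does, $\dim\HH^m_{\dR}(X)=\sum_{a+b=m}h^{a,b}$. Since the Hodge--de Rham spectral sequence always gives $\dim\HH^m_{\dR}\le\sum h^{a,b}$, with equality exactly when it degenerates at $E_1$, this equality yields the claim. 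The Frobenius twist $X'$ has the same Hodge numbers because $\bk$ is perfect.

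To produce the degeneration, I would use Petrov's description. The extension class of the canonical truncation $\tau_{\le1}F_*\Omega^\bullet_X$ in $\Ext^2(\Omega^1_{X'},\Oh_{X'})$ is the obstruction to lifting $X$ over $\Wit_2$. More generally, the successive truncations are governed by the sheaves $\exactforms{r}{X}$ through the exact sequences of \Cref{lem:basic-properties}. An $r$-quasi-$F$-splitting gives a retraction of $\Oh_X\to Q_{X,r}$. Via the second sequence and the identification of $Q_{X,r}$ with a de Rham--Witt-type truncation, this retraction should provide a splitting of $\Oh_{X'}\to\tau_{\le1}$ or, better, a multiplicative-enough structure on $F_*\Omega^\bullet_X$. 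Petrov's insight is that the quasi-$F$-splitting splits $F_*\Omega^\bullet_X$ as an object equipped with the action of the "$\Oh_X$-algebra" structure up to the relevant degree, and that cup product then propagates the splitting from degree $1$ to all degrees $\le p-1$. Beyond degree $p-1$, one instead uses that the quasi-$F$-splitting makes the whole de Rham--Witt comparison $\Wit_r\Omega^\bullet$ behave like a lift, giving degeneration in all degrees.

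The main obstacle is this last step: the Deligne--Illusie argument only decomposes $\tau_{<p}F_*\Omega^\bullet_X$, so Hodge--de Rham degeneration in degrees $\ge p$ needs Petrov's finer analysis. Petrov shows that quasi-$F$-splitting endows $F_*\Omega^\bullet_X$ with a full decomposition as an $E_\infty$-type module compatible with the Sen/Bhatt--Lurie operator on the diffracted Hodge complex. I would not redo this. Instead I would cite Petrov's theorem for smooth proper schemes, and then check that the argument is étale local, so that it extends to algebraic spaces as the paper's conventions allow, in the same way \Cref{lem:basic-properties} was extended.
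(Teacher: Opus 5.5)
Your proposal ends up exactly where the paper does. The paper's proof is a single citation of Petrov's Theorem~1.1, which decomposes the de Rham complex of a quasi-$F$-split smooth proper variety, and your deduction (such a decomposition gives conjugate degeneration, which gives $E_1$-degeneration by comparing $\dim\HH^m_{\dR}(X)$ with $\sum_{a+b=m}h^{a,b}$) is the standard one.

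Two parts of your proposal are unnecessary and should not be presented as established: your heuristic account of Petrov's mechanism, and the extension to algebraic spaces (the lemma is stated for varieties). In particular, suppose a retraction of $\Oh_{X'}\to\tau_{\le1}F_*\Omega^\bullet_X$ could be obtained directly from the quasi-$F$-splitting. By Deligne--Illusie, every quasi-$F$-split variety would then be $\Wit_2$-liftable, whereas the paper only deduces $\Wit_2$-liftability under $\omega_X\simeq\Oh_X$, via Achinger--Suh and Brantner--Taelman.
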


\begin{proof}
This is \cite[Theorem~1.1]{Petrov2025}, which gives a decomposition of the de Rham complex of a quasi-$F$-split smooth proper variety.
\end{proof}

Together with a lifting criterion, Petrov's theorem gives the following $\Wit_2$-liftability statement for varieties with trivial canonical bundle.

\begin{proposition}
  \label{prop:quasi F split is W2 litable}
Let $X$ be a smooth proper geometrically connected variety over $\bk$ with $\omega_{X}\simeq\Oh_X$.
If $X$ is quasi-$F$-split, then $X$ is $\Wit_2(\bk)$-liftable.
\end{proposition}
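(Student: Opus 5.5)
The plan is to deduce $\Wit_2(\bk)$-liftability from the obstruction theory for lifting over the square-zero extension $\Wit_2(\bk)\to\bk$. The obstruction to lifting $X$ lies in $\HH^2(X,T_X)$. With $\omega_X\simeq\Oh_X$, Serre duality and the isomorphism $T_X\simeq\Omega^{d-1}_X\otimes\omega_X^{-1}\simeq\Omega^{d-1}_X$, where $d=\dim X$, identify $\HH^2(X,T_X)$ with $\HH^2(X,\Omega_X^{d-1})$. Rather than show this group vanishes, which fails in general, I will use a lifting criterion that needs only Hodge--de Rham degeneration, which Petrov supplies. Concretely I would use the result (Deligne--Illusie style, in the form used by Achinger--Zdanowicz or in the literature on quasi-$F$-splitting) that a quasi-$F$-split variety has a quasi-$F$-splitting of level $n$ producing a lift of $X$ together with its Frobenius over $\Wit_2$ up to the $Q_{X,n}$ data. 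If a citable statement of this kind is available in \cite{KTTWYY-BirGeom} or \cite{YobukoQuasiFSplit}, I would invoke it directly. The more self-contained route is the Calabi--Yau $T^1$-lifting argument below.

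The self-contained route runs as follows. First, by \Cref{lem:petrov-degeneration}, the Hodge--de Rham spectral sequence of $X$ degenerates at $E_1$, so the Hodge numbers sum to the de Rham Betti numbers. Second, I would use the standard criterion in the spirit of Deligne--Illusie and Kato: $X$ lifts to $\Wit_2$ if and only if the class of the extension $\tau_{\le1}F_*\Omega^\bullet_X$, namely $\Oh_X\to F_*\Oh_X\to F_*Z^1\to\Omega^1_{X'}$ via the Cartier isomorphism, vanishes in the appropriate $\Ext^2$. Equivalently, the obstruction class $o\in\HH^2(X,T_X)$ is the image of the class of $0\to\Oh_X\to F_*\Oh_X\to F_*B^1_X\to 0$ composed with the Cartier data. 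Third, I would show that quasi-$F$-splitting kills this class. A retraction of $\Phi_{X,n}$ splits the first exact sequence in the proof of \Cref{lem:basic-properties}, namely $0\to\Oh_X\to Q_{X,n}\to B_nΩ^1_X\to0$. Pulling back along the inclusion $F_*B^1_X\hookrightarrow B_n\Omega^1_X$, which comes from iterating the second sequence, shows that the $W_2$-lifting obstruction, viewed as a Yoneda product with the class of $F_*\Oh_X$, becomes zero once composed into $B_n$. The triviality of $\omega_X$ and duality then give injectivity of the comparison map on the relevant $\Ext^2$ groups, and this forces $o=0$. Here geometric connectedness ensures $\HH^0(\Oh_X)=\bk$, so the relevant top-degree duality pairings are perfect.

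I expect the main obstacle to be the third step: identifying precisely how the $W_2$-lifting obstruction factors through the extension $Q_{X,n}$, and showing that splitting at level $n$ (rather than level $1$) still kills the obstruction. For this I would first try the argument of Yobuko's proof that quasi-$F$-split Calabi--Yau varieties lift. There, the obstruction in $\HH^2(T_X)\simeq\HH^2(\Omega^{d-1})$ is dual to a map on $\HH^{d-2}(\Omega^1)$, whose nontriviality would contradict the surjectivity of $\HH^{d}(Q_{X,n})$-type maps guaranteed by the splitting. I would combine this with the degeneration from \Cref{lem:petrov-degeneration} to control the passage from $B_n$ back to $B_1$.
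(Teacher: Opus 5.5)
Your proposal has a genuine gap, and it sits in the step you yourself flag as the main obstacle. The first route depends on a citation you do not identify. It also misdescribes what a quasi-$F$-splitting provides: it gives no Frobenius lift modulo $p^2$, since such a lift would make $X$ Frobenius split. So everything rests on the self-contained route.

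There the obstruction is the Yoneda product $o=e_1\cdot c_1\in\Ext^2(\Omega^1_{X'},\Oh_{X'})$. Here $e_1$ is the class of $0\to\Oh\to F_*\Oh\to F_*B_1\Omega^1\to0$, and $c_1$ is the class of the Cartier extension $0\to F_*B_1\Omega^1\to F_*Z_1\Omega^1\xrightarrow{C}\Omega^1_{X'}\to0$. Let $e_n$ be the class of $0\to\Oh_X\to Q_{X,n}\to B_n\Omega^1_X\to0$. It is related to $e_1$ by the \emph{surjection} $R^{n-1}\colon B_n\Omega^1_X\to B_1\Omega^1_X$ induced by $Q_{X,n}\to Q_{X,1}$, namely $e_n=(R^{n-1})^*e_1$. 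It is not related to $e_1$ by an inclusion into $B_n\Omega^1_X$.

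Pulling $e_n$ back along the inclusion $F_*B_{n-1}\Omega^1_X\hookrightarrow B_n\Omega^1_X$ from the second sequence, or along anything factoring through it, gives zero for \emph{every} $X$. The reason is that $F_*B_{n-1}\Omega^1_X\subset Q_{X,n}$ is the kernel of $Q_{X,n}\to F_*\Oh_X$, so it meets $\Oh_X$ trivially. Hence your pullback never uses the splitting.

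Your next claim is that $\omega_X\simeq\Oh_X$ and duality make ``the comparison map on the relevant $\Ext^2$ groups'' injective. It names no map and is unproved. No injectivity of this kind can hold at the level of $e_1$: the natural map $(R^{n-1})^*\colon\Ext^1(B_1\Omega^1,\Oh)\to\Ext^1(B_n\Omega^1,\Oh)$ kills the nonzero class $e_1$ whenever $1<\hgt(X)\le n$. Finally, Petrov's degeneration from your first step is never actually used.

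What is missing is an actual mechanism that kills $o$. The paper's proof proceeds as follows.
\begin{enumerate}
\item Petrov gives $E_1$-degeneration, hence, by a dimension count, degeneration of the conjugate spectral sequence.
\item By \cite[Theorem~1.3]{AchingerSuh2023}, the differential $d_2^{d-2,1}\colon\HH^{d-2}(X',\Omega^1_{X'})\to\HH^d(X',\Oh_{X'})$ is cup product with the obstruction $o$ to lifting $X'=X^{(p)}$.
\item Since $\omega_X\simeq\Oh_X$, Serre duality makes the pairing $\HH^2(X,T_X)\otimes\HH^{d-2}(X,\Omega^1_X)\to\HH^d(X,\Oh_X)\simeq\bk$ perfect, so $o=0$. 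This is \cite[Theorem~7.18]{BrantnerTaelman2025}.
\item As $\bk$ is perfect, $X'$ lifts exactly when $X$ does.
\end{enumerate}
That is the role duality should play: it converts vanishing of a differential into vanishing of $o$.

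Alternatively, your Yoneda computation can be repaired directly by moving the Cartier factor instead of the splitting factor.
\begin{enumerate}
\item One has $c_1=(C^{n-1})_*c_n$, where $c_n$ is the class of $0\to B_n\Omega^1\to Z_n\Omega^1\xrightarrow{C^n}\Omega^1\to0$.
\item Under Serre's isomorphism $F_*\Wit_n\Oh_X/F(\Wit_n\Oh_X)\simeq B_n\Omega^1_X$ given by $F^{n-1}d$, the map $R^{n-1}$ becomes $C^{n-1}$; check this on the generators $V^i[a]$.
\item Hence $o=e_1\cdot(C^{n-1})_*c_n=e_n\cdot c_n=0$.
\end{enumerate}
This route needs neither Petrov nor $\omega_X\simeq\Oh_X$.
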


\begin{proof}
By \Cref{lem:petrov-degeneration}, quasi-$F$-splitting makes the Hodge--de Rham spectral sequence degenerate at $E_1$.
For a smooth proper geometrically connected variety with trivial canonical bundle, this implies $\Wit_2(\bk)$-liftability by \cite[Theorem~7.18]{BrantnerTaelman2025}.
The input is \cite[Theorem~1.3]{AchingerSuh2023}: for $d=\dim X$, the conjugate differential
\[
  d_2^{d-2,1}\colon\HH^{d-2}(X^{(p)},\Omega^1_{X^{(p)}/\bk})
  \longrightarrow\HH^d(X^{(p)},\Oh_{X^{(p)}})
\]
is cup product with the obstruction to lifting $X^{(p)}$, and Serre duality with a volume form equates their vanishing.
\end{proof}

\subsection{Artin--Mazur formal group}
\label{subsec:Artin-Mazur formal group}

For a Calabi--Yau variety, quasi-$F$-split height agrees with the height of its top Artin--Mazur formal group.
We recall the functor, its representability criterion, and the comparisons used later.

Write $\mathrm{Art}_\bk$ for the category of local Artinian $\bk$-algebras with residue field $\bk$, and put $X_A=X\times_{\Spec\bk}\Spec A$ for $A\in\mathrm{Art}_\bk$.

\begin{definition}
\label{def:artin-mazur}
For $q\ge1$, the \emph{Artin--Mazur functor} of $X$ in degree $q$ is
\[
  \Phi^q_X\colon\mathrm{Art}_\bk\longrightarrow(\mathrm{Ab}),
  \qquad
  \Phi^q_X(A)=\ker\bigl(\HH^q_{\et}(X_A,\GGm)\longrightarrow\HH^q_{\et}(X,\GGm)\bigr),
\]
the kernel of restriction to the closed fibre \cite[\S II]{ArtinMazur1977}.
For $q=1$ this is the formal Picard functor $\widehat{\Pic}_X$, and for $q=2$ the formal Brauer functor $\widehat{\mathrm{Br}}_X$.
\end{definition}

\begin{lemma}[Artin--Mazur]
\label{lem:AM-prorepresentable}
Let $X$ be a smooth proper algebraic space over $\bk$ of dimension $d$, and let $q\ge1$.
\begin{enumerate}[label=\textup{(\arabic*)},leftmargin=2.2em]
\item The tangent space of $\Phi^q_X$ is $\HH^q(X,\Oh_X)$, and obstructions to lifting along a square-zero extension in $\mathrm{Art}_\bk$ lie in $\HH^{q+1}(X,\Oh_X)$.
\item If $\HH^{q-1}(X,\Oh_X)=0$, then $\Phi^q_X$ is prorepresentable.
\item If in addition $\HH^{q+1}(X,\Oh_X)=0$, which is automatic for $q=d$, then $\Phi^q_X$ is formally smooth; if moreover $\dim_\bk\HH^q(X,\Oh_X)=1$, then $\Phi^q_X$ is a smooth one-dimensional formal group.
\end{enumerate}
\end{lemma}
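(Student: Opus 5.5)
The plan is to prove the three parts separately with standard deformation theory, following Artin--Mazur. Throughout, $\Phi^q_X$ is treated as a functor on $\mathrm{Art}_\bk$. Because $\mathbb{G}_m$ is smooth, its deformations along a nilpotent thickening are controlled by the truncated exponential.

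\emph{Part (1).} Take a small extension $0\to I\to A'\to A\to0$ in $\mathrm{Art}_\bk$ with $\mathfrak m_{A'}I=0$. On the étale site of $X$, which is the same as that of $X_A$ and $X_{A'}$, there is an exact sequence of sheaves of groups
\[
0\longrightarrow \Oh_X\otimes_\bk I\longrightarrow \GGm(X_{A'})\longrightarrow\GGm(X_A)\longrightarrow 1,
\]
with first map $f\mapsto 1+f$. It is exact on the right because units lift along nilpotent thickenings étale-locally. Taking cohomology gives
\[
\HH^q(X,\Oh_X)\otimes I\to \HH^q(X_{A'},\GGm)\to\HH^q(X_A,\GGm)\to \HH^{q+1}(X,\Oh_X)\otimes I .
\]
Now restrict to the kernels over $\HH^q(X,\GGm)$. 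This shows that obstructions to lifting an element of $\Phi^q_X(A)$ lie in $\HH^{q+1}(X,\Oh_X)\otimes I$. With $A'=\bk[\varepsilon]$ and $A=\bk$, it identifies $\Phi^q_X(\bk[\varepsilon])$ with the image of $\HH^q(X,\Oh_X)$. That map is injective because $\HH^{q-1}(X_{\bk[\varepsilon]},\GGm)\to\HH^{q-1}(X,\GGm)$ is split surjective, since the section $\bk\to\bk[\varepsilon]$ splits it. This gives the tangent space.

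\emph{Part (2).} Apply Schlessinger's criteria. The functor $\Phi^q_X$ is a kernel of cohomology functors, so I will check (H1)--(H3) and left exactness. By the long exact sequence above, $\Phi^q_X(A')\to\Phi^q_X(A)$ has fibres that are torsors under a quotient of $\HH^q(X,\Oh_X)\otimes I$. The quotient is by the image of $\HH^{q-1}(X_A,\GGm)$ under the connecting map. The hypothesis $\HH^{q-1}(X,\Oh_X)=0$ should make this action free. More precisely, I would show by induction on length that $\HH^{q-1}(X_A,\GGm)\to\HH^{q-1}(X,\GGm)$ is surjective, using the same sequence in degree $q-1$ together with the vanishing. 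Then $\HH^q(X,\Oh_X)\otimes I\to\Phi^q_X(A')$ is injective, which gives the exactness needed for Schlessinger's (H4) condition. Finite-dimensionality of the tangent space follows from properness.

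This step is the one I expect to be the main obstacle. Controlling the connecting map $\HH^{q-1}(X_A,\GGm)\to \HH^q(X,\Oh_X)\otimes I$ needs care, because the induction has to handle the non-linear units in all intermediate lengths. Failing that, I would simply invoke \cite[Corollary~II.2.12]{ArtinMazur1977}, which states exactly this.

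\emph{Part (3).} If $\HH^{q+1}(X,\Oh_X)=0$, the obstruction space from part (1) vanishes. So $\Phi^q_X(A')\to\Phi^q_X(A)$ is surjective for every small extension, and the prorepresenting hull is formally smooth, i.e. a power series ring. The vanishing is automatic for $q=d$, since $\HH^{d+1}(X,\Oh_X)=0$ for dimension reasons. The group structure is induced from the abelian group structure on $\HH^q_{\et}(\,\cdot\,,\GGm)$, so the prorepresenting object is a commutative formal group. Its dimension equals $\dim_\bk$ of the tangent space, which is $\dim_\bk\HH^q(X,\Oh_X)=1$ under the last hypothesis.
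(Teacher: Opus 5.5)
Your outline matches the paper's proof. The paper also uses the sequence $1\to 1+I\Oh_X\to\Oh_{X_A}^\times\to\Oh_{X_{A/I}}^\times\to 1$ for (1), cites Artin--Mazur for (2), and uses cohomological dimension for (3). Your use of the splitting $\bk\to\bk[\varepsilon]$ to get injectivity on the tangent space is a detail the paper leaves implicit, and your (1) and (3) are fine.

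The problem is your self-contained sketch of (2), which does not work as written. The action is free exactly when the connecting map $\delta\colon\HH^{q-1}(X_A,\GGm)\to\HH^q(X,\Oh_X)\otimes I$ vanishes. That is equivalent to surjectivity of $\HH^{q-1}(X_{A'},\GGm)\to\HH^{q-1}(X_A,\GGm)$, not of $\HH^{q-1}(X_A,\GGm)\to\HH^{q-1}(X,\GGm)$. The latter map is split surjective for every $A$, by the same splitting you used in (1). It needs no hypothesis, and it says nothing about whether classes in $\Phi^{q-1}_X(A)$ lift to $A'$, which is exactly what $\delta$ measures.

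Moreover, putting $\HH^{q-1}(X,\Oh_X)=0$ into the degree-$(q-1)$ sequence gives \emph{injectivity} of $\HH^{q-1}(X_{A'},\GGm)\to\HH^{q-1}(X_A,\GGm)$, not surjectivity. Induction on length then gives $\Phi^{q-1}_X(A)=0$ for all $A$. Combined with the splitting, $\HH^{q-1}(X_A,\GGm)\to\HH^{q-1}(X,\GGm)$ is bijective for every $A$. Hence $\HH^{q-1}(X_{A'},\GGm)\to\HH^{q-1}(X_A,\GGm)$ is bijective and $\delta=0$. The non-linear difficulty you anticipate never arises: the correct intermediate statement is $\Phi^{q-1}_X=0$, and it is a one-line induction.

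There is a second gap in (2). Injectivity of $\HH^q(X,\Oh_X)\otimes I\to\Phi^q_X(A')$ only controls the fibres of $\Phi^q_X(A')\to\Phi^q_X(A)$. It does not by itself give (H4), and you never check (H1)--(H2). The missing tool is a Mayer--Vietoris sequence for units.

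Take a surjection $A''\to A$ and put $B=A'\times_A A''$. Exactness of $\Oh_X\otimes_\bk-$ gives $\Oh_{X_B}=\Oh_{X_{A'}}\times_{\Oh_{X_A}}\Oh_{X_{A''}}$. Hence there is an exact sequence $1\to\Oh_{X_B}^\times\to\Oh_{X_{A'}}^\times\times\Oh_{X_{A''}}^\times\to\Oh_{X_A}^\times\to 1$ on the common \'etale site. Its cohomology sequence shows that $\HH^q(X_B,\GGm)\to\HH^q(X_{A'},\GGm)\times_{\HH^q(X_A,\GGm)}\HH^q(X_{A''},\GGm)$ is always surjective. It is injective as soon as $\HH^{q-1}(X_{A''},\GGm)\to\HH^{q-1}(X_A,\GGm)$ is surjective, which the corrected induction provides.

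Passing to kernels over $\HH^q(X,\GGm)$, $\Phi^q_X$ commutes with these fibre products. This gives (H1), (H2) and (H4) at once, and (H3) is finite-dimensionality of $\HH^q(X,\Oh_X)$. With this repair your part (2) proves what the paper only cites.
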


\begin{proof}
These are the criteria of \cite[\S II, Corollaries~2.4--2.5 and 4.2--4.4]{ArtinMazur1977}.
The proofs apply on the \'etale site of a proper algebraic space: nilpotent thickenings preserve the \'etale topos, coherent cohomology is finite-dimensional, and smoothness gives vanishing above degree $d$.
For (1), a square-zero extension $A\twoheadrightarrow A/I$ in $\mathrm{Art}_\bk$ gives the exact sequence
\[
  1\longrightarrow 1+I\Oh_X\longrightarrow\Oh_{X_A}^{\times}
   \longrightarrow\Oh_{X_{A/I}}^{\times}\longrightarrow1,
  \qquad 1+I\Oh_X\simeq I\otimes_\bk\Oh_X ,
\]
Its cohomology sequence identifies the kernel of $\Phi^q_X(A)\to\Phi^q_X(A/I)$ as a quotient of $I\otimes_\bk\HH^q(X,\Oh_X)$ and places the obstruction to surjectivity in $I\otimes_\bk\HH^{q+1}(X,\Oh_X)$.
Taking $A=\bk[\varepsilon]$ gives the tangent space; for $q=d$ the obstruction group vanishes by cohomological dimension.
\end{proof}

Heights can be read off from Witt-vector cohomology.
For a commutative formal Lie group $E$ over $\bk$ write
\[
  \Cart(E)=\Hom(\widehat{\Wit},E)
\]
for its covariant ($p$-typical) Cartier module, $\widehat{\Wit}$ being the formal completion of the Witt group at the origin.

\begin{lemma}
\label{prop:AM-cartier}
If $\Phi^q_X$ is prorepresentable, there is a canonical isomorphism of Cartier modules
\[
  \Cart(\Phi^q_X)\;\simeq\;\HH^q(X,\Wit\Oh_X)
  \coloneqq\varprojlim_r\HH^q(X,\Wit_r\Oh_X).
\]
\end{lemma}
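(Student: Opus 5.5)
This is Artin--Mazur's identification of the Dieudonné--Cartier module \cite[\S II.4]{ArtinMazur1977}. The plan is to reproduce their argument on the étale site, in three steps: compute the finite-level kernels, pass to the limit, and then match the Cartier-module structures.

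\emph{Step 1: finite levels.} For $A\in\mathrm{Art}_\bk$ with maximal ideal $\mathfrak m_A$, the sequence
\[
1\to 1+\mathfrak m_A\Oh_{X_A}\to\Oh_{X_A}^\times\to\Oh_X^\times\to1
\]
is split by $\Oh_X^\times\subset\Oh_{X_A}^\times$, using $\bk\subset A$. Hence
\[
\Phi^q_X(A)=\HH^q\bigl(X,1+\mathfrak m_A\Oh_{X_A}\bigr)=\HH^q\bigl(X,\widehat{\mathds G}_m(\mathfrak m_A\otimes\Oh_X)\bigr).
\]
Next, the Artin--Hasse exponential gives an isomorphism of sheaves of formal groups $\widehat{\mathds G}_m\simeq\prod_{(m,p)=1}\widehat{\Wit}$ over $\ZZ_{(p)}$-algebras. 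Extracting the $p$-typical part, it identifies $\Hom(\widehat{\Wit},\Phi^q_X)$ with $\Hom(\widehat{\Wit},\HH^q(X,\widehat{\Wit}(\mathfrak m\otimes\Oh_X)))$, where the outer $\Hom$ is of functors on $\mathrm{Art}_\bk$.

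\emph{Step 2: the limit.} Evaluate on the test objects $A_r=\bk[t]/(t^{r})$. A homomorphism from $\widehat{\Wit}$ to a prorepresentable group functor is determined by where it sends the universal curve $\gamma(t)=(t,0,0,\dots)$, via Cartier's theory of curves. Prorepresentability is exactly what is needed here: it makes $\Phi^q_X$ left exact, so $\Cart(\Phi^q_X)=\varprojlim_r$ of the groups of $p$-typical curves of $\Phi^q_X$ modulo $t^r$. On the sheaf side, the $p$-typical curves in $\widehat{\Wit}(t\,\Oh_X[t]/t^{r})$ are identified, via $\gamma\mapsto$ its coefficients and the Witt-vector identity $\sum V^i[a_i t^{p^i}]$, with $\Wit_{s}\Oh_X$ for $p^{s}\approx r$. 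This yields a compatible family of maps $\HH^q(X,\Wit_s\Oh_X)\to$ (curves mod $t^{p^s}$) which are isomorphisms in the limit. Taking $\varprojlim$, and using Mittag--Leffler for the finite-length groups $\HH^q(X,\Wit_s\Oh_X)$ \cite[Proposition~4.5.2]{Olsson2007Crystalline} so that $\varprojlim^1$ causes no trouble, gives the bijection $\Cart(\Phi^q_X)\simeq\HH^q(X,\Wit\Oh_X)$.

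\emph{Step 3: compatibility of operators.} The final task is to check that $F$, $V$, and the $\Wit(\bk)$-action agree. On curves, $V$ corresponds to $\gamma(t)\mapsto\gamma(t^p)$, which matches Witt Verschiebung. $F$ corresponds to the sum over the $p$-th roots of unity, which matches Witt Frobenius. Teichmüller $[a]$ acts by $t\mapsto at$, which matches multiplication by $[a]$. Each of these is a formal identity in $\Wit$ of a polynomial ring, so it holds sheaf-locally and passes to cohomology.

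I expect the main obstacle to be Step 2. One must make the identification of truncated curves with $\Wit_s\Oh_X$ exact in both directions: truncation levels $t^r$ versus Witt length $s$ interleave, and the transition maps have to become isomorphisms after the limit. One must also confirm that prorepresentability really gives $\Cart(\Phi)=\varprojlim$ of the curve groups. For that I would cite Artin--Mazur's Proposition II.4.3 rather than rederive it, noting that, as in Lemma~\ref{lem:AM-prorepresentable}, their argument uses only the étale topos and is insensitive to nilpotent thickenings, so it applies to algebraic spaces.
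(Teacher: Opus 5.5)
Your overall route matches the paper's. Its proof is just the citation of \cite[\S II, Corollary~4.3]{ArtinMazur1977}, with \cite[Proposition~6.3.3]{MondalReinecke2026} as a modern account, together with your remark that the computation lives on the \'etale site and therefore applies to algebraic spaces.

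If you write out your sketch, however, Step~1 contains a false claim. The Artin--Hasse exponential does not give an isomorphism of formal groups $\widehat{\mathds G}_m\simeq\prod_{(m,p)=1}\widehat{\Wit}$: the left side is one-dimensional with Cartier module $\Wit$, while the right side is infinite-dimensional. Replacing the sheaf $1+\mathfrak m_A\Oh_{X_A}=\widehat{\mathds G}_m(\mathfrak m_A\otimes\Oh_X)$ by $\widehat{\Wit}(\mathfrak m_A\otimes\Oh_X)$ therefore changes the functor whose Cartier module you are computing, and your identification of the two $\Hom(\widehat{\Wit},-)$ groups is wrong.

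What Artin--Hasse actually splits is the group of \emph{curves} $\widehat{\mathds G}_m(tR[[t]])=1+tR[[t]]$, i.e.\ the big Witt vectors. Write $E(x)=\exp\bigl(\sum_{i\ge0}x^{p^i}/p^i\bigr)$ and let $R$ be a $\ZZ_{(p)}$-algebra. The map $(a_{m,i})\mapsto\prod_{(m,p)=1}\prod_{i\ge0}E(a_{m,i}t^{mp^i})$ identifies this group with $\prod_{(m,p)=1}\Wit(R)$. Modulo $t^r$ it identifies $1+tR[t]/(t^r)$ with $\prod_{(m,p)=1,\,m<r}\Wit_{s_m(r)}(R)$, where $s_m(r)=\#\{i\ge0: mp^i<r\}$.

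So drop the detour through $\widehat{\Wit}(\mathfrak m\otimes\Oh_X)$ and evaluate Step~1 directly at $A_r=\bk[t]/(t^r)$. This gives $\Phi^q_X(A_r)=\HH^q(X,1+t\Oh_X[t]/(t^r))$, and the sheaf-level splitting turns this into $\prod_m\HH^q(X,\Wit_{s_m(r)}\Oh_X)$. The $p$-typical curves are the $m=1$ factor, parametrised by $a\mapsto\prod_iE(a_it^{p^i})$, that is, by $[t]\cdot a=\sum_iV^i[a_it^{p^i}]$. This is what your Step~2 actually uses. Passing to the limit over $r$ gives $\varprojlim_s\HH^q(X,\Wit_s\Oh_X)$.

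Since the lemma defines $\HH^q(X,\Wit\Oh_X)$ as exactly this inverse limit, no Mittag--Leffler input is needed. With this correction, your Step~3 operator checks go through as stated.
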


\begin{proof}
This is \cite[\S II, Corollary~4.3]{ArtinMazur1977}; see \cite[Proposition~6.3.3]{MondalReinecke2026} for a modern account.
The computation uses the multiplicative formal group and the Witt sheaves on the \'etale site, so it applies equally to smooth proper algebraic spaces.
\end{proof}

When $\Phi^q_X$ is a smooth one-dimensional formal group, write $\hgt(\Phi^q_X)\in\ZZ_{\ge1}\cup\{\infty\}$ for its height.
At finite height $h$, its Cartier module is free of rank $h$ over $\Wit$.
At infinite height the module is annihilated by $p$, and the formal group becomes $\widehat{\mathds{G}}_a$ over $\overline{\bk}$.
Thus \Cref{prop:AM-cartier} reads the height from Witt-vector cohomology, and rationally from crystalline slopes.

\begin{remark}
\label{rem:AM-slope-convention}
If $\Phi^q_X$ is a smooth one-dimensional formal group of finite height $h$, then $(D_q)_{[0,1)}$ is isoclinic of slope $1-\tfrac1h$ and dimension $h$; if its height is infinite, then $(D_q)_{[0,1)}=0$ (\Cref{prop:AM-cartier,prop:WittSlope}).
Here Frobenius is the crystalline Frobenius acting on the covariant Cartier module $\HH^q(X,\Wit\Oh_X)$.
\end{remark}

The link with \Cref{def:quasi-f-split} is Yobuko's theorem.

\begin{theorem}[Yobuko]
\label{thm:yobuko-height}
Let $X$ be a Calabi--Yau variety of dimension $d\ge2$ over $\bk$, that is, smooth proper with $\omega_X\simeq\Oh_X$ and $\HH^i(X,\Oh_X)=0$ for $0<i<d$.
Then $\Phi^d_X$ is a smooth one-dimensional formal group and
\[
  \hgt(X)=\hgt(\Phi^d_X).
\]
\end{theorem}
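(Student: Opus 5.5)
The plan is to prove the smoothness claim first, and then the height equality by comparing the retraction problem for $\Phi_{X,n}$ with Witt-vector cohomology in degree $d$.

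\emph{Smoothness.} Since $\HH^{d-1}(X,\Oh_X)=0$ (this uses $d\ge2$ and the Calabi--Yau vanishing), \Cref{lem:AM-prorepresentable}(2) makes $\Phi^d_X$ prorepresentable. The obstruction group $\HH^{d+1}(X,\Oh_X)$ vanishes for dimension reasons. Finally, Serre duality gives $\dim\HH^d(X,\Oh_X)=1$, so \Cref{lem:AM-prorepresentable}(3) shows $\Phi^d_X$ is a smooth one-dimensional formal group. By \Cref{prop:AM-cartier}, its Cartier module is $\HH^d(X,\Wit\Oh_X)$.

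\emph{Criterion for $n$-quasi-$F$-splitting.} I will prove the analogue of \Cref{lem:Fsplit}: $X$ is $n$-quasi-$F$-split if and only if the map
\[
\HH^d(X,\Oh_X)\to\HH^d(X,Q_{X,n})
\]
induced by $\Phi_{X,n}$ is nonzero. Grothendieck duality for the finite morphism underlying $Q_{X,n}$ (an $\Oh_X$-module via the factorisation through $\Wit_n\Oh_X/V$) identifies $\Hom(Q_{X,n},\Oh_X)\to\Hom(\Oh_X,\Oh_X)=\bk$ with the dual of $\HH^d(\Phi_{X,n})$, using $\omega_X\simeq\Oh_X$. The argument is then the same as in \Cref{lem:Fsplit}.

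\emph{Reformulation in Witt-vector cohomology.} By the pushout square, $\Phi_{X,n}$ on $\HH^d$ is nonzero exactly when the image of $F\colon\HH^d(\Wit_n\Oh_X)\to\HH^d(F_*\Wit_n\Oh_X)$ is not contained in the image of $\HH^d$ of the kernel $V\Wit_{n-1}\Oh_X$ of $R_{n-1}$. Here I use that $\HH^{d+1}=0$, so the relevant $\HH^d$ sequences are right exact. Because $\HH^i(X,\Oh_X)=0$ for $0<i<d$, induction on the sequences $0\to\Wit_{n-1}\Oh_X\xrightarrow{V}\Wit_n\Oh_X\to\Oh_X\to0$ gives $\HH^{d-1}(\Wit_n\Oh_X)=0$. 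It also shows that $\HH^d(\Wit_n\Oh_X)$ is uniserial, with each $\HH^d(\Wit_n\Oh_X)\to\HH^d(\Wit_{n-1}\Oh_X)$ surjective, so $\HH^d(\Wit_n\Oh_X)=\Cart(\Phi^d_X)/V^n$.

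The condition then becomes: $F$ does not map $\Cart/V^n$ into $V\Cart/V^n$. Equivalently, $F\not\equiv0$ modulo $V$ on the generator, or at higher levels the first $n$ Dieudonné coefficients of $F$ are not all divisible by $V$. For a one-dimensional formal group with Cartier module $\Wit[[V]]$-free on $\gamma$, write $F\gamma=\sum a_iV^i\gamma$. The height is the least $h$ with $a_{h-1}$ a unit. The vanishing condition at level $n$ is then exactly $n<h$, so $\hgt(X)=\hgt(\Phi^d_X)$. In the infinite-height case $F$ kills everything modulo $V^n$ for all $n$, since $\Cart$ is killed by $p=FV$ and $F\equiv0$, giving $\hgt(X)=\infty$.

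\emph{Main obstacle.} I expect the hardest part to be this last identification: matching the pushout-defined $Q_{X,n}$ on $\HH^d$ with the action of $F$ on $\Cart/V^n$ while tracking the $F_*$ twists correctly. I would handle it by comparing with Yobuko's original computation \cite[Theorem~4.5]{YobukoQuasiFSplit}, or simply cite that theorem, noting that its proof extends verbatim.
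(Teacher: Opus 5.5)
Your smoothness step is exactly the paper's proof: $\HH^{d-1}(X,\Oh_X)=0$, $\HH^{d+1}(X,\Oh_X)=0$ and Serre duality are fed into \Cref{lem:AM-prorepresentable}. For $\hgt(X)=\hgt(\Phi^d_X)$ the paper simply cites \cite[Theorem~4.5]{YobukoQuasiFSplit}, so your fallback coincides with the paper. The self-contained argument you sketch, however, goes wrong at its key step and, as written, does not prove the equality.

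The error is in which subsheaf the pushout kills. It is $F(V\Wit_{n-1}\Oh_X)\subseteq F_*\Wit_n\Oh_X$, and since $FV=p$ this equals $p\,\Wit_n\Oh_X$, not $V\Wit_{n-1}\Oh_X$. Because $\HH^{d+1}$ vanishes, $\HH^d(X,\Wit_n\Oh_X)\to\HH^d(X,p\Wit_n\Oh_X)$ is onto. So, with $M=\Cart(\Phi^d_X)$ and $\HH^d(X,\Wit_n\Oh_X)=M/V^nM$, the correct criterion is
\[
  \HH^d(\Phi_{X,n})=0\iff F(M)\subseteq pM+V^nM ,
\]
whereas you wrote $F(M)\subseteq VM+V^nM$. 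Your condition does not depend on $n$: it only says $F\gamma\in VM$. It would therefore give $\hgt(X)\in\{1,\infty\}$ for every Calabi--Yau variety, contradicting the $K3$ surfaces of height $2,\dots,10$. Your sentence about ``the first $n$ Dieudonn\'e coefficients'' states the right answer, but it does not follow from the condition you derived.

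With $p$ in place of $V$ the computation closes. For finite height $h$ one has $F\gamma\in V^{h-1}M\setminus V^hM$ and $pM=VF(M)=V^hM$. Moreover
\[
  F\Bigl(\sum_{i\ge0}V^i[c_i]\gamma\Bigr)=[c_0^p]\,F\gamma+p\sum_{i\ge1}V^{i-1}[c_i]\gamma .
\]
Hence $F(M)\subseteq pM+V^nM=V^{\min(h,n)}M$ if and only if $F\gamma\in V^{\min(h,n)}M$, that is, if and only if $n<h$. For infinite height, $F=0$ on $M$, so the inclusion holds for every $n$.

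Equivalently, $X$ is $n$-quasi-$F$-split exactly when Frobenius is nonzero on $\HH^d(X,\Wit_n\Oh_X)$, which is the standard description of the height. With this correction your route becomes a genuine proof of the cited theorem rather than a citation of it.
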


\begin{proof}
The hypotheses give $\HH^{d-1}(X,\Oh_X)=0$, and Serre duality together with $\omega_X\simeq\Oh_X$ gives $\HH^d(X,\Oh_X)\simeq\HH^0(X,\Oh_X)^\vee=\bk$, so $\Phi^d_X$ is a smooth one-dimensional formal group by \Cref{lem:AM-prorepresentable}.
The equality of heights is \cite[Theorem~4.5]{YobukoQuasiFSplit}.
\end{proof}

In dimension $2n\ge4$, good reductions of hyperk\"ahler varieties are not Calabi--Yau in this sense: semicontinuity gives $\HH^{2i}(X,\Oh_X)\ne0$ for $0<i<n$.
The following inequality gives a partial comparison under explicit cohomological hypotheses.
\begin{theorem}[Nakkajima]
\label{thm:nakkajima-inequality}
Let $X$ be a smooth proper variety over $\bk$, and let $q\ge1$.
Assume that $\Phi^q_X$ is prorepresentable, that
\[
  \HH^q(X,\Oh_X)=\bk,\qquad \HH^{q+1}(X,\Oh_X)=0,
\]
and that the Bockstein maps $\beta_m\colon\HH^{q-1}(X,\Oh_X)\to\HH^q(X,\Wit_{m-1}\Oh_X)$ vanish for every $m\ge2$.
Then
\[
  \hgt(\Phi^q_X)\;\le\;\hgt(X).
\]
In particular $\hgt(\Phi^q_X)=\infty$ forces $\hgt(X)=\infty$.
\end{theorem}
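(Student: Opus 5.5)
Suppose $X$ is $m$-quasi-$F$-split with $m=\hgt(X)<\infty$; the goal is to show $\hgt(\Phi^q_X)\le m$. The route is to compare the Cartier module $\Cart(\Phi^q_X)\simeq\HH^q(X,\Wit\Oh_X)$ of \Cref{prop:AM-cartier} with the finite-level groups $\HH^q(X,\Wit_r\Oh_X)$, and to read off the height from how $F$ acts on them.

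First I would record the structure coming from the hypotheses. By \Cref{lem:AM-prorepresentable}, $\HH^q(X,\Oh_X)=\bk$ and $\HH^{q+1}(X,\Oh_X)=0$ make $\Phi^q_X$ a smooth one-dimensional formal group. Its height can then be characterised Witt-theoretically: $\hgt(\Phi^q_X)\le m$ holds exactly when $F$ is nonzero on a suitable quotient of $\HH^q(X,\Wit_m\Oh_X)$. Concretely, in the Dieudonné normal form $F=V^{h-1}\cdot$unit, one has $F\ne0$ on $\Cart/V^m\Cart$ iff $h\le m$. Next I would use the vanishing of the Bocksteins $\beta_m$. In the long exact sequences of
\[
0\to \Wit_{m-1}\Oh_X\xrightarrow{V}\Wit_m\Oh_X\to\Oh_X\to0,
\]
this vanishing forces $V\colon\HH^q(X,\Wit_{m-1}\Oh_X)\to\HH^q(X,\Wit_m\Oh_X)$ to be injective. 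Together with $\HH^{q+1}(X,\Oh_X)=0$, this gives surjectivity of $\HH^q(X,\Wit_m\Oh_X)\to\HH^q(X,\Oh_X)$. Inductively, $\HH^q(X,\Wit_m\Oh_X)\simeq\Cart(\Phi^q_X)/V^m$, a uniserial module of length $m$.

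The key step is to translate the splitting into nonvanishing of $F$. From the defining pushout of $Q_{X,m}$, a retraction $\sigma$ of $\Phi_{X,m}$ induces on $\HH^q$ a factorisation of the identity of $\HH^q(X,\Oh_X)=\bk$. This factorisation passes through $\HH^q(X,Q_{X,m})$, and the latter receives $\HH^q(X,F_*\Wit_m\Oh_X)$. Since $R_{m-1}$ is surjective on $\HH^q$ by the previous step, every class of $\HH^q(X,\Oh_X)$ comes from some $\alpha\in\HH^q(X,\Wit_m\Oh_X)$. The identity $\sigma\circ\Phi_{X,m}=\mathrm{id}$ then says that the image of $F\alpha$ in $\HH^q(X,Q_{X,m})$, pushed forward along $\sigma$, recovers the class of $\alpha$ modulo $V$. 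Taking $\alpha$ to be a generator, this shows $F\alpha\ne0$ in $\HH^q(X,\Wit_m\Oh_X)$. Indeed, if $F\alpha=0$, the image in $\HH^q(Q_{X,m})$ would vanish, contradicting the identity. Hence $F$ is nonzero on $\Cart/V^m$, and so $h\le m$. For the final assertion: if $\hgt(\Phi^q_X)=\infty$, then $F=0$ on every finite level, so no $m$ works and $\hgt(X)=\infty$.

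The main obstacle is this last translation. The $\Oh_X$-module structure on $Q_{X,m}$ is the one pulled back through $\Wit_m\Oh_X/V$, so the map $\HH^q(\Wit_m\Oh_X)\to\HH^q(Q_{X,m})$ must be checked to factor correctly through Frobenius. One also has to ensure that the class of $F\alpha$ is not killed by the $R_{m-1}$-relation in the pushout. I would handle this with the exact sequence $0\to\Oh_X\to Q_{X,m}\to\exactforms{m}{X}\to0$ from the proof of \Cref{lem:basic-properties}. Injectivity on $\HH^q$ follows from the splitting, so the relevant kernel can be computed inside $\HH^q(X,\Wit_m\Oh_X)/\bigl(\ker R_{m-1}\bigr)$. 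This is exactly where the Bockstein vanishing is used a second time. This is essentially Nakkajima's argument, and I would follow his reference for the precise bookkeeping.
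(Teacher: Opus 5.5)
Your argument is correct, and it takes a genuinely different route from the paper. The paper gives no argument of its own: it simply applies \cite[Theorem~1.5]{Nakkajima2022} to $X$ with the trivial log structure. You prove the inequality directly, in three steps.
\begin{enumerate}
\item The Bockstein vanishing, together with $\HH^{q+1}(X,\Wit_r\Oh_X)=0$ for all $r$ (this follows from $\HH^{q+1}(X,\Oh_X)=0$ by induction on $r$), gives exact sequences $0\to\HH^q(X,\Wit_{m-1}\Oh_X)\xrightarrow{V}\HH^q(X,\Wit_m\Oh_X)\to\HH^q(X,\Oh_X)\to0$ and surjective restrictions. Hence $\HH^q(X,\Wit_m\Oh_X)\simeq M/V^mM$ compatibly with $F$, where $M=\Cart(\Phi^q_X)$.
\item Apply $\HH^q$ to the pushout square $\Phi_{X,m}\circ R_{m-1}=\iota\circ F$ and compose with a retraction $\sigma$. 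This gives $\sigma_*\iota_*(F\alpha)=R_{m-1}\alpha\ne0$ for a lift $\alpha$ of a generator, so $F\alpha\ne0$.
\item The normal form $F\gamma\in V^{h-1}M\setminus V^{h}M$ of a one-dimensional Cartier module turns this into $h\le m$.
\end{enumerate}
This is the half of Yobuko's Calabi--Yau argument that needs no duality. It shows that the Bockstein hypothesis is used only to identify finite-level Witt cohomology with truncations of the Cartier module. The citation, by contrast, gives the result in log smooth Cartier-type generality without any computation.

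Your last paragraph addresses a non-issue and can be deleted. The implication $F\alpha=0\Rightarrow\sigma_*\iota_*(F\alpha)=0$ uses only functoriality of $\HH^q$ for morphisms of abelian sheaves. So neither the $\Oh_X$-module structure on $Q_{X,m}$ nor the relations in the pushout enter, and the Bocksteins are not used a second time.

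Whether the pushout relation can kill $\iota_*(F\alpha)$ would matter only for a reverse inequality, and that is false in this generality. Take a Hodge-good reduction $X$ of a hyperk\"ahler variety with $1<h_2^{\crys}(X)<\infty$. The hypotheses hold with $q=2$, yet $\hgt(\Phi^2_X)<\infty=\hgt(X)$ by \Cref{prop:AM-hodge-good,cor:HG-qF-dichotomy}.
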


\begin{proof}
This is \cite[Theorem~1.5]{Nakkajima2022}, applied to $X$ with the trivial log structure, which is log smooth of Cartier type over $\bk$.
\end{proof}

\begin{remark}
\label{rem:AM-infinite-height}
If $\Phi^q_X$ is prorepresentable by a smooth one-dimensional formal group of infinite height, its Cartier module $\HH^q(X,\Wit\Oh_X)$ is not finitely generated over $\Wit$ (\Cref{prop:AM-cartier}).
Thus $X$ is not quasi-$F$-split by \Cref{lem:basic-properties}(2), without any Bockstein vanishing hypothesis.
\end{remark}

\begin{remark}
\label{rem:AM-recent}
Without the vanishing in \Cref{lem:AM-prorepresentable}(2), the criterion gives no automatic prorepresentability of the classical Artin--Mazur functor.
For this reason, \Cref{def:crystalline-height} defines the degree-two invariant using crystalline cohomology.
\end{remark}

\section{Good reductions of hyperk\"ahler varieties}
\label{sec:hyperkahler}

We fix the arithmetic setting for good reductions of hyperk\"ahler varieties and record properties of their special fibres.
The intrinsic characteristic-$p$ notion of a primitive symplectic variety is introduced in \Cref{def:primitive-symplectic}.

\subsection{Good reduction over a number field}
\label{subsec:good-reduction}

Let $K$ be a number field with ring of integers $\Oh_K$, and fix a prime $\mathfrak{p}\subset\Oh_K$ above $p$.
By a \emph{hyperk\"ahler variety} over $K$ we mean a smooth projective $K$-variety $X_K$ whose base change to $\overline K$ is simply connected and has $\HH^0(X_{\overline K},\Omega^2)$ spanned by a nowhere-degenerate closed $2$-form.

\begin{definition}
\label{def:good-reduction}
We say $X_K$ has \emph{good reduction} at $\mathfrak{p}$ if there is a smooth proper morphism of algebraic spaces $\cX \to \Spec \Oh_{K,\mathfrak{p}}$ with generic fibre $\cX_K \cong X_K$.
The \emph{reduction} associated with this model is the special fibre
\[
  X \;=\; \cX \times_{\Oh_{K,\mathfrak{p}}} \bk ,
  \qquad \bk=\Oh_K/\mathfrak p,
\]
a smooth proper algebraic space over $\bk$ of dimension $2n$.
The total space $\cX$ is regular; neither $\cX$ nor $X$ is required to be a scheme or to be projective over its base.
\end{definition}

\begin{example}
\label{ex:k3}
A $K3$ surface is a hyperk\"ahler variety of dimension $2$, and its good reduction is again a $K3$ surface.
Higher-dimensional examples include Hilbert schemes of points on $K3$ surfaces and generalised Kummer varieties; we return to their reductions in \Cref{ex:hilb,ex:kummer}.
\end{example}

\subsection{Local setting and automatic invariants}
\label{subsec:automatic}

For the cohomological arguments we complete at $\mathfrak p$ and retain the notation $K$ for the resulting finite extension of $\Qp$.
Its valuation ring is $\Oh_K$ and its residue field $\bk$ is perfect of characteristic $p$.
Put $\Wit=\Wit(\bk)$ and $\Kzero=\Wit[1/p]$, and let
\[
  \mathscr{X} \longrightarrow\Spec(\Oh_K)
\]
be a smooth proper morphism of algebraic spaces, with projective generic fibre $Y/K$ and special fibre $X/\bk$.
We assume throughout that $Y_{\overline K}$ is an irreducible hyperk\"ahler variety of dimension $2n$.
Recall that the structure-sheaf Hodge numbers of $Y$ are thus $1$ in each even degree and $0$ in odd degrees, so that $\chi(Y,\Oh_Y)=n+1$.

We begin with properties of $X$ that require neither Hodge-goodness nor a ramification or characteristic bound.

\begin{proposition}
\label{prop:canonical-good}
In the setting above, $X$ is geometrically connected and geometrically integral, and
\[
  \omega_X\simeq\Oh_X,\qquad\chi(X,\Oh_X)=n+1 .
\]
The same assertions hold after any finite extension of the residue field.
\end{proposition}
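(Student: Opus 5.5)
We use three standard facts about the smooth proper morphism $f\colon\mathscr X\to\Spec(\Oh_K)$ over the discrete valuation ring $\Oh_K$: Stein factorization, invariance of the Euler characteristic of a flat coherent sheaf, and local freeness of $f_*\omega$ together with its base change.

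\textbf{Geometric connectedness and integrality.} The generic fibre $Y$ is geometrically connected, since $Y_{\overline K}$ is an irreducible hyperk\"ahler variety. Since $f$ is proper and flat with $\HH^0(Y,\Oh_Y)=K$, the coherent $\Oh_K$-algebra $f_*\Oh_{\mathscr X}$ is torsion-free, hence finite free. It is contained in $K$, and $\Oh_K$ is normal, so $f_*\Oh_{\mathscr X}=\Oh_K$. Zariski's connectedness theorem (Stein factorization, valid for proper algebraic spaces) then shows that the special fibre $X$ is geometrically connected. A smooth connected algebraic space over a field is irreducible and reduced. Since smoothness and connectedness persist over $\overline{\bk}$, $X$ is geometrically integral. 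The same argument applies after any finite extension of the residue field, or one can pass to an unramified extension of $\Oh_K$.

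\textbf{Euler characteristic.} The sheaf $\Oh_{\mathscr X}$ is flat over $\Oh_K$, and $f$ is proper. Hence $Rf_*\Oh_{\mathscr X}$ is a perfect complex, and the Euler characteristic of its fibres is locally constant on the connected base. Therefore
\[
\chi(X,\Oh_X)=\chi(Y,\Oh_Y)=n+1,
\]
where the last equality is the Hodge-number computation for the hyperk\"ahler variety $Y_{\overline K}$ recalled above, combined with flat base change to $\overline K$.

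\textbf{Triviality of $\omega_X$.} This is the step that needs care. The relative dualizing sheaf $\omega_{\mathscr X/\Oh_K}$ is a line bundle whose restriction to each fibre is the canonical sheaf of that fibre. Its restriction to $Y$ is trivial, since $\omega_{Y_{\overline K}}\simeq\Oh$ via the top power of the symplectic form. The plan is first to show $h^0(X,\omega_X)\ge1$. Upper semicontinuity goes the wrong way for this, so we use duality instead. Serre duality gives $\HH^{2n}(X,\omega_X)\simeq\HH^0(X,\Oh_X)^\vee=\bk$. Moreover $R^{2n}f_*\omega_{\mathscr X/\Oh_K}$ is the relative trace target; it is isomorphic to $\Oh_K$ and commutes with base change, because it is the top degree. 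Grothendieck duality then identifies $f_*\omega_{\mathscr X/\Oh_K}$ with $\mathrm{Hom}(R^{2n}f_*\Oh_{\mathscr X},\Oh_K)$. The module $R^{2n}f_*\Oh_{\mathscr X}$ is the top-degree direct image, so it commutes with base change. Its fibres are $\HH^{2n}(Y,\Oh_Y)\simeq K$ and $\HH^{2n}(X,\Oh_X)\simeq\HH^0(X,\omega_X)^\vee$, so it is a finitely generated $\Oh_K$-module of generic rank $1$.

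There is a cleaner route, and it is the one to use. Choose a nonzero section $s$ of $\omega_{\mathscr X/\Oh_K}$ over $\mathscr X$ that restricts to a generator on $Y$. Such a section exists: clear denominators, which is possible because $f_*\omega$ is a lattice in $\HH^0(Y,\omega_Y)=K$, and scale by a power of a uniformizer $\pi$ so that $s$ does not vanish identically on $X$. The divisor of $s$ is an effective Cartier divisor supported in the special fibre. The special fibre $X$ is integral, equal to the divisor of $\pi$, and irreducible by the first step. Hence $\mathrm{div}(s)=m\cdot X$ for some $m\ge0$, and $m=0$ because $s$ is nonzero on $X$. So $s$ is nowhere vanishing, and $\omega_X\simeq\Oh_X$. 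The main point to check is this divisor argument on the regular algebraic space $\mathscr X$. It holds because $\mathscr X$ is regular, so Weil divisors are Cartier, and this can be verified on \'etale charts.
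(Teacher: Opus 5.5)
Your proof is correct and follows essentially the same route as the paper. The Euler characteristic comes from the perfect complex $Rf_*\Oh_{\mathscr X}$, and $\omega_X\simeq\Oh_X$ comes from a section of $\omega_{\mathscr X/\Oh_K}$ whose divisor is a multiple of the unique vertical prime divisor $X$, rescaled by a power of $\pi$. For geometric connectedness you use $f_*\Oh_{\mathscr X}=\Oh_K$ and Zariski's connectedness theorem, where the paper cites local constancy of the number of geometric connected components; either works.

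The one step you pass over is descending $\omega_{Y_{\overline K}}\simeq\Oh$ to $\omega_Y\simeq\Oh_Y$, which your lattice argument needs. The paper uses Hilbert 90; alternatively, flat base change gives $\dim_K\HH^0(Y,\omega_Y)=1$, and a generator is nowhere vanishing because it is so over $\overline K$. The abandoned duality paragraph can simply be deleted.
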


\begin{proof}
The number of geometric connected components is locally constant in a smooth proper family of algebraic spaces \cite[Tag~0E1E]{stacks-project}; hence $X$ is geometrically connected, and smoothness makes it geometrically integral.
Hilbert's Theorem 90 descends the trivialisation of $\omega_{Y_{\overline K}}$ to $Y$.
Viewed as a rational section of $\omega_{\mathscr X/\Oh_K}$, it has divisor $aX$ for some $a\in\ZZ$, since $X$ is the only vertical prime divisor.
The divisor $X$ is principal, cut out by a uniformiser, so rescaling gives a nowhere-vanishing relative canonical section and hence $\omega_X\simeq\Oh_X$; the divisor argument is checked on \'etale charts of $\mathscr X$.
Finally, proper flat cohomology and base change give a perfect complex for the two fibres \cite[Tag~0CTM]{stacks-project}, so $\chi(X,\Oh_X)=\chi(Y,\Oh_Y)=n+1$.
\end{proof}

\begin{proposition}
\label{prop:automatic-picard}
After extension to an algebraic closure of the residue field, every good reduction $X$ satisfies
\[
  \pi_1^{\et}(X)=1,\qquad \Alb_{X/\bk}=0,\qquad
  (\Pic^0_{X/\bk})_{\mathrm{red}}=0 .
\]
Moreover, the absolute Frobenius acts nilpotently on $\HH^1(X,\Oh_X)$.
\end{proposition}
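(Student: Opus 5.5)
We prove the three assertions in order: first $\pi_1^{\et}(X)=1$ by specialisation of fundamental groups, then $\Alb_{X/\bk}=0$ and $(\Pic^0_{X/\bk})_{\mathrm{red}}=0$ as consequences of simple connectivity, and finally nilpotence of Frobenius on $\HH^1(X,\Oh_X)$ via Artin--Schreier theory. We may pass to $\overline{\bk}$, since all assertions are geometric.

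\emph{Step 1: simple connectivity.} We use Grothendieck's specialisation map for the proper smooth morphism $\mathscr X\to\Spec\Oh_K$ of algebraic spaces (SGA~1, Exp.~X, extended to algebraic spaces via \'etale charts). It gives a surjection
\[
  \pi_1^{\et}(Y_{\overline K})\twoheadrightarrow\pi_1^{\et}(X_{\overline\bk}).
\]
Since $Y_{\overline K}$ is simply connected in the topological sense, its \'etale fundamental group, the profinite completion, vanishes. Hence $\pi_1^{\et}(X_{\overline\bk})=1$. Concretely, a connected finite \'etale cover of $X$ lifts uniquely to $\mathscr X$ over the henselian base $\Oh_K$, and its generic fibre is then a connected finite \'etale cover of $Y$, which must be trivial.

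\emph{Step 2: Albanese and Picard.} Let $A=\Pic^0_{X/\bk,\mathrm{red}}$, an abelian variety dual to $\Alb_{X/\bk}$; it suffices to show $A=0$. If $A\neq0$, choose a prime $\ell\neq p$. Then $A[\ell]$ is a nontrivial finite \'etale group scheme, and a nonzero point of $A[\ell](\overline\bk)$ gives a line bundle $L$ with $L^{\otimes\ell}\simeq\Oh_X$ and $L\not\simeq\Oh_X$. Such an $L$ defines a nontrivial connected $\mu_\ell$-torsor, i.e.\ a connected finite \'etale cyclic cover of $X$. This contradicts Step 1. Therefore $A=0$, and dually $\Alb_{X/\bk}=0$.

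\emph{Step 3: Frobenius on $\HH^1(X,\Oh_X)$.} Over $\overline\bk$, decompose $\HH^1(X,\Oh_X)=\HH^1_{\mathrm{ss}}\oplus\HH^1_{\mathrm{nil}}$ into the parts where Frobenius acts bijectively and nilpotently. The semisimple part satisfies $\HH^1_{\mathrm{ss}}\simeq\HH^1_{\et}(X,\Fp)\otimes\overline\bk$, via the Artin--Schreier sequence $0\to\Fp\to\Oh_X\xrightarrow{F-1}\Oh_X\to0$ on the \'etale site, using that $F-1$ is surjective on $\HH^0=\overline\bk$. Moreover
\[
  \HH^1_{\et}(X,\Fp)=\Hom(\pi_1^{\et}(X),\Fp)=0
\]
by Step 1. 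Hence $\HH^1_{\mathrm{ss}}=0$ and Frobenius is nilpotent.

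The main obstacle is Step 1, namely justifying specialisation of $\pi_1$ for proper smooth algebraic spaces rather than schemes. The approach is to invoke Grothendieck's existence theorem and \'etale invariance for proper algebraic spaces (Stacks Project) so that finite \'etale covers of $X$ lift to $\mathscr X$ over the complete base. Purity issues do not arise because $\mathscr X$ is smooth over $\Oh_K$.
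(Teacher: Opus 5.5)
Your proposal is correct and follows essentially the same route as the paper: surjectivity of specialisation $\pi_1^{\et}(Y_{\overline K})\twoheadrightarrow\pi_1^{\et}(X)$ by extending finite \'etale covers over a complete strictly henselian trait; then killing $\Alb_{X/\bk}$ and $(\Pic^0_{X/\bk})_{\mathrm{red}}$ via prime-to-$p$ torsion (you use $\ell$-torsion line bundles, the paper uses the dual prime-to-$p$ Tate module of the Albanese); and finally Artin--Schreier plus Lang to force Frobenius to be nilpotent. The one sentence to tighten is your ``concretely'' remark: you need the extended cover's generic fibre to be \emph{geometrically} connected, by constancy of geometric components in smooth proper families over the strictly henselian base, and you must compare it with $\pi_1^{\et}(Y_{\overline K})$, because a connected finite \'etale cover of $Y$ over $K$ itself need not be trivial.
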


\begin{proof}
Work over an algebraic closure of $\bk$.
After passing to a complete strictly henselian trait, finite \'etale covers of $X$ extend to the proper model; a connected cover has geometrically connected generic fibre by smooth proper constancy.
Thus $\pi_1^{\et}(Y_{\overline K})\twoheadrightarrow\pi_1^{\et}(X)$, and the source is trivial.
The Albanese variety is therefore zero, since a nonzero abelian variety has a nonzero prime-to-$p$ Tate module detected by $\pi_1^{\et}(X)$; its dual $(\Pic^0_{X/\bk})_{\mathrm{red}}$ is zero as well.

Put $V=\HH^1(X,\Oh_X)$.
The Artin--Schreier sequence gives $\ker(\Frob-1\colon V\to V)=\HH^1_{\et}(X,\Fp)=0$.
The bijective part of a $p$-semilinear operator over an algebraically closed field has fixed vectors by Lang's theorem, so it must vanish here.
Hence $\Frob$ is nilpotent on $V$.
\end{proof}

\begin{remark}
\label{rem:h1-not-automatic}
The vanishing $\HH^1(X,\Oh_X)=0$ is \emph{not} a formal consequence of \Cref{prop:automatic-picard}: a finite connected Picard scheme in characteristic $p$ may have nonzero tangent space while its reduced subscheme is a point.
This is why the dichotomy below is proved through crystalline slopes rather than through the coherent cohomology algebra.
\end{remark}

The additional vanishing of $\HH^1(X,\Oh_X)$ follows when the ramification index is at most $p-1$.
\begin{proposition}
\label{prop:h1-vanishing-small-e}
Let $X$ be a good reduction of a hyperk\"ahler variety as above.
Suppose the absolute ramification index of $K$ is at most $p-1$.
Then $\HH^1(X,\Oh_X)=0$.
\end{proposition}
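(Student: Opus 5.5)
The plan is to identify $\HH^1(X,\Oh_X)$ with the tangent space of $\Pic_{X/\bk}$ at the origin and to show that $\Pic_{X/\bk}$ is reduced (hence étale near $0$) when $e\le p-1$. We may work over $\overline{\bk}$ and the completed strictly henselian base. By \Cref{prop:automatic-picard}, $(\Pic^0_{X/\bk})_{\mathrm{red}}=0$, so $\Pic^0_{X/\bk}$ is a finite connected group scheme $G$ with tangent space $\HH^1(X,\Oh_X)$. The claim is therefore that $G$ is trivial, and the natural strategy is to compare $G$ with the $p$-torsion in cohomology of the generic fibre via a finite flat group scheme over $\Oh_K$.

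First, consider the relative Picard functor of $\mathscr X/\Oh_K$ and the $p$-torsion $\Pic_{\mathscr X/\Oh_K}[p]$. Equivalently, and more robustly, I will use the flat cohomology $\HH^1_{\mathrm{fl}}(\mathscr X,\mu_p)$, or the group scheme $\mathcal G$ classifying $\mu_p$-torsors. Its special fibre contains $\Pic_{X}[p]\supseteq G[p]$, which is nonzero if $G\neq0$. Its generic fibre is $\Pic_{Y}[p]$, which vanishes because $Y_{\overline K}$ is simply connected with $\HH^1(Y,\Oh_Y)=0$ (so $\Pic^0_Y=0$ and $\HH^2$ has no torsion from $\pi_1$, giving $\HH^1_{\et}(Y_{\overline K},\mu_p)=0$). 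The cleanest route is crystalline/Fontaine–Laffaille or Raynaud's theorem: for $e<p-1$, a finite flat group scheme over $\Oh_K$ is determined by its generic fibre, so $\mu_p$-torsor data on the special fibre must come from the generic fibre. A more concrete variant is Raynaud's result that, in Raynaud's notation, $\Pic$ of a proper flat model with $e<p-1$ has the property that $\Pic^\tau$ is flat and the special fibre inherits no extra infinitesimal part.

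Concretely I would argue as follows. A nonzero $G$ contains either $\mu_p$ or $\alpha_p$ as a subgroup. This gives a nontrivial $\mu_p$- or $\alpha_p$-torsor over $X$, i.e.\ a nonzero class in $\HH^1_{\mathrm{fl}}(X,\mu_p)$ or a nonzero $F$-nilpotent or closed-form class in $\HH^1(X,\Oh_X)$. By \Cref{prop:automatic-picard}, Frobenius is nilpotent on $\HH^1(X,\Oh_X)$, so there is a nonzero class killed by $F$, and hence an $\alpha_p$ inside $G$. Equivalently, $\HH^0(X,B\Omega^1)\neq0$, giving a nonzero class in $\HH^1_{\dR}(X)$ or torsion in $\HH^2_{\crys}(X/\Wit)$. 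I then invoke the integral $p$-adic comparison (Fontaine–Messing / Caruso, or Bhatt–Morrow–Scholze) in the range $e\le p-1$, which needs $ei<p-1$ in degrees $i\le 2$. This identifies the torsion in $\HH^2_{\crys}(X/\Wit)$ with torsion in $\HH^2_{\et}(Y_{\overline K},\ZZ_p)$. The latter is dual to $\pi_1^{\mathrm{ab}}$ torsion and so vanishes, giving $\HH^1_{\dR}(X)=0$. Then $\HH^0(X,\Omega^1)$ and $\HH^1(X,\Oh_X)$, which inject into the de Rham cohomology via the conjugate/Hodge filtrations in low degree, vanish as well. More precisely, $\HH^1_{\crys}(X/\Wit)$ is torsion-free of rank $b_1=0$, and $\HH^2_{\crys}$ is torsion-free. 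The universal coefficient theorem then gives $\HH^1_{\dR}(X)=0$, and $\HH^1(X,\Oh_X)$ is a quotient of $\HH^1_{\dR}$ whenever $d\colon\HH^0(\Oh)\to\HH^0(\Omega^1)$ has the expected degeneration. This holds since $\HH^0(X,\Oh_X)=\bk$ consists of constants, so $E_2^{0,1}=\ker(\HH^1(\Oh)\to\HH^1(\Omega^1))$. I will need to rule out the differential from $\HH^1(\Oh)$ to $\HH^1(\Omega^1)$ separately, using the $\alpha_p$-class, which is $d$-closed.

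The main obstacle is the boundary case $e=p-1$. There the Fontaine–Laffaille/Breuil range is borderline for degree $2$, and only Raynaud's uniqueness for $\mu_p$/$\alpha_p$-type group schemes with $e\le p-1$ applies directly. In that case I would instead work with $\Pic^\tau_{\mathscr X/\Oh_K}$ and Raynaud's theorem that, for $e<p-1$ (and $e\le p-1$ for $p$-torsion killing arguments on $\mu_p$), the closure of the trivial generic Picard torsion is flat. This forces the connected special fibre to be a specialization of a trivial group, provided $\Pic_{\mathscr X}$ is representable and the $p$-torsion is finite flat. Verifying that finiteness and flatness, so as to transfer $G[p]\neq0$ to a nonzero generic $p$-torsion, is the step I expect to be hardest.
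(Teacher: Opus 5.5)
There is a genuine gap. Your main route needs $\HH^2_{\crys}(X/\Wit)[p]=0$, equivalently $\HH^1_{\dR}(X)=0$. You propose to get it from an integral comparison ``in the range $e\le p-1$''. But, as you yourself note, the Fontaine--Messing/Caruso-type results require $ei<p-1$ in the relevant degree. For $i=2$ this means $2e<p-1$, which already fails for $e=1$, $p\le3$, and for all $e\ge(p-1)/2$. Bhatt--Morrow--Scholze does not help: it bounds \'etale torsion by crystalline torsion, which is the wrong direction for proving crystalline torsion-freeness.

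Your preliminary steps are fine: $G$ is finite connected, $\alpha_p\subset G$ via nilpotent Frobenius, and $\HH^0(X,B\Omega^1)\ne0$. One can also sharpen your argument: $\HH^1_{\crys}(X/\Wit)=0$, so $\HH^1_{\dR}(X)\simeq\HH^2_{\crys}(X/\Wit)[p]$ and only a mod-$p$ comparison in degree one is needed. Even so, this still requires $e<p-1$, so the case $e=p-1$ (e.g.\ $p=2$, $\Oh_K=\ZZ_2$) is never covered.

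Your fallback for that case is not yet an argument. The phrase ``the closure of the trivial generic Picard torsion is flat'' is vacuous, since that closure is just the zero section. The finite flatness of $p$-torsion that you would need is precisely the point you leave open.

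The missing idea is to apply Raynaud's Picard-flatness theorem to all of $P^\tau=\Pic^\tau_{\mathscr X/\Oh_K}$, not to a $p$-torsion subgroup. For $e\le p-1$ it says that $P^\tau$ is flat over $\Oh_K$ along the zero section $z$. The paper uses the algebraic-stack formulation of Fringuelli--Viviani, which also gives that $P^\tau$ is separated and of finite type. The argument then runs as follows:
\begin{enumerate}
\item Take an open neighbourhood $U$ of $z(S)$ that is flat over $S=\Spec\Oh_K$.
\item The generic fibre of $P^\tau$ is trivial, so $U_K=z(K)$.
\item By separatedness, $z(S)$ is closed in $U$.
\item Its ideal sheaf vanishes over $K$, hence vanishes, because $\Oh_U$ is $\Oh_K$-torsion-free.
\end{enumerate}
Thus $U=z(S)$, so $z(\bk)$ is an open reduced point of $P^\tau_\bk$, and its tangent space $\HH^1(X,\Oh_X)$ is zero. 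This route needs no crystalline comparison, no $\alpha_p$/$\mu_p$ analysis and no finiteness of $p$-torsion, and it is uniform in $e\le p-1$.
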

\begin{proof}
After a faithfully flat base change we may assume that $k$ is algebraically closed.
This preserves the absolute ramification index of $K$ and the desired vanishing of coherent cohomology descends along the base change.
Let $P^{\tau} = \Pic^{\tau}_{\mathscr X/\Oh_K}$.
Since $\mathscr{X}/\Oh_K$ is smooth proper with geometrically connected fibres, it is cohomologically flat in degree $0$.
Hence $P^{\tau}$ is an algebraic space of finite type whose formation commutes with base change.
Since the fibers of $\mathscr{X}/\Oh_K$ are geometrically normal, $P^{\tau}$ is separated and equidimensional over $\Oh_K$ \cite[Theorem~3.6(i),(iii)]{FringuelliViviani2023}.

Let $S = \Spec(\Oh_K)$ and $z\colon S\to P^{\tau}$ be the zero section.
Raynaud's Picard-flatness theorem \cite[Theorem~4.1.2]{Raynaud1979Picard}, in the algebraic-stack formulation of \cite[Theorem~8.1(ii)]{FringuelliViviani2023}, implies that $P^{\tau}$ is flat along $z$ whenever the absolute ramification index is at most $p-1$.
Thus there is a Zariski open neighbourhood $U$ of $z(S)$ in $P^{\tau}$ such that $U\to S$ is flat.
The generic fibre of $P^{\tau}$ is trivial, since it is the $\tau$-part of the Picard scheme of a hyperk\"ahler variety in characteristic zero; hence $U_K=z(K)$.
Since $P^{\tau}$ is separated over $S$, $z(S)$ is closed in $U$.
Its ideal sheaf vanishes on $U_K$, and therefore vanishes on $U$ by flatness over the discrete valuation ring.
Consequently $U=z(S)$, so $z(k)$ is an open reduced point of $P^{\tau}_k$ and
\[
  \HH^1(X,\Oh_X) = \mathrm{T}_{z(k)} P^{\tau}_k = 0. \qedhere
\]
\end{proof}

\section{Crystalline slopes and formal-group heights}
\label{sec:slopes}

We keep the arithmetic setting of \Cref{subsec:automatic}.
The crystalline Frobenius makes each $D_q=\HH^q_{\crys}(X/\Wit)[1/p]$ an isocrystal.
For a good reduction the Galois representation $\HH^q_{\et}(Y_{\overline K},\Qp)$ is crystalline with $D_{\crys}(\HH^q_{\et}(Y_{\overline K},\Qp))=D_q$; in particular $D_q$ underlies a weakly admissible filtered $\varphi$-module, whose filtration comes from the Hodge filtration on $\HH^q_{\dR}(Y/K)$ \cite[\S\S7.3,\,9.1]{BrinonConrad2009}, \cite[Theorems~1.1(i),\,1.10]{BMS2018}.

For the algebraic-space model used here, this is the smooth case of Olsson's comparison for proper tame Deligne--Mumford stacks with schematic generic fibre \cite[\S6.4 and Theorem~9.6.9]{Olsson2007Crystalline}: the special-fibre object is ordinary crystalline cohomology and the monodromy operator is zero.
The comparison respects cup products and identifies the de Rham filtration with the Hodge filtration, so the filtered quotients used below have the same interpretation as for a scheme model.
All Hodge filtrations are decreasing and effective, so $\Fil^0D_q=D_q$.

We first determine the slopes of $D_2$ below one, then use the Verbitsky component to control the slopes in higher even degrees.
Under Hodge-goodness, Cartier theory gives a separate description of the Artin--Mazur heights.

\subsection{Degree-two slopes and crystalline height}
\label{subsec: weak admissibility and slopes}
For a filtered $\varphi$-module $M$ set
\[
  t_H(M)=\sum_a a\dim\gr^a_{\Fil}M_K,
  \qquad
  t_N(M)=\sum_\lambda\lambda\dim M_\lambda
\]
for its Hodge degree and Newton degree respectively.
Weak admissibility means that $t_H(M)=t_N(M)$ and that $t_H(M')\le t_N(M')$ for every $\varphi$-stable subobject $M'$, equipped with the induced filtration \cite[\S8.2]{BrinonConrad2009}.

For an isocrystal $M$, define its \emph{slope defect below one} by
\[
  \slopedefect(M)=\sum_{\lambda<1}(1-\lambda)\dim_{\Kzero}M_\lambda
    =\dim_{\Kzero}M_{<1}-t_N(M_{<1}),
\]
where $M_{<1}$ is its slope subisocrystal with slopes below one.
In particular, $\slopedefect(M)=\slopedefect(M_{<1})$.

\begin{lemma}
\label{lem:WA}
Let $Q$ be a weakly admissible filtered $\varphi$-module with $\Fil^0Q_K=\Fil^1Q_K=Q_K$.
Then every Newton slope of $Q$ is at least $1$.
\end{lemma}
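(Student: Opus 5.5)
The plan is to apply weak admissibility to the subobject of $Q$ cut out by the slopes below $1$. Let $Q'=Q_{<1}$ be the sum of the slope subspaces of $Q$ with slope $<1$. By the Dieudonné--Manin decomposition, $Q'$ is a $\varphi$-stable subobject, so we may equip it with the induced filtration $\Fil^aQ'_K=Q'_K\cap\Fil^aQ_K$. The aim is to show $Q'=0$.

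First we bound the Hodge degree of $Q'$ from below. The hypothesis $\Fil^1Q_K=Q_K$ gives $\Fil^1Q'_K=Q'_K$, so every Hodge jump of $Q'$ is at least $1$ and
\[
  t_H(Q')=\sum_{a\ge1}a\dim\gr^a_{\Fil}Q'_K\;\ge\;\dim_{K}Q'_K=\dim_{\Kzero}Q'.
\]

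Next we bound the Newton degree from above. Every slope of $Q'$ is $<1$, so
\[
  t_N(Q')=\sum_{\lambda<1}\lambda\dim Q'_\lambda\;\le\;\dim Q',
\]
with equality only if $Q'=0$. More precisely, $\dim Q'-t_N(Q')=\slopedefect(Q')$, and this quantity is strictly positive when $Q'\neq0$.

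Weak admissibility gives $t_H(Q')\le t_N(Q')$. Combining the two bounds yields $\dim Q'\le t_H(Q')\le t_N(Q')\le \dim Q'$, so all three inequalities are equalities. In particular $\slopedefect(Q')=0$, which forces $Q'=0$. Hence every Newton slope of $Q$ is at least $1$. The hypothesis $\Fil^0=Q_K$ only records effectivity and is not needed beyond that.

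There is no serious obstacle. The one point to check is the convention that weak admissibility is tested on $\varphi$-stable subobjects carrying the induced filtration, together with the fact that the slope-$<1$ part is such a subobject. Both hold by the definition recalled above and by the Dieudonné--Manin theorem over $\Kzero$, with $\bk$ perfect.
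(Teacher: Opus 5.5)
Your proof is correct and follows the paper's argument. Both apply weak admissibility to the $\varphi$-stable slope-$<1$ part $Q_{<1}$ with its induced filtration, using $\Fil^1Q_K=Q_K$ to get $t_H(Q_{<1})\ge\dim Q_{<1}$ and the slopes below one to get $t_N(Q_{<1})<\dim Q_{<1}$ when $Q_{<1}\ne0$. The only difference is presentational: you phrase the contradiction as a chain of inequalities forcing $\slopedefect(Q_{<1})=0$, while the paper argues directly by contradiction.
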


\begin{proof}
Suppose the slope subisocrystal $Q_{<1}$ spanned by the slopes below one were nonzero.
It is $\varphi$-stable, and we give it the induced filtration.
Since $\Fil^1Q_K=Q_K$, the induced filtration on $(Q_{<1})_K$ also satisfies $\Fil^1=(Q_{<1})_K$, whence $t_H(Q_{<1})\ge\dim Q_{<1}$.
On the other hand every Newton slope of $Q_{<1}$ is strictly less than $1$, so $t_N(Q_{<1})<\dim Q_{<1}$.
This contradicts the weak-admissibility inequality $t_H(Q_{<1})\le t_N(Q_{<1})$.
\end{proof}

The next proposition classifies the slopes of $D_2$ below one.

\begin{proposition}
\label{prop:automatic-D2-block}
For every good reduction, exactly one of the following holds:
\begin{enumerate}[label=\textup{(\roman*)},leftmargin=2.2em,
  beginpenalty=10000,midpenalty=10000]
  \item $(D_2)_{[0,1)}=0$; or
  \item there is a unique integer $h\ge1$ for which $(D_2)_{[0,1)}$ is isoclinic of slope $1-\tfrac1h$ and has dimension $h$.
\end{enumerate}
\end{proposition}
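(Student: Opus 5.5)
We classify $(D_2)_{[0,1)}$ using weak admissibility of $D_2$ together with the Hodge numbers of the generic fibre. The filtration on $D_{2,K}=\HH^2_{\dR}(Y/K)$ has graded pieces $\gr^0=\HH^2(Y,\Oh_Y)$, $\gr^1=\HH^1(Y,\Omega^1_Y)$ and $\gr^2=\HH^0(Y,\Omega^2_Y)$. For the hyperk\"ahler variety $Y$ the outer pieces are one-dimensional, and Hodge--de Rham degeneration holds in characteristic zero. Put $M=(D_2)_{<1}$ and $d=\dim M$. This is a $\varphi$-stable subobject, and we give it the induced filtration.

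\textbf{Step 1.} We bound $t_H(M)$ from below. Since $\Fil^1D_{2,K}$ has codimension $h^{0,2}=1$, the induced filtration on $M_K$ satisfies $\dim\Fil^1M_K\ge d-1$, and this gives $t_H(M)\ge d-1$. Weak admissibility then gives $d-1\le t_H(M)\le t_N(M)$, so $\slopedefect(M)=d-t_N(M)\le1$.

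\textbf{Step 2.} We use Poincar\'e duality on $D_2$, which is self-dual up to Tate twist $(-2)$ through $\mathrm{tr}\circ(\cdot\cup\eta^{2n-2})$, or more simply through the Beauville--Bogomolov form. It is cleanest to apply Step 1 to the dual: the dual quotient $M'=(D_2)_{>1}$ has slopes mirrored to $2-\lambda$. Consequently $\slopedefect(M)=\sum_{\lambda<1}(1-\lambda)\dim M_\lambda$ is the same quantity computed from the top. Alternatively, and more robustly, we use the symmetry of the Newton polygon of $D_2$ about slope $1$, which follows from hard Lefschetz: cup product with an ample class $\eta^{2n-2}$ gives an isomorphism $D_2\to D_{4n-2}$, and Poincar\'e duality then yields the symmetry. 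Together with integrality of the Newton polygon, this shows that the segment of slopes below one has endpoint $(d,t_N(M))$ with $t_N(M)\in\ZZ$. Hence $\slopedefect(M)\in\ZZ_{\ge0}$, and so $\slopedefect(M)\in\{0,1\}$.

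\textbf{Step 3.} We conclude. If $\slopedefect=0$ then $M=0$, which is case (i). If $\slopedefect=1$, the Newton polygon on $[0,d]$ is convex with integral break points, has all slopes in $[0,1)$, and has total defect exactly $1$. We claim it is a single segment. Each isoclinic piece of slope $a/b<1$ written in lowest terms, with break points integral, contributes a defect $(b-a)\ge1$. Two pieces would therefore give defect at least $2$, so there is exactly one piece, of slope $1-\tfrac1h$ with $b-a=1$, dimension $h=d$, and $h\ge1$. Uniqueness of $h$ is clear because $h=\dim M$. The two cases are exclusive since $M$ is either zero or not.

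\textbf{Main obstacle.} I expect the main obstacle to be the integrality of $\slopedefect$, that is, ensuring that the break point at slope $1$ is integral. Newton break points of $D_2$ are integral because $D_2$ comes from an $F$-crystal, namely $\HH^2_{\crys}$ modulo torsion; so the slope $<1$ part is a summand with integral Newton degree. Thus $t_N(M)\in\ZZ$ follows from the Dieudonné--Manin decomposition over $\Kzero$: each isoclinic component of slope $a/b$ has dimension divisible by $b$, and therefore integer Newton degree. This settles the step without any duality argument, so Step 2 reduces to this observation.
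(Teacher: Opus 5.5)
Your proposal is correct and is essentially the paper's argument: the bound $t_H(M)\ge d-1$ from $h^{0,2}(Y)=1$ plus weak admissibility gives $\slopedefect(M)\le1$, and Dieudonn\'e--Manin (each simple summand of slope $a/b$ in lowest terms contributes the positive integer $b-a$) then forces a single simple summand with $b-a=1$. The duality and hard-Lefschetz detour in Step~2 is unnecessary, as you observe yourself. The one point you leave implicit is that the slopes are nonnegative, so that $(D_2)_{<1}=(D_2)_{[0,1)}$; the paper proves this by a separate weak-admissibility argument, and it also follows from the $F$-stable lattice $\HH^2_{\crys}(X/\Wit)/\mathrm{tors}$ that you mention.
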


\begin{proof}
The filtered $\varphi$-module $D_2$ is weakly admissible with effective Hodge weights $0,1,2$ and $\dim\gr^0(D_2)_K=h^{0,2}(Y)=1$.
Put $S=(D_2)_{[0,1)}$ for simplicity.
Let $r=\dim S$.
The induced filtration on $S_K$ embeds $\gr^a S_K$ into $\gr^a(D_2)_K$ for every $a$.
In particular $\gr^0S_K$ is at most one-dimensional.
Since the filtration is effective, we have
\[
  t_H(S)\ \ge\ 0\cdot\dim\gr^0S_K+1\cdot\bigl(r-\dim\gr^0S_K\bigr)\ \ge\ r-1 .
\]
Weak admissibility gives $t_H(S)\le t_N(S)$, whence
\[
  \slopedefect(S)=r-t_N(S)\le1 .
\]
Slopes of $D_2$ are all non-negative: the sum $N$ of the negative slope subspaces is $\varphi$-stable with $t_H(N)\ge0$ by effectivity and $t_N(N)<0$ if $N\ne0$, which would contradict $t_H(N)\le t_N(N)$.

Let $\lambda = a/b$ be a slope of $S$ in lowest terms with $0 \leq a <b$.
By Dieudonn\'e--Manin theory its multiplicity is divisible by $b$, and an isoclinic part of multiplicity $mb$ contributes the positive integer $m(b-a)$ to $\slopedefect(S)$.
Hence, if $S\ne0$, the bound $\slopedefect(S)\le1$ forces exactly one part to occur.
This part necessarily has multiplicity $b$ and $b-a=1$; taking $h=b$ gives \textup{(ii)}, together with its uniqueness.
\end{proof}

The hard-Lefschetz pairing on $\HH^2(Y_{\overline K},\Qp)$, induced by an ample class on $Y$, pairs the slopes $\lambda$ and $2-\lambda$ of $D_2$.
Thus in case \textup{(ii)} the full list of slopes is
\[
  1-\tfrac1h,\quad 1,\quad 1+\tfrac1h
\]
with multiplicities $h,b_2-2h,h$; in case \textup{(i)} every slope is one.
\Cref{fig:polygons} illustrates the Hodge and Newton polygons in case \textup{(ii)}.

\begin{figure}[ht]
\centering
\begin{tikzpicture}[scale=0.44,
    lead/.style={gray!70,line width=0.3pt},
    nwt/.style={themegreen,line width=1.4pt},
    hdg/.style={themeorange,line width=1.2pt,dash pattern=on 5pt off 3pt},
    brk/.style={themegreen,fill=themegreen,inner sep=0pt,minimum size=3.4pt,
                circle},
    hbrk/.style={draw=themeorange,fill=white,line width=0.8pt,inner sep=0pt,
                 minimum size=3.2pt,circle}]
  \draw[->,black,line width=0.7pt] (-0.4,0) -- (11.0,0) node[right]{\small $\dim$};
  \draw[->,black,line width=0.7pt] (0,-0.4) -- (0,10.8) node[above]{\small $t$};
  \draw[dotted,gray!85] (0,0) -- (10.5,10.5);
  \node[rotate=45,anchor=south,font=\scriptsize,gray] at (8.5,8.5)
       {slope $1$};
  \draw[nwt] (0,0) -- (4,3) -- (6,5) -- (10,10);
  \draw[hdg] (0,0) -- (1,0) -- (9,8) -- (10,10);
  \node[brk] at (0,0) {}; \node[brk] at (4,3) {};
  \node[brk] at (6,5) {}; \node[brk] at (10,10) {};
  \node[hbrk] at (1,0) {}; \node[hbrk] at (9,8) {};
  \draw[|<->|,black,line width=0.5pt] (4,3.08) -- (4,3.92);
  \node[anchor=east,font=\scriptsize] at (3.82,3.5) {$\slopedefect(D_2)$};
  \draw[lead] (5.2,9.0) -- (6.9,6.1);
  \node[themegreen,anchor=south,font=\scriptsize] at (5.1,9.05) {Newton};
  \draw[lead] (9.4,4.7) -- (7.85,6.75);
  \node[themeorange,anchor=north,font=\scriptsize] at (9.5,4.5) {Hodge};
  \draw[lead] (1.15,4.15) -- (2.0,1.58);
  \node[anchor=south west,font=\scriptsize] at (0.65,4.2)
       {slope $1-\tfrac1h$};
  \draw[lead] (1.8,-1.0) -- (0.62,-0.07);
  \node[anchor=north west,font=\scriptsize] at (1.7,-0.96)
       {slope $0$};
  \foreach \x/\y/\l in {4/3/h,6/5/{b_2-h},10/10/{b_2}}{
    \draw[gray!55,dotted] (\x,0) -- (\x,\y);
    \draw[black] (\x,-0.12) -- (\x,0.12);
    \node[below,font=\scriptsize] at (\x,-0.14) {$\l$};}
  \draw[black] (-0.12,10) -- (0.12,10);
  \node[left,font=\scriptsize] at (-0.14,10) {$b_2$};
\end{tikzpicture}
\caption{The Hodge polygon of $D_2$ (dashed) and its Newton polygon (solid) in
  case \textup{(ii)} of \Cref{prop:automatic-D2-block}.
Both have slope one on $[h,b_2-h]$, where they coincide; they differ only at the
  two ends.
The drop $\slopedefect(D_2)\le1$ of the Newton polygon below the line of slope
one is attained at $\dim=h$.
In case \textup{(i)} the Newton polygon is that line.}
\label{fig:polygons}
\end{figure}

\begin{definition}
\label{def:crystalline-height}
For a good reduction $X$, define its \emph{degree-two crystalline height} by
\[
  h_2^{\crys}(X):=
  \begin{cases}
    \dim_{\Kzero}(D_2)_{[0,1)}, & \text{if }(D_2)_{[0,1)}\ne0,\\
    \infty, & \text{if }(D_2)_{[0,1)}=0.
  \end{cases}
\]
By \Cref{prop:automatic-D2-block}, every finite value is a positive integer $h$, and $(D_2)_{[0,1)}$ is then isoclinic of slope $1-\tfrac1h$.
\end{definition}

This definition does not assume that the Artin--Mazur functor $\Phi^2_X$ of \Cref{def:artin-mazur} is prorepresentable.
If $\Phi^2_X$ is a smooth one-dimensional formal group, its height equals $h_2^{\crys}(X)$ by \Cref{rem:AM-slope-convention}.

Consequently, the slope-zero condition \textup{(iii)} of \Cref{thm:main-all-good} can be written as
\begin{equation}\label{eq:iii-equiv}
  \HH^2_{\crys}(X/\Wit)[1/p]\text{ has a nonzero slope-zero part}
  \iff h_2^{\crys}(X)=1 .
\end{equation}

We first record the degree-two slope criterion for the surfaces underlying our examples.
For a smooth proper variety $Z/\bk$, write $D_q(Z)=\HH^q_{\crys}(Z/\Wit)[1/p]$, and let $\Kzero(-1)$ denote the one-dimensional isocrystal of slope one.

\begin{lemma}
\label{lem:ordinary-surface-slopes}
Let $Z$ be a $K3$ surface or an abelian surface over a perfect field $\bk$ of characteristic $p>0$. Then $Z$ is ordinary if and only if $D_2(Z)$ has a nonzero slope-zero part. In this case $(D_2(Z))_{[0,1)}$ is a line of slope zero, and the slopes of $D_2(Z)$ are $0,1,2$, with multiplicities $1,20,1$ in the $K3$ case and $1,4,1$ in the abelian case.
\end{lemma}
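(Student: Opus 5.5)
We treat the two cases together, using the same slope-defect argument as in \Cref{prop:automatic-D2-block} but with intrinsic input, since $Z$ need not lift in a way we control. Ordinarity of a $K3$ surface means height one of $\widehat{\mathrm{Br}}_Z$, and of an abelian surface means $p$-rank $2$. The first step is to show that in either case $(D_2(Z))_{[0,1)}$ has slope defect at most one. For this we would use Mazur's theorem (Newton polygon lies above Hodge polygon). The crystalline cohomology of $K3$ and abelian surfaces is torsion-free and Hodge--de Rham degenerates, so the Hodge polygon of $D_2(Z)$ has slopes $0,1,2$ with multiplicities $h^{0,2},h^{1,1},h^{2,0}$. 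These are $1,20,1$ for a $K3$ surface and $1,4,1$ for an abelian surface. The Newton polygon lies above the Hodge polygon, and both start at $(0,0)$ and end at the same point. Poincaré duality $D_2\times D_2\to\Kzero(-2)$ pairs the slopes $\lambda$ and $2-\lambda$. Hence the part of the Newton polygon with slopes below one lies above the Hodge segment of slope $0$ followed by slope $1$, which gives $\slopedefect(D_2(Z))\le1$. The Dieudonné--Manin argument already used in \Cref{prop:automatic-D2-block} then shows that $(D_2(Z))_{[0,1)}$ is either $0$ or isoclinic of slope $1-\tfrac1h$ and dimension $h$.

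Next we identify when slope zero occurs, that is, when $h=1$. For abelian surfaces we would use $D_2(Z)=\wedge^2D_1(Z)$. The slopes of $D_1(Z)$ are $0^{f},\tfrac12^{4-2f},1^{f}$, where $f$ is the $p$-rank. The slope-zero part of $\wedge^2D_1$ has dimension $\binom f2$, which is nonzero exactly when $f=2$, that is, when $Z$ is ordinary. In that case the slopes of $\wedge^2D_1$ are $0,1,2$ with multiplicities $1,4,1$. For $K3$ surfaces we would use \Cref{prop:AM-cartier}: since $\HH^1(Z,\Oh_Z)=0$, $\HH^3=0$ and $h^{0,2}=1$, $\widehat{\mathrm{Br}}_Z$ is a smooth one-dimensional formal group by \Cref{lem:AM-prorepresentable}. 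By \Cref{rem:AM-slope-convention}, $(D_2)_{[0,1)}$ is then isoclinic of slope $1-\tfrac1h$, where $h$ is the height of $\widehat{\mathrm{Br}}_Z$, or zero if the height is infinite. Slope zero therefore occurs iff $h=1$, which is ordinarity.

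Finally, the multiplicities follow from duality and $b_2$. In the ordinary case the slope-zero line is paired with a slope-two line, and everything else has slope one, giving multiplicities $1,b_2-2,1$ with $b_2=22$ or $6$.

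The main obstacle is \Cref{rem:AM-slope-convention}, which cites \Cref{prop:WittSlope}, stated later in the paper. If that is unavailable, we would instead invoke the classical Artin--Mazur identification of $\HH^2(Z,\Wit\Oh_Z)\otimes\Kzero$ with the slope $[0,1)$ part of $D_2$ (Illusie's slope spectral sequence). Its degeneration modulo torsion suffices rationally.
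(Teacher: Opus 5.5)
Your proof is correct. It differs from the paper's in how you get the forward direction and the multiplicities.

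The paper takes ordinarity in the Bloch--Kato sense. It cites \cite[Proposition~7.3]{BlochKato1986} to conclude that the Newton and Hodge polygons of $D_2(Z)$ coincide, which gives the slopes $0,1,2$ with multiplicities $1,20,1$ or $1,4,1$ immediately. Only the converse uses the formal Brauer group (\Cref{rem:AM-slope-convention}) and the exterior square $\bigwedge^2D_1$.

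You instead use the classical definitions: height one of $\widehat{\mathrm{Br}}_Z$, and $p$-rank two. With these, the same two computations give both directions, and you recover the multiplicities from Poincar\'e duality and $b_2$. This avoids Bloch--Kato and the implicit equivalence between Bloch--Kato ordinarity and the classical definitions, at the small cost of a duality argument.

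Your first step (Mazur's inequality plus Dieudonn\'e--Manin) is not actually needed. In the $K3$ case, \Cref{rem:AM-slope-convention} already gives that $(D_2)_{[0,1)}$ is isoclinic of slope $1-\tfrac1h$ and dimension $h$. In the abelian case, the $\bigwedge^2D_1$ computation determines every slope directly.

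Within that step, the bound $\slopedefect\le1$ comes from Mazur's inequality together with equality of the endpoints, not from duality as your ``hence'' suggests.
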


\begin{proof}
Both types of surfaces have torsion-free crystalline cohomology and trivial canonical bundle, so $h^{0,2}(Z)=h^{2,0}(Z)=1$. If $Z$ is ordinary, its degree-two Newton and Hodge polygons agree \cite[Proposition~7.3]{BlochKato1986}, giving the stated slope multiplicities.

Conversely, for a $K3$ surface the formal Brauer group is smooth and one-dimensional, so \Cref{rem:AM-slope-convention} shows that a nonzero slope-zero part is equivalent to height one, hence to ordinarity. For an abelian surface $A$, the exterior-algebra description $D_2(A)\simeq\bigwedge^2D_1(A)$ gives slope-zero multiplicity $\binom{f(A)}{2}$, where $f(A)$ is the $p$-rank. This is nonzero exactly when $f(A)=2$, that is, when $A$ is ordinary.
\end{proof}

The degree-two criterion can now be read directly from the underlying surface in two standard families.

\begin{example}
\label{ex:hilb}
Let $\mathscr S\to\Spec(\Oh_K)$ be smooth and proper with $K3$ generic fibre and special fibre $S/\bk$, and let $X=S^{[n]}$ be the good reduction of the Hilbert scheme $\Hilb^n$ of the generic fibre, a hyperk\"ahler variety of dimension $2n\ge4$.
For the Mukai vector $v=(1,0,1-n)$, the degree-two isomorphism \cite[Proposition~4.4(iii)]{FuLi2021} gives a decomposition of $F$-isocrystals
\[
  D_2(X)\simeq D_2(S)\oplus\Kzero(-1).
\]
The last summand comes from the exceptional divisor, so $(D_2(X))_{[0,1)}\simeq(D_2(S))_{[0,1)}$.
Thus $h_2^{\crys}(X)=1$ if and only if $S$ is ordinary, by \Cref{lem:ordinary-surface-slopes} and \eqref{eq:iii-equiv}.
\end{example}

\begin{example}
\label{ex:kummer}
Let $\mathscr A/\Oh_K$ be an abelian scheme with special fibre $A/\bk$, and suppose that the relative generalised Kummer variety $\mathscr X=K_n(\mathscr A)$ is smooth and proper over $\Oh_K$; this holds, for example, if $p\nmid n+1$ \cite[Proposition~6.5]{FuLi2021}.
Set $X=K_n(A)$, of dimension $2n\ge4$.
The degree-two calculation in \cite[proof of Proposition~6.7]{FuLi2021} and the exterior-algebra description of abelian cohomology give
\[
  D_2(X)\simeq D_2(A)\oplus\Kzero(-1)
  \simeq \bigwedge^2D_1(A)\oplus\Kzero(-1)
\]
as $F$-isocrystals. Hence $(D_2(X))_{[0,1)}\simeq(D_2(A))_{[0,1)}$, and $h_2^{\crys}(X)=1$ if and only if $A$ is ordinary, by \Cref{lem:ordinary-surface-slopes} and \eqref{eq:iii-equiv}.
\end{example}

\subsection{The Verbitsky component and higher-degree slopes}

The Verbitsky component of the second cohomology controls the slopes in $[0,1)$ in every even degree.

\begin{proposition}
\label{prop:quotient}
For $1\le i\le n$, cup product gives an injective morphism of isocrystals
\[
  \Sym^i D_2\lhook\joinrel\longrightarrow D_{2i},
\]
whose cokernel $Q_i$ has all Newton slopes at least $1$.
Consequently
\begin{equation}\label{eq:less1iso}
  (D_{2i})_{[0,1)}\simeq(\Sym^i D_2)_{[0,1)} .
\end{equation}
\end{proposition}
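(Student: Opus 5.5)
Injectivity and the slope bound on the cokernel will be proved on the generic fibre and then transported to $D_{2i}$ through the crystalline comparison.

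\emph{Injectivity.} Cup product $\Sym^i\HH^2\to\HH^{2i}$ is compatible with the crystalline comparison recalled at the start of \Cref{sec:slopes}. It therefore gives a morphism of filtered $\varphi$-modules $\Sym^iD_2\to D_{2i}$, where $\Sym^iD_2$ carries the induced tensor filtration. Injectivity may be checked after the comparison on $\HH^*_{\dR}(Y/K)$, and then after base change to $\CC$ via an embedding $\overline K\hookrightarrow\CC$. There it is Verbitsky's theorem: for a compact hyperk\"ahler manifold of dimension $2n$, the subalgebra generated by $\HH^2$ is $\Sym^\bullet\HH^2/(\alpha^{n+1}:q(\alpha)=0)$. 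In particular $\Sym^i\HH^2\to\HH^{2i}$ is injective for $i\le n$.

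\emph{Cokernel.} The category of weakly admissible filtered $\varphi$-modules is abelian, and a morphism in it is strict for the filtrations. The tensor product of weakly admissible modules is weakly admissible by Colmez--Fontaine (or directly, since $\Sym^iD_2$ is $D_{\crys}$ of the crystalline representation $\Sym^i\HH^2_{\et}$). Hence $Q_i$, equipped with the quotient filtration, is weakly admissible.

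To apply \Cref{lem:WA} we need $\Fil^1(Q_i)_K=(Q_i)_K$, that is, $\gr^0$ of $Q_i$ vanishes. By strictness and exactness of $\gr$,
\[
\dim\gr^0(Q_i)_K=\dim\gr^0 D_{2i,K}-\dim\gr^0(\Sym^iD_2)_K .
\]
The term $\gr^0D_{2i,K}$ is $\HH^{2i}(Y,\Oh_Y)$, which is one-dimensional. The term $\gr^0(\Sym^iD_2)_K$ contains $\sigma^i$, where $\sigma$ spans $\gr^0(D_2)_K=\HH^2(Y,\Oh_Y)$. Its image in $\HH^{2i}(Y,\Oh_Y)$ is $\bar\sigma^i$, which is nonzero because $\HH^\bullet(Y,\Oh_Y)=K[\bar\sigma]/(\bar\sigma^{n+1})$ in characteristic zero (complex conjugate of $\sigma^i\ne0$). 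Hence $\gr^0(Q_i)_K=0$. \Cref{lem:WA} then shows that all Newton slopes of $Q_i$ are at least $1$.

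\emph{Conclusion.} Slope decomposition is exact on isocrystals. Applying $(-)_{[0,1)}$ to $0\to\Sym^iD_2\to D_{2i}\to Q_i\to0$ and using $(Q_i)_{[0,1)}=0$ gives \eqref{eq:less1iso}.

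The main obstacle is the filtration bookkeeping. One must check that the comparison isomorphism intertwines cup product with the filtered tensor structure, so that the map is a morphism of filtered $\varphi$-modules, and that strictness lets one compute $\gr^0$ of the cokernel. I would handle this by working throughout with de Rham cohomology of $Y$ and Hodge filtrations, where strictness follows from Hodge theory (morphisms of Hodge structures are strict) after base change to $\CC$. The algebra structure of $\HH^\bullet(Y,\Oh_Y)$ then enters only through $\bar\sigma^i\neq0$.
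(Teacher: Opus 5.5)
Your proposal is correct and follows the paper's proof essentially step for step. The argument transports Verbitsky's injectivity to $D_{2i}$ through the comparison isomorphisms, then uses weak admissibility and strictness to identify $\gr^0$ of the cup map with the coherent cup product $\Sym^i\HH^2(Y,\Oh_Y)\to\HH^{2i}(Y,\Oh_Y)$, hence $\gr^0(Q_i)_K=0$, and finishes with \Cref{lem:WA} and exactness of slope truncation. The only differences are cosmetic: you check injectivity through the crystalline--de Rham comparison rather than the $p$-adic \'etale side, and the generator of $\gr^0(D_2)_K=\HH^2(Y,\Oh_Y)$ should be written $\bar\sigma$ rather than $\sigma$.
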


\begin{proof}
Verbitsky's theorem gives an injection $\Sym^i\HH^2(Y_{\CC},\QQ)\hookrightarrow\HH^{2i}(Y_{\CC},\QQ)$ for $i\le n$ \cite{Verbitsky1996,Bogomolov1996}; its image is the Verbitsky component, and the injectivity of the corresponding map in the Betti, $\ell$-adic and potentially semistable $p$-adic realisations is recorded in \cite[Theorem~3.14]{IIKTZ2025}.
For $Y$ over an arbitrary field of characteristic zero the statement follows by spreading out and choosing an embedding of a finitely generated field of definition into $\CC$.

The de Rham cup map is therefore injective, and the $B_{\dR}$-comparison yields an injective $G_K$-equivariant map $\Sym^i V_2\hookrightarrow V_{2i}$, where $V_m=\HH^m_{\et}(Y_{\overline K},\Qp)$; let $U_i$ denote its cokernel.
The category of crystalline representations is closed under subquotients, and $D_{\crys}$ is exact and compatible with tensor operations, so $U_i$ is crystalline and $D_{\crys}(U_i)=Q_i=D_{2i}/\Sym^iD_2$.
In particular $Q_i$ is weakly admissible.

Morphisms of weakly admissible filtered $\varphi$-modules are strict, so on degree-zero graded pieces the cup map becomes $\Sym^i\HH^2(Y,\Oh_Y)\to\HH^{2i}(Y,\Oh_Y)$.
For an irreducible hyperk\"ahler variety both sides are one-dimensional and this map is an isomorphism; hence $\gr^0(Q_i)_K=0$, and effectivity gives $\Fil^1(Q_i)_K=(Q_i)_K$.
By \Cref{lem:WA}, every Newton slope of $Q_i$ is at least one.
Finally, slope truncation is exact for isocrystals (as one checks after extending the perfect residue field, using Dieudonn\'e--Manin), so applying the interval $[0,1)$ to the exact sequence
\[
  0\to\Sym^iD_2\to D_{2i}\to Q_i\to 0
\]
yields \eqref{eq:less1iso}.
\end{proof}

\begin{corollary}
\label{cor:automatic-top-slopes}
Every good reduction $X$ of dimension $2n\ge4$ satisfies
\[
  \dim_{\Kzero}(D_{2n})_{[0,1)}=
  \begin{cases}
    1, & h_2^{\crys}(X)=1,\\
    0, & h_2^{\crys}(X)>1,
  \end{cases}
\]
and in the first case $(D_{2n})_{[0,1)}$ is a line of slope zero.
Equivalently, $\HH^{2n}_{\crys}(X/\Wit)[1/p]$ has a nonzero slope-zero part if and only if $\HH^2_{\crys}(X/\Wit)[1/p]$ does.
\end{corollary}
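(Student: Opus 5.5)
By \eqref{eq:less1iso} with $i=n$, it suffices to compute $(\Sym^nD_2)_{[0,1)}$. The plan is a slope bookkeeping argument on the symmetric power, using the full slope list of $D_2$ established after \Cref{prop:automatic-D2-block}. All slopes of $D_2$ are nonnegative. Moreover, $D_2$ has one of two shapes:
\begin{itemize}
\item[] in case (i), every slope equals $1$;
\item[] in case (ii), the slopes are $1-\tfrac1h,\,1,\,1+\tfrac1h$ with multiplicities $h,\,b_2-2h,\,h$.
\end{itemize}
Slope decompositions are compatible with tensor operations after extending to $\overline{\bk}$ (Dieudonn\'e--Manin). Therefore the slopes of $\Sym^nD_2$ are exactly the sums $\lambda_1+\dots+\lambda_n$ over multisets of slopes of $D_2$, counted with the multiplicities coming from $\Sym^n$ of the slope decomposition $D_2=\bigoplus_\lambda D_{2,\lambda}$. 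Concretely,
\[
\Sym^nD_2=\bigoplus\ \Sym^{a}D_{2,\lambda_-}\otimes\Sym^{b}D_{2,1}\otimes\Sym^{c}D_{2,\lambda_+},
\]
with $a+b+c=n$, and the summand indexed by $(a,b,c)$ is isoclinic of slope $a\lambda_-+b+c\lambda_+$.

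Next I determine which summands have slope below $1$. Since every slope is nonnegative, a sum of $n\ge2$ slopes is $<1$ only if at most one factor is $\ge 1$ and the small factors sum to less than $1$. In case (i) every sum equals $n\ge2$, so $(D_{2n})_{[0,1)}=0$; this is consistent with $h_2^{\crys}=\infty>1$. In case (ii) with $h\ge2$, the smallest slope is $1-\tfrac1h\ge\tfrac12$, so any sum of $n\ge2$ slopes is at least $2(1-\tfrac1h)\ge1$, and again $(D_{2n})_{[0,1)}=0$. In case (ii) with $h=1$, the slopes are $0,1,2$, and $D_{2,0}$ is a line $L$. The summand indexed by $(a,b,c)$ has slope $b+2c$, which is $<1$ only when $b=c=0$, that is for $\Sym^nL=L^{\otimes n}$. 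This is a line of slope $0$.

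Finally, I combine this with \eqref{eq:less1iso}. This gives the displayed dimension formula and shows that in the first case $(D_{2n})_{[0,1)}$ is a slope-zero line. The equivalence with slope-zero parts then follows from \eqref{eq:iii-equiv}: a slope-zero part of $D_{2n}$ lies in $(D_{2n})_{[0,1)}$, which is nonzero only when $h_2^{\crys}=1$.

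The only delicate point is the first step: justifying that the slopes of $\Sym^n$ of an isocrystal are the expected sums with the stated multiplicities. I would handle this by base change to $\overline{\bk}$, where $D_2$ splits as a direct sum of isoclinic pieces, and then use that a tensor product of isoclinic isocrystals of slopes $\lambda$ and $\mu$ is isoclinic of slope $\lambda+\mu$. Symmetric powers are direct summands of tensor powers because $n<p$ is not assumed, but over $\Kzero$, of characteristic $0$, symmetrization is a projector, so this causes no issue.
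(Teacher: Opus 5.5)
Your proof is correct and follows the paper's argument. You reduce via \eqref{eq:less1iso} to $(\Sym^nD_2)_{[0,1)}$, use that its slopes are sums of $n$ slopes of $D_2$, and split into three cases: $h=1$ (only $\Sym^n$ of the slope-zero line survives), $1<h<\infty$ (every sum is at least $2(1-\tfrac1h)\ge1$), and $h=\infty$ (every sum is at least $n\ge2$). Your extra justification through Dieudonn\'e--Manin and the characteristic-zero symmetrisation projector is sound, though ``because $n<p$ is not assumed'' should read ``although $n<p$ is not assumed.''
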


\begin{proof}
By \eqref{eq:less1iso} the left-hand side is $(\Sym^nD_2)_{[0,1)}$, and the slopes of $\Sym^nD_2$ are the sums of $n$ slopes of $D_2$.
If $h_2^{\crys}=1$, the unique slope of $D_2$ below one is a slope-zero line and all other slopes are at least one; the $n$-th power of that line is then the unique summand of $\Sym^nD_2$ with slope below one, and it is again a slope-zero line.
If $1<h_2^{\crys}<\infty$, the smallest slope of $D_2$ is
\[
  1-\tfrac{1}{h_2^{\crys}}\ge\tfrac{1}{2}
\]
so every slope of $\Sym^nD_2$ is at least $\tfrac n2\ge1$.
If $h_2^{\crys}=\infty$ the assertion is immediate.
\end{proof}

To relate these slope computations to Witt-vector cohomology, we use the following comparison for an arbitrary smooth proper algebraic space.

\begin{proposition}
\label{prop:WittSlope}
For every smooth proper algebraic space $X/\bk$ and every $q$ there is a Frobenius-compatible isomorphism
\[
  \HH^q(X,\Wit\Oh_X)\otimes_\Wit\Kzero\simeq (D_q)_{[0,1)} .
\]
\end{proposition}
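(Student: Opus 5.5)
The plan is to deduce the statement from the slope spectral sequence of the de Rham--Witt complex, in the form of Illusie's theorem, carried over from schemes to algebraic spaces by étale descent.

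For a smooth proper $X/\bk$ we have the de Rham--Witt complex $\Wit\Omega^\bullet_X$ on the small étale site. Its hypercohomology computes $\HH^q_{\crys}(X/\Wit)$ compatibly with Frobenius, and on the zeroth column the crystalline Frobenius corresponds to $p^i F$ on $\Wit\Omega^i_X$. For schemes this is Illusie's comparison. For algebraic spaces we use that $\Wit_r\Omega^\bullet$ is an étale sheaf compatible with étale localisation, so the comparison $R\Gamma_{\crys}(X/\Wit_r)\simeq R\Gamma(X_{\et},\Wit_r\Omega^\bullet_X)$ glues from étale scheme charts. We then take the inverse limit over $r$; the Mittag--Leffler property of finite-level Witt cohomology from \cite[Proposition~4.5.2]{Olsson2007Crystalline} controls the $\varprojlim^1$ terms.

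Next we consider the slope spectral sequence
\[
E_1^{i,j}=\HH^j(X,\Wit\Omega^i_X)\Longrightarrow \HH^{i+j}_{\crys}(X/\Wit).
\]
By Illusie's theorem, it degenerates at $E_1$ modulo torsion, that is, after $\otimes_\Wit\Kzero$. Moreover, $\HH^j(X,\Wit\Omega^i_X)\otimes\Kzero$ is identified Frobenius-equivariantly with $(D_{i+j})_{[i,i+1)}$. This identification is where the ``modulo torsion'' finiteness enters: the $E_1$-terms are not finitely generated in general, but their torsion is killed and the quotients by torsion are finitely generated. The $i=0$ column therefore gives
\[
\HH^q(X,\Wit\Oh_X)\otimes_\Wit\Kzero\simeq (D_q)_{[0,1)}.
\]
Here the Frobenius on the left is the Witt-vector $F$, which matches the crystalline Frobenius on the slope $[0,1)$ part.

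The main obstacle is the extension of Illusie's degeneration and slope identification to algebraic spaces. Illusie's arguments are local for the étale topology, through the structure of $\Wit\Omega^\bullet$ via the Cartier isomorphism and the filtrations. The global finiteness inputs, namely finite length of $\HH^j(X,\Wit_r\Omega^i)$ and the resulting structure of the $E_1$-terms as Dieudonné modules modulo torsion, need only properness and coherent finiteness, and these hold for proper algebraic spaces. So the plan is to check that each step of Illusie's proof uses only these inputs and then quote it. An alternative route is to pass to a proper hypercover by schemes and use cohomological descent for both crystalline and Witt cohomology; this is the fallback if the direct verification is awkward.

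Finally, the rational slope decomposition identifies the image of the zeroth column. Since $F$ is bijective after inverting $p$ with slopes in $[0,1)$, while the higher columns carry slopes at least $1$, the abutment filtration splits rationally by slopes. This yields the stated Frobenius-compatible isomorphism.
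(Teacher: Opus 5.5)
Your proposal is correct and follows the paper's proof: both read off the $i=0$ column of Illusie's slope spectral sequence, which degenerates rationally \cite[\S II, Corollary~3.5]{Illusie1979}, and on that column the crystalline Frobenius $p^iF$ is just the Witt-vector $F$. For algebraic spaces the paper simply cites Olsson's de Rham--Witt comparison and slope spectral sequence on the \'etale site \cite[Theorems~4.4.17 and~4.5.15]{Olsson2007Crystalline}, which supplies exactly the verification you sketch, so your proper-hypercover fallback is not needed.
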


\begin{proof}
This is the $r=0$ case of the rational slope spectral sequence \cite[\S II, Corollary~3.5]{Illusie1979}.
For algebraic spaces, use the de Rham--Witt comparison and slope spectral sequence on the \'etale site \cite[Theorems~4.4.17 and~4.5.15]{Olsson2007Crystalline}; the degree-zero term is $\Wit\Oh_X$, with the same Frobenius convention.
\end{proof}

\subsection{Hodge-goodness and Artin--Mazur formal groups}

The slope results above apply to every good reduction, without a hypothesis on its coherent cohomology.
We now impose the algebra structure expected from characteristic zero to study the Artin--Mazur formal groups.
This condition makes sense for any smooth proper algebraic space, independently of a good-reduction model.

\begin{definition}
\label{def:hodge-good}
Let $X$ be a smooth proper algebraic space over $\bk$ of even dimension $2n$.
We say that $X$ is \emph{Hodge-good} if there is a class $\eta\in\HH^2(X,\Oh_X)$ for which cup product induces an isomorphism of graded $\bk$-algebras
\begin{equation}\label{eq:HG}
  \HH^\bullet(X,\Oh_X)\;\simeq\;\bk[\eta]/(\eta^{n+1}),
  \qquad\deg\eta=2 .
\end{equation}
Equivalently, $\HH^{2i}(X,\Oh_X)=\bk\,\eta^i$ for $0\le i\le n$ and $\HH^j(X,\Oh_X)=0$ for every odd $j$.
\end{definition}

\begin{definition}
\label{def:hodge-good-reduction}
A \emph{Hodge-good reduction} is a good reduction in the sense of \Cref{def:good-reduction} which is Hodge-good in the sense of \Cref{def:hodge-good}.
\end{definition}

\begin{remark}
\label{rem:HG-not-automatic}
After shrinking the base of a fixed arithmetic spread of a hyperk\"ahler variety, coherent cohomology commutes with base change and the cup-power maps are isomorphisms, so the fibres are Hodge-good.
This does not establish Hodge-goodness at every good reduction: both the dimensions of coherent cohomology and the nonvanishing of cup powers require justification at an arbitrary good prime.
The dichotomy of \Cref{thm:all-good-dichotomy} is therefore proved without this hypothesis.
\end{remark}

Hodge-goodness supplies exactly the vanishing required by the criterion of \Cref{lem:AM-prorepresentable}, in every even degree at once.

\begin{proposition}
\label{prop:AM-hodge-good}
Let $X$ be a Hodge-good smooth proper algebraic space of dimension $2n$ over $\bk$.
\begin{enumerate}[label=\textup{(\arabic*)},leftmargin=2.2em]
\item For every even $q=2i$ with $1\le i\le n$, the functor $\Phi^{2i}_X$ is prorepresentable and formally smooth with tangent space $\HH^{2i}(X,\Oh_X)=\bk\,\eta^i$; hence it is a smooth one-dimensional formal group.
\item For every odd $q$ with $1\le q\le 2n-1$, the functor $\Phi^q_X$ is zero.
  In particular no odd formal group enters the height argument.
\item If $X$ is a Hodge-good reduction of a hyperk\"ahler variety, then $\hgt(\Phi^2_X)=h_2^{\crys}(X)$.
\end{enumerate}
\end{proposition}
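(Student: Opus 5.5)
We will derive all three parts from the Artin--Mazur criterion \Cref{lem:AM-prorepresentable}, using Hodge-goodness to supply the required vanishings. Throughout, Hodge-goodness means $\HH^j(X,\Oh_X)=0$ for every odd $j$ and $\HH^{2i}(X,\Oh_X)=\bk\,\eta^i$ for $0\le i\le n$.

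For (1), fix $q=2i$ with $1\le i\le n$. The two neighbouring degrees $q-1=2i-1$ and $q+1=2i+1$ are odd, so $\HH^{q-1}(X,\Oh_X)=0$ and $\HH^{q+1}(X,\Oh_X)=0$. When $i=n$ the second vanishing also follows from $\dim X=2n$. By \Cref{lem:AM-prorepresentable}(2), $\Phi^{2i}_X$ is therefore prorepresentable. Part (1) of that lemma identifies its tangent space with $\HH^{2i}(X,\Oh_X)=\bk\,\eta^i$, which is one-dimensional. Part (3) then gives formal smoothness, so $\Phi^{2i}_X$ is a smooth one-dimensional formal group.

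For (2), fix an odd $q$ with $1\le q\le 2n-1$. Then $q-1$ is even and $\HH^{q-1}(X,\Oh_X)$ is generally nonzero, so we cannot argue via prorepresentability. Instead we use the filtration argument from the proof of \Cref{lem:AM-prorepresentable}(1). For a square-zero extension $A\twoheadrightarrow A/I$ in $\mathrm{Art}_\bk$, the kernel of $\Phi^q_X(A)\to\Phi^q_X(A/I)$ is a quotient of $I\otimes_\bk\HH^q(X,\Oh_X)$, and this group vanishes because $q$ is odd. So each map $\Phi^q_X(A)\to\Phi^q_X(A/I)$ is injective. Every $A\in\mathrm{Art}_\bk$ is reached from $\bk$ by a finite chain of small extensions, and $\Phi^q_X(\bk)=0$ by definition. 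Inducting along such a chain gives $\Phi^q_X(A)=0$ for all $A$, so $\Phi^q_X=0$. This is the step that needs care: injectivity must be stated precisely, namely as exactness of
\[
  \HH^q(X,1+I\Oh_X)\to\HH^q(X_A,\GGm)\to\HH^q(X_{A/I},\GGm),
\]
restricted to the kernels of reduction to the closed fibre. A diagram chase handles this, since an element of $\Phi^q_X(A)$ that maps to zero in $\Phi^q_X(A/I)$ maps to zero in $\HH^q(X_{A/I},\GGm)$.

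For (3), part (1) shows that $\Phi^2_X$ is a smooth one-dimensional formal group. \Cref{prop:AM-cartier} identifies its Cartier module with $\HH^2(X,\Wit\Oh_X)$, and \Cref{prop:WittSlope} identifies $\HH^2(X,\Wit\Oh_X)\otimes\Kzero$ with $(D_2)_{[0,1)}$. We then compare heights in two cases.
\begin{enumerate}[label=\textup{(\alph*)},leftmargin=2.2em]
\item If the height $h$ of $\Phi^2_X$ is finite, the Cartier module is free of rank $h$ with slope $1-\tfrac1h$. Hence $(D_2)_{[0,1)}$ has dimension $h$, and by \Cref{def:crystalline-height} this gives $h_2^{\crys}(X)=h$.
\item If the height is infinite, the Cartier module is $p$-torsion. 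Hence $(D_2)_{[0,1)}=0$, which gives $h_2^{\crys}(X)=\infty$.
\end{enumerate}
This is exactly \Cref{rem:AM-slope-convention}, so (3) follows.
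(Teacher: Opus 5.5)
Your proposal is correct and follows essentially the same route as the paper: (1) applies \Cref{lem:AM-prorepresentable} using the odd-degree vanishings supplied by Hodge-goodness, (2) is the same d\'evissage showing that each transition map $\Phi^q_X(A)\to\Phi^q_X(A/I)$ has trivial kernel because $\HH^q(X,\Oh_X)=0$ for odd $q$, and (3) is read off from \Cref{rem:AM-slope-convention} together with \Cref{def:crystalline-height}. Your use of a chain of small extensions, which makes the identification $1+I\Oh_{X_A}\simeq I\otimes_\bk\Oh_X$ precise, plays the same role as the paper's induction along the powers of the maximal ideal.
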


\begin{proof}
(1) By \eqref{eq:HG} we have $\HH^{2i-1}(X,\Oh_X)=\HH^{2i+1}(X,\Oh_X)=0$, the second also for $i=n$ by cohomological dimension, and $\dim_\bk\HH^{2i}(X,\Oh_X)=1$; now apply \Cref{lem:AM-prorepresentable}.

(2) For a square-zero extension $A\twoheadrightarrow A/I$ in $\mathrm{Art}_\bk$ the sequence displayed in the proof of \Cref{lem:AM-prorepresentable} exhibits $\ker\bigl(\Phi^q_X(A)\to\Phi^q_X(A/I)\bigr)$ as a quotient of $I\otimes_\bk\HH^q(X,\Oh_X)$, which vanishes for odd $q$.
Induction along the powers of the maximal ideal of $A$ gives $\Phi^q_X(A)=0$.

(3) By (1) and \Cref{rem:AM-slope-convention}, $(D_2)_{[0,1)}$ is isoclinic of slope $1-1/\hgt(\Phi^2_X)$ and of dimension $\hgt(\Phi^2_X)$ when the height is finite, and is zero when it is infinite.
Comparing with \Cref{prop:automatic-D2-block} and \Cref{def:crystalline-height} gives the equality.
\end{proof}

\subsection{Cartier products and heights}

We now compute the even Artin--Mazur heights entirely in characteristic $p$.
Recall the covariant Cartier module $\Cart(E)=\Hom(\widehat{\Wit},E)$ of \Cref{subsec:Artin-Mazur formal group}.
The category of $V$-complete Cartier modules carries a completed symmetric monoidal product $M\mathbin{\widehat\boxtimes}_{\Wit}N$, and the corresponding product of formal Lie groups by the Cartier theory is written as $E\boxtimes E'$.
In this convention, we have
\begin{equation}\label{eq:boxCartier}
  \Cart(E\boxtimes E')\;\simeq\;
  \Cart(E)\mathbin{\widehat\boxtimes}_{\Wit}\Cart(E') ;
\end{equation}
see \cite[\S4.2,~Example~4.18]{AntieauNikolaus2021} and \cite[Remark~7.2.18]{MondalReinecke2026}.
We keep the base ring in the notation: this is the relative product in $\Wit$-module objects, not the absolute Cartier tensor product, whose unit is $\Wit(\ZZ)$.

Throughout the rest of this subsection $X$ is a Hodge-good smooth proper algebraic space of dimension $2n$ over $\bk$, not necessarily a reduction, and we put
\[
  E_i=\Phi^{2i}_X,\qquad M_i=\Cart(E_i)\simeq\HH^{2i}(X,\Wit\Oh_X)
  \qquad(1\le i\le n),
\]
the identification being \Cref{prop:AM-cartier}, which applies by \Cref{prop:AM-hodge-good}(1).

\begin{proposition}
\label{prop:boxcup}
For $a,b\ge1$ with $a+b\le n$, the cup product of Witt-vector cohomology induces a canonical morphism of formal Lie groups
\begin{equation}\label{eq:mubox}
  \mu_{a,b}\colon E_a\boxtimes E_b\longrightarrow E_{a+b},
\end{equation}
and $\mu_{a,b}$ is an isomorphism.
Consequently
\begin{equation}\label{eq:boxpowers}
  \Phi^{2i}_X\;\simeq\;(\Phi^2_X)^{\boxtimes i}\qquad(1\le i\le n).
\end{equation}
\end{proposition}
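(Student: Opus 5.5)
The plan is to build $\mu_{a,b}$ at the level of Cartier modules, use the equivalence between $V$-complete Cartier modules and formal Lie groups to turn it into a morphism, and then check that it is an isomorphism by comparing tangent spaces.

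\emph{Construction.} By \Cref{prop:AM-cartier} and \Cref{prop:AM-hodge-good}(1), we have $M_i\simeq\HH^{2i}(X,\Wit\Oh_X)$. Cup product on the Witt sheaves, $\Wit_r\Oh_X\otimes\Wit_r\Oh_X\to\Wit_r\Oh_X$, is compatible with restriction, so passing to the limit over $r$ gives a $\Wit$-bilinear pairing $M_a\times M_b\to M_{a+b}$. I would then check the identities defining the Cartier box product. Compatibility with $F$ is immediate, since $F$ is a ring endomorphism of $\Wit_r\Oh_X$. The Verschiebung satisfies the projection formula $V(x)\cdot y=V(x\cdot Fy)$, which holds in Witt vectors and hence in cohomology. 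The pairing is continuous for the $V$-adic topologies. Together these say the pairing factors through $M_a\mathbin{\widehat\boxtimes}_{\Wit}M_b$, and \eqref{eq:boxCartier} then turns it into $\mu_{a,b}\colon E_a\boxtimes E_b\to E_{a+b}$.

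\emph{Isomorphism.} Both sides are smooth one-dimensional formal groups. For the target this is \Cref{prop:AM-hodge-good}(1). For the source, the box product of two one-dimensional formal Lie groups is again one-dimensional, with tangent space $\Lie E_a\otimes\Lie E_b$; this is the Antieau--Nikolaus computation, where $M/VM$ of a box product is the tensor product of the $M/VM$. A morphism between smooth one-dimensional formal groups is an isomorphism as soon as it is an isomorphism on tangent spaces. On $M/VM=\HH^{2i}(X,\Oh_X)$, the map $\mu_{a,b}$ reduces to coherent cup product $\HH^{2a}(X,\Oh_X)\otimes\HH^{2b}(X,\Oh_X)\to\HH^{2a+b}(X,\Oh_X)$. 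To justify that $M_i/VM_i\simeq\HH^{2i}(X,\Oh_X)$ here, I would use the sequence $0\to\Wit\Oh_X\xrightarrow{V}\Wit\Oh_X\to\Oh_X\to0$ together with $\HH^{2i\pm1}(X,\Oh_X)=0$. By Hodge-goodness \eqref{eq:HG}, this cup product sends $\eta^a\otimes\eta^b$ to $\eta^{a+b}\ne0$ when $a+b\le n$, so it is an isomorphism of lines. Therefore $\mu_{a,b}$ is an isomorphism. Finally, \eqref{eq:boxpowers} follows by induction on $i$ using $\mu_{1,i-1}$ and associativity of the box product.

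\emph{Main obstacle.} The hardest step is the construction: verifying that the Witt cup pairing is a genuine Cartier bilinear map that descends to the completed relative box product. The subtle points are the $V$-projection formula and passing to the limit over $r$ in cohomology, where $\varprojlim$ and cup product must be handled with Mittag--Leffler, available from the finite-length statement used in \Cref{lem:basic-properties}. Here I would lean on the Mondal--Reinecke formalism, in which $R\Gamma(X,\Wit\Oh_X)$ is an algebra in derived Cartier modules. A secondary subtlety is that the tangent-space criterion for isomorphism requires both groups to be smooth one-dimensional formal groups, including in the infinite-height case where $E\simeq\widehat{\mathds G}_a$. The criterion still holds there, because a morphism of smooth formal Lie groups that is an isomorphism on Lie algebras is an isomorphism.
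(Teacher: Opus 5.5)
Your proposal is correct and follows essentially the same route as the paper. You build the pairing $M_a\times M_b\to M_{a+b}$ from the Witt cup product and check the $(V,F)$-relations so that it factors through the completed box product. You then identify the differential with the coherent cup product $\eta^a\otimes\eta^b\mapsto\eta^{a+b}$ on the lines $M_i/VM_i\simeq\HH^{2i}(X,\Oh_X)$, and conclude by the inverse function theorem for smooth one-dimensional formal groups. The only differences are small. You obtain $M_i/VM_i\simeq\HH^{2i}(X,\Oh_X)$ directly from both adjacent odd vanishings, whereas the paper uses an injection plus a dimension count. Also, your displayed target should read $\HH^{2a+2b}(X,\Oh_X)$, not $\HH^{2a+b}(X,\Oh_X)$.
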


\begin{proof}
The products of the Witt sheaves give a continuous $\Wit$-balanced pairing $M_a\times M_b\to M_{a+b}$, $(x,y)\mapsto x\smile y$, subject to the standard Witt identities
\[
  F(x\smile y)=Fx\smile Fy,\qquad
  V(x\smile Fy)=Vx\smile y,\qquad
  V(Fx\smile y)=x\smile Vy .
\]
These are exactly the $(V,F)$-bilinearity relations of \cite[Definition~4.7 and Example~4.8]{AntieauNikolaus2021}; such pairings are corepresented by the Cartier box product \cite[Lemma~4.9]{AntieauNikolaus2021}, and the construction passes to derived $V$-completion \cite[Proposition~4.14]{AntieauNikolaus2021}.
As $M_{a+b}$ is $V$-complete we obtain $M_a\mathbin{\widehat\boxtimes}_{\Wit}M_b\to M_{a+b}$, which by the covariance of $\Cart$ and \eqref{eq:boxCartier} is a morphism in the direction \eqref{eq:mubox}.

It remains to compute the differential of $\mu_{a,b}$.
Cartier theory gives $\Lie(E_i)\simeq M_i/VM_i$ \cite[Theorem~4.23]{Zink1984}.
The exact sequences
\[
  0\longrightarrow\Wit_{r-1}\Oh_X\xrightarrow{\ V\ }\Wit_r\Oh_X
   \xrightarrow{\ R_{r-1}\ }\Oh_X\longrightarrow0
\]
together with $\HH^{2i-1}(X,\Oh_X)=0$ show that $V$ is injective on $M_{i}$.
Then, passing to the limit along the restriction maps, the uniqueness of $V$-preimages gives $\ker\bigl(M_i\to\HH^{2i}(X,\Oh_X)\bigr)=VM_i$ and hence an injection
\begin{equation}\label{eq:rhoi}
  \rho_i\colon M_i/VM_i\lhook\joinrel\longrightarrow\HH^{2i}(X,\Oh_X).
\end{equation}
Both sides are one-dimensional, the left by \Cref{prop:AM-hodge-good}(1) and the right by \eqref{eq:HG}, so $\rho_i$ is an isomorphism.
Reduction modulo $V$ is symmetric monoidal \cite[Lemma~4.12 and Proposition~4.14]{AntieauNikolaus2021}, whence
\[
  \Lie(E_a\boxtimes E_b)\simeq(M_a/VM_a)\otimes_\bk(M_b/VM_b),
\]
and since $R_{r-1}\colon\Wit\Oh_X\to\Oh_X$ is a map of sheaves of rings, the differential of $\mu_{a,b}$ is identified under \eqref{eq:rhoi} with the ordinary cup product
\[
  \HH^{2a}(X,\Oh_X)\otimes_\bk\HH^{2b}(X,\Oh_X)\longrightarrow
  \HH^{2a+2b}(X,\Oh_X),
\]
which sends $\eta^a\otimes\eta^b$ to $\eta^{a+b}\ne0$ and is therefore an isomorphism.
The source of \eqref{eq:mubox} is again a smooth one-dimensional formal Lie group \cite[Proposition~7.2.17]{MondalReinecke2026}, and a morphism of smooth one-dimensional formal schemes with invertible differential is an isomorphism by the formal inverse function theorem\cite[discussion after Definition~1.22]{Zink1984}.
Iterating \eqref{eq:mubox} gives \eqref{eq:boxpowers}.
\end{proof}

\begin{corollary}
\label{cor:boxheights}
Let $X$ be a Hodge-good smooth proper algebraic space of dimension $2n\ge4$ over $\bk$ and write $h=\hgt(\Phi^2_X)$, which equals $h_2^{\crys}(X)$ if $X$ is a Hodge-good reduction.
Then
\begin{equation}\label{eq:heightpattern}
  \bigl(\hgt(\Phi^2_X),\hgt(\Phi^4_X),\dots,\hgt(\Phi^{2n}_X)\bigr)=
  \begin{cases}
    (1,1,\dots,1), & h=1,\\
    (h,\infty,\dots,\infty), & 1<h<\infty,\\
    (\infty,\infty,\dots,\infty), & h=\infty .
  \end{cases}
\end{equation}
\end{corollary}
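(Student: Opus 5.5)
By \Cref{prop:boxcup} we have $\Phi^{2i}_X\simeq(\Phi^2_X)^{\boxtimes i}$. So it suffices to compute the height of an $i$-fold box power of a smooth one-dimensional formal group $E$ of height $h$. I will do this with Cartier modules. Put $M=\Cart(E)\simeq\HH^2(X,\Wit\Oh_X)$. Heights are insensitive to extension of the perfect base field, so I first pass to $\overline{\bk}$. There, Dieudonné--Cartier classification lets me normalise $M$ in each case.

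\emph{Case $h=1$.} Here $E\simeq\widehat{\GGm}$, and $M\simeq\Wit$ with $F=\sigma$ bijective and $V=p\sigma^{-1}$. This is the unit object of $\widehat\boxtimes_{\Wit}$: $\widehat{\GGm}$ corresponds to the unit in the convention of \eqref{eq:boxCartier}, as for $\Cart(\widehat{\GGm})=\Wit$ in \cite{AntieauNikolaus2021}. Hence $E^{\boxtimes i}\simeq\widehat{\GGm}$, every height is $1$, and this gives the pattern $(1,\dots,1)$. For a direct check, the box product is generated by $x\boxtimes y$ subject to the relations recorded in the proof of \Cref{prop:boxcup}. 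When $F$ is bijective on both factors, every generator can be written $Fx'\boxtimes Fy'=F(x'\boxtimes y')$, so $F$ is surjective on $M^{\widehat\boxtimes i}$. A one-dimensional formal group whose Cartier module has surjective $F$ has height one.

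\emph{Cases $1<h\le\infty$.} The key relation is
\[
V(x\boxtimes y)=V(x\boxtimes F(F^{-1}\cdots))
\]
or, more usefully, $V(Fx\boxtimes y)=x\boxtimes Vy$. I first treat $h=\infty$, where $E\simeq\widehat{\mathds G}_a$ over $\overline{\bk}$ and $M\simeq\bk[\![V]\!]$ is killed by $F$ (so $p=VF=0$). Then every element of $M^{\widehat\boxtimes i}$ is, up to $V$-adic completion, a sum of $V^a(x\boxtimes y)$. Moreover $F(x\boxtimes y)=Fx\boxtimes Fy=0$. So $F=0$ on the box power, which is again $\widehat{\mathds G}_a$ of infinite height; indeed it suffices that $F$ vanish on a $V$-topological generator.

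For $1<h<\infty$, write $M=\Wit[\![V]\!]/(F-V^{h-1})$, so $F=V^{h-1}$ up to a unit. Let $e$ be a generator, and take $e\boxtimes e$ as the generator of the rank-one object $M\widehat\boxtimes M$ (its Lie algebra is one-dimensional, as in the proof of \Cref{prop:boxcup}). Then
\[
F(e\boxtimes e)=Fe\boxtimes Fe=V^{h-1}e\boxtimes V^{h-1}e.
\]
Using $Vx\boxtimes Vy=V(FVx\boxtimes y)=pV(x\boxtimes y)$, this becomes a multiple of $p\cdot(\ldots)$. Because $h\ge2$ forces $p\in V^{h}M$, I expect every power of $V$ multiplying $e\boxtimes e$ to pick up extra factors of $p$, and $F$ to land in $\bigcap_m V^m=0$. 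The result would be $F=0$, hence height $\infty$. The general mechanism is that a nontrivial product $V^a e\boxtimes V^b e$ with $a,b\ge1$ gains a factor of $p$ with each shared $V$.

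The main obstacle is exactly this computation of $F$ on $M^{\widehat\boxtimes2}$ when $h\ge2$, together with the passage from $2$ to general $i$. As a cross-check I would use the slopes. By \Cref{prop:WittSlope}, $\Cart(E_i)\otimes\Kzero\simeq(D_{2i})_{[0,1)}$. For a Hodge-good reduction, \Cref{cor:automatic-top-slopes} and \eqref{eq:less1iso} give $(\Sym^iD_2)_{[0,1)}=0$ for $i\ge2$ when $h\ge2$, since the slopes are at least $2(1-\tfrac1h)\ge1$. Thus $M_i$ is torsion, which forces infinite height by \Cref{rem:AM-slope-convention}. For general Hodge-good $X$ I cannot use these slopes, so I would rely on the Cartier computation above, reducing general $i$ to $i=2$ via $E_i\simeq E_{i-1}\boxtimes E_1$. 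Since $E_{i-1}$ already has infinite height for $i\ge3$, the $h=\infty$ argument then applies.
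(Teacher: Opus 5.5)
Your outline is the paper's: pass to box powers via \eqref{eq:boxpowers}, use that $\widehat{\GGm}$ is the unit for $\boxtimes$ when $h=1$, and for $h>1$ show $E_1\boxtimes E_1\simeq\widehat{\mathds G}_a$, then induct through $E_i\simeq E_{i-1}\boxtimes E_1$. The paper does no computation for $h>1$. It quotes \cite[Proposition~7.2.21]{MondalReinecke2026}: the box product of two one-dimensional formal Lie groups of height greater than one is $\widehat{\mathds G}_a$. You try to prove this by hand, but the case $1<h<\infty$ stops at ``I expect''. That step is therefore not yet proved.

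It closes in a few lines with the identity you already have. Put $N=M\mathbin{\widehat\boxtimes}_{\Wit}M$ and $g=e\boxtimes e$.
\begin{itemize}
\item Iterating $Va\boxtimes Vb=pV(a\boxtimes b)$ gives $Fg=V^{h-1}e\boxtimes V^{h-1}e=p^{h-1}V^{h-1}g$. Since $p$ is central, $pg=VFg=p^{h-2}V^{h}(pg)$.
\item Iterating the last identity gives $pg\in\bigcap_m V^mN=0$. Here $N$ is $V$-adically separated because it is the Cartier module of a formal Lie group. Then $Fg=V^{h-1}(p^{h-2}\cdot pg)=0$, using $h\ge2$.
\item The line $N/VN$ is spanned by $g$, since reduction mod $V$ is monoidal. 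So every $z\in N$ has the form $z=[c]g+Vz'$. This gives $Fz=pz'$ and $pz=VFz=V(pz')$.
\item Hence $pN\subseteq V(pN)$, so $pN=0$. Therefore $F=0$ on $N$, and $N$ is the Cartier module of $\widehat{\mathds G}_a$.
\end{itemize}

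Your $h=\infty$ step needs the same care. ``$F$ vanishes on a topological generator'' is not enough on its own, because $F(Vz)=pz$. You also need $p$ to kill the box product. This follows from $p(x\boxtimes y)=VF(x\boxtimes y)=V(Fx\boxtimes Fy)=0$, and then from the last two items above. This argument only uses $F=0$ on one factor. That is exactly what your induction step $E_{i-1}\boxtimes E_1$ requires, since there $E_{i-1}\simeq\widehat{\mathds G}_a$ but $E_1$ may have finite height.

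With these additions your argument is a self-contained replacement for the citation, and the slope cross-check is unnecessary. The paper's version is shorter but depends entirely on the external result.
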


\begin{proof}
Heights may be computed after extending $\bk$ to an algebraic closure.
If $h=1$, then $E_1\simeq\widehat{\mathds{G}}_m$, which is the unit for $\boxtimes$ \cite[Remark~7.2.20]{MondalReinecke2026}; by \eqref{eq:boxpowers} every $E_i$ is then of height one.
If $h>1$, possibly infinite, then the box product of two one-dimensional formal Lie groups of height greater than one is $\widehat{\mathds{G}}_a$ \cite[Proposition~7.2.21]{MondalReinecke2026}, so $E_2\simeq E_1\boxtimes E_1\simeq\widehat{\mathds{G}}_a$, and inductively $E_i\simeq E_1\boxtimes E_{i-1}$ is additive for every $i\ge2$.
Additive groups have infinite height, while the first entry is unchanged.
\end{proof}

\begin{corollary}
\label{cor:HG-qF-dichotomy}
Let $Z$ be a Hodge-good smooth proper geometrically connected algebraic space of dimension $2n\ge4$ over a perfect field of characteristic $p$, with $\omega_Z\simeq\Oh_Z$.
Then
\[
  \hgt(Z)=\hgt(\Phi^{2n}_Z)=
  \begin{cases}
    1, & \hgt(\Phi^2_Z)=1,\\
    \infty, & \hgt(\Phi^2_Z)>1.
  \end{cases}
\]
Thus $Z$ is quasi-$F$-split if and only if it is $F$-split, equivalently if and only if $\Phi^2_Z$ has height one.
\end{corollary}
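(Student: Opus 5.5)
We need to prove Corollary cor:HG-qF-dichotomy: $\hgt(Z)=\hgt(\Phi^{2n}_Z)$, and the latter is $1$ if $\hgt(\Phi^2)=1$ and $\infty$ otherwise.

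The plan is to reduce to the top-degree formal group and then read off the dichotomy from \Cref{cor:boxheights}. By \Cref{prop:AM-hodge-good}(1), Hodge-goodness makes $E_n=\Phi^{2n}_Z$ a smooth one-dimensional formal group, and \Cref{cor:boxheights} gives $\hgt(\Phi^{2n}_Z)=1$ when $\hgt(\Phi^2_Z)=1$ and $\hgt(\Phi^{2n}_Z)=\infty$ otherwise, because $n\ge2$. It therefore remains to show $\hgt(Z)=\hgt(\Phi^{2n}_Z)$. Both sides are insensitive to extending the perfect base field, so we may work over $\overline{\bk}$.

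\textbf{Case $\hgt(\Phi^{2n}_Z)=1$.} We show that $Z$ is $F$-split. The restriction $R_{r-1}\colon\Wit\Oh_Z\to\Oh_Z$ induces the isomorphism $\rho_n\colon M_n/VM_n\simeq\HH^{2n}(Z,\Oh_Z)$ of \eqref{eq:rhoi}, and this isomorphism intertwines the Witt Frobenius $F$ on $M_n$ with the absolute Frobenius on $\HH^{2n}(Z,\Oh_Z)$. For a one-dimensional formal group of height one, i.e.\ $\widehat{\mathds{G}}_m$ over $\overline\bk$, the operator $F$ is bijective on $M_n$. Hence $F$ is nonzero on $M_n/VM_n$, so Frobenius acts nontrivially on $\HH^{2n}(Z,\Oh_Z)$. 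Since $\omega_Z\simeq\Oh_Z$ and $Z$ is geometrically connected, \Cref{lem:Fsplit} with $d=2n$ shows that $Z$ is $F$-split. Thus $\hgt(Z)=1$.

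\textbf{Case $\hgt(\Phi^{2n}_Z)=\infty$.} We show that $Z$ is not quasi-$F$-split. By \Cref{rem:AM-infinite-height}, applied with $q=2n$ (prorepresentability holds by \Cref{prop:AM-hodge-good}(1)), the Cartier module $M_n\simeq\HH^{2n}(Z,\Wit\Oh_Z)$ is not finitely generated over $\Wit$. Concretely, it is the Cartier module of $\widehat{\mathds{G}}_a$, which is killed by $p$ but infinite-dimensional. By \Cref{lem:basic-properties}(2), $Z$ cannot be quasi-$F$-split, so $\hgt(Z)=\infty$.

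The two cases together give $\hgt(Z)=\hgt(\Phi^{2n}_Z)\in\{1,\infty\}$. The remaining equivalences are then immediate: quasi-$F$-split and $F$-split both amount to $\hgt(Z)=1$, which holds exactly when $\hgt(\Phi^2_Z)=1$.

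The main obstacle is the Frobenius compatibility of $\rho_n$ in the first case. One has to check that the Witt Frobenius on $M_n$ reduces, modulo $V$, to the $p$-power map on $\Oh_Z$; this uses $R\circ F=\Frob\circ R$ on $\Wit_r\Oh_Z$. One also has to check that for height one, $F$ acting on $M/VM$ is nonzero. Since $FV=p$, and $F$ is bijective for $\widehat{\mathds{G}}_m$, $F$ does not map $M$ into $VM$. If this route proves delicate, an alternative is Yobuko's argument in \Cref{thm:yobuko-height}, which only uses $\HH^{2n-1}(Z,\Oh_Z)=0$ and the one-dimensionality of $\HH^{2n}$, both of which Hodge-goodness supplies.
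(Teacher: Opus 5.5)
Your proposal is correct and follows the paper's proof essentially step for step. Hodge-goodness and \Cref{cor:boxheights} reduce everything to the top formal group. In the height-one case, bijectivity of $F$ on $M=\HH^{2n}(Z,\Wit\Oh_Z)$ and the Frobenius-compatible isomorphism \eqref{eq:rhoi} make Frobenius nonzero on $\HH^{2n}(Z,\Oh_Z)$, and \Cref{lem:Fsplit} then gives $F$-splitting. In the infinite-height case, \Cref{rem:AM-infinite-height} together with \Cref{lem:basic-properties}(2) rules out quasi-$F$-splitting. The only cosmetic difference is that the paper phrases the second case as its contrapositive: quasi-$F$-split $\Rightarrow$ $M$ finitely generated $\Rightarrow$ finite height $\Rightarrow$ height one by the pattern.
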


\begin{proof}
By \Cref{prop:AM-hodge-good}(1), the formal groups $\Phi^2_Z$ and $\Phi^{2n}_Z$ are smooth and one-dimensional.
Put $M=\Cart(\Phi^{2n}_Z)=\HH^{2n}(Z,\Wit\Oh_Z)$ (\Cref{prop:AM-cartier}).
If $Z$ is quasi-$F$-split, then $M$ is finitely generated over $\Wit$ by \Cref{lem:basic-properties}(2), so $\Phi^{2n}_Z$ has finite height by \Cref{rem:AM-infinite-height}.
The height pattern of \Cref{cor:boxheights} then forces $\hgt(\Phi^2_Z)=\hgt(\Phi^{2n}_Z)=1$.

Conversely, suppose $\hgt(\Phi^2_Z)=1$.
By \Cref{cor:boxheights}, the top formal group also has height one, so $F$ is bijective on $M$ and $V=pF^{-1}$.
Thus $M/VM=M/pM$, and $F$ acts nontrivially on this one-dimensional quotient.
The isomorphism $M/VM\simeq\HH^{2n}(Z,\Oh_Z)$ from \eqref{eq:rhoi} is induced by Witt restriction, which commutes with Frobenius; hence Frobenius is nonzero on top coherent cohomology.
Hence $Z$ is $F$-split by \Cref{lem:Fsplit}, and $\hgt(Z)=1$ by \Cref{def:quasi-f-split}.
If $\hgt(\Phi^2_Z)>1$, then \Cref{cor:boxheights} gives $\hgt(\Phi^{2n}_Z)=\infty$, and the first implication shows that $Z$ is not quasi-$F$-split, so $\hgt(Z)=\infty$.
\end{proof}

\begin{remark}
\label{rem:cartier-vs-verbitsky}
The Verbitsky comparison (\Cref{prop:quotient}) uses a characteristic-zero lift and $p$-adic Hodge theory to determine $(D_{2i})_{[0,1)}$, without assuming Hodge-goodness.
The Cartier argument assumes \eqref{eq:HG}, uses no lift, and identifies the formal groups integrally, giving their heights in every even degree.
For Hodge-good reductions, the height pattern in \Cref{cor:boxheights}, together with \Cref{prop:AM-cartier,prop:WittSlope}, recovers the top-degree slope description of \Cref{cor:automatic-top-slopes}.
\end{remark}

\section{The Witt--Euler characteristic and the dichotomy}
\label{sec:main-results}

Unless another base is specified, throughout this section we keep the arithmetic setting of \Cref{subsec:automatic}: $X$ is the special fibre of a smooth proper algebraic space $\mathscr X\to\Spec(\Oh_K)$ whose geometric generic fibre is an irreducible hyperk\"ahler variety of dimension $2n$, and $h_2^{\crys}(X)$ is its degree-two crystalline height (\Cref{def:crystalline-height}).
We first prove the Witt--Euler identity for smooth proper algebraic spaces and evaluate its slope sum for good reductions. We then establish the ordinary implication and the main dichotomy, before recording applications and a top Witt-cohomology refinement under $\HH^1(X,\Oh_X)=0$.

\subsection{The Witt--Euler identity}

For a smooth proper algebraic space $Z$ over a perfect field, write $D_j(Z)=\HH^j_{\crys}(Z/\Wit)[1/p]$.
Using the slope defect defined in \Cref{subsec: weak admissibility and slopes}, set
\[
  \slopedefect_j(Z)=\slopedefect\bigl(D_j(Z)\bigr),
  \qquad
  E_0(Z)=\sum_{j\ge0}(-1)^j\slopedefect_j(Z) .
\]
Crystalline slopes are non-negative, so only the slope-$[0,1)$ part contributes to $\slopedefect_j(Z)$.
When $\HH^j(Z,\Wit\Oh_Z)$ is finitely generated over $\Wit$, \Cref{prop:WittSlope} identifies $\slopedefect_j(Z)$ with its Verschiebung index, the $p$-adic valuation of the determinant of $V$ on its free quotient.
Since $\HH^j_{\crys}$ vanishes for $j>2\dim Z$, the sum defining $E_0(Z)$ is finite.
For the special fibre $X$ of \Cref{subsec:automatic} we abbreviate $\slopedefect_j=\slopedefect_j(X)$; in particular, the quantity $\slopedefect(S)$ in the proof of \Cref{prop:automatic-D2-block} is $\slopedefect_2$.
In Crew's notation $\slopedefect_j(Z)$ is the Hodge--Newton number $m^{0,j}(Z)$ and $E_0(Z)=m^0(Z)$.

The next proposition is the case $i=0$ of Crew's Euler characteristic formula
\[
  m^i(Z)+T^i(Z)+2T^{i-1}(Z)+T^{i-2}(Z)=\chi(Z,\Omega^i_Z)
\]
\cite[Theorem~4]{Crew1985}, the hypothesis of finite generation being what makes the domino term $T^0(Z)$ vanish.
We record a self-contained proof for the reader's convenience.

\begin{proposition}
\label{prop:Witt-Euler}
Let $Z$ be a smooth proper algebraic space of dimension $d$ over a perfect field, and suppose $\HH^j(Z,\Wit\Oh_Z)$ is finitely generated over $\Wit$ for every $j$.
Then
\[
  \chi(Z,\Oh_Z)=E_0(Z) .
\]
\end{proposition}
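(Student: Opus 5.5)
Write $M_j=\HH^j(Z,\Wit\Oh_Z)$ and $M_{j,r}=\HH^j(Z,\Wit_r\Oh_Z)$. The plan is to compute $\chi(Z,\Oh_Z)$ as the length growth of finite-level Witt cohomology. We first relate it to the Euler characteristic of $\Wit_r\Oh_Z$, and then read off that growth from the $V$-structure of the finitely generated limits $M_j$.

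Step one: additivity in $r$. The exact sequences $0\to\Wit_{r-1}\Oh_Z\xrightarrow{V}\Wit_r\Oh_Z\to\Oh_Z\to0$ are sequences of sheaves of $\Wit_r$-modules; here $V$ is semilinear along the Frobenius of $\Wit(\bk)$, which is harmless for lengths since $\bk$ is perfect. All finite-level groups have finite length (\cite[Proposition~4.5.2]{Olsson2007Crystalline}), so the long exact sequence gives
\[
  \chi_{\Wit}(Z,\Wit_r\Oh_Z):=\sum_j(-1)^j\length_{\Wit}M_{j,r}=r\,\chi(Z,\Oh_Z).
\]
Hence it suffices to show $\sum_j(-1)^j\length M_{j,r}=r\,E_0(Z)+O(1)$ as $r\to\infty$, and then divide by $r$.

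Step two: compare finite levels with limits. The standard exact sequences relate $M_{j,r}$ to $M_j/V^rM_j$ and to $V^r$-torsion of $M_{j+1}$. Concretely, on the \'etale site, $R\Gamma(Z,\Wit_r\Oh_Z)\simeq R\Gamma(Z,\Wit\Oh_Z)\otimes^{L}_{\Wit[V]}\Wit[V]/V^r$ in the Cartier-module sense, equivalently via the short exact sequence $0\to\Wit\Oh_Z\xrightarrow{V^r}\Wit\Oh_Z\to\Wit_r\Oh_Z\to0$. This gives
\[
  0\to M_j/V^rM_j\to M_{j,r}\to M_{j+1}[V^r]\to0 .
\]
Here $R^1\varprojlim$ vanishes by Mittag--Leffler, so $\HH^j(Z,\Wit\Oh_Z)$ is the limit as stated.

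Each $M_j$ is finitely generated, so it splits as free part $\oplus$ finite torsion $T_j$, and $V$ is injective on the free part after inverting $p$ (since $FV=p$). Thus $\length(M_j/V^rM_j)=r\cdot\delta'_j+c_j$ for large $r$, where $\delta'_j=v_p(\det V)$ on the free quotient and $c_j$ is bounded in $r$. The term $\length M_{j+1}[V^r]$ is bounded by $\length T_{j+1}$, because $V$-torsion in the free quotient is zero. By \Cref{prop:WittSlope}, the free quotient rationally is $(D_j)_{[0,1)}$ with $F$ the crystalline Frobenius. On a slope-$\lambda$ piece $V=pF^{-1}$ has slope $1-\lambda$, so $\delta'_j=\sum_{\lambda<1}(1-\lambda)\dim (D_j)_\lambda=\slopedefect_j(Z)$. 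Combining, $\sum_j(-1)^j\length M_{j,r}=rE_0(Z)+O(1)$, and comparison with step one gives $\chi(Z,\Oh_Z)=E_0(Z)$. This is also where finite Witt torsion cancels.

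The main obstacle is step two: establishing the short exact sequence relating $M_{j,r}$ to $M_j/V^r$ and $M_{j+1}[V^r]$ without a naive module structure. The issue is that $V^r$ on $\Wit\Oh_Z$ is injective with cokernel $\Wit_r\Oh_Z$ only as abelian sheaves, and it is $\sigma^{-r}$-semilinear. I would check this directly on the sheaf level; injectivity of $V$ on $\Wit\Oh_Z$ of a reduced ring is standard. Then I would pass to cohomology, using that inverse-limit cohomology commutes with these exact sequences by the Mittag--Leffler property. Semilinearity only twists lengths by an automorphism of $\bk$, so it does not affect any count.
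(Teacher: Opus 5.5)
Your argument is correct, but it reaches the identity by a growth-rate argument in $r$ rather than the paper's exact computation at a single level.

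\textbf{The paper's route.} It uses only $0\to\Wit\Oh_Z\xrightarrow{V}\Wit\Oh_Z\to\Oh_Z\to0$, which is your step two with $r=1$. This gives $\chi(Z,\Oh_Z)=\sum_j(-1)^j(\length\coker V_j-\length\ker V_j)$ directly. The torsion then cancels exactly, because a semilinear endomorphism of a finite-length module has kernel and cokernel of equal length. Finally the paper computes $\length\coker V_{L}=v_p(\det B)=\rank L-t_N$ on the free quotient.

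\textbf{Your route.} You run the sequence with $V^r$ for every $r$ and add the identity $\sum_j(-1)^j\length\HH^j(Z,\Wit_r\Oh_Z)=r\,\chi(Z,\Oh_Z)$. The torsion then contributes only an error bounded independently of $r$, which disappears after dividing by $r$. This avoids the kernel--cokernel length lemma, at the cost of the extra additivity step and a limit. Both routes use the same Mittag--Leffler identification of the cohomology of $\Wit\Oh_Z$ with the inverse limit, and both use \Cref{prop:WittSlope} to convert $v_p(\det V)$ into $\slopedefect_j(Z)$.

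\textbf{Bookkeeping.} A $\Wit$-complement of $T_j$ in $M_j$ need not be $V$-stable, so the ``free part'' should be the free quotient $L_j=M_j/T_j$. The clean justification is the snake lemma applied to $V^r$ on $0\to T_j\to M_j\to L_j\to0$. It gives $M_j[V^r]=T_j[V^r]$ and
\[
\length(M_j/V^rM_j)=\length(T_j/V^rT_j)+r\,v_p(\det V_{L_j})
\]
for every $r$, not just large $r$. Here $v_p(\det V^r)=r\,v_p(\det V)$ because $\sigma$ preserves valuations. This yields your uniform bound $\length T_j+\length T_{j+1}$ on the error in degree $j$. Also, $V$ is injective on $L_j$ itself, not merely after inverting $p$, since $FV=p$ and $L_j$ is torsion-free.
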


\begin{proof}
Write $M_j=\HH^j(Z,\Wit\Oh_Z)$ and let $V_j$ denote the Verschiebung on $M_j$, a $\sigma^{-1}$-semilinear endomorphism.
The finite length of each $\HH^j(Z,\Wit_r\Oh_Z)$ makes these inverse systems satisfy the Mittag--Leffler condition, so $M_j$ is computed by the inverse limit with no derived-limit term \cite[Proposition~4.5.2]{Olsson2007Crystalline}.
The exact sequence $0\to\Wit\Oh_Z\xrightarrow{V}\Wit\Oh_Z\to\Oh_Z\to0$ then gives, in each degree,
\[
  0\longrightarrow\coker V_j\longrightarrow\HH^j(Z,\Oh_Z)\longrightarrow\ker V_{j+1}\longrightarrow0 .
\]
Both $\ker V_j$ and $\coker V_j$ are killed by $p$, since $FV=VF=p$, and hence have finite length.
Moreover $M_j=0$ for $j>d$, and $V_0$ is injective.
Taking alternating sums of the displayed sequences, the terms $\ker V_{j+1}$ telescope and we obtain
\[
  \chi(Z,\Oh_Z)=\sum_j(-1)^j\bigl(\length_\Wit\coker V_j-\length_\Wit\ker V_j\bigr) .
\]

It remains to evaluate each summand.
Let $M$ be a finitely generated $\Wit$-module with such operators $F,V$, and write $T=M_{\mathrm{tors}}$ and $L=M/T$.
Then $V$ is injective on the free module $L$, and the snake lemma gives $\ker V_M=\ker V_T$ together with an exact sequence $0\to\coker V_T\to\coker V_M\to\coker V_L\to0$.
As $T$ has finite length and $V_T$ is semilinear for an automorphism of $\Wit$, its kernel and cokernel have the same length, so the torsion cancels:
\[
  \length\coker V_M-\length\ker V_M=\length\coker V_L .
\]
Choose a basis of $L$ and let $A,B$ be the matrices of $F$ and $V$ in it, so that $A\,\sigma(B)=pI_r$ with $r=\rank_\Wit L$.
Then
\[
  \length\coker V_L=v_p(\det B)=r-v_p(\det A)=r-t_N(M[1/p]) .
\]
By \Cref{prop:WittSlope} we have $M_j[1/p]\simeq\bigl(D_j(Z)\bigr)_{[0,1)}$, so the degree-$j$ index is exactly $\slopedefect(M_j[1/p])=\slopedefect_j(Z)$.
Substituting gives $\chi(Z,\Oh_Z)=E_0(Z)$.
\end{proof}

For good reductions, the odd-degree terms in $E_0(X)$ vanish.
\begin{lemma}
\label{lem:HK-odd-low-slopes}
Let $X$ be a good reduction of a hyperk\"ahler variety. For every integer $0\le i\le\dim X-1$, we have $\slopedefect_{2i+1}(X)=0$.
\end{lemma}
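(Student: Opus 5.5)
We will show that each odd-degree isocrystal $D_{2i+1}=\HH^{2i+1}_{\crys}(X/\Wit)[1/p]$ has no Newton slopes in $[0,1)$. Since $\slopedefect(M)=\slopedefect(M_{<1})$ and crystalline slopes are non-negative, this gives $\slopedefect_{2i+1}(X)=0$. The argument parallels the proof of \Cref{prop:quotient}: we pass to the filtered $\varphi$-module and apply \Cref{lem:WA}.

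First, $D_{2i+1}$ is the weakly admissible filtered $\varphi$-module attached to the crystalline representation $\HH^{2i+1}_{\et}(Y_{\overline K},\Qp)$, as recalled at the start of \Cref{sec:slopes}. Its Hodge filtration is the Hodge filtration on $\HH^{2i+1}_{\dR}(Y/K)$, and it is effective, so $\Fil^0=D_{2i+1,K}$. The key input is that
\[
  \gr^0\HH^{2i+1}_{\dR}(Y/K)\simeq\HH^{2i+1}(Y,\Oh_Y)=0,
\]
by Hodge degeneration in characteristic zero together with the structure-sheaf Hodge numbers of an irreducible hyperk\"ahler variety recorded in \Cref{subsec:automatic}, which vanish in odd degrees. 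Hence $\Fil^1(D_{2i+1})_K=(D_{2i+1})_K$. \Cref{lem:WA}, applied with $Q=D_{2i+1}$, then shows that every Newton slope is at least $1$. So $(D_{2i+1})_{[0,1)}=0$ and the defect vanishes.

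I expect no serious obstacle. The only point needing care is that the comparison identifies $\gr^0$ of the filtration on $(D_{2i+1})_K$ with $\HH^{2i+1}(Y,\Oh_Y)$ for the algebraic-space model. This is exactly the compatibility already invoked via Olsson's comparison in \Cref{sec:slopes}. Everything happens on the generic fibre, so no assumption on $\HH^1(X,\Oh_X)$ or on torsion is needed. That matters here, since $\HH^1(X,\Oh_X)$ may be nonzero by \Cref{rem:h1-not-automatic}. The range $0\le i\le\dim X-1$ covers all odd degrees up to $2\dim X-1$, and the argument is uniform in $i$.
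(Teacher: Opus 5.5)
Your proposal is correct and follows the paper's own argument. The paper likewise uses weak admissibility of $D_{2i+1}$ and the vanishing $\gr^0(D_{2i+1})_K=\HH^{2i+1}(Y,\Oh_Y)=0$ to get $\Fil^1(D_{2i+1})_K=(D_{2i+1})_K$, and then applies \Cref{lem:WA} to exclude Newton slopes below one, so that $\slopedefect_{2i+1}(X)=0$.
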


\begin{proof}
$D_{2i+1}$ is weakly admissible and $\gr^0(D_{2i+1})_K=\HH^{2i+1}(Y,\Oh_Y)=0$, so $\Fil^1(D_{2i+1})_K=(D_{2i+1})_K$ and \Cref{lem:WA} implies that $D_{2i+1}$ has no Newton slope below one when $2i+1$ is odd.
\end{proof}

\begin{proposition}
\label{prop:HK-Euler-slopes}
Let $X$ be a good reduction of dimension $2n$ with $n\ge2$. Then
\[
  E_0(X)=
  \begin{cases}
    n+1, & h_2^{\crys}(X)=1,\\
    2,   & 1<h_2^{\crys}(X)<\infty,\\
    1,   & h_2^{\crys}(X)=\infty .
  \end{cases}
\]
\end{proposition}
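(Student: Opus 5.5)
We compute $E_0(X)=\sum_j(-1)^j\slopedefect_j$ degree by degree. The terms $\slopedefect_j$ vanish for $j>2n$, since $\HH^j_{\crys}$ vanishes above the dimension range; more precisely $D_j$ has no slopes below one there, as it is either zero or handled by \Cref{lem:WA}. By \Cref{lem:HK-odd-low-slopes} all odd-degree terms vanish. In degree zero, $D_0=\Kzero$ has slope zero, so $\slopedefect_0=1$. Thus
\[
  E_0(X)=1+\sum_{i=1}^n\slopedefect\bigl((D_{2i})_{[0,1)}\bigr),
\]
and by \eqref{eq:less1iso} of \Cref{prop:quotient} each summand equals $\slopedefect\bigl((\Sym^iD_2)_{[0,1)}\bigr)$. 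So it suffices to compute the slope defect of $\Sym^iD_2$ below one, for $1\le i\le n$, in the three cases of \Cref{prop:automatic-D2-block}. We may extend the residue field and use Dieudonné--Manin to decompose $D_2$ into isoclinic pieces; the slopes of $\Sym^iD_2$ are then sums of $i$ slopes of $D_2$, counted with the usual symmetric-power multiplicities.

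\emph{Case $h_2^{\crys}=1$.} The slopes of $D_2$ are $0$ with multiplicity one and all others are at least $1$. In $\Sym^iD_2$, a monomial in the eigenbasis has slope below one only if all its factors are the slope-zero line. Indeed, any factor of slope at least one already pushes the sum to at least one, because slopes are nonnegative. Hence $(\Sym^iD_2)_{[0,1)}$ is the slope-zero line $L^{\otimes i}$, with defect $1$. Summing over $i=1,\dots,n$ gives $E_0=1+n$.

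\emph{Case $1<h<\infty$.} For $i=1$ the defect is $h\cdot\tfrac1h=1$. For $i\ge2$ every slope of $\Sym^iD_2$ is at least $2(1-\tfrac1h)\ge1$, so the defect vanishes; this is the argument of \Cref{cor:automatic-top-slopes}. Hence $E_0=1+1=2$.

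\emph{Case $h=\infty$.} Here all slopes of $D_2$ are at least one, so every $\Sym^iD_2$ has slopes at least $i\ge1$. All even terms with $i\ge1$ vanish and $E_0=1$.

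The main obstacle is justifying the slope computation for $\Sym^i$ in the first case: one must check that the slope-zero part of $\Sym^iD_2$ is exactly one-dimensional. This uses nonnegativity of all slopes of $D_2$, proved in \Cref{prop:automatic-D2-block}, together with the fact that the slope-$<1$ part of $D_2$ is a single line. I would argue after base change to $\overline{\bk}$ using a slope decomposition $D_2=L\oplus D_2^{\ge1}$, where $\Sym^iD_2=\bigoplus_{a+b=i}L^{\otimes a}\otimes\Sym^bD_2^{\ge1}$ and each summand with $b\ge1$ has all slopes at least one. The remaining bookkeeping is routine: the degree-zero term, and the vanishing of terms in degrees above $2n$ and of the odd terms.
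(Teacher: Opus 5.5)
Your proof is correct and is essentially the paper's argument. Degree zero contributes $1$, odd degrees vanish by \Cref{lem:HK-odd-low-slopes}, and \eqref{eq:less1iso} reduces the even degrees $2\le 2i\le 2n$ to the slopes of $\Sym^iD_2$ in the three cases of \Cref{prop:automatic-D2-block}.

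One small correction: $\HH^j_{\crys}$ vanishes only for $j>4n$, so your ``more precisely'' clause is the real justification for $2n<j\le 4n$, where $\HH^j(Y,\Oh_Y)=0$ lets \Cref{lem:WA} apply. That clause in fact makes explicit a range of degrees the paper's proof leaves implicit.
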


\begin{proof}
Degree zero contributes $\slopedefect_0=1$, and all odd degrees contribute zero by \Cref{lem:HK-odd-low-slopes}.
In degree two, if $h_2^{\crys}=h<\infty$ then the block below one has slope $1-\tfrac1h$ and multiplicity $h$, so $\slopedefect_2=h\bigl(1-(1-\tfrac1h)\bigr)=1$; if $h_2^{\crys}=\infty$ then $\slopedefect_2=0$.
For $2\le i\le n$, \Cref{prop:quotient} identifies the slopes below one in $D_{2i}$ with those of $\Sym^iD_2$.

If $h_2^{\crys}=1$, the only such slope is a slope-zero line, so $\slopedefect_{2i}=1$ for every $0\le i\le n$ and $E_0=n+1$.
If $1<h_2^{\crys}<\infty$, every slope of $\Sym^iD_2$ is at least $i/2\ge1$ for $i\ge2$, so $\slopedefect_{2i}=0$ in those degrees and $E_0=\slopedefect_0+\slopedefect_2=2$.
If $h_2^{\crys}=\infty$, every degree-two slope is already at least one, so $\slopedefect_{2i}=0$ for all $i\ge1$ and $E_0=1$.
\end{proof}

\subsection{The ordinary branch and the dichotomy}

As it is well known, for a K3 surface in a perfect field, being ordinary is equivalent to being $F$-split, and also equivalent to having Artin--Mazur height one.
The following proposition shows that one of the directions holds for good reductions of hyperk\"ahler varieties; the idea comes from the proof of \cite[Theorem 3.2]{Yobuko2023}.
\begin{proposition}
\label{prop:unconditional-ordinary}
For a good reduction $X$, we have the following implications
\[
  h_2^{\crys}(X)=1\ \Longrightarrow\  \text{ $X$ is $F$-split}\ \bigl(\Longleftrightarrow\ \hgt(X)=1 \bigr).
\]
\end{proposition}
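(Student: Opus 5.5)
We argue on top-degree Witt-vector cohomology and conclude with \Cref{lem:Fsplit}. By \Cref{prop:canonical-good}, $X$ is geometrically connected with $\omega_X\simeq\Oh_X$, so $\HH^{2n}(X,\Oh_X)\simeq\bk$ by Serre duality. It therefore suffices to show that the absolute Frobenius is nonzero on this line. We may extend $\bk$ to its algebraic closure, since all notions involved are insensitive to this.

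The first step transports the hypothesis to top degree. By \Cref{cor:automatic-top-slopes}, the assumption $h_2^{\crys}(X)=1$ gives that $(D_{2n})_{[0,1)}$ is a line of slope zero. By \Cref{prop:WittSlope}, the module $M:=\HH^{2n}(X,\Wit\Oh_X)$ then satisfies $M\otimes_\Wit\Kzero\simeq\Kzero e$ Frobenius-compatibly. Over $\overline{\bk}$ we may choose $e$ with $Fe=e$.

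The second step relates $M$ to coherent cohomology. Consider the sequence $0\to\Wit\Oh_X\xrightarrow{V}\Wit\Oh_X\to\Oh_X\to0$ of inverse systems, which is Mittag--Leffler as in the proof of \Cref{prop:Witt-Euler}. Since $\HH^{2n+1}$ vanishes for dimension reasons, it yields the exact sequence
\[
  M\xrightarrow{\,V\,}M\longrightarrow\HH^{2n}(X,\Oh_X)\longrightarrow0 .
\]
Hence $M/VM\simeq\HH^{2n}(X,\Oh_X)$, and this isomorphism is compatible with Frobenius because restriction commutes with $F$. We now suppose, for contradiction, that Frobenius is zero on $\HH^{2n}(X,\Oh_X)$. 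Then $FM\subseteq VM$, so $F^2M\subseteq FVM=pM$, and inductively $F^{2k}M\subseteq p^kM$ for all $k$.

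The third step derives the contradiction, and it is where the main obstacle lies. Let $\bar M$ denote the image of $M$ in $M\otimes\Kzero=\Kzero e$. It is an $F$-stable $\Wit$-submodule spanning the line, so some $m\in M$ has image $p^{s}e$ with $s\in\ZZ$. Then
\[
  p^{s}e=F^{2k}(p^{s}e)\in p^k\bar M\qquad\text{for all }k,
\]
so $\bar M$ is $p$-divisible along $e$, i.e.\ $\bar M=\Kzero e$. This contradicts finiteness once we know that $M$ modulo torsion is a finitely generated $\Wit$-module. I expect this finiteness to be the delicate point, since we do not assume quasi-$F$-splitting and so cannot invoke \Cref{lem:basic-properties}(2).

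I would try to obtain it from Illusie's structure theory of $\HH^j(X,\Wit\Oh_X)$. The quotient of $\HH^j(X,\Wit\Oh_X)$ by its $V$-torsion, or equivalently by its $p$-torsion, is finitely generated; it is the slope-$[0,1)$ part of the de Rham--Witt lattice. Its image in the isocrystal is therefore contained in a lattice, which is exactly the boundedness needed to exclude $\bar M=\Kzero e$. An alternative route would use the Dieudonn\'e-module description of the domino-free part. With boundedness in hand, Frobenius is nonzero on $\HH^{2n}(X,\Oh_X)$, and \Cref{lem:Fsplit} gives that $X$ is $F$-split, i.e.\ $\hgt(X)=1$.
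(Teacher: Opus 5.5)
Your proof is correct. Its ingredients are the same as the paper's:
\begin{itemize}
\item the top group $M=\HH^{2n}(X,\Wit\Oh_X)$;
\item the Frobenius-compatible isomorphism $M/VM\simeq\HH^{2n}(X,\Oh_X)$;
\item \Cref{cor:automatic-top-slopes} with \Cref{prop:WittSlope}, making $M[1/p]$ a slope-zero line;
\item finite generation of $M$ modulo its $\Wit$-torsion;
\item \Cref{lem:Fsplit} to finish.
\end{itemize}
The finiteness you flag as delicate is not a gap. It is exactly the input the paper cites, \cite[Theorem~4.5.12]{Olsson2007Crystalline}, which is the algebraic-space form of Illusie's theorem. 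You need that form, not Illusie's scheme version, because $X$ may be an algebraic space.

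The only real difference is the endgame.
\begin{itemize}
\item The paper uses that $L=M/T$ is free of rank one. It applies the snake lemma to $0\to T\to M\to L\to0$ to get $T/VT=0$, then invokes $V$-adic separatedness of $M$ to conclude $T=0$. Thus $M$ is free of rank one with $F$ a unit, so Frobenius is nonzero on $M/VM$.
\item You argue by contradiction directly on the lattice $\bar M=M/T$. Vanishing of Frobenius on $\HH^{2n}(X,\Oh_X)$ gives $F^{2k}M\subseteq p^kM$, which forces $\bar M=\Kzero e$.
\end{itemize}
Your route never needs to show the torsion vanishes and does not use $V$-adic separatedness, so it is slightly more economical. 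The paper's route yields more, namely that $\HH^{2n}(X,\Wit\Oh_X)$ is free of rank one.

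One correction to your sketch of the finiteness input: the phrase ``by its $V$-torsion, or equivalently by its $p$-torsion'' is wrong. Since $FV=p$, the $V$-torsion lies in the $\Wit$-torsion, but the two differ in general. For a supersingular $K3$ surface, $\HH^2(X,\Wit\Oh_X)\simeq\bk[[V]]$ with $F=0$. This module is killed by $p$ and has no $V$-torsion, yet it is not finitely generated over $\Wit$. So only the quotient by the $\Wit$-torsion is finitely generated. Fortunately, that is precisely your $\bar M$, the image of $M$ in $M\otimes_{\Wit}\Kzero$, so the argument itself is unaffected.
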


\begin{proof}
Extending the perfect residue field, we may assume that it is algebraically closed.
Set $H=\HH^{2n}(X,\Wit\Oh_X)$, and let $T\subseteq H$ denote its $\Wit$-torsion submodule.
The exact sequence
\[
  0\longrightarrow\Wit\Oh_X\xrightarrow{\ V\ }\Wit\Oh_X\longrightarrow\Oh_X\longrightarrow0
\]
gives $H/VH\simeq\HH^{2n}(X,\Oh_X)=\bk$ in top degree, and \Cref{cor:automatic-top-slopes} together with \Cref{prop:WittSlope} shows that $H[1/p]$ is a rank-one slope-zero isocrystal.
By \cite[Theorem~4.5.12]{Olsson2007Crystalline}, $L=H/T$ is a free $\Wit$-module of rank one; $V$ is injective on $L$ since $FV=p$.
The snake lemma applied to $0\to T\to H\to L\to0$ then gives
\[
  0\longrightarrow T/VT\longrightarrow H/VH\longrightarrow L/VL\longrightarrow0 .
\]
The last term is nonzero while the middle one has dimension $1$, so $T/VT=0$; as $H$, and hence $T$, is $V$-adically separated \cite[Corollary~4.5.6]{Olsson2007Crystalline}, $T=VT$ forces $T=0$.
Thus $H$ is free of rank one.

Because $H[1/p]$ has slope zero, $F$ is bijective on it, and $FV=p$ forces $VH=pH$; hence $F$ is a unit on $H$ and acts nontrivially on $H/VH=\HH^{2n}(X,\Oh_X)$.
Since $\omega_X\simeq\Oh_X$ by \Cref{prop:canonical-good}, \Cref{lem:Fsplit} shows that $X$ is Frobenius split.
\end{proof}

The Witt--Euler calculation and the ordinary implication now give the main dichotomy without a Hodge-goodness assumption.
\begin{theorem}
\label{thm:all-good-dichotomy}
Let $X$ be a good reduction of a projective irreducible hyperk\"ahler variety of dimension $2n\ge4$, in the setting of \Cref{subsec:automatic}.
The smooth proper model and its special fibre may be algebraic spaces.
Then the following are equivalent:
\begin{enumerate}[label=\textup{(\roman*)},leftmargin=2.2em]
\item $X$ is quasi-$F$-split;
\item $X$ is Frobenius split;
\item $h_2^{\crys}(X)=1$;
\item $\HH^j(X,\Wit\Oh_X)$ is finitely generated over $\Wit$ for every $j$.
\end{enumerate}
Consequently
\[
  \hgt(X)=
  \begin{cases}
    1, & h_2^{\crys}(X)=1,\\
    \infty, & h_2^{\crys}(X)>1.
  \end{cases}
\]
\end{theorem}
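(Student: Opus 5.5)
We prove the cycle of implications (iii)$\Rightarrow$(ii)$\Rightarrow$(i)$\Rightarrow$(iv)$\Rightarrow$(iii). The first is exactly \Cref{prop:unconditional-ordinary}. The second is \Cref{lem:basic-properties}(1), since an $F$-splitting is a $1$-quasi-$F$-splitting. The third is \Cref{lem:basic-properties}(2), which holds for smooth proper algebraic spaces as stated there. So the only new content is (iv)$\Rightarrow$(iii).

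For (iv)$\Rightarrow$(iii), finite generation of every $\HH^j(X,\Wit\Oh_X)$ is precisely the hypothesis of the Witt--Euler identity, \Cref{prop:Witt-Euler}. Applied to $Z=X$, it gives $\chi(X,\Oh_X)=E_0(X)$. By \Cref{prop:canonical-good} the left-hand side equals $n+1$. By \Cref{prop:HK-Euler-slopes}, $E_0(X)$ takes one of the values $n+1$, $2$ or $1$, according as $h_2^{\crys}(X)=1$, $1<h_2^{\crys}(X)<\infty$ or $h_2^{\crys}(X)=\infty$. Since $n\ge2$, we have $n+1\ge3$, so neither $2$ nor $1$ is possible. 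Hence $h_2^{\crys}(X)=1$.

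The only point needing care is that \Cref{prop:HK-Euler-slopes} uses only the slope information from weak admissibility and Verbitsky (\Cref{prop:quotient}, \Cref{lem:HK-odd-low-slopes}), which hold for every good reduction with no Hodge-goodness assumption. This is why the cycle closes unconditionally. One should also confirm that \Cref{prop:Witt-Euler} is stated for algebraic spaces and that torsion in Witt cohomology cancels in its proof, which it does.

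For the height formula: if $h_2^{\crys}(X)=1$, then (ii) gives $\hgt(X)=1$. Otherwise (i) fails, so $\hgt(X)=\infty$ by definition.

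Finally, condition (iii) of \Cref{thm:main-all-good} matches condition (iii) here by \eqref{eq:iii-equiv}. I expect no real obstacle here: the substantive inputs are the Witt--Euler identity and the Euler-sum evaluation, both already established.
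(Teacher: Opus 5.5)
Your proposal is correct and is essentially the paper's own proof. It uses the same cycle of implications: (i)$\Rightarrow$(iv) by \Cref{lem:basic-properties}(2); (iv)$\Rightarrow$(iii) by \Cref{prop:Witt-Euler}, $\chi(X,\Oh_X)=n+1$ and \Cref{prop:HK-Euler-slopes}; (iii)$\Rightarrow$(ii) by \Cref{prop:unconditional-ordinary}; (ii)$\Rightarrow$(i) by \Cref{lem:basic-properties}(1); and the height formula is then read off exactly as in the paper.
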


\begin{proof}
Condition (i) implies (iv) by \Cref{lem:basic-properties}(2).
If (iv) holds, then \Cref{prop:Witt-Euler} and $\chi(X,\Oh_X)=n+1$ (\Cref{prop:canonical-good}) give $E_0(X)=n+1$; since $n+1\ge3$, the two non-ordinary values $2$ and $1$ allowed by \Cref{prop:HK-Euler-slopes} are impossible, so $h_2^{\crys}(X)=1$ and (iv) implies (iii).
\Cref{prop:unconditional-ordinary} gives (iii)$\Rightarrow$(ii), and (ii)$\Rightarrow$(i) is \Cref{lem:basic-properties}(1).

For the last assertion, if $h_2^{\crys}(X)=1$ then $\hgt(X)=1$ by \Cref{prop:unconditional-ordinary}.
If $h_2^{\crys}(X)>1$ then $X$ fails (iii), hence also (i), and $\hgt(X)=\infty$.
\end{proof}

\begin{proof}[Proof of \Cref{thm:main-all-good}]
By \eqref{eq:iii-equiv}, condition (iii) of \Cref{thm:main-all-good} is equivalent to $h_2^{\crys}(X)=1$, so the statement is exactly \Cref{thm:all-good-dichotomy}.
\end{proof}

\begin{remark}
\label{rem:no-supersingularity}
The dichotomy does not imply full supersingularity: it does not force all slopes of $\HH^{2i}_{\crys}$ to equal $i$ when $h_2^{\crys}(X)>1$.
The argument also does not identify which Witt-vector cohomology group fails to be finitely generated; \Cref{prop:top-witt-criterion} identifies the top degree when $\HH^1(X,\Oh_X)=0$.
\end{remark}

\subsection{Consequences and examples}

The main theorem also rules out Hodge--Wittness in the nonordinary case.
\begin{corollary}
\label{cor:hodge-witt-dichotomy}
If a good reduction $X$ of dimension $2n\ge4$ has $h_2^{\crys}(X)>1$, then there is some $j$ for which $\HH^j(X,\Wit\Oh_X)$ is not finitely generated over $\Wit$.
In particular, $X$ is not Hodge--Witt.
\end{corollary}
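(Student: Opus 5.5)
The first assertion is the contrapositive of the implication (iv)$\Rightarrow$(iii) in \Cref{thm:all-good-dichotomy}, and I would argue it directly. Suppose every $\HH^j(X,\Wit\Oh_X)$ is finitely generated over $\Wit$. Then \Cref{prop:Witt-Euler} applies to the smooth proper algebraic space $X$ and gives $\chi(X,\Oh_X)=E_0(X)$. By \Cref{prop:canonical-good}, $\chi(X,\Oh_X)=n+1\ge3$. On the other hand, \Cref{prop:HK-Euler-slopes} computes $E_0(X)\in\{2,1\}$ whenever $h_2^{\crys}(X)>1$, according as the height is finite or infinite. This is a contradiction, so some $\HH^j(X,\Wit\Oh_X)$ must fail to be finitely generated. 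No Hodge-goodness or torsion-freeness enters, and the finite Witt torsion is already absorbed by the cancellation inside \Cref{prop:Witt-Euler}.

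For the second assertion I would recall the definition of Hodge--Witt: $X$ is Hodge--Witt when all groups $\HH^j(X,\Wit\Omega^i_X)$ are finitely generated over $\Wit$, in the sense of Illusie--Raynaud. The case $i=0$ of this condition is exactly finite generation of $\HH^j(X,\Wit\Oh_X)$ for every $j$. The first part shows this fails for some $j$, so $X$ is not Hodge--Witt. For algebraic spaces I would use the same definition on the étale site, as in \Cref{prop:WittSlope}, via Olsson's de Rham--Witt theory.

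The only point needing care is the convention for the Hodge--Witt property. If the convention instead uses the degeneration of the slope spectral sequence at $E_1$, I would note that Illusie--Raynaud show it is equivalent to finite generation of all $\HH^j(X,\Wit\Omega^i_X)$. Under that equivalence the $i=0$ terms again give the conclusion. Apart from this, the argument is a direct assembly of earlier results.
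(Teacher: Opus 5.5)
Your proposal is correct and is essentially the paper's argument: the corollary is the contrapositive of (iv)$\Rightarrow$(iii) in \Cref{thm:all-good-dichotomy}, which the paper proves exactly as you do. It combines \Cref{prop:Witt-Euler}, $\chi(X,\Oh_X)=n+1\ge3$ from \Cref{prop:canonical-good}, and the values $E_0(X)\in\{1,2\}$ from \Cref{prop:HK-Euler-slopes}, and then observes that the Hodge--Witt property includes finite generation of every $\HH^j(X,\Wit\Oh_X)$ (the $i=0$ case).
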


\begin{example}\label{ex:hilb2}
By \Cref{thm:all-good-dichotomy} and \Cref{ex:hilb,ex:kummer}, the quasi-$F$-splitting height of either series can be read off from the surface which it is built on:
\[
  \begin{aligned}
    \hgt\bigl(S^{[n]}\bigr)
      &=\begin{cases}
          1,      & S\text{ ordinary},\\
          \infty, & S\text{ non-ordinary},
        \end{cases}\\
    \hgt\bigl(K_n(A)\bigr)
      &=\begin{cases}
          1,      & A\text{ ordinary},\\
          \infty, & A\text{ non-ordinary}.
        \end{cases}
  \end{aligned}
\]
This is consistent with Yobuko's direct computation that $S^{[n]}$ with $n\ge2$ is quasi-$F$-split only if $S$ is Frobenius split \cite[Theorem~5.6]{Yobuko2023}.
In particular, if $S$ is a non-ordinary $K3$ surface, or $A$ a non-ordinary abelian surface, of finite formal Brauer height $h>1$, then $S^{[n]}$ and $K_n(A)$ are neither quasi-$F$-split nor Hodge--Witt; \Cref{cor:generalized Kummer and Hilbert scheme no fg} below makes this precise.
\end{example}

The ordinary case also applies to moduli spaces of sheaves. For $p>2$, case \textup{(a)} extends the $K3$-surface instance of Kumar--Thomsen's Hilbert-scheme theorem \cite[Theorem~2]{KumarThomsen2001} to other primitive Mukai vectors. The following application is suggested by Charles Vial.
\begin{corollary}\label{cor:ordinary-sheaf-moduli-F-split}
Let $\bk$ be an algebraically closed field of characteristic $p>0$. Let $Z$ be a $K3$ or abelian surface. In each case let $v=(r,c_1,s)$ be an algebraic Mukai vector on the indicated surface $Z$, with $\langle v,v\rangle=c_1^2-2rs$, and let $H$ be an ample polarization general for $v$. Write $M_H(Z,v)$ for the moduli space of Gieseker-stable sheaves on $Z$.
\begin{enumerate}[label=\textup{(\alph*)},leftmargin=2.2em]
\item If $S/\bk$ is an ordinary $K3$ surface and $v$ is primitive with $r>0$ and $\langle v,v\rangle>0$, then $M_H(S,v)$ is $F$-split.
\item If $A/\bk$ is an ordinary abelian surface and $v$ is primitive with $r>0$ and $\langle v,v\rangle=2n+2$ for $n\ge2$ and $p\nmid n+1$, then both the Albanese fibre $K_H(A,v)$ and $M_H(A,v)$ are $F$-split.
\end{enumerate}
\end{corollary}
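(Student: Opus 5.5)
The plan is to reduce to \Cref{prop:unconditional-ordinary} (or its intrinsic analogue, \Cref{cor:HG-qF-dichotomy}) by realising $M_H(Z,v)$, respectively $K_H(A,v)$, as the special fibre of a smooth proper family over a mixed-characteristic base. The generic fibre will be a hyperk\"ahler variety, and the degree-two isocrystal of the special fibre will contain that of the ordinary surface.

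First, lift the surface. An ordinary $K3$ surface $S$ has a canonical lift $\mathscr S/\Wit$, and an ordinary abelian surface $A$ has a Serre--Tate canonical lift $\mathscr A/\Wit$. In both cases the line bundles underlying $c_1$ and $H$ lift, so $v$ and $H$ stay algebraic and $H$ stays $v$-general on the generic fibre. Over the lifted base, form the relative moduli space $\mathscr M_H(\mathscr Z,v)\to\Spec\Wit$ of stable sheaves. Since $v$ is primitive and $H$ is general, stability equals semistability. Mukai's deformation theory shows that $\mathscr M$ is smooth over $\Wit$ of relative dimension $\langle v,v\rangle+2$: the obstructions lie in the traceless part of $\Ext^2$, which vanishes for stable sheaves with trivial $\omega$. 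It is also proper, by Langton's theorem. In characteristic zero, the generic fibre is a projective hyperk\"ahler variety of $K3^{[n]}$-type by Mukai, O'Grady and Yoshioka; the condition $r>0$ keeps it nonempty.

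In the abelian case, compose with the Albanese morphism $\mathscr M\to\mathscr A\times\widehat{\mathscr A}$ and take the fibre $\mathscr K$. The hypothesis $p\nmid n+1$ (with $\langle v,v\rangle=2n+2$) should make this fibre smooth over $\Wit$, by the same multiplication-by-$(n+1)$ étaleness argument used for $K_n(\mathscr A)$ in \Cref{ex:kummer}. The generic fibre is then of $\mathrm{Kum}_n$-type.

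Next, compute $D_2$ of the special fibre. The Mukai homomorphism $\theta_v\colon v^\perp\to\HH^2$ exists in crystalline cohomology via the universal family, or via $p$-adic comparison on the lift. It should give an isomorphism of $F$-isocrystals $D_2(M)\simeq v^\perp\subset\widetilde D(S)(1)$, where $\widetilde D$ is the Mukai isocrystal $D_0\oplus D_2\oplus D_4$. For $K$ the analogous statement should hold with $\HH^{\mathrm{ev}}(A)$. The algebraic classes $D_0$ and $D_4$ after the Tate twist have slope one, so the slope-$<1$ part of $v^\perp$ is $(D_2(Z))_{[0,1)}$. By \Cref{lem:ordinary-surface-slopes} this is a slope-zero line, giving $h_2^{\crys}=1$. \Cref{prop:unconditional-ordinary} then shows that the special fibres $K_H(A,v)$ and $M_H(S,v)$ are $F$-split. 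The residue field is algebraically closed, so we may complete to a finite extension of $\Qp$ after descending to a finitely generated field; alternatively, \Cref{prop:unconditional-ordinary} only uses weak admissibility, which holds over $\Wit$.

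For $M_H(A,v)$, use the étale cover $K_H(A,v)\times A\times\widehat A\to M_H(A,v)$, whose degree is a power of $n+1$ and hence prime to $p$. $F$-splitting is inherited by the product of $F$-split varieties (an ordinary abelian variety is $F$-split), and it descends along finite étale maps of degree prime to $p$ via the trace. For $K3$ surfaces with $\langle v,v\rangle=2$ the dimension is $4$, so no further restriction is needed.

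I expect the main obstacle to be the slope identification: making the crystalline Mukai map an isomorphism of isocrystals in positive characteristic. Rather than construct it directly, I would first transport it from the generic fibre through the $B_{\mathrm{cris}}$ comparison. The Mukai isometry is a Galois-equivariant isomorphism of $\Qp$-representations given by an algebraic correspondence (the Chern character of the universal family), so $D_{\crys}$ carries it to a Frobenius-compatible isomorphism. Smoothness of the relative Kummer fibre for $p\nmid n+1$ is the secondary technical point.
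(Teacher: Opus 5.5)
Your proposal is correct and follows the paper's proof. You lift to $\Wit(\bk)$ and identify $D_2$ of the moduli space with $v^\perp$ inside the crystalline Mukai isocrystal $D_0(Z)(-1)\oplus D_2(Z)\oplus D_4(Z)(1)$; your ``$\widetilde D(S)(1)$'' garbles the twists, though your slope-one remark shows this is what you mean. You then deduce $h_2^{\crys}=1$ and apply \Cref{prop:unconditional-ordinary} in its local form over $\Wit(\bk)$, which is your second option; the first, passing to a finite extension of $\Qp$, is unavailable when $Z$ is not defined over $\overline{\Fp}$. Finally you descend along the prime-to-$p$ finite \'etale cover $K_H(A,v)\times A\times\widehat A\to M_H(A,v)$ by the normalized trace.

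The only differences are in how the inputs are obtained. You construct the lift (via canonical lifts) and the isomorphism $D_2\simeq v^\perp$ (by transporting Mukai's isometry through $D_{\crys}$) yourself, where the paper cites \cite[Propositions~4.4 and~6.9]{FuLi2021}. The paper also makes explicit a step you pass over, namely $(D_2(Z))_{[0,1)}\subseteq v^\perp$. This holds because a slope-$\lambda$ class with $\lambda<1$ pairs with the slope-one class $v$ into the slope-two object $\Kzero(-2)$, so the pairing vanishes.
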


\begin{proof}
Let $Z=S$ in case \textup{(a)} and $Z=A$ in case \textup{(b)}. By \Cref{lem:ordinary-surface-slopes}, $(D_2(Z))_{[0,1)}$ is a line of slope zero. Consider the crystalline Mukai isocrystal
\[
  \widetilde D(Z)=D_0(Z)(-1)\oplus D_2(Z)\oplus D_4(Z)(1)
\]
whose outer summands have slope one, as does $\Kzero v$. The Mukai pairing takes values in $\Kzero(-2)$, of slope two. A slope-$\lambda$ subobject with $\lambda<1$ therefore pairs trivially with $\Kzero v$, since their tensor product has slope $\lambda+1<2$. Thus $(v^\perp)_{[0,1)}=(D_2(Z))_{[0,1)}$ is a line of slope zero.

In case \textup{(a)}, \cite[Proposition~4.4 and its proof]{FuLi2021} gives a smooth projective mixed-characteristic lift of $M_H(S,v)$ with hyperk\"ahler geometric generic fibre and an isomorphism $D_2(M_H(S,v))\simeq v^\perp$ of isocrystals. Hence $h_2^{\crys}(M_H(S,v))=1$, so \Cref{prop:unconditional-ordinary} gives the assertion. The same argument applies to $K_H(A,v)$ by \cite[Proposition~6.9 and its proof]{FuLi2021}, proving the first assertion of \textup{(b)}. Here we use the local form of \Cref{prop:unconditional-ordinary}: its proof also applies to these complete mixed-characteristic lifts with perfect residue field.

Finally, $A\times\widehat A$ is ordinary and hence $F$-split. The isotrivialization of the Albanese map in \cite[diagram~(50)]{FuLi2021} gives a finite \'etale cover
\[
  K_H(A,v)\times A\times\widehat A\longrightarrow M_H(A,v)
\]
of degree $(n+1)^8$, prime to $p$. Its source is $F$-split, and the normalized trace descends a Frobenius splitting to $M_H(A,v)$.
\end{proof}

\subsection{Primitivity and top Witt cohomology}
\label{subsec:primitive reduction}
A $K3$ surface $S$ has $h^{0,1}(S)=\dim_\bk\HH^1(S,\Oh_S)=0$ by definition.
Since hyperk\"ahler varieties are the higher-dimensional analogues of $K3$ surfaces, it is natural to ask whether a good reduction $X$ inherits the vanishing $\HH^1(X,\Oh_X)=0$.
It does not inherit it formally: by \Cref{rem:h1-not-automatic}, simple connectedness still leaves room for a nonreduced Picard scheme, whose tangent space is exactly $\HH^1(X,\Oh_X)$.

We give this vanishing a name; in characteristic $p$ it takes over the role that simple connectedness plays in characteristic zero.

\begin{definition}
\label{def:primitive}
A smooth proper algebraic space $X$ over $\bk$ is \emph{primitive} if $h^{0,1}(X)=\dim_\bk\HH^1(X,\Oh_X)=0$.
\end{definition}

A good reduction is simply connected by \Cref{prop:automatic-picard}, so \Cref{rem:h1-not-automatic} says exactly that it need not be primitive.
Two sufficient conditions are available, and they constrain different things.
\Cref{prop:h1-vanishing-small-e} deduces the vanishing from a bound on the absolute ramification index, a condition on the model $\mathscr X/\Oh_K$.
The dichotomy of \Cref{thm:all-good-dichotomy} deduces it instead from quasi-$F$-splitting, a condition on $X$ alone, and so sharpens \Cref{prop:automatic-picard} on the quasi-$F$-split locus.

\begin{corollary}
\label{cor:qF-Picard}
If a good reduction $X$ of dimension $2n\ge4$ is quasi-$F$-split, then $X$ is primitive.
\end{corollary}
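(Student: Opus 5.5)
The plan is to go through the dichotomy. Suppose $X$ is quasi-$F$-split. By \Cref{thm:all-good-dichotomy}, $h_2^{\crys}(X)=1$ and $X$ is Frobenius split. I will show that a Frobenius split special fibre with $\pi_1^{\et}(X)=1$ has $\HH^1(X,\Oh_X)=0$. The key input is \Cref{prop:automatic-picard}, which says that the absolute Frobenius acts nilpotently on $V=\HH^1(X,\Oh_X)$ after extending to an algebraic closure. Since all the notions involved are insensitive to extending the perfect base field, I will work over $\overline{\bk}$ throughout.

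The next step is that an $F$-splitting forces Frobenius to be injective on every coherent cohomology group. Let $\sigma\colon\Frob_*\Oh_X\to\Oh_X$ be an $\Oh_X$-linear retraction of $\Oh_X\to\Frob_*\Oh_X$. Applying $\HH^1$, the composite
\[
  \HH^1(X,\Oh_X)\xrightarrow{\ \Frob\ }\HH^1(X,\Frob_*\Oh_X)=\HH^1(X,\Oh_X)\xrightarrow{\ \sigma\ }\HH^1(X,\Oh_X)
\]
is the identity. Here I use that $\Frob$ is affine, so $\HH^1(X,\Frob_*\Oh_X)=\HH^1(X,\Oh_X)$ as abelian groups and the first map is the $p$-linear Frobenius. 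Hence $\Frob$ is injective on $V$. Since $V$ is finite-dimensional and $\Frob$ is also nilpotent on $V$, an injective nilpotent map must be zero, and therefore $V=0$.

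This argument is straightforward, and I expect the only point needing care to be that \Cref{prop:automatic-picard} is stated after passing to an algebraic closure. Coherent cohomology commutes with flat base field extension, so $\HH^1(X_{\overline{\bk}},\Oh)=0$ descends to $\HH^1(X,\Oh_X)=0$. Likewise, $F$-splitting is preserved under base change to $\overline{\bk}$; for example, \Cref{lem:Fsplit} gives a criterion that is visibly invariant. As an alternative route, one could avoid \Cref{prop:automatic-picard}: if $V\neq0$, then the Frobenius-injective $p$-linear operator on $V$ over $\overline{\bk}$ is bijective and has a nonzero fixed vector by Lang's theorem. That fixed vector gives a nonzero class in $\HH^1_{\et}(X,\Fp)$, contradicting $\pi_1^{\et}(X)=1$. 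This is the same content, so I would simply cite the proposition.
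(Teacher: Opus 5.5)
Your proposal is correct and is the paper's own argument. Theorem~\ref{thm:all-good-dichotomy} upgrades quasi-$F$-splitting to Frobenius splitting, which makes Frobenius injective on $\HH^1(X,\Oh_X)$, while Proposition~\ref{prop:automatic-picard} makes it nilpotent there, forcing $\HH^1(X,\Oh_X)=0$; your remarks on base change to $\overline{\bk}$ and the Lang's-theorem alternative only unpack what Proposition~\ref{prop:automatic-picard} already contains.
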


\begin{proof}
By \Cref{thm:all-good-dichotomy}, $X$ is Frobenius split, so Frobenius is injective on $\HH^1(X,\Oh_X)$.
But it is nilpotent there by \Cref{prop:automatic-picard} and hence $\HH^1(X,\Oh_X)=0$.
\end{proof}

\Cref{cor:hodge-witt-dichotomy} does not identify which Witt-vector cohomology group fails to be finitely generated when $h_2^{\crys}(X)>1$.
Under $\HH^1(X,\Oh_X)=0$, the top group detects the dichotomy, and $\Phi_X^{2n}$ is prorepresentable by Serre duality and \Cref{lem:AM-prorepresentable}.

\begin{proposition}
\label{prop:top-witt-criterion}
Let $X$ be a good reduction of dimension $2n\ge4$, in the setting of \Cref{subsec:automatic}, and suppose $\HH^1(X,\Oh_X)=0$.
Then $X$ is quasi-$F$-split if and only if $\HH^{2n}(X,\Wit\Oh_X)$ is finitely generated over $\Wit$. Moreover, we have
\[ \hgt(X) = \hgt(\Phi^{2n}_X) \in \{ 1, \infty \}.\]
\end{proposition}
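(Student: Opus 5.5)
The plan is to set things up so that $\Phi^{2n}_X$ is a smooth one-dimensional formal group, and then read both assertions off \Cref{thm:all-good-dichotomy} together with \Cref{prop:AM-cartier} and \Cref{prop:WittSlope}.

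\emph{Setting up $\Phi^{2n}_X$.} By \Cref{prop:canonical-good}, $\omega_X\simeq\Oh_X$. Serre duality then gives $\HH^{2n-1}(X,\Oh_X)\simeq\HH^1(X,\Oh_X)^\vee=0$ and $\HH^{2n}(X,\Oh_X)\simeq\bk$. Consequently \Cref{lem:AM-prorepresentable}(2),(3) make $\Phi^{2n}_X$ a smooth one-dimensional formal group, and \Cref{prop:AM-cartier} identifies its Cartier module with $H=\HH^{2n}(X,\Wit\Oh_X)$.

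\emph{The equivalence.} If $X$ is quasi-$F$-split, \Cref{lem:basic-properties}(2) shows that $H$ is finitely generated. Conversely, suppose $H$ is finitely generated. Since the Cartier module of an infinite-height one-dimensional formal group is not finitely generated (\Cref{rem:AM-infinite-height}), $\Phi^{2n}_X$ has finite height $h'$. By \Cref{rem:AM-slope-convention}, $(D_{2n})_{[0,1)}$ is then nonzero, of dimension $h'$ and isoclinic of slope $1-1/h'$. On the other hand, \Cref{cor:automatic-top-slopes} says $(D_{2n})_{[0,1)}$ is either zero or a slope-zero line, the latter exactly when $h_2^{\crys}(X)=1$. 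Comparing the two forces $h'=1$ and $h_2^{\crys}(X)=1$. \Cref{prop:unconditional-ordinary} then gives that $X$ is $F$-split, hence quasi-$F$-split.

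\emph{The height formula.} If $h_2^{\crys}(X)=1$, the computation above runs backwards: \Cref{cor:automatic-top-slopes} makes $(D_{2n})_{[0,1)}$ a slope-zero line, so $\hgt(\Phi^{2n}_X)=1$ by \Cref{rem:AM-slope-convention}. Also $\hgt(X)=1$ by \Cref{thm:all-good-dichotomy}. If $h_2^{\crys}(X)>1$, then $(D_{2n})_{[0,1)}=0$. A finite height $h'$ would produce a nonzero slope part of dimension $h'$, so the height is infinite. Again \Cref{thm:all-good-dichotomy} gives $\hgt(X)=\infty$. Hence $\hgt(X)=\hgt(\Phi^{2n}_X)\in\{1,\infty\}$.

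\emph{Main obstacle.} The delicate point is the slope--height comparison in \Cref{rem:AM-slope-convention} in top degree. One needs that when $H$ is not finitely generated, or has infinite height, the rational part $H[1/p]$ computed via \Cref{prop:WittSlope} is still controlled correctly. The route above sidesteps this: the equivalence only uses the finite-height direction, and infinite height is detected purely from the vanishing of $(D_{2n})_{[0,1)}$. What remains to check is that Serre duality and the cohomological-dimension bound hold for algebraic spaces, so that the hypotheses of \Cref{lem:AM-prorepresentable} are met. This is standard via \cite[Tags~0E58 and~0E61]{stacks-project}.
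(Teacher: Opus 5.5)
Your proof is correct. It differs from the paper's in how the converse (finite generation $\Rightarrow$ quasi-$F$-split) is obtained.

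\textbf{The paper's route.} It argues by contrapositive and by hand. Assuming $X$ is not quasi-$F$-split, it gets $h_2^{\crys}(X)>1$. It then uses $\HH^{2n-1}(X,\Oh_X)=0$ and the sequences $0\to\Oh_X\xrightarrow{V^{m-1}}\Wit_m\Oh_X\to\Wit_{m-1}\Oh_X\to0$ to show $\length_\Wit\HH^{2n}(X,\Wit_m\Oh_X)=m$, with surjective transition maps. Hence the limit has infinite length. Meanwhile \Cref{cor:automatic-top-slopes} and \Cref{prop:WittSlope} make the limit torsion, and a torsion module of infinite length is not finitely generated. Only afterwards does the paper bring in $\Phi^{2n}_X$, to read off its height.

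\textbf{Your route.} You front-load the formal group. Since $\HH^{2n}(X,\Wit\Oh_X)$ is the Cartier module of the smooth one-dimensional group $\Phi^{2n}_X$, finite generation forces finite height (\Cref{rem:AM-infinite-height}). Finite height forces a nonzero slope-$[0,1)$ part, and \Cref{cor:automatic-top-slopes} then pins down $h'=1$ and $h_2^{\crys}(X)=1$. \Cref{prop:unconditional-ordinary} then gives $F$-splitting directly rather than by contrapositive.

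\textbf{What each buys.} Your argument is shorter and shows the criterion is really \emph{finite generation of the Cartier module $\Leftrightarrow$ finite height of $\Phi^{2n}_X$}, combined with the slope constraint. The cost is reliance on the structure theory of Cartier modules of one-dimensional formal groups: free of rank $h$ at finite height, $p$-torsion of infinite length at infinite height. The paper's length count is in effect a hands-on proof of exactly the part of that theory it needs, using only the Witt exact sequences.

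\textbf{Two side remarks.} Neither route actually needs the Witt--Euler identity under the hypothesis $\HH^1(X,\Oh_X)=0$. In your height formula, the appeal to \Cref{thm:all-good-dichotomy} when $h_2^{\crys}(X)>1$ can be replaced by your own chain: infinite height makes $\HH^{2n}(X,\Wit\Oh_X)$ non-finitely generated, so $X$ is not quasi-$F$-split by \Cref{lem:basic-properties}(2). Also, the concern in your last paragraph is not a real obstacle: \Cref{prop:WittSlope} holds with no finite-generation hypothesis, so there is nothing to sidestep.
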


\begin{proof}
If $X$ is quasi-$F$-split, then $\HH^{2n}(X,\Wit\Oh_X)$ is finitely generated by \Cref{lem:basic-properties}(2); this direction uses no hypothesis on $\HH^1(X,\Oh_X)$.

Since $\omega_X\simeq\Oh_X$ and $X$ is smooth proper and geometrically connected (\Cref{prop:canonical-good}), Serre duality gives
\[
  \HH^{2n-1}(X,\Oh_X)\simeq\HH^1(X,\omega_X)^{\vee}=\HH^1(X,\Oh_X)^{\vee}=0,
  \qquad
  \HH^{2n}(X,\Oh_X)\simeq\HH^0(X,\omega_X)^{\vee}=\bk .
\]
Conversely, suppose $X$ is not quasi-$F$-split, so that $h_2^{\crys}(X)>1$ by \Cref{thm:all-good-dichotomy}.
For $m\geq 2$ the Witt sheaves sit in exact sequences
\[
  0\longrightarrow\Oh_X\xrightarrow{\ V^{m-1}\ }\Wit_m\Oh_X
   \xrightarrow{\ R\ }\Wit_{m-1}\Oh_X\longrightarrow0 .
\]
Induction on $m$ gives $\HH^{2n-1}(X,\Wit_m\Oh_X)=0$ for all $m\geq 1$, the long exact sequence squeezing this group between $\HH^{2n-1}(X,\Oh_X)=0$ and $\HH^{2n-1}(X,\Wit_{m-1}\Oh_X)=0$.
The same sequences therefore give short exact sequences
\[
  0\longrightarrow\HH^{2n}(X,\Oh_X)\longrightarrow\HH^{2n}(X,\Wit_m\Oh_X)
   \longrightarrow\HH^{2n}(X,\Wit_{m-1}\Oh_X)\longrightarrow0,
\]
surjective on the right because $\HH^{2n+1}$ vanishes.
Hence
\[
  \length_\Wit\HH^{2n}(X,\Wit_m\Oh_X)=m
\]
for every $m$.
The transition maps $R$ are surjective, so the inverse limit $\HH^{2n}(X,\Wit\Oh_X)$ surjects onto each $\HH^{2n}(X,\Wit_m\Oh_X)$ and has infinite length.

On the other hand $h_2^{\crys}(X)>1$ forces $(D_{2n})_{[0,1)}=0$ by \Cref{cor:automatic-top-slopes}, so \( \HH^{2n}(X,\Wit\Oh_X)[1/p]=0\) by \Cref{prop:WittSlope}.
A finitely generated torsion $\Wit$-module has finite length, so $\HH^{2n}(X,\Wit\Oh_X)$ is not finitely generated.

Finally, the vanishing of $\HH^{2n-1}(X,\Oh_X)$ and the isomorphism
$\HH^{2n}(X,\Oh_X)\simeq\bk$ show that $\Phi_X^{2n}$ is a smooth
one-dimensional formal group by \Cref{lem:AM-prorepresentable}.
By \Cref{cor:automatic-top-slopes}, the slope-$[0,1)$ part of $D_{2n}$
is a line of slope zero when $h_2^{\crys}(X)=1$, and vanishes when
$h_2^{\crys}(X)>1$.
Thus \Cref{rem:AM-slope-convention} gives $\hgt(\Phi_X^{2n})=1$ in the
first case and $\hgt(\Phi_X^{2n})=\infty$ in the second.
These are exactly the two values of $\hgt(X)$ in
\Cref{thm:all-good-dichotomy}.
\end{proof}

For these standard families \Cref{ex:hilb,ex:kummer}, non-ordinarity of the underlying surface forces infinite length in the top Witt-vector cohomology of the $2n$-dimensional variety.

\begin{corollary}\label{cor:generalized Kummer and Hilbert scheme no fg}
Let $\bk$ be a perfect field of characteristic $p$, let $n\ge2$, and let $X$ be either
\begin{enumerate}[label=\textup{(\alph*)},leftmargin=2.2em]
\item the Hilbert scheme $X=S^{[n]}$ of $n$ points on a $K3$ surface $S$ over $\bk$, with $p >2$; or
\item the generalised Kummer variety $X=K_n(A)$ attached to an abelian surface $A$ over $\bk$, with $p>2$ and $p\nmid n+1$.
\end{enumerate}
Then $X$ is smooth of dimension $2n\ge4$ and is a good reduction of a hyperk\"ahler variety.
If the underlying surface $S$, respectively $A$, is not ordinary, or equivalently if $h_2^{\crys}(X)>1$, then $X$ is not quasi-$F$-split and
\[
  \length_\Wit\HH^{2n}(X,\Wit\Oh_X)=\infty .
\]
In particular $\HH^{2n}(X,\Wit\Oh_X)$ is a $\Wit$-torsion module that is not finitely generated over $\Wit$.
\end{corollary}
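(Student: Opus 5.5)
We need to prove \Cref{cor:generalized Kummer and Hilbert scheme no fg}. The plan has three steps: exhibit $X$ as a good reduction, check $\HH^1(X,\Oh_X)=0$, and then apply \Cref{prop:top-witt-criterion}.

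First, realise $X$ as the special fibre of a smooth proper model over $\Oh_K$ with hyperk\"ahler geometric generic fibre. Pass to $\bar\bk$ if needed; the conclusions are insensitive to perfect base extension. Take $K=\Wit(\bk)[1/p]$, which is unramified, so $e=1\le p-1$ because $p>2$.

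\begin{itemize}
\item In case (a), lift $S$ to a projective $K3$ surface $\mathscr S/\Wit$ by Deligne's lifting theorem. The relative Hilbert scheme $\Hilb^n(\mathscr S/\Wit)$ is then smooth and proper, since the Hilbert scheme of points on a smooth surface is smooth in every characteristic. Its generic fibre is Beauville's hyperk\"ahler $S_K^{[n]}$.
\item In case (b), lift $A$ to a projective abelian scheme $\mathscr A/\Wit$, using Serre--Tate/Mumford lifting of a polarised abelian surface. The relative Kummer variety $K_n(\mathscr A)$ is smooth and proper because $p\nmid n+1$, by \cite[Proposition~6.5]{FuLi2021}.
\end{itemize}

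This puts $X$ in the setting of \Cref{subsec:automatic}, with $\dim X=2n\ge4$.

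Second, obtain $\HH^1(X,\Oh_X)=0$. This follows from \Cref{prop:h1-vanishing-small-e}, because the ramification index is $1\le p-1$ when $p>2$. Alternatively one can compute directly: $\HH^1(S^{[n]},\Oh)=\HH^1(S,\Oh)^{\oplus}=0$ via $S^{(n)}$ with rational singularities for $p>n$, but the ramification argument avoids any condition like $p>n$.

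Third, recall from \Cref{ex:hilb,ex:kummer} that $h_2^{\crys}(X)>1$ exactly when the surface is non-ordinary. Then apply \Cref{prop:top-witt-criterion}. Since $\HH^1(X,\Oh_X)=0$ and $h_2^{\crys}(X)>1$, \Cref{thm:all-good-dichotomy} shows $X$ is not quasi-$F$-split. The proof of \Cref{prop:top-witt-criterion} gives two facts about $\HH^{2n}(X,\Wit\Oh_X)$:
\begin{itemize}
\item it surjects onto $\HH^{2n}(X,\Wit_m\Oh_X)$, which has length $m$, so it has infinite length;
\item by \Cref{cor:automatic-top-slopes} and \Cref{prop:WittSlope}, it is torsion, i.e.\ $\HH^{2n}(X,\Wit\Oh_X)[1/p]=0$.
\end{itemize}
A torsion module of infinite length cannot be finitely generated over $\Wit$, which gives the final assertion.

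The main obstacle is the first step, producing the smooth proper model in case (a) without imposing $p>n$. The point to verify is smoothness of the relative Hilbert scheme of points for a smooth projective surface over $\Wit$. This is Fogarty's theorem, which is valid over any base fibrewise, so flatness and fibrewise smoothness give a smooth morphism. A secondary point is checking that $\Wit$-lifts of $K3$ and abelian surfaces exist in $p>2$, which is classical.
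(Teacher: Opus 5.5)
Your proposal is correct and follows essentially the same route as the paper: realise $X$ as the special fibre of a smooth proper model over the unramified ring $\Wit(\bk)$, deduce $\HH^1(X,\Oh_X)=0$ from \Cref{prop:h1-vanishing-small-e} with $e=1$, identify $h_2^{\crys}(X)>1$ with non-ordinarity of the surface via \Cref{ex:hilb,ex:kummer}, and conclude with \Cref{thm:all-good-dichotomy} and \Cref{prop:top-witt-criterion}. One small sharpening: Deligne's theorem by itself lifts a $K3$ surface only over a possibly ramified finite extension of $\Wit$, so the unramified lift you need for $e=1$ is the Deligne--Ogus refinement in odd characteristic (which the paper cites via Liedtke); it is this lifting that uses $p>2$, not the inequality $1\le p-1$, which holds for every $p$.
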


\begin{proof}
In both cases $X$ is a smooth good reduction of a hyperk\"ahler variety of dimension $2n\ge4$ by the existence of algebraic lifting over $W(k)$.
\Cref{prop:h1-vanishing-small-e} gives $\HH^1(X,\Oh_X)=0$.
The slope-$[0,1)$ part of $D_2$ is that of the underlying surface (\Cref{ex:hilb,ex:kummer}), so $h_2^{\crys}(X)>1$ precisely when that surface is non-ordinary; then $X$ is not quasi-$F$-split by \Cref{thm:all-good-dichotomy}, and \Cref{prop:top-witt-criterion} applies.
\end{proof}

\section{Quasi-\texorpdfstring{$F$}{F}-split height in a family}
\label{sec:quasiFspliting in family}

Throughout this section $f\colon\cX\to S$ is a smooth proper morphism of noetherian $\bk$-schemes whose fibres are geometrically connected of dimension $d$, and
\begin{equation}\label{eq:family-setup}
  \omega_{\cX/S}\simeq\Oh_{\cX} .
\end{equation}
We impose even dimension $d=2n$, connectedness of $S$, and the vanishing
\begin{equation}\label{eq:family-h01}
  \HH^1(\cX_s,\Oh_{\cX_s})=0\qquad\text{for every }s\in S
\end{equation}
only where stated.
Let
\[
  S^{\HG},\qquad S^{\Fs},\qquad S^{\qF},\qquad S^{\fin}
\]
be the sets of $s\in S$ whose geometric fibre $\cX_{\bar s}$ is Hodge-good, Frobenius split, quasi-$F$-split, and of finite top Artin--Mazur height $\hgt(\Phi^{d}_{\cX_{\bar s}})<\infty$, respectively.
We use $S^{\fin}$ under \eqref{eq:family-h01}, or after restriction to $S^{\HG}$, so that the top Artin--Mazur functor is a smooth one-dimensional formal group (\Cref{lem:relative-AM-family}).
Geometric fibres allow us to apply the results over perfect fields even when $\kappa(s)$ is imperfect.
If $\kappa(s)$ is perfect, each condition can be tested on $\cX_s$ itself; this applies to closed points when $S$ is of finite type over $\bk$.
The top Artin--Mazur height is unchanged by field extension.

The first two subsections establish openness of the relevant loci and the height dichotomy on $S^{\HG}$, together with its consequences at the generic point.
We then pass from fibrewise to relative $\Wit_2$-liftings, use Deligne--Illusie to control Hodge-goodness under specialization, and deduce a dichotomy on the whole base.
The final subsection relates these results to primitive symplectic varieties.

\subsection{Openness and the top Artin--Mazur group}
\label{subsec:openness}

The Hodge-good and Frobenius-split loci are open.
Under \eqref{eq:family-h01}, the relative Artin--Mazur group also gives openness of $S^{\fin}$ and the inclusion $S^{\qF}\subseteq S^{\fin}$.

\begin{lemma}
\label{lem:HG-open}
Assume $d=2n$.
Then $S^{\HG}$ is open in $S$, and its formation commutes with base change on $S$.
\end{lemma}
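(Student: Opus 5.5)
The plan is to describe $S^{\HG}$ by a finite list of conditions that are each open and compatible with base change. The two tools are upper semicontinuity of $s\mapsto h^j(\cX_s,\Oh_{\cX_s})$ and local constancy of $\chi(\cX_s,\Oh_{\cX_s})$ in the flat proper family $f$.

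First, fix what Hodge-goodness requires. Serre duality and \eqref{eq:family-setup} give $h^{2n}=h^0=1$ on every fibre, by geometric connectedness. Hence Hodge-goodness at $s$ is equivalent to two conditions:
\begin{itemize}
\item[(a)] $h^{2i}(\cX_s,\Oh)\le1$ for $0<i<n$ and $h^{j}(\cX_s,\Oh)=0$ for odd $j$, which together with $\chi(\cX_s,\Oh)=n+1$ forces all even $h^{2i}=1$;
\item[(b)] $\eta^n\ne0$ for a generator $\eta$ of $\HH^2(\cX_s,\Oh)$.
\end{itemize}
Condition (b) makes every $\eta^i$ nonzero, so every $\HH^{2i}$ is spanned by $\eta^i$. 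All of this can be checked after passing to geometric fibres, because cohomology commutes with field extension.

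Next, fix $s_0\in S^{\HG}$. By upper semicontinuity (Stacks, via a perfect complex $Rf_*\Oh_\cX$ computing cohomology of all fibres), there is an open neighbourhood $U$ of $s_0$ on which $h^j\le h^j(\cX_{s_0})$ for every $j$. On $U$ the odd groups therefore vanish and $h^{2i}\le1$. Constancy of $\chi$ on the connected component then forces $h^{2i}=1$ for all $0\le i\le n$. With all $h^j$ constant on $U$, which we may take reduced, or more generally by the cohomology-and-base-change theorem for a perfect complex whose fibre cohomology has locally constant dimension, the sheaves $R^jf_*\Oh_\cX$ are locally free of the expected ranks (rank one in even degrees, zero in odd degrees) and commute with arbitrary base change. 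Locally on $U$, choose a generator $\eta$ of the line bundle $R^2f_*\Oh_\cX$. The relative cup power $\eta^n$ is then a section of the line bundle $R^{2n}f_*\Oh_\cX$, and it is nonvanishing at $s_0$. Its nonvanishing locus is open, and at each point of that locus the fibre is Hodge-good by the reduction above. This shows $S^{\HG}$ is open.

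For base change along $g\colon S'\to S$, note that the fibre of $\cX_{S'}$ over $s'$ is $\cX_{g(s')}\otimes\kappa(s')$. Hodge-goodness is a property of geometric fibres, and cup products commute with flat field extension, so $S'^{\HG}=g^{-1}(S^{\HG})$.

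The main obstacle is to ensure that the cup product on fibres is computed by the relative cup product on $R^\bullet f_*\Oh_\cX$ when $S$ is non-reduced. If the constancy argument does not directly give base change there, I would first use Grauert's criterion on $S_{\mathrm{red}}$, and then use that the formation of $S^{\HG}$ only depends on points, since openness is topological. Concretely, I would verify base change of each $R^jf_*$ through the standard criterion that the base-change maps are surjective in degrees $j$ and $j-1$, which follows from local freeness of the perfect complex's cohomology after reducing to the reduced base.
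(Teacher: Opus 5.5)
Your argument is correct and is essentially the paper's proof: reduce to reduced $S$, use upper semicontinuity together with local constancy of $\chi=n+1$ to force odd vanishing and $h^{2i}=1$ near $s_0$, apply Grauert to make the $R^qf_*\Oh_{\cX}$ locally free and compatible with base change, take the open nonvanishing locus of the top cup-power map $(R^2f_*\Oh_{\cX})^{\otimes n}\to R^{2n}f_*\Oh_{\cX}$, and handle base change fibrewise. One caveat: your parenthetical alternative, that constant fibre dimensions of a perfect complex already give local freeness over a non-reduced base, is false (e.g.\ $A\xrightarrow{\epsilon}A$ over $A=k[\epsilon]/(\epsilon^2)$ has constant fibre cohomology but non-free $\HH^0$), so the passage to $S_{\mathrm{red}}$ that you fall back on at the end is genuinely needed, exactly as in the paper.
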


\begin{proof}
The assertion is immediate for $n=0$, so assume $n\ge1$.
Hodge-goodness of $\cX_{\bar s}$ is equivalent to that of $\cX_s$, as \eqref{eq:HG} may be tested after the flat base change $\kappa(s)\to\kappa(\bar{s})$, and it is a condition on fibres; since $\cX\times_SS_{\mathrm{red}}\to S_{\mathrm{red}}$ has the same fibres as $f$, we may assume $S$ reduced.
Let $s_0\in S^{\HG}$; we produce an open neighbourhood of $s_0$ inside $S^{\HG}$.

Since $f$ is flat and proper, $s\mapsto\chi(\cX_s,\Oh_{\cX_s})$ is locally constant, and \eqref{eq:HG} gives $\chi(\cX_{s_0},\Oh_{\cX_{s_0}})=n+1$; shrink $S$ so that $\chi(\cX_s,\Oh_{\cX_s})=n+1$ for every $s$.
By the semicontinuity theorem each function $s\mapsto h^q(s)\coloneqq\dim_{\kappa(s)}\HH^q(\cX_s,\Oh_{\cX_s})$ is upper semicontinuous, so
\[
  V=\bigl\{\,s\in S \mid h^q(s)\le h^q(s_0)\ \text{for all } q\,\bigr\}
\]
is open and contains $s_0$.
On $V$ we have $h^q(s)=0$ for odd $q$ and $h^{2i}(s)\le1$ for $0\le i\le n$, whence
\[
  n+1=\chi(\cX_s,\Oh_{\cX_s})=\sum_{i=0}^{n}h^{2i}(s)\le n+1 .
\]
Equality forces $h^{2i}(s)=1$ for every $i$ and every $s\in V$.
Thus all the functions $h^q$ are constant on $V$; as $V$ is reduced, Grauert's theorem makes each $R^qf_*\Oh_{\cX}|_V$ locally free of that rank with formation commuting with arbitrary base change.
In particular $R^{2i}f_*\Oh_{\cX}|_V$ is invertible for $0\le i\le n$.

The top cup-power map
\[
  (R^2f_*\Oh_{\cX}|_V)^{\otimes n}
     \longrightarrow R^{2n}f_*\Oh_{\cX}|_V
\]
is a map of line bundles, so its non-vanishing locus is open and contains $s_0$.
On each fibre in this locus, a generator $\eta\in\HH^2(\cX_s,\Oh_{\cX_s})$ satisfies $\eta^n\ne0$, hence $\eta^i\ne0$ for every $0\le i\le n$.
These powers generate the one-dimensional even cohomology groups, proving \eqref{eq:HG} there.
The same fibrewise description proves compatibility with base change.
\end{proof}

\begin{lemma}
\label{lem:Fsplit-open}
The locus $S^{\Fs}$ is open in $S$, and equals $\{\,s\in S \mid \hgt(\cX_{\bar s})=1\,\}$.
\end{lemma}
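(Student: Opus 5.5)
The plan is to use \Cref{lem:Fsplit} fibrewise and then realise the Frobenius action on top cohomology as a map of line bundles on $S$, whose non-vanishing locus is open. The equality of sets is formal. By \Cref{def:quasi-f-split}, $\hgt(\cX_{\bar s})=1$ exactly when $\cX_{\bar s}$ is $F$-split, and this is the definition of $s\in S^{\Fs}$. So the substance is openness.

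Fix $s\in S$. The geometric fibre $\cX_{\bar s}$ lives over an algebraically closed, hence perfect and $F$-finite, field, and it is smooth proper, geometrically connected, with $\omega\simeq\Oh$ by \eqref{eq:family-setup}. By \Cref{lem:Fsplit}, $s\in S^{\Fs}$ if and only if absolute Frobenius is nonzero on $\HH^d(\cX_{\bar s},\Oh)$. This group is one-dimensional by Serre duality. Frobenius being nonzero is compatible with the flat extension $\kappa(s)\to\kappa(\bar s)$, since the $p$-linear map $\HH^d(\cX_s,\Oh)\to\HH^d(\cX_s,\Oh)$ becomes nonzero after base change exactly when it is nonzero.

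Next I globalise. Top-degree cohomology commutes with arbitrary base change, because $R^{d+1}$ vanishes on fibres. Serre duality on fibres with $\omega_{\cX/S}\simeq\Oh_\cX$ gives $R^df_*\Oh_\cX\simeq(f_*\Oh_\cX)^\vee$. Here $f_*\Oh_\cX=\Oh_S$ because the fibres are geometrically connected and geometrically reduced and $f$ is flat and proper. Hence $L=R^df_*\Oh_\cX$ is an invertible sheaf whose formation commutes with base change. The absolute Frobenius $F_\cX$ lies over $F_S$, so it induces an $\Oh_S$-linear map $\phi\colon F_S^*L\to L$. Since $F_S^*L\simeq L^{\otimes p}$, this is a section of $L^{\otimes(1-p)}$. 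By base change, the fibre of $\phi$ at $s$ is the linearisation of Frobenius on $\HH^d(\cX_s,\Oh)$. The locus where this section is nonzero in the fibre is open, and by the previous paragraph it equals $S^{\Fs}$.

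The main obstacle is the base-change compatibility: one must check that the fibre of $\phi$ at $s$ really is Frobenius on $\HH^d(\cX_s,\Oh_{\cX_s})$. This uses the fact that the relative formation of $R^df_*$ commutes with the pullback along $F_S$ and along $\Spec\kappa(s)\to S$, together with functoriality of absolute Frobenius. I would check this through the isomorphism $R^df_*\Oh_\cX\otimes\kappa(s)\simeq\HH^d(\cX_s,\Oh)$, using that $R^d$ is right exact in the base since it is the top degree. Noetherianity of $S$ is only needed for coherence.
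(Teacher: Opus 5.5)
Your proposal is correct and follows essentially the same route as the paper. Both arguments use geometric connectedness and relative duality to make $L=R^df_*\Oh_{\cX}$ invertible with formation commuting with base change. Both then realise Frobenius as a map of line bundles $\Frob_S^*L\to L$, whose non-vanishing locus is open, and identify that locus fibrewise with $S^{\Fs}$ and with height one via \Cref{lem:Fsplit} and \Cref{def:quasi-f-split}. The only cosmetic difference is where the map $\Frob_S^*L\to L$ comes from: you obtain it by adjunction from the absolute Frobenius of $\cX$ lying over $\Frob_S$, while the paper takes it from the relative Frobenius $\cX\to\cX\times_{S,\Frob_S}S$, and these give the same map.
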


\begin{proof}
Geometric connectedness gives $f_*\Oh_{\cX}=\Oh_S$.
Relative Serre duality and \eqref{eq:family-setup} therefore make $L=R^df_*\Oh_{\cX}$ invertible, with formation commuting with arbitrary base change and $L\otimes\kappa(s)=\HH^d(\cX_s,\Oh_{\cX_s})$.
The relative Frobenius $\cX\to\cX\times_{S,\Frob_S}S$ induces an $\Oh_S$-linear map $\Frob_S^*L\to L$ whose fibre at $s$ is the absolute Frobenius on $\HH^d(\cX_s,\Oh_{\cX_s})$.
A map of invertible sheaves is a section of an invertible sheaf, so its non-vanishing locus is open.
Non-vanishing can be tested after passage to $\kappa(\bar s)$, where \Cref{lem:Fsplit,def:quasi-f-split} identify it with $F$-splitting and with height one.
\end{proof}

The remaining two loci are governed by the Artin--Mazur formal group of the family, which exists as soon as \eqref{eq:family-h01} holds.

\begin{lemma}
\label{lem:relative-AM-family}
Assume \eqref{eq:family-h01}.
Then:
\begin{enumerate}[label=\textup{(\arabic*)},leftmargin=2.2em]
\item $R^{d-1}f_*\Oh_{\cX}=0$, and $R^df_*\Oh_{\cX}$ is an invertible $\Oh_S$-module whose formation commutes with arbitrary base change.
\item The relative Artin--Mazur functor $\Phi^d_{\cX/S}$ is prorepresentable by a smooth one-dimensional formal group $G$ over $S$, with $\Lie(G)\simeq R^df_*\Oh_{\cX}$ and $G\times_S\Spec\kappa(s)\simeq\Phi^d_{\cX_s}$ for every $s\in S$.
\end{enumerate}
\end{lemma}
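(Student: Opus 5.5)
For part (1), I will first use Serre duality fibrewise and then globalise by cohomology and base change. Since $\omega_{\cX/S}\simeq\Oh_{\cX}$ and the fibres are geometrically connected of dimension $d$, Serre duality on each fibre gives
\[
  \HH^{d-1}(\cX_s,\Oh_{\cX_s})\simeq\HH^1(\cX_s,\Oh_{\cX_s})^\vee=0,\qquad \HH^d(\cX_s,\Oh_{\cX_s})\simeq\kappa(s).
\]
Top-degree cohomology always commutes with base change for a proper flat morphism of relative dimension $d$. Since the fibre dimension of $\HH^d$ is constant and equal to one, Grauert's theorem (after reducing to $S$ reduced) or the cohomology-and-base-change theorem shows that $R^df_*\Oh_{\cX}$ is invertible with formation commuting with base change. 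This is the same argument already used in \Cref{lem:Fsplit-open}. Because the fibrewise groups $\HH^{d-1}$ vanish, the base-change map in degree $d-1$ is surjective onto zero, hence an isomorphism. Descending induction via the base-change theorem then gives $R^{d-1}f_*\Oh_{\cX}=0$ with base-change compatibility; for $S$ not reduced I would work directly with the perfect complex $Rf_*\Oh_{\cX}$ locally on $S$.

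For part (2), I will follow the relative version of Artin--Mazur's criterion \cite[\S II]{ArtinMazur1977}. Consider the functor on nilpotent thickenings $A$ of $\Oh_S$-algebras, $\Phi^d_{\cX/S}(A)=\ker\bigl(\HH^d(\cX_A,\GGm)\to\HH^d(\cX_{A_{\mathrm{red}}},\GGm)\bigr)$. For a square-zero extension with ideal $I$, the sequence $1\to 1+I\Oh\to\Oh^\times\to\Oh^\times\to1$ from the proof of \Cref{lem:AM-prorepresentable}, together with the identifications $1+I\Oh\simeq I\otimes\Oh$, yields three facts. The vanishing of $R^{d-1}f_*\Oh_{\cX}$, which holds after base change by (1), makes the left-exactness and Schlessinger-type conditions hold. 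The kernel term is $I\otimes R^df_*\Oh_{\cX}$. The obstruction group $R^{d+1}$ vanishes for dimension reasons. Consequently $\Phi^d_{\cX/S}$ is a formally smooth, prorepresentable functor whose tangent space is the line bundle $R^df_*\Oh_{\cX}$. A formally smooth formal group with invertible Lie algebra is locally $\mathrm{Spf}\,\Oh_S[[t]]$, so $G$ is a smooth one-dimensional formal group with $\Lie(G)\simeq R^df_*\Oh_{\cX}$.

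Compatibility with restriction to fibres comes from the base-change statements in (1). These allow the deformation-theoretic computation to commute with pullback to $\kappa(s)$, which gives $G\times_S\Spec\kappa(s)\simeq\Phi^d_{\cX_s}$; on fibres this recovers \Cref{lem:AM-prorepresentable}(3).

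I expect the main obstacle to be prorepresentability over a general noetherian base rather than a field. Here I will invoke the relative version in Artin--Mazur, Corollary~II.4.4-type arguments, applying the vanishing of $R^{d-1}$ and $R^{d+1}$ universally.
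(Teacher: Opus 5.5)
Your proposal is correct and follows the paper's proof: fibrewise Serre duality plus cohomology and base change (with Nakayama) for (1), then the relative Artin--Mazur criterion using $R^{d-1}f_*\Oh_{\cX}=R^{d+1}f_*\Oh_{\cX}=0$ and the invertible tangent sheaf $R^df_*\Oh_{\cX}$ for (2). There are two small differences. First, the paper obtains invertibility of $R^df_*\Oh_{\cX}$ from relative Serre duality (\Cref{lem:Fsplit-open}), whereas you use descending cohomology and base change; your route is valid on non-reduced $S$, but the Grauert aside is not. Second, on a non-reduced base the relative functor should be the kernel along the augmentation $A\to\Oh_{S'}$, not the map to $A_{\mathrm{red}}$.
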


\begin{proof}
(1) Each fibre is smooth proper and geometrically connected with $\omega_{\cX_s}\simeq\Oh_{\cX_s}$, so Serre duality gives
\[
  \HH^{d-1}(\cX_s,\Oh_{\cX_s})\simeq\HH^{1}(\cX_s,\omega_{\cX_s})^{\vee}
  =\HH^{1}(\cX_s,\Oh_{\cX_s})^{\vee}=0
\]
by \eqref{eq:family-h01}.
Cohomology and base change gives $R^{d-1}f_*\Oh_{\cX}\otimes\kappa(s)=0$ for every $s$ and hence $R^{d-1}f_*\Oh_{\cX}=0$ by Nakayama.
The assertion about $R^df_*\Oh_{\cX}$ was proved in \Cref{lem:Fsplit-open}.

(2) The fibrewise assertion is \Cref{lem:AM-prorepresentable} applied with $q=d$: the hypothesis $\HH^{d-1}(\cX_s,\Oh_{\cX_s})=0$ of part (2) there holds by (1), the obstruction group $\HH^{d+1}(\cX_s,\Oh_{\cX_s})$ vanishes for dimensional reasons, and the tangent space $\HH^d(\cX_s,\Oh_{\cX_s})$ is one-dimensional.
The relative statement is the same criterion of \cite[\S II]{ArtinMazur1977} applied to $f$: by (1) the sheaf $R^{d-1}f_*\Oh_{\cX}$ vanishes and $R^{d+1}f_*\Oh_{\cX}=0$, so $\Phi^d_{\cX/S}$ is prorepresentable and formally smooth over $S$ with tangent sheaf $R^df_*\Oh_{\cX}$, which is invertible; and its formation commutes with base change $\Spec\kappa(s)\to S$ because that of $R^df_*\Oh_{\cX}$ does.
\end{proof}

\begin{proposition}
\label{prop:height-semicontinuity}
Assume \eqref{eq:family-h01}.
The function $s\mapsto\hgt(\Phi^d_{\cX_s})$ is upper semicontinuous: for every $h\ge1$, the locus $\{\,s\in S:\hgt(\Phi^d_{\cX_s})\ge h\,\}$ is closed.
In particular, $S^{\fin}$ is open.
\end{proposition}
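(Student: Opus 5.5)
We reduce to the classical height stratification of a one-dimensional formal group over a base by Hasse invariants. By \Cref{lem:relative-AM-family}(2), under \eqref{eq:family-h01} the relative Artin--Mazur functor is prorepresented by a smooth one-dimensional formal group $G$ over $S$ with $\Lie(G)\simeq L\coloneqq R^df_*\Oh_{\cX}$ invertible, and $G_{\kappa(s)}\simeq\Phi^d_{\cX_s}$. The statement is local on $S$, so we may take $S=\Spec A$ affine and $L$ trivial. Choosing a coordinate $t$ then gives a formal group law $G(x,y)\in A[[x,y]]$.

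Multiplication by $p$ is an endomorphism $[p](t)\in A[[t]]$. Since we are in characteristic $p$, it factors through relative Frobenius, $[p](t)=g_1(t^{p})$. Iterating Lazard's description, the height is then read off as follows: $\hgt(G_{\kappa(s)})\ge h$ if and only if $[p](t)\equiv 0\pmod{t^{p^h}}$ over $\kappa(s)$. Equivalently, the coefficients $a_j(s)$ of $t^j$ in $[p](t)$ vanish for all $j<p^{h}$, and infinite height means all coefficients vanish.

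Given this, the locus $\{s:\hgt\ge h\}$ is the vanishing locus $V(a_1,\dots,a_{p^h-1})$ of finitely many elements of $A$, hence closed. Its formation is compatible with passage to $\kappa(s)$ and to $\kappa(\bar s)$, because $[p]$ commutes with base change and the height is insensitive to field extension. Gluing over an affine cover gives closedness globally. Then $S\setminus S^{\fin}=\bigcap_h\{\hgt\ge h\}$ is an intersection of closed sets; in fact, since $S$ is noetherian, this decreasing chain of closed subsets stabilises.

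Openness of $S^{\fin}$ does not follow from the closedness of each stratum alone, so more is needed. One can get it from the coefficients $v_h$ of $t^{p^h}$ in $[p]$, the higher Hasse invariants, defined on the closed stratum $\{\hgt\ge h\}$. The refined claim is the following: there is some $N$ with $\{\hgt\ge N\}=\{\hgt=\infty\}$. This holds because the chain $Z_h=\{\hgt\ge h\}$ stabilises at some $Z_N$, and a point of $Z_N$ of finite height $m\ge N$ would not lie in $Z_{m+1}=Z_N$. Hence $S\setminus S^{\fin}=Z_N$ is closed.

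The main obstacle is justifying the height criterion via the coefficients of $[p]$ over a general noetherian (possibly nonreduced) base. The first approach to try is to work fibrewise: closedness only concerns the underlying set, and we may pass to $S_{\mathrm{red}}$. On a point, the standard theory of one-dimensional formal groups over a field, after extending to its algebraic closure, gives $[p](t)=u(t)\,t^{p^h}$-type leading behaviour with $u(0)\ne0$ exactly at height $h$.
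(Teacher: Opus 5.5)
Your argument is the paper's: trivialise $\Lie(G)$ locally, expand $[p]_G(t)=\sum_i a_it^i$, and use the fibrewise criterion $\hgt(\Phi^d_{\cX_s})\ge h\iff a_i(s)=0$ for all $i<p^h$. That criterion only needs the theory of formal groups over fields, so the ``obstacle'' over nonreduced bases never arises.

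One correction: your claim that openness of $S^{\fin}$ does not follow from closedness of the strata is false. The set $S\setminus S^{\fin}=\bigcap_h\{\hgt\ge h\}$ is an intersection of closed sets and hence closed; the paper equivalently writes $S^{\fin}$ as the union of the open sets $\{\hgt\le h\}$. Your noetherian stabilisation argument and the Hasse-invariant remark are therefore valid but unnecessary.
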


\begin{proof}
Let $G$ be the formal group of \Cref{lem:relative-AM-family}(2).
The question is local on $S$, so we may assume that $G$ admits a coordinate, that is $G\simeq\Oh_S[[t]]$ as a formal scheme, with a one-dimensional formal group law over $\Gamma(S,\Oh_S)$.
Write its multiplication-by-$p$ endomorphism as
\[
  [p]_G(t)=\sum_{i\ge1}a_it^{i},\qquad a_i\in\Gamma(S,\Oh_S).
\]
Its formation commutes with base change, so $[p]_{G_s}(t)=\sum_i a_i(s)t^i$ for every $s$.
Over a field of characteristic $p$ a one-dimensional formal group has height at least $h$ precisely when its $p$-series lies in $\kappa(s)[[t^{p^{h}}]]$, that is, precisely when $a_i(s)=0$ for every $i<p^{h}$.
Hence
\[
  \{~ s \mid \hgt(\Phi^d_{\cX_s})\ge h~\}=\bigcap_{i<p^{h}}\{\,s \mid a_i(s)=0\,\}
\]
is closed, and its complement $\{\hgt\le h-1\}$ is open.
Finally $S^{\fin}=\bigcup_{h\ge1}\{\hgt\le h\}$ is a union of open subsets.
\end{proof}

\begin{proposition}
\label{prop:qF-in-fin}
Assume \eqref{eq:family-h01}.
Then $S^{\qF}\subseteq S^{\fin}$.
\end{proposition}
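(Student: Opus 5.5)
Fix $s\in S^{\qF}$; we must show $\hgt(\Phi^d_{\cX_{\bar s}})<\infty$. Since both quasi-$F$-splitting and the top Artin--Mazur height may be tested on the geometric fibre, we work with $Z=\cX_{\bar s}$ over the algebraically closed field $\kappa(\bar s)$. The strategy is the one already used in \Cref{rem:AM-infinite-height} and the first paragraph of the proof of \Cref{cor:HG-qF-dichotomy}: quasi-$F$-splitting forces finite generation of top Witt-vector cohomology, and this in turn rules out infinite height.

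First we confirm that $\Phi^d_Z$ is a smooth one-dimensional formal group. By \eqref{eq:family-h01} and Serre duality with $\omega_Z\simeq\Oh_Z$ we have $\HH^{d-1}(Z,\Oh_Z)\simeq\HH^1(Z,\Oh_Z)^\vee=0$ and $\HH^d(Z,\Oh_Z)\simeq\bk$. This is exactly \Cref{lem:relative-AM-family}(2) restricted to the fibre, or \Cref{lem:AM-prorepresentable} applied directly. \Cref{prop:AM-cartier} then identifies $\Cart(\Phi^d_Z)$ with $M=\HH^d(Z,\Wit\Oh_Z)$.

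Next, since $Z$ is quasi-$F$-split, \Cref{lem:basic-properties}(2) shows that $M$ is finitely generated over $\Wit$. If $\Phi^d_Z$ had infinite height, then over the algebraically closed field it would be $\widehat{\mathds G}_a$, and its Cartier module would be annihilated by $p$. It would also be infinite-dimensional over $\bk$, with a basis of the form $\{V^m\gamma\}_{m\ge0}$, so it could not be finitely generated. This is the content of \Cref{rem:AM-infinite-height}. Hence $\hgt(\Phi^d_Z)<\infty$, that is, $s\in S^{\fin}$.

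The only step needing care is the passage from $s$ to $\bar s$: we need that quasi-$F$-splitting of $\cX_s$ over a possibly imperfect $F$-finite field implies that of $\cX_{\bar s}$. Here $S^{\qF}$ is defined via geometric fibres, so this is automatic. If one wants the statement for $\cX_s$ itself, the plan is to use that the formation of $Q_{X,n}$ and of $\Phi_{X,n}$ commutes with flat base change of the field, so a retraction base-changes. Beyond this point the argument is formal.
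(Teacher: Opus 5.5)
Your argument is correct, but it takes a different route from the paper. The paper applies Nakkajima's height inequality (\Cref{thm:nakkajima-inequality}) to $\cX_{\bar s}$ in degree $q=d$. It checks the hypotheses: the tangent space is $\kappa(\bar s)$, the obstruction group $\HH^{d+1}$ vanishes, and the Bocksteins vanish because their source $\HH^{d-1}(\cX_{\bar s},\Oh)$ is zero by \eqref{eq:family-h01} and Serre duality. This gives the quantitative bound $\hgt(\Phi^d_{\cX_{\bar s}})\le\hgt(\cX_{\bar s})<\infty$.

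You instead combine three ingredients:
\begin{enumerate}
\item finite generation of top Witt-vector cohomology under quasi-$F$-splitting (\Cref{lem:basic-properties}(2));
\item the Cartier-module identification (\Cref{prop:AM-cartier});
\item the fact that a one-dimensional formal group of infinite height has a $p$-torsion, infinite-dimensional Cartier module.
\end{enumerate}
This is the mechanism of \Cref{rem:AM-infinite-height} and of the first paragraph of the proof of \Cref{cor:HG-qF-dichotomy}. Your route yields only finiteness rather than the inequality, but finiteness is all the inclusion requires. It also bypasses the Bockstein hypothesis altogether, a point the paper itself makes in \Cref{rem:AM-infinite-height}. Both arguments ultimately rest on Nakkajima's work: the finite-generation theorem in your case, the height inequality in the paper's.

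Two small remarks. First, the Cartier module of $\widehat{\mathds G}_a$ is the $V$-adic completion $\prod_{m\ge0}\bk\,V^m\gamma$, so $\{V^m\gamma\}$ is a topological rather than an algebraic basis; your conclusion is unaffected. Second, your closing aside is unnecessary and, as phrased, inaccurate. The absolute Frobenius does not commute with base change along an inseparable extension $\kappa(s)\to\kappa(\bar s)$; for instance, the rank of $\Frob_*\Oh$ changes. Hence $Q_{X,n}$ and $\Phi_{X,n}$ are not simply base-changed. Since $S^{\qF}$ is defined via geometric fibres, nothing in your proof depends on this.
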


\begin{proof}
Let $s\in S^{\qF}$ and work over the perfect field $\kappa(\bar{s})$.
By \Cref{lem:relative-AM-family}(2) the functor $\Phi^d_{\cX_{\bar s}}$ is a smooth one-dimensional formal group, and by \eqref{eq:family-h01} and Serre duality the hypotheses of \Cref{thm:nakkajima-inequality} hold in degree $q=d$: the tangent space $\HH^d(\cX_{\bar s},\Oh)$ is $\kappa(\bar{s})$, the obstruction group $\HH^{d+1}(\cX_{\bar s},\Oh)$ vanishes, and the Bockstein maps vanish because their source $\HH^{d-1}(\cX_{\bar s},\Oh)$ is zero.
That theorem gives $\hgt(\Phi^d_{\cX_{\bar s}})\le\hgt(\cX_{\bar s})<\infty$, so $s\in S^{\fin}$.
\end{proof}

\begin{remark}
\label{rem:openness-gap}
The reverse inclusion $S^{\fin}\subseteq S^{\qF}$ is not established here in general.
The next result gives equality on the Hodge-good locus in dimension at least four.
\end{remark}

\subsection{The dichotomy on the Hodge-good locus}
\label{subsec:dichotomy-family}

The fibrewise dichotomy of \Cref{cor:HG-qF-dichotomy} applies on $S^{\HG}$ without a lifting hypothesis or a bound on $p$.

\begin{theorem}
\label{thm:dichotomy in a family}
Assume $d=2n\ge4$.
Then
\[
  S^{\Fs}\cap S^{\HG}
    =S^{\qF}\cap S^{\HG}
    =S^{\fin}\cap S^{\HG}
\]
is open in $S$.
The height $\hgt(\cX_{\bar s})$ is $1$ on this common locus and $\infty$ on its complement in $S^{\HG}$.
\end{theorem}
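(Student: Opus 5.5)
The plan is to work fibrewise on $S^{\HG}$ and then use the openness results of \Cref{subsec:openness}. Fix $s\in S^{\HG}$ and pass to the geometric fibre $Z=\cX_{\bar s}$ over the perfect field $\kappa(\bar s)$. By hypothesis $Z$ is Hodge-good, smooth, proper and geometrically connected, of dimension $2n\ge4$, with $\omega_Z\simeq\Oh_Z$ by base change of \eqref{eq:family-setup}. So \Cref{cor:HG-qF-dichotomy} applies and gives
\[
\hgt(Z)=\hgt(\Phi^{2n}_Z)\in\{1,\infty\},
\]
with the value $1$ exactly when $Z$ is $F$-split. This already proves the three equalities of loci pointwise.

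Unwinding, $s\in S^{\Fs}$ means $\hgt(Z)=1$; $s\in S^{\qF}$ means $\hgt(Z)<\infty$; and $s\in S^{\fin}$ means $\hgt(\Phi^{2n}_Z)<\infty$. Here $S^{\fin}$ is meaningful after restriction to $S^{\HG}$, because \Cref{prop:AM-hodge-good}(1) makes $\Phi^{2n}_Z$ a smooth one-dimensional formal group. Since all three heights coincide and take only the values $1$ and $\infty$, the three conditions are equivalent on $S^{\HG}$. The same corollary gives the final assertion: the height is $1$ on the common locus and $\infty$ on its complement in $S^{\HG}$.

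For openness, I would write the common locus as $S^{\Fs}\cap S^{\HG}$ and intersect two open sets. \Cref{lem:HG-open} shows $S^{\HG}$ is open, and \Cref{lem:Fsplit-open} shows $S^{\Fs}$ is open. The latter needs only \eqref{eq:family-setup} and geometric connectedness, not \eqref{eq:family-h01}. Choosing the Frobenius-split description of the locus avoids needing the relative Artin--Mazur group of \Cref{lem:relative-AM-family}, which requires \eqref{eq:family-h01} on all of $S$.

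The only point needing care is confirming that the hypotheses of \Cref{cor:HG-qF-dichotomy} hold on geometric fibres. Geometric connectedness and properness are given, and Hodge-goodness of $\cX_{\bar s}$ is the definition of $S^{\HG}$. Triviality of $\omega$ on the fibre follows from \eqref{eq:family-setup} by base change. I do not expect a real obstacle; the substance lives in \Cref{cor:HG-qF-dichotomy} and the two openness lemmas.
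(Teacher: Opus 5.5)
Your proposal is correct and follows the paper's proof: apply \Cref{cor:HG-qF-dichotomy} to each geometric fibre over $S^{\HG}$ to get the three equalities and the height values, then obtain openness from \Cref{lem:HG-open,lem:Fsplit-open}. The only cosmetic difference is that the paper justifies the top Artin--Mazur group by noting that \eqref{eq:family-h01} holds on $S^{\HG}$ and invoking \Cref{lem:relative-AM-family}, whereas you use \Cref{prop:AM-hodge-good}(1) fibrewise; this suffices because $S^{\fin}$ is defined fibrewise.
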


\begin{proof}
For $s\in S^{\HG}$, the geometric fibre $\cX_{\bar s}$ is Hodge-good, smooth proper and geometrically connected, with trivial canonical bundle and dimension $2n\ge4$.
Moreover, \eqref{eq:family-h01} holds on $S^{\HG}$ by \eqref{eq:HG}, so its top Artin--Mazur group is defined by \Cref{lem:relative-AM-family}.
Applying \Cref{cor:HG-qF-dichotomy} over $\kappa(\bar s)$ gives the equalities and the height assertion.
Openness follows from \Cref{lem:HG-open,lem:Fsplit-open}.
\end{proof}

In particular, the dichotomy holds near any Hodge-good fibre, including any Hodge-good reduction of a hyperk\"ahler variety (\Cref{def:hodge-good-reduction}).

\begin{theorem}
\label{thm:generic-dichotomy}
Assume $d=2n\ge4$, $S$ is irreducible with generic point $\eta$, $S^{\HG}\ne\emptyset$, and \eqref{eq:family-h01} holds.
\begin{enumerate}[label=\textup{(\roman*)},leftmargin=2.2em]
\item $S^{\fin}\ne\emptyset$ if and only if $S^{\qF}\ne\emptyset$, if and only if the geometric generic fibre is $F$-split.
\item If the geometric generic fibre is not $F$-split, then $S^{\fin}=S^{\qF}=\emptyset$ and $\hgt(\cX_{\bar s})=\infty$ for every $s\in S$.
\item The dichotomy $\hgt(\cX_{\bar s})\in\{1,\infty\}$ holds on the dense open subset $S^{\HG}\cup S^{\Fs}$.
\end{enumerate}
\end{theorem}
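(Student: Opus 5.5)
The plan is to combine openness of the loci with irreducibility of $S$. Every nonempty open subset of an irreducible space contains the generic point $\eta$. By \Cref{lem:HG-open}, $S^{\HG}$ is open, and it is nonempty by hypothesis, so $\eta\in S^{\HG}$ and $S^{\HG}$ is dense. By \Cref{prop:height-semicontinuity} (which uses \eqref{eq:family-h01}), $S^{\fin}$ is open, and by \Cref{lem:Fsplit-open} so is $S^{\Fs}$.

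For (i), the chain of inclusions $S^{\Fs}\subseteq S^{\qF}\subseteq S^{\fin}$ holds by \Cref{lem:basic-properties}(1), applied on geometric fibres, and \Cref{prop:qF-in-fin}. Hence it suffices to show two things. First, if $S^{\fin}\neq\emptyset$ then $\eta\in S^{\Fs}$. Second, $\eta\in S^{\Fs}$ implies $S^{\Fs}\ne\emptyset$, which is trivial. For the first, suppose $S^{\fin}\ne\emptyset$. Since $S^{\fin}$ is open, it contains $\eta$, so $\eta\in S^{\fin}\cap S^{\HG}$. \Cref{thm:dichotomy in a family} identifies this intersection with $S^{\Fs}\cap S^{\HG}$, so the geometric generic fibre is $F$-split. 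This closes the cycle of equivalences. Here "geometric generic fibre $F$-split" means $\eta\in S^{\Fs}$, and this is the convention I would state explicitly.

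For (ii), suppose the geometric generic fibre is not $F$-split. Then (i) gives $S^{\fin}=\emptyset$ and hence $S^{\qF}=\emptyset$. So for every $s$ the geometric fibre $\cX_{\bar s}$ is not quasi-$F$-split, which means $\hgt(\cX_{\bar s})=\infty$ by \Cref{def:quasi-f-split}.

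For (iii), $S^{\HG}\cup S^{\Fs}$ is open as a union of opens, and dense because it contains the nonempty open set $S^{\HG}$. On $S^{\HG}$ the height lies in $\{1,\infty\}$ by \Cref{thm:dichotomy in a family}. On $S^{\Fs}$ it equals $1$ by \Cref{lem:Fsplit-open}.

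The only delicate step is the use of openness of $S^{\fin}$ at the generic point. This needs the relative formal group of \Cref{lem:relative-AM-family}, and that is exactly where hypothesis \eqref{eq:family-h01} on all fibres enters. Since that is already packaged in \Cref{prop:height-semicontinuity}, I expect no real obstacle. The argument is just bookkeeping with generic points.
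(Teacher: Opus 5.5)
Your proposal is correct and follows the paper's proof essentially step for step. Openness of $S^{\HG}$ and $S^{\fin}$ puts $\eta$ in their intersection, and \Cref{thm:dichotomy in a family} then gives $\eta\in S^{\Fs}$; the inclusions $S^{\Fs}\subseteq S^{\qF}\subseteq S^{\fin}$ close the cycle, and (ii) and (iii) follow exactly as you describe.
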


\begin{proof}
(i) If $S^{\fin}\ne\emptyset$, both $S^{\fin}$ and $S^{\HG}$ are nonempty open subsets by \Cref{prop:height-semicontinuity,lem:HG-open}.
They contain $\eta$, so \Cref{thm:dichotomy in a family} gives $\eta\in S^{\Fs}$.
The other implications follow from $S^{\Fs}\subseteq S^{\qF}\subseteq S^{\fin}$, using \Cref{lem:basic-properties}(1) and \Cref{prop:qF-in-fin}.

(ii) By (i), $S^{\fin}=S^{\qF}=\emptyset$, so every geometric fibre has infinite quasi-$F$-split height by definition.

(iii) Apply \Cref{thm:dichotomy in a family} on $S^{\HG}$ and \Cref{lem:Fsplit-open} on $S^{\Fs}$.
Their union is open and contains the nonempty open subset $S^{\HG}$, hence is dense in the irreducible base $S$.
\end{proof}

\paragraph{Outside the Hodge-good locus.}
Under the hypotheses of \Cref{thm:generic-dichotomy}, a fibre $Y=\cX_{\bar s}$ with $s\notin S^{\HG}$ is constrained in two ways.
First, semicontinuity from the Hodge-good generic fibre gives $h^{2i}(Y,\Oh_Y)=1+a_i$ and $h^{2j+1}(Y,\Oh_Y)=b_j$ with $a_i,b_j\ge0$.
Constancy of $\chi(Y,\Oh_Y)=n+1$ then gives
\begin{equation}\label{eq:paired-jumps}
  \sum_i a_i=\sum_j b_j ,
\end{equation}
so an even jump is accompanied by an odd one.
Hodge-goodness can also fail without a dimension jump, through vanishing of the top cup power detected in \Cref{lem:HG-open}.

Second, if $Y$ is quasi-$F$-split, then all $\HH^j(Y,\Wit\Oh_Y)$ are finitely generated by \Cref{lem:basic-properties}(2), and \Cref{prop:Witt-Euler} gives
\[
  \sum_j(-1)^j\slopedefect_j(Y)=n+1,
  \qquad 0\le\slopedefect_j(Y)\le h^j(Y,\Oh_Y).
\]
If the coherent dimensions do not jump, these relations force $\slopedefect_{2i}(Y)=1$ for every $i$, and every $\Phi^{2i}_Y$ has finite height.
They do not control the cup powers.
Thus we cannot exclude a fibre outside $S^{\HG}$ that is quasi-$F$-split but not $F$-split; by \Cref{thm:generic-dichotomy}(ii), such a fibre can occur only when the geometric generic fibre is already $F$-split.

\subsection{From fibrewise to relative $\Wit_2$-liftings}
\label{subsec:relative-W2-liftings}

The family argument requires a relative $\Wit_2$-lifting.
Condition \eqref{eq:ct2} makes the fibrewise lifting obstructions into a section of a vector bundle on the base; its vanishing on a dense open subset then gives local liftings of the whole family.

In the following, $T_{\cY/B}$ denotes the relative tangent sheaf and $T_{\cY_b}$ its restriction to a fibre.

\begin{lemma}
\label{lem:relative-W2-lifting}
Let $B$ be a smooth finite-type $\bk$-scheme and let $g\colon\cY\to B$ be smooth and proper.
Assume that
\begin{equation}\label{eq:ct2}
  b\longmapsto\dim_{\kappa(b)}\HH^2(\cY_b,T_{\cY_b})
  \tag{$\textup{ct}_2$}
\end{equation}
is locally constant on the set of closed points of $B$.
Then $R^2g_*T_{\cY/B}$ is locally free and its formation commutes with arbitrary base change, and for every affine open $U\subseteq B$ and every smooth affine $\Wit_2(\bk)$-lifting $\widetilde U$ of $U$ there is a class
\[
  o\in\Gamma\bigl(U,R^2g_*T_{\cY/B}\bigr)
\]
with the following two properties.
\begin{enumerate}[label=\textup{(\roman*)},leftmargin=2.2em]
\item $o=0$ if and only if $\cY_U$ admits a smooth proper lifting over $\widetilde U$.
\item For every closed point $b\in U$, the value $o(b)$ is the obstruction to lifting $\cY_b$ over $\Wit_2(\kappa(b))$.
  Thus the non-liftable closed fibres are the closed points of an open subset of $U$.
\end{enumerate}
Consequently, if the closed fibres over a dense open subset of $B$ lift to $\Wit_2(\kappa(b))$, then locally on $B$ the morphism $g$ admits a smooth proper lifting over a smooth $\Wit_2(\bk)$-lifting of its base.
\end{lemma}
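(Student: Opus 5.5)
Local freeness comes first. Because $B$ is smooth, it is reduced, and $T_{\cY/B}$ is locally free and flat over $B$. Grauert's theorem therefore upgrades \eqref{eq:ct2} into local freeness of $R^2g_*T_{\cY/B}$, once the function is known to be locally constant on all points and not only on closed points. To get that, we will use that $B$ is Jacobson and that the function is upper semicontinuous: every locally closed constancy stratum is determined by its closed points, so constancy on closed points of each connected component forces constancy everywhere. Grauert then gives base change in degree $2$. For compatibility with arbitrary base change we apply the cohomology and base change theorem; it gives surjectivity of the degree-$2$ base change map, and since $R^2$ is locally free this also yields base change in degree $1$. We record this for use in the obstruction theory.

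Next we construct the obstruction. Fix an affine $U$ with a smooth lifting $\widetilde U$ over $\Wit_2(\bk)$; it is flat, and $p\Oh_{\widetilde U}\simeq\Oh_U$. By deformation theory of smooth morphisms (Illusie), lifting $\cY_U\to U$ to a smooth $\widetilde{\cY}\to\widetilde U$ is obstructed by a class in $\operatorname{Ext}^2(\Omega_{\cY_U/\widetilde U\otimes\bk}\ldots)$. Since $\cY_U\to U$ is smooth and $U$ is already lifted, this reduces to $\HH^2(\cY_U,T_{\cY_U/U}\otimes_{\Oh_U}p\Oh_{\widetilde U})=\HH^2(\cY_U,T_{\cY/B}|_U)$. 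Because $U$ is affine, this equals $\Gamma(U,R^2g_*T_{\cY/B})$, which defines $o$. Properness of the lift is automatic, since it is a topological condition and the underlying spaces agree. This gives (i).

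For (ii), the key point is functoriality of obstruction classes under base change along a closed point $b\colon\Spec\kappa(b)\to U$. We lift $b$ to $\Spec\Wit_2(\kappa(b))\to\widetilde U$ using formal smoothness of $\widetilde U$; this is possible because $\kappa(b)$ is perfect and $\Wit_2(\kappa(b))$ is the universal thickening. Naturality of Illusie's obstruction then sends $o$ to the fibre obstruction under the base change map $R^2g_*T\otimes\kappa(b)\to\HH^2(\cY_b,T_{\cY_b})$, which is an isomorphism by the first step. The non-liftable locus is the support of the section $o$, and for a section of a vector bundle that support is closed. The word ``open'' in the statement should presumably read ``closed'': the liftable closed points are those of the closed zero locus $Z(o)$.

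For the consequence, suppose liftability holds on a dense open set. Then $Z(o)$ contains a dense set of closed points, so $Z(o)$ is all of $U$ as a set. Because $U$ is reduced and $R^2g_*T$ is locally free, $o=0$, and (i) gives the lift. Smooth affine $\Wit_2$-liftings exist locally by smoothness of $B$.

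The main obstacle is the compatibility in (ii): the choice of the lift of $b$ must be made compatibly, and the obstruction class must be identified with the canonical fibre obstruction. The fibre obstruction does not depend on that choice, because lifting $\cY_b$ over $\Wit_2(\kappa(b))$ involves no choice of base thickening.
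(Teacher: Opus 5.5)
Your proof has one error, in the last assertion of (ii). You say the non-liftable locus is the support of $o$, hence closed, and that the statement's ``open'' should read ``closed''. This conflates two different loci.

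A closed fibre $\cY_b$ fails to lift exactly when the \emph{value} $o(b)\in R^2g_*T_{\cY/B}\otimes\kappa(b)\simeq\HH^2(\cY_b,T_{\cY_b})$ is nonzero. Locally trivialise the bundle and write $o=(f_1,\dots,f_r)$. The zero locus $Z(o)=V(f_1,\dots,f_r)$ is closed, so the non-liftable locus $\{b : o(b)\neq 0\}=U\setminus Z(o)$ is open. This is exactly what the statement says, and it is the paper's argument: the non-vanishing locus of a section of a locally free sheaf is open.

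The closed set you call the support is the stalkwise locus $\{x : o_x\neq 0 \text{ in } (R^2g_*T_{\cY/B})_x\}$. That set is in general strictly larger than the non-vanishing locus. For example, the section $t$ of $\Oh$ on $\Spec\bk[t]$ has support the whole line, but its value vanishes at the origin. Your own next clause, that the liftable points are those of the closed set $Z(o)$, already gives the statement as written. So drop the proposed ``correction''; the justification you give for closedness of the non-liftable locus is invalid.

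Everything else is correct and follows the paper's route:
\begin{itemize}
\item upper semicontinuity plus the Jacobson property to pass from closed points to all points;
\item Illusie's obstruction in $\HH^2(\cY_U,T_{\cY_U/U})=\Gamma(U,R^2g_*T_{\cY/B})$;
\item lifting $b$ to a $\Wit_2(\kappa(b))$-point of $\widetilde U$ and using naturality of the obstruction;
\item the reducedness argument showing $o=0$ once $Z(o)$ contains the closed points of a dense open subset.
\end{itemize}

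The one difference is in the first step. You get local freeness and arbitrary base change from Grauert's theorem plus the cohomology-and-base-change theorem, via surjectivity of the degree-two and degree-one base-change maps. The paper instead takes a minimal free model of $Rg_*T_{\cY/B}$ over the regular local domain $\Oh_{B,b}$ and shows that the differentials adjacent to degree two vanish. Both are standard and equally effective here.
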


\begin{proof}
The sheaf $T_{\cY/B}$ is locally free and $B$-flat, so $b\mapsto\dim\HH^2(\cY_b,T_{\cY_b})$ is upper semicontinuous on $B$.
A finite-type $\bk$-scheme is Jacobson, so a closed subset meeting no closed point is empty; hence local constancy on closed points forces local constancy on $B$.

Fix $b\in B$ and put $A=\Oh_{B,b}$, a regular local domain.
By properness $Rg_*T_{\cY/B}$ is a perfect complex whose formation commutes with derived base change; choose a minimal finite free model $P^\bullet$ of it over $A$, so that the differentials of $P^\bullet$ have entries in the maximal ideal and $\dim\HH^2(\cY_b,T_{\cY_b})=\rank P^2$.
By the local constancy just established the same number computes the second cohomology of $P^\bullet$ at the generic point of $\Spec A$, so the two differentials adjacent to degree two vanish there; as $A$ is a domain and $P^\bullet$ is free, they vanish.
Hence $\HH^2(P^\bullet)=P^2$, and $R^2g_*T_{\cY/B}$ is locally free with formation commuting with arbitrary base change.

Let $U\subseteq B$ be an affine open subset and $\widetilde U$ a smooth affine $\Wit_2(\bk)$-lifting of $U$, which exists by lifting an \'etale coordinate presentation.
Flatness of $\widetilde U$ over $\Wit_2(\bk)$ identifies the square-zero ideal $p\Oh_{\widetilde U}$ with $\Oh_U$ through multiplication by $p$, so the obstruction to extending $\cY_U\to U$ to a flat lifting over $\widetilde U$ is a class
\[
  o\in\operatorname{Ext}^2_{\cY_U}\bigl(\Omega^1_{\cY_U/U},\Oh_{\cY_U}\bigr)
   =\HH^2\bigl(\cY_U,T_{\cY_U/U}\bigr)
   =\Gamma\bigl(U,R^2g_*T_{\cY/B}\bigr),
\]
the first equality because smoothness identifies the relative cotangent complex with $\Omega^1_{\cY_U/U}$; the class vanishes if and only if such a lifting exists \cite[Chapter~III, \S2.1]{Illusie1971}.
A flat lifting is automatically smooth and proper, both properties persisting across a nilpotent thickening of the base, so this is \textup{(i)}.

For \textup{(ii)}, smoothness of $\widetilde U$ over $\Wit_2(\bk)$ lifts a closed point $b\in U$ to a $\Wit_2(\kappa(b))$-point of $\widetilde U$, the residue field $\kappa(b)$ being finite over $\bk$ and hence perfect.
By the base-change property established above, together with naturality of the obstruction along that morphism of square-zero extensions, the value $o(b)$ is the obstruction to lifting $\cY_b$ over $\Wit_2(\kappa(b))$.
The non-vanishing locus of a section of a locally free sheaf is open, which gives the last assertion of \textup{(ii)}.

If the closed fibres lift over a dense open subset $V\subseteq B$, then $o$ vanishes at every closed point of $U\cap V$.
Since $B$ is reduced and Jacobson, $o$ vanishes on $U\cap V$, hence on $U$.
Thus $o=0$ and \textup{(i)} applies.
\end{proof}

\begin{corollary}\label{cor:relative-W2-from-one-fibre}
Under the hypotheses of \Cref{lem:relative-W2-lifting}, suppose $B$ is connected and one geometric fibre is connected, has trivial canonical bundle, has $E_1$-degeneration, and has torsion-free crystalline cohomology in every degree.
Then, locally on $B$, the morphism $g$ admits a smooth proper lifting over a smooth $\Wit_2(\bk)$-lifting of its base.
\end{corollary}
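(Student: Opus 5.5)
By \Cref{lem:relative-W2-lifting}, it suffices to show that the closed fibres over a dense open subset of $B$ lift to $\Wit_2(\kappa(b))$. The final assertion of that lemma then yields the local relative liftings. The plan is to establish, on a dense open of $B$, the two inputs of the Brantner--Taelman criterion used in \Cref{prop:quasi F split is W2 litable}: connectedness with trivial canonical bundle, and $E_1$-degeneration of the Hodge--de Rham spectral sequence.

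First, spread out the properties of the given geometric fibre over a point $b_0$.

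The number of geometric connected components is locally constant in smooth proper families. Since $B$ is connected, every fibre is therefore geometrically connected.

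For triviality of the canonical bundle, consider $g_*\omega_{\cY/B}$. By semicontinuity and base change, the locus where $h^0(\cY_b,\omega_{\cY_b})\ge1$ is closed. On the connected component of $\Pic$, the locus where $\omega_{\cY_b}\simeq\Oh_{\cY_b}$ is closed as well. To also get openness, I would use torsion-freeness: the class of $\omega$ in crystalline cohomology, or equivalently its $\ell$-adic Chern class, is locally constant. The cleaner route is the following. Hodge numbers $h^{a,b}$ are upper semicontinuous, de Rham dimensions are constant because crystalline cohomology is torsion-free and forms a crystal, and $\sum h^{a,b}\ge\dim\HH^m_{\dR}$. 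Together these force $E_1$-degeneration and constancy of all Hodge numbers on an open neighbourhood of $b_0$. The same argument gives degeneration wherever the Hodge numbers are minimal, which is a dense open set, once we know the de Rham dimensions equal the Betti numbers computed at $b_0$. That equality comes from torsion-freeness at $b_0$ together with local constancy of $\HH^m_{\crys}$ ranks via the $\ell$-adic Betti numbers.

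For $\omega$, near $b_0$ the sheaf $g_*\omega_{\cY/B}$ is then a line bundle commuting with base change, since $h^{d,0}$ is constant. Its generating section is nowhere vanishing on the fibre over $b_0$, so it is nowhere vanishing on a neighbourhood.

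Second, on the resulting nonempty open $V$, each closed fibre is connected, has trivial canonical bundle and satisfies $E_1$-degeneration. By \cite[Theorem~7.18]{BrantnerTaelman2025}, as in \Cref{prop:quasi F split is W2 litable}, each such fibre lifts to $\Wit_2(\kappa(b))$. Since $B$ is connected and smooth, hence irreducible, $V$ is dense, and \Cref{lem:relative-W2-lifting} applies. The vanishing of the obstruction section $o$ on $V$ then propagates to all of $U$.

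The main obstacle is the first step: showing that the locus of $E_1$-degenerate fibres is open and nonempty. This requires the de Rham dimension at every nearby fibre to equal the Betti number. I would deduce that from the fact that $\HH^m_{\crys}(\cY_b/\Wit)$ torsion-free at $b_0$ forces $\dim\HH^m_{\dR}(\cY_{b_0})=b_m$, combined with upper semicontinuity of $\dim\HH^m_{\dR}$ in the family (the relative de Rham complex is perfect) and the inequality $\dim\HH^m_{\dR}\ge b_m$ at every fibre. This pins the dimension to $b_m$ near $b_0$.
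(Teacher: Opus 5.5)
Your proposal is correct and follows the paper's proof. The paper uses the same steps. Upper semicontinuity of the Hodge numbers near the given fibre, squeezed by $b_j\le\dim\HH^j_{\dR}\le\sum_{a+b=j}h^{a,b}\le b_j$, gives constant Hodge numbers and $E_1$-degeneration on an open $V$. Then $g_*\omega_{\cY/B}$ is a line bundle, and properness shows that its evaluation map is an isomorphism over a smaller open, so the fibres there have trivial canonical bundle. Brantner--Taelman then gives fibrewise $\Wit_2$-lifts, and density of $V$ together with \Cref{lem:relative-W2-lifting} finishes. Your extra appeal to upper semicontinuity of $\dim\HH^m_{\dR}$ is harmless but unnecessary. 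The lower bound $b_m\le\dim\HH^m_{\dR}$ from the universal-coefficient sequence, combined with semicontinuity of the Hodge numbers, already closes the chain of inequalities.
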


\begin{proof}
Let $Y_0$ be the specified geometric fibre and write $d=\dim Y_0$.
Smooth proper base change makes all geometric fibres connected and their Betti numbers $b_j$ constant.
The assumptions on $Y_0$ give $\sum_{a+b=j}h^{a,b}(Y_0)=b_j$ for every $j$.
By upper semicontinuity there is an open neighbourhood $V$ of its image on which $h^{a,b}(\cY_{\bar t})\le h^{a,b}(Y_0)$ for all $a,b$.
For $t\in V$,
\[
 b_j\le\dim\HH^j_{\dR}(\cY_{\bar t})
     \le\sum_{a+b=j}h^{a,b}(\cY_{\bar t})\le b_j.
\]
The first inequality follows from the crystalline universal-coefficient sequence.
Equality throughout gives constant Hodge numbers on $V$, $E_1$-degeneration, and torsion-free crystalline cohomology in every degree.
In particular $g_*\omega_{\cY_V/V}$ is a line bundle and commutes with base change.
Its evaluation map is an isomorphism on $Y_0$.
The locus in $\cY_V$ where it is not an isomorphism is closed and has closed image in $V$ by properness.
Removing that image leaves an open neighbourhood of the image of $Y_0$ on which every fibre has trivial canonical bundle.
By \cite[Theorem~7.18]{BrantnerTaelman2025}, all closed fibres over this open subset lift to $\Wit_2$.
Since $B$ is smooth and connected, it is integral, so this open subset is dense and \Cref{lem:relative-W2-lifting} applies.
\end{proof}

\subsection{Constancy of Hodge-goodness under relative lifting}
\label{subsec:HG-constancy}

Relative $\Wit_2$-liftings control Hodge cohomology in degrees below $p$.
For varieties with trivial canonical bundle, Serre duality extends this control to every structure-sheaf degree and, when $n<p$, to the cup powers needed for Hodge-goodness.

\goodbreak
\begin{samepage}
\begin{proposition}
\label{prop:HG-locally-constant}
Let $B$ be a smooth connected finite-type $\bk$-scheme and let $g\colon\cY\to B$ be smooth proper of relative dimension $2n$, with $n\ge1$.
Suppose that, locally on $B$, the morphism $g$ admits a smooth lifting over a smooth $\Wit_2(\bk)$-lifting of its base.
Then the following hold.
\begin{enumerate}[label=\textup{(\roman*)},leftmargin=2.2em]
\item For $a+b<p$, the sheaves $R^bg_*\Omega^a_{\cY/B}$ are locally free and commute with arbitrary base change.
  For $1\le r\le n$ with $2r<p$, both maps
\[
    (R^2g_*\Oh_{\cY})^{\otimes r}\longrightarrow R^{2r}g_*\Oh_{\cY},
    \qquad
    (g_*\Omega^2_{\cY/B})^{\otimes r}\longrightarrow g_*\Omega^{2r}_{\cY/B}
  \]
  have locally constant rank.
  \item If $2n<p$, the local-freeness and base-change assertions hold for all $a,b$, and the locus of Hodge-good geometric fibres is open and closed.
  \item Suppose every geometric fibre has trivial canonical bundle.
    If $n\le p$, all $R^qg_*\Oh_{\cY}$ are locally free and commute with arbitrary base change.
    If $n<p$, the locus of Hodge-good geometric fibres is either empty or all of $B$.
\end{enumerate}
\end{proposition}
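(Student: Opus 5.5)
The engine is the relative Deligne--Illusie decomposition. Fix an open $U\subseteq B$ carrying a smooth $\Wit_2(\bk)$-lifting $\widetilde U$ and a lifting $\widetilde{\cY}_U$ of $\cY_U$. The relative form of Deligne--Illusie then gives a decomposition in the derived category of $\Oh_{U'}$-modules,
\[
  \tau_{<p}F_{\cY_U/U*}\Omega^\bullet_{\cY_U/U}\simeq\bigoplus_{a<p}\Omega^a_{\cY'_U/U}[-a],
\]
where $\cY'_U$ is the Frobenius twist. Applying $Rg'_*$ gives degeneration of the relative Hodge--de Rham spectral sequence in total degrees $<p$. The same holds for every base change to a point, since the lifting base changes. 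I would then argue with lengths, as in the proof of \Cref{lem:HG-open}. On each fibre, $\dim\HH^j_{\dR}(\cY_b)=\sum_{a+b=j}h^{a,b}(\cY_b)$ for $j<p$. The de Rham cohomology $R^jg_*\Omega^\bullet_{\cY/B}$ of the smooth proper family is computed by a perfect complex. The fibrewise degeneration, combined with upper semicontinuity of each $h^{a,b}$ and of $\dim\HH^j_{\dR}$, should force the functions $b\mapsto\dim\HH^j_{\dR}(\cY_b)$ and $h^{a,b}$ to be locally constant for $a+b<p$.

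The idea for that step is that $\dim\HH^j_{\dR}$ jumps up on closed sets. By degeneration it equals the sum of the Hodge numbers. The Hodge numbers can also only jump up, and the de Rham spectral sequence (non-degenerate a priori) bounds $\HH^j_{\dR}$ by them. Comparing with the generic point, where Grauert-type generic constancy holds, gives equality on all of $B$. I would run this through the Hodge-to-de Rham filtration on the relative de Rham cohomology, which I expect to be locally free in degrees $<p$ because it is a Gauss--Manin crystal. Since $B$ is smooth, hence reduced, Grauert then yields local freeness and base change for $R^bg_*\Omega^a$. The cup-power maps in (i) are maps of locally free sheaves compatible with base change. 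The plan is to identify their fibrewise rank with a Betti-type invariant: use the relative lifting to make them compatible with the Gauss--Manin connection (the decomposition is multiplicative in the range $2r<p$), so that their images are horizontal subbundles and hence of constant rank on the connected base. This horizontality is the \textbf{main obstacle}. I would try first to realise the rank as the rank of a map of $F$-crystals or of filtered de Rham bundles, rather than working with coherent sheaves directly.

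For (ii), take $2n<p$. Then all total degrees $\le 4n$ that matter for Hodge numbers come out of the first part via Serre duality, or directly since $a+b\le 4n$ may exceed $p$. I would use duality $R^bg_*\Omega^a\leftrightarrow R^{2n-b}g_*\Omega^{2n-a}$ to reduce to $a+b\le 2n<p$. With all the $R^q g_*\Oh$ locally free and all cup-power ranks locally constant, the Hodge-good locus, which \Cref{lem:HG-open} describes by dimensions and the top cup power, becomes open and closed.

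For (iii), with trivial canonical bundle, relative Serre duality gives $R^qg_*\Oh\simeq(R^{2n-q}g_*\Oh)^\vee$. Every $q$ then reduces to $\min(q,2n-q)\le n\le p$, and the boundary case $q=n=p$ is handled by the degree-$<p$ freeness plus the $\chi$-constancy argument. The top cup power $(R^2)^{\otimes n}\to R^{2n}$ factors as $(R^2)^{\otimes r}\to R^{2r}$ followed by a dual-paired multiplication with $2r\le n<p$, and is nonzero exactly when $\eta^r\cdot\eta^{n-r}\neq 0$. Choosing $r=\lceil n/2\rceil$ and using the perfect Serre pairing $R^{2r}\otimes R^{2n-2r}\to R^{2n}$, nonvanishing of the top power is detected by the ranks from (i). Connectedness of $B$ then gives empty-or-all.
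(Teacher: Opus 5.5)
\textbf{Where the gap is.} Your (ii) and (iii) match the paper's proof. In both you use fibrewise Serre duality to bring each coherent degree below $p$, and you use $\chi$-constancy at $q=n=p$. You also use the perfect pairing $\HH^{2a}\otimes\HH^{2b}\to\HH^{2n}$ with $a=\lfloor n/2\rfloor$, $b=\lceil n/2\rceil$. The inequality you need there is $2b<p$, not ``$2r\le n$''. For odd $n\ge3$ this uses that $p$ is odd, and $n=1$ is trivial. Your duality reduction in (ii) is a valid substitute for the paper's appeal to the full decomposition when $2n<p$. But both parts rest on (i), and (i) has a genuine gap.

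\textbf{Local freeness.} Your argument does not establish it. Two upper semicontinuous functions that agree pointwise, $\dim\HH^j_{\dR}(\cY_b)=\sum_a h^{a,j-a}(\cY_b)$, can jump together, so comparing with the generic point proves nothing. A Gauss--Manin connection also does not force local freeness in characteristic $p$. For example, $\Oh_{\mathds{A}^1}/(t^p)$ with $\nabla=d$ is a coherent module with integrable connection, even of $p$-curvature zero, and it is not locally free. The conclusion is nevertheless true: it is Deligne--Illusie, Corollary~4.1.4, applied to the relative decomposition. The paper simply quotes this for $R^bg_*\Omega^a$ with $a+b<p$ and for $D^m=R^mg_*\Omega^\bullet$ with $m<p$.

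\textbf{Cup-power ranks: the missing idea.} Horizontality cannot give constancy of these ranks. The maps in (i) live on Hodge graded pieces, which carry no connection. Moreover, a horizontal map of bundles with connection can drop rank in characteristic $p$, for instance multiplication by $t^p$ on $(\Oh_{\mathds{A}^1},d)$. The paper instead plays the Hodge filtration against the Frobenius twist, as follows.
\begin{itemize}
\item Let $u_r\colon(D^2)^{\otimes r}\to D^{2r}$ be the de Rham cup power. It is filtered, and its associated graded $v_r$ is the sum of the Hodge cup products. The blocks of $v_r$ in form degree $0$ and $2r$ are exactly your two maps.
\item Multiplicativity of the decomposition in degrees $<p$ gives $u_r\circ\theta_2^{\otimes r}=\theta_{2r}\circ\Frob_U^*v_r$, where $\theta_m\colon\Frob_U^*\bigl(\bigoplus_aR^{m-a}g_*\Omega^a\bigr)\xrightarrow{\sim}D^m$. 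Hence the determinantal ideals satisfy $I_j(u)=I_j(v)^{[p]}$.
\item Split the filtrations. The matrix of $u$ is then block lower triangular with the blocks of $v$ on the diagonal, so $I_j(v)\subseteq I_j(u)$.
\item Take $I=I_{\rho+1}(v)\subseteq\Oh_{U,b}$, where $\rho$ is the rank of $v$ at $b$. Then $I\subseteq I^{[p]}\subseteq\mathfrak m I$, so $I=0$ by Nakayama, and $v_r$ has constant rank near $b$.
\item Each block's rank is lower semicontinuous and their sum is constant, so each block has locally constant rank.
\end{itemize}
Without an argument of this kind, the rank assertions of (i) remain unproved, and so do the Hodge-good conclusions in (ii) and (iii).
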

\end{samepage}

\begin{proof}
Work over an affine open $U\subseteq B$ carrying a lifting $\widetilde{\cY}\to\widetilde U$ of $\cY_U\to U$ with $\widetilde U$ smooth over $\Wit_2(\bk)$.
We first obtain the low-degree Deligne--Illusie comparison.
Smoothness of $\widetilde U$ provides a lift of the absolute Frobenius of $U$, semilinear for the Witt Frobenius; base changing $\widetilde{\cY}$ along it lifts the Frobenius twist $\cY'=\cY_U\times_{U,\Frob_U}U$.
It is this lifting of $\cY'$, and not a lifting of individual fibres, that the relative form of the Deligne--Illusie theorem consumes \cite[Corollary~3.7(a), Remark~4.1.6]{DeligneIllusie1987}.
Without any dimension bound, the resulting decomposition is
\[
  \phi\colon\bigoplus_{0\le a<p}\Omega^a_{\cY'/U}[-a]
  \xrightarrow{\ \sim\ }\tau_{<p}\Frob_*\Omega^\bullet_{\cY_U/U}
  \qquad\text{in }D(\cY'),
\]
$\Frob$ denoting the relative Frobenius, and $\phi$ may be chosen compatibly with products in total form degree less than $p$, that is $\phi_{a+b}(u\wedge v)=\phi_a(u)\phi_b(v)$ for $a+b<p$: one takes $\phi_1$, forms its tensor powers, antisymmetrises by $1/a!$, which is legitimate exactly in the range $a<p$, and composes with the multiplication of the de Rham complex \cite[proof of Theorem~2.1(a) and Corollary~3.7(a)]{DeligneIllusie1987}.
By \cite[Corollary~4.1.4]{DeligneIllusie1987} the sheaves $E^{a,b}=R^bg_*\Omega^a_{\cY_U/U}$ and $D^m=R^mg_*\Omega^\bullet_{\cY_U/U}$ are locally free and commute with arbitrary base change for $a+b<p$ and $m<p$, respectively, and the relative Hodge spectral sequence degenerates in these total degrees.
Since the omitted truncation has hypercohomology only in degrees at least $p$, $\phi$ induces isomorphisms \footnote{These need not preserve the Hodge filtration.}
\[
  \theta_m\colon \Frob_U^*\Bigl(\bigoplus_a E^{a,m-a}\Bigr)
  \xrightarrow{\ \sim\ }D^m,\qquad m<p.
\]

For $1\le r\le n$ with $2r<p$, let $u_r\colon(D^2)^{\otimes r}\to D^{2r}$ be the cup product of relative de Rham cohomology.
It is a filtered map of vector bundles for the Hodge filtration, whose associated graded $v_r$ is the direct sum, over the total form degree, of the Hodge cup products on $\bigl(\bigoplus_{a=0}^2E^{a,2-a}\bigr)^{\otimes r}$.
Every product occurring here has total form degree at most $2r<p$, so multiplicativity of $\phi$ gives $u_r\circ\theta_2^{\otimes r}=\theta_{2r}\circ \Frob_U^*v_r$.

To show that each graded block of $v_r$ has locally constant rank, fix $b\in U$ and put $A=\Oh_{U,b}$, with maximal ideal $\mathfrak m$.
Over $A$, write $u\colon E\to G$ for $u_r$ and $v=\gr u$.
The Hodge filtrations split because their graded pieces are free, so we may write
\[
  E=\bigoplus_{i=0}^{2r}E_i,\qquad
  G=\bigoplus_{i=0}^{2r}G_i,
  \qquad
  \Fil^iE=\bigoplus_{j\ge i}E_j,\quad
  \Fil^iG=\bigoplus_{j\ge i}G_j,
\]
where $E_i$ and $G_i$ identify with the graded pieces.
Choose bases ordered by increasing $i$.
Since $u(\Fil^iE)\subseteq\Fil^iG$, its component $E_i\to G_j$ vanishes for $j<i$.
Thus, writing $M_i$ for the matrix of $v_i\colon E_i\to G_i$, we have
\[
  M_u=
  \begin{pmatrix}
    M_0&0&\cdots&0\\
    *&M_1&\ddots&\vdots\\
    \vdots&\ddots&\ddots&0\\
    *&\cdots&*&M_{2r}
  \end{pmatrix},
  \qquad
  M_v=
  \begin{pmatrix}
    M_0&0&\cdots&0\\
    0&M_1&\ddots&\vdots\\
    \vdots&\ddots&\ddots&0\\
    0&\cdots&0&M_{2r}
  \end{pmatrix}.
\]
Each $M_i$ has $\rank G_i$ rows and $\rank E_i$ columns and may be rectangular.

For a map $w$ of finite free modules, let $I_j(w)$ be the ideal generated by its $j\times j$ minors; it is independent of bases by the Cauchy--Binet formula.
A nonzero minor of $M_v$ selects equally many rows and columns in each block, giving square submatrices $N_i$ of $M_i$.
The same rows and columns in $M_u$ give a block lower triangular matrix with diagonal blocks $N_i$.
Both determinants are $\prod_i\det N_i$, so
\[
  I_j(v)\subseteq I_j(u).
\]

Write $M_v^{(p)}$ for the matrix obtained from $M_v$ by raising every entry to its $p$-th power.
The identity $u_r\circ\theta_2^{\otimes r}=\theta_{2r}\circ\Frob_U^*v_r$ gives
\[
  M_u=P M_v^{(p)}Q^{-1}.
\]
Here $P$ and $Q$ represent $\theta_{2r}$ and $\theta_2^{\otimes r}$.
These invertible matrices preserve determinantal ideals, although they need not preserve the filtrations.
Since taking minors commutes with raising entries to their $p$-th powers,
\[
  I_j(v)\subseteq I_j(u)=I_j(v)^{[p]},
  \qquad J^{[p]}\coloneqq (x^p\mid x\in J).
\]

Let $\rho=\rank_{\kappa(b)}v(b)$ and $I=I_{\rho+1}(v)$.
Then $I\subseteq\mathfrak m$, and $x^p=x^{p-1}x\in\mathfrak m I$ for $x\in I$.
Thus
\[
  I\subseteq I^{[p]}\subseteq\mathfrak m I\subseteq I.
\]
Nakayama's lemma gives $I=0$.
Thus $v$ has constant rank $\rho$ over $A$ and thus a Zariski neighborhood of $b$.
For each diagonal block, put $\rho_i=\rank v_i(b)$.
A $\rho_i$-minor nonzero at $b$ remains nonzero nearby, so after shrinking $U$ we have $\rank v_i(s)\ge\rho_i$ for every $i$ and $s \in U$; for $\rho_i=0$ this is automatic.
But
\[
  \sum_i\rank v_i(s)=\rank v(s)=\rho=\sum_i\rho_i,
\]
so each $\rank v_i(s)=\rho_i$.
Hence every graded block has locally constant rank.
The blocks $M_0$ and $M_{2r}$ of form degree zero and $2r$ are precisely the coherent cup-power and wedge-power maps in \textup{(i)}, respectively, proving their rank constancy.

If $2n<p$, the Deligne--Illusie decomposition is one of the whole de Rham complex, so \cite[Corollary~4.1.5]{DeligneIllusie1987} gives the assertion about all Hodge sheaves.
By \eqref{eq:HG} a geometric fibre $\cY_{\bar b}$ is Hodge-good exactly when $\HH^q(\cY_{\bar b},\Oh)=0$ for odd $q$, $\dim\HH^{2i}(\cY_{\bar b},\Oh)=1$ for $0\le i\le n$, and the $i$-th cup-power map has rank one for $1\le i\le n$: granted the first two conditions, a nonzero $\eta\in\HH^2(\cY_{\bar b},\Oh)$ satisfies $\eta^i\ne0$ precisely when that rank is one, and $\eta^i$ then generates $\HH^{2i}(\cY_{\bar b},\Oh)$.
These dimensions and ranks are locally constant by \textup{(i)}, proving \textup{(ii)}.

For \textup{(iii)}, Serre duality on every geometric fibre $Y$ gives $h^q(Y,\Oh_Y)=h^{2n-q}(Y,\Oh_Y)$.
If $n<p$, at least one of $q$ and $2n-q$ is less than $p$, so \textup{(i)} makes every coherent cohomology dimension locally constant.
If $n=p$, the same argument covers all degrees except $q=p$, and constancy of $\chi(Y,\Oh_Y)$ covers the remaining degree.
Since $B$ is reduced, cohomology and base change now give local freeness and arbitrary base change for every $R^qg_*\Oh_{\cY}$.

Assume henceforth $n<p$ and one geometric fibre is Hodge-good.
The dimensions are then zero in odd degrees and one in even degrees on every geometric fibre.
For $n=1$ these dimensions already give Hodge-goodness.
For $n\ge2$, put $a=\lfloor n/2\rfloor$ and $b=\lceil n/2\rceil$.
Both $2a$ and $2b$ are less than $p$: for odd $n$, this uses that $p$ is odd, so $n<p$ implies $n+1<p$.
By \textup{(i)}, both low cup-power maps have rank one on every fibre, as they do on the Hodge-good fibre.
Thus, for any nonzero $\eta\in\HH^2(Y,\Oh_Y)$, the classes $\eta^a$ and $\eta^b$ generate $\HH^{2a}(Y,\Oh_Y)$ and $\HH^{2b}(Y,\Oh_Y)$.
After choosing a trivialisation of $\omega_Y$, Serre duality identifies cup product
\[
  \HH^{2a}(Y,\Oh_Y)\otimes\HH^{2b}(Y,\Oh_Y)
       \longrightarrow\HH^{2n}(Y,\Oh_Y)
\]
with a perfect pairing of one-dimensional spaces.
Hence $\eta^n\ne0$, which forces every $\eta^i$, $0\le i\le n$, to be nonzero.
These powers generate all coherent cohomology, so $Y$ is Hodge-good.
On the connected base $B$, the Hodge-good locus is therefore either empty or all of $B$.
\end{proof}

\subsection{The dichotomy on the whole base}
\label{subsec:HG-to-all}

The relative lifting criterion and the constancy of Hodge-goodness now extend the dichotomy beyond $S^{\HG}$.

\begin{theorem}
\label{cor:dichotomy-all-of-S}
Let $S$ be smooth connected of finite type over $\bk$ and $d=2n$ with $2\le n<p$.
Assume $S^{\HG}\ne\emptyset$ and that \eqref{eq:family-h01} and \eqref{eq:ct2} hold.
Then $\hgt(\cX_{\bar s})\in\{1,\infty\}$ for every $s\in S$.

If every closed fibre lifts to $\Wit_2(\kappa(s))$, then $S^{\HG}=S$ and
\[
  S^{\Fs}=S^{\qF}=S^{\fin}
\]
is open in $S$.
This lifting condition holds whenever $S^{\fin}\ne\emptyset$.
\end{theorem}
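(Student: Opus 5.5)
Split according to whether the closed fibres lift to $\Wit_2$.

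\emph{Case 1: every closed fibre lifts to $\Wit_2(\kappa(s))$.} Since $S$ is smooth and finite type, \Cref{lem:relative-W2-lifting} under \eqref{eq:ct2}, applied with the dense open subset equal to $S$, gives local relative liftings of $f$ over smooth $\Wit_2(\bk)$-liftings of the base. \Cref{prop:HG-locally-constant}(iii) then applies, because the fibres have trivial canonical bundle by \eqref{eq:family-setup} and $n<p$. Since $S^{\HG}\ne\emptyset$ and $S$ is connected, we get $S^{\HG}=S$. \Cref{thm:dichotomy in a family} now gives $S^{\Fs}=S^{\qF}=S^{\fin}$. This set is open and the height is $1$ or $\infty$ on every fibre.

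\emph{Case 2: some closed fibre $\cX_{s_0}$ does not lift.} By \Cref{lem:relative-W2-lifting}(ii), the non-liftable closed fibres form the closed points of an open set $W$ containing $s_0$. Hence $W$ contains the generic point $\eta$ of the irreducible base $S$.

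We first show that no fibre is quasi-$F$-split. If $S^{\qF}\ne\emptyset$, then $S^{\fin}\ne\emptyset$ by \Cref{prop:qF-in-fin}. \Cref{thm:generic-dichotomy}(i) then makes the geometric generic fibre $F$-split. Since $S^{\Fs}$ is open by \Cref{lem:Fsplit-open}, it meets $W$ in a nonempty open set, and by the Jacobson property that set has a closed point $s$. The fibre $\cX_s$ is $F$-split, hence quasi-$F$-split, and $\kappa(s)$ is perfect. By \Cref{prop:quasi F split is W2 litable}, $\cX_s$ lifts to $\Wit_2(\kappa(s))$, which contradicts $s\in W$. So $S^{\qF}=\emptyset$, and every height is $\infty$.

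The same argument proves the last claim. If $S^{\fin}\ne\emptyset$, the generic fibre is $F$-split, so $S^{\Fs}$ is a nonempty open set. If some closed fibre failed to lift, $S^{\Fs}\cap W$ would contain a closed point whose fibre both lifts and fails to lift. Hence every closed fibre lifts.

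The main point to check is the openness of the non-lifting locus. It relies on \eqref{eq:ct2}, which makes the obstruction a section of a locally free sheaf compatible with base change; this is \Cref{lem:relative-W2-lifting}(ii). A second point is that quasi-$F$-splitting at a closed point gives a $\Wit_2$-lift: \Cref{prop:quasi F split is W2 litable} is stated for varieties, so for algebraic spaces one may need \cite[Theorem~7.18]{BrantnerTaelman2025} together with \Cref{lem:petrov-degeneration}. Here the fibres are schemes, since $f$ is a morphism of schemes, so this causes no trouble.
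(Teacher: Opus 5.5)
Your proof is correct and essentially the same as the paper's. The only difference is the case split. The paper splits on whether $S^{\fin}$ is empty. When it is not, the paper notes that the fibres over the dense open $S^{\fin}\cap S^{\HG}$ are $F$-split, hence $\Wit_2$-liftable, and invokes the dense-open clause of \Cref{lem:relative-W2-lifting} to reduce to your Case~1; this is the contrapositive of your use of the openness of the non-lifting locus in part (ii), so it is the same mechanism.
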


\begin{proof}
Since $S$ is smooth and connected, it is irreducible.
Suppose first that every closed fibre lifts.
By \eqref{eq:ct2} and \Cref{lem:relative-W2-lifting}, the family admits smooth relative $\Wit_2$-liftings locally on $S$.
Since $n<p$ and the fibres have trivial canonical bundle, \Cref{prop:HG-locally-constant}\textup{(iii)} propagates Hodge-goodness from one fibre to all of $S$.
The asserted equality of open loci and the height dichotomy follow from \Cref{thm:dichotomy in a family}.

If $S^{\fin}\ne\emptyset$, the subset $V=S^{\fin}\cap S^{\HG}$ is dense and open by \Cref{prop:height-semicontinuity,lem:HG-open}.
Its closed fibres are $F$-split by \Cref{thm:dichotomy in a family}, hence lift to $\Wit_2(\kappa(s))$ by \Cref{prop:quasi F split is W2 litable}.
The dense-open assertion of \Cref{lem:relative-W2-lifting} gives local relative liftings on all of $S$; in particular, every closed fibre lifts, and the preceding paragraph applies.

Finally, if $S^{\fin}=\emptyset$, then $S^{\qF}=\emptyset$ by \Cref{prop:qF-in-fin}, so $\hgt(\cX_{\bar s})=\infty$ for every $s\in S$.
\end{proof}

\begin{remark}
\label{rem:family-hypotheses}
In the proof of \Cref{cor:dichotomy-all-of-S}, condition \eqref{eq:ct2} makes $R^2f_*T_{\cX/S}$ locally free and compatible with base change, so the lifting obstruction is a section of a vector bundle.
The bound $n<p$ is used only for the low-degree decomposition and the middle Serre pairing in \Cref{prop:HG-locally-constant}\textup{(iii)}.
It does not give the all-degree assertions about Hodge sheaves or wedge powers in \textup{(ii)} of that proposition.
At $n=p$, the coherent dimensions still remain constant, but the cup product from degree $p-1$ to degree $p+1$ is outside the range controlled by \textup{(i)}.
We do not know whether Hodge-goodness remains constant there, or whether \eqref{eq:ct2} can be removed from the family theorem.
\end{remark}

\subsection{Primitive symplectic varieties}
\label{subsec:primitive-symplectic}

For applications, we combine Hodge-goodness with a nondegenerate $2$-form.

\begin{definition}
\label{def:primitive-symplectic}
A smooth proper Hodge-good variety $X$ over $\bk$ of dimension $2n$ is \emph{primitive symplectic} if $\HH^0(X,\Omega^2_X)$ is one-dimensional and spanned by a nowhere-degenerate $2$-form. Here being nowhere-degenerate means the induced $\Oh_X$-linear morphism $T_X \to \Omega_{X}^1$ is an isomorphism.
\end{definition}

Hodge-goodness makes the odd structure-sheaf cohomology vanish, so a primitive symplectic variety is primitive in the sense of \Cref{def:primitive}; this is what the name records.

\begin{remark}
\label{rem:pfaffian-volume}
There is also a linear-algebra distinction in small characteristic.
A differential $2$-form defines an alternating pairing even in characteristic $2$, so nondegeneracy always forces even dimension.
Its Pfaffian volume form trivialises $\omega_X$: in a local symplectic coframe $e_1,f_1,\ldots,e_n,f_n$ with $\sigma=\sum_i e_i\wedge f_i$, this volume form is $e_1\wedge f_1\wedge\cdots\wedge e_n\wedge f_n$.
The ordinary exterior power is $n!$ times this volume form; it therefore vanishes when $p\le n$, and gives the trivialization of canonical bundle only when $p>n$.
\end{remark}

By \Cref{rem:pfaffian-volume,cor:HG-qF-dichotomy}, a primitive symplectic variety of dimension $2n\ge4$ is quasi-$F$-split if and only if it is Frobenius split.
Together with \Cref{lem:HG-open}, this proves \Cref{thm:main-family}.
The isomorphism $T_X\simeq\Omega^1_X$ induced by the nowhere-degenerate $2$-form implies that, in a family of primitive symplectic varieties, \eqref{eq:ct2} is constancy of $h^{1,2}$, as used for the standard families in \Cref{lem:ct2 for relative Kummer}.

Over an algebraically closed field, Fu--Li \cite[Definition~3.1]{FuLi2021} call a connected smooth projective variety \emph{irreducible symplectic} if its \emph{\'etale} fundamental group is trivial and $\HH^0(X,\Omega_X^2)=\bk\sigma$ for a closed, nowhere-degenerate $2$-form $\sigma$.
In characteristic $p$, \'etale simple connectedness does not imply $\HH^1(X,\Oh_X)=0$, and Hodge symmetry cannot be used to identify the conditions on $\HH^0(X,\Omega_X^2)$ and $\HH^2(X,\Oh_X)$.
Our definition imposes the full coherent cohomology algebra \eqref{eq:HG}, including its cup products.
It is a working definition adapted to the arguments here: it requires only properness, and neither \'etale simple connectedness nor closedness of $\sigma$ is explicitly imposed.
Closedness follows from $E_1$-degeneration of the Hodge--de Rham spectral sequence, since its differential $\HH^0(X,\Omega_X^2)\to\HH^0(X,\Omega_X^3)$ sends $\sigma$ to $\mathrm{d}\sigma$.
We do not assert an equivalence with Fu--Li's notion without additional hypotheses.

Srivastava's examples in \cite{Srivastava2021} make the distinction concrete.
A supersingular Enriques surface $E$ in characteristic $2$ has trivial \'etale fundamental group and $\HH^0(E,\Omega_E^2)=\bk\sigma$ with $\sigma$ nowhere degenerate, but $\HH^1(E,\Oh_E)=\bk$.
Thus it is irreducible symplectic in Fu--Li's sense (see also \cite[Example~3.3(i)]{FuLi2021}), whereas Hodge-goodness excludes it from \Cref{def:primitive-symplectic}.
In dimension two the latter definition recovers the usual $K3$ condition $\omega_X\simeq\Oh_X$ and $\HH^1(X,\Oh_X)=0$.
For $m\ge2$, Srivastava shows that $E^{[m]}$ is simply connected and symplectic, but $h^{2,0}(E^{[m]})>1$, so these Hilbert schemes already fail the one-dimensionality condition.
The same paper discusses deformations of a supersingular Enriques surface to classical Enriques surfaces, where the canonical bundle becomes nontrivial.
These phenomena show why the characteristic-zero behaviour under deformations and Hilbert schemes cannot be inferred from the naive conditions alone.

Hodge-goodness supplies the stronger cohomological input needed for the Artin--Mazur groups and the height dichotomy, but does not by itself settle all the geometric requirements of a satisfactory positive-characteristic analogue.
In particular, preservation in families is a separate assertion: we prove it under the hypotheses of \Cref{prop:HG-locally-constant}, and control the symplectic form for the standard deformation classes in \Cref{thm:deformation type of hilbert and kummer}.

\section{Hilbert schemes, Kummer varieties, and their deformations}
\label{sec:examples}

We apply the preceding results to Hilbert schemes of $K3$ surfaces and generalised Kummer varieties.
We first compute their coherent cohomology algebras in the tame range, then construct relative symplectic families and verify \eqref{eq:ct2}.
These inputs, together with the integral BBF pairings in \Cref{sec:appendix}, allow us to propagate primitive symplecticity and the height dichotomy along Hodge-deformations.
We conclude by explaining the different behaviour in dimension two.

\subsection{Hodge-goodness in the tame range}
\label{subsec:HG-examples}

For the two standard series, the coherent cohomology algebra can be computed using the Hilbert--Chow morphism and invariants under a symmetric group.
The bounds $p>n$ for $S^{[n]}$ and $p>n+1$ for $K_n(A)$ make the relevant group order invertible and ensure that the degree-two class generates the algebra.

\begin{proposition}[Tame Hilbert schemes]
\label{prop:hilbert-HG}
Let $S$ be a $K3$ surface over a perfect field of characteristic $p$.
If $p>n$, then $S^{[n]}$ is Hodge-good.
\end{proposition}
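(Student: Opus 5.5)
We compute $\HH^\bullet(S^{[n]},\Oh)$ through the Hilbert--Chow morphism $\pi\colon S^{[n]}\to S^{(n)}$ and the quotient $q\colon S^n\to S^{(n)}=S^n/\mathfrak S_n$. Since $p>n$, the order $n!$ of $\mathfrak S_n$ is invertible in $\bk$. Hence taking invariants is exact and $\Oh_{S^{(n)}}=(q_*\Oh_{S^n})^{\mathfrak S_n}$ is a direct summand of $q_*\Oh_{S^n}$. This gives
\[
  \HH^\bullet(S^{(n)},\Oh)\simeq\HH^\bullet(S^n,\Oh)^{\mathfrak S_n}\simeq\bigl(\HH^\bullet(S,\Oh)^{\otimes n}\bigr)^{\mathfrak S_n},
\]
compatibly with cup products, by the Künneth formula. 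Here $\HH^\bullet(S,\Oh_S)=\bk[\varepsilon]/(\varepsilon^2)$ with $\deg\varepsilon=2$, and the classes are even, so no Koszul signs intervene. The invariants are therefore the symmetric tensors of $\bk[\varepsilon]/(\varepsilon^2)$. Set $\eta=\sum_j\varepsilon_j$, where $\varepsilon_j$ is $\varepsilon$ in the $j$th factor. Then $\eta^i=i!\,e_i(\varepsilon_1,\dots,\varepsilon_n)$, where $e_i$ is the elementary symmetric polynomial and we use $\varepsilon_j^2=0$. Since $i!\ne0$ in $\bk$ for $i\le n<p$, the powers $\eta^i$ span the one-dimensional invariant pieces in degree $2i$. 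So this algebra is $\bk[\eta]/(\eta^{n+1})$, and the odd degrees vanish.

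It remains to show $R\pi_*\Oh_{S^{[n]}}=\Oh_{S^{(n)}}$, i.e.\ that $\pi^*$ is an isomorphism on coherent cohomology. Cup products are then respected automatically, since $\pi^*$ is a ring map. The step $\pi_*\Oh=\Oh$ is standard, because $S^{(n)}$ is normal and $\pi$ is birational with connected fibres. The higher vanishing $R^i\pi_*\Oh_{S^{[n]}}=0$ for $i>0$ amounts to $S^{(n)}$ having rational singularities with $S^{[n]}$ as a resolution, and this is the main obstacle in characteristic $p$.

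My first attempt is the route via Kumar--Thomsen: for $p>n$, or at least in the range where the relevant Frobenius splitting exists, they show $S^{[n]}$ is Frobenius split whenever $S$ is, compatibly with $\pi$. That yields the vanishing only for ordinary $S$, so it needs a deformation argument. My alternative is therefore to spread out. Lift $S$ to a $K3$ surface $\mathscr S$ over $\Wit$, which is always possible. The relative Hilbert scheme $\mathscr S^{[n]}$ is smooth and proper over $\Wit$, and its generic fibre is a hyperk\"ahler variety with $h^{0,q}$ equal to $1,0,1,\dots$. If the coherent cohomology of $\Oh$ is torsion-free and commutes with base change, this gives the dimensions on the special fibre. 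Torsion-freeness is not automatic, however. The clean way is the direct route: use the tameness $p>n$ together with the local structure of $\pi$. Étale locally on $S^{(n)}$, the morphism $\pi$ is a product of punctual Hilbert–Chow maps $\Hilb^m(\mathbb A^2)\to(\mathbb A^2)^{(m)}$ with $m\le n$. For these, Haiman's description via the isospectral Hilbert scheme, or the known fact that $(\mathbb A^2)^{(m)}$ is a quotient singularity of order prime to $p$ and hence $F$-rational and strongly $F$-regular, gives that $(\mathbb A^2)^{(m)}$ has rational singularities. Any resolution then has $R^i\pi_*\Oh=0$. For the resolution $\Hilb^m(\mathbb A^2)$, the vanishing follows from the torus-equivariant $F$-splitting of $\Hilb^m(\mathbb A^2)$ compatible with the Hilbert–Chow map (Kumar--Thomsen), together with the Grauert–Riemenschneider-type vanishing available for $F$-split birational maps with trivial relative canonical bundle.

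With $R\pi_*\Oh_{S^{[n]}}=\Oh_{S^{(n)}}$, the two computations combine: $\HH^\bullet(S^{[n]},\Oh)\simeq\HH^\bullet(S^{(n)},\Oh)\simeq\bk[\eta]/(\eta^{n+1})$ as graded algebras, which is \eqref{eq:HG}. I expect the local vanishing of $R^i\pi_*\Oh$ in characteristic $p$ to be the delicate point, and I would settle it by reducing to the punctual case and citing the $F$-splitting results valid for $p>n$.
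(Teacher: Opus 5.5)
Your reduction matches the paper's: once $R\pi_*\Oh_{S^{[n]}}=\Oh_{S^{(n)}}$ is known, exactness of $\mathfrak S_n$-invariants, K\"unneth and $e_1^i=i!\,e_i$ give \eqref{eq:HG} exactly as you say. The gap is the vanishing $R^i\pi_*\Oh_{S^{[n]}}=0$ for $i>0$. This is the only nontrivial input in characteristic $p$, and neither justification you offer establishes it.

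\emph{First justification.} You argue that $(\mathds{A}^2)^{(m)}$ is a tame quotient, hence $F$-rational and strongly $F$-regular, hence every resolution has vanishing higher direct images of $\Oh$. The last step is not available in characteristic $p$. Smith's theorem gives only pseudo-rationality, i.e.\ $\pi_*\omega_Y=\omega_X$. Strong $F$-regularity, via derived splinters, only makes $\Oh_X$ a direct summand of $R\pi_*\Oh_Y$. Upgrading to $R^i\pi_*\Oh_Y=0$ is a Grauert--Riemenschneider statement, which is exactly what is missing in characteristic $p$.

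\emph{Second justification.} Grauert--Riemenschneider vanishing ``for $F$-split birational maps with trivial relative canonical bundle'' is not a valid general principle. A Frobenius splitting of the source by itself only kills $R^i\pi_*$ of relatively ample line bundles and their $\omega$-twists. The Mehta--van der Kallen theorem needs a splitting of the form $\sigma^{p-1}$ with $\sigma$ vanishing along the exceptional locus. You would have to produce such a splitting on the non-projective $\Hilb^m(\mathds{A}^2)$, e.g.\ by passing through $\PP^2$ and \cite{KumarThomsen2001}, and check this compatibility. As written, the crux is deferred to results that do not say what you need.

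\emph{What fixes it.} The missing ingredient is the theorem the paper invokes, \cite[Theorem~3.2.14]{ChatzistamatiouRulling2011}: singularities dominated by a smooth variety through a finite cover of degree prime to $p$, such as tame quotients, satisfy $R\rho_*\Oh=\Oh$ for a resolution $\rho$. Its proof rests on Chatzistamatiou--R\"ulling's correspondence calculus for $Rf_*\Oh$ on smooth varieties, not on $F$-singularities or vanishing theorems. Applied to the global cover $S^n\to S^{(n)}$ of degree $n!$, prime to $p$ because $p>n$, it gives $R\pi_*\Oh_{S^{[n]}}=\Oh_{S^{(n)}}$ in one step. This makes the \'etale-local reduction, the Kumar--Thomsen splittings and the lifting to $\Wit$ unnecessary, and no ordinarity of $S$ enters. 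The same theorem applied to $(\mathds{A}^2)^m\to(\mathds{A}^2)^{(m)}$ would also close your local route, but then it is this theorem, not $F$-rationality, that does the work.
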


\begin{proof}
We may extend the ground field.
For the Hilbert--Chow morphism $\rho\colon S^{[n]}\to S^{(n)}=S^n/\mathfrak S_n$ the target admits the finite cover $S^n\to S^{(n)}$ of degree $n!$, which is prime to $p$, so $R\rho_*\Oh_{S^{[n]}}=\Oh_{S^{(n)}}$ by \cite[Theorem~3.2.14]{ChatzistamatiouRulling2011}.
Exactness of $\mathfrak S_n$-invariants then gives an isomorphism of graded algebras
\[
  \HH^\bullet(S^{[n]},\Oh)\simeq\HH^\bullet(S^n,\Oh)^{\mathfrak S_n} .
\]
Write $x_j$ for the degree-two generator coming from the $j$-th factor.
Then $x_j^2=0$, the invariants in degree $2i$ are spanned by the elementary symmetric function $e_i(x_1,\dots,x_n)$, and $e_1^i=i!\,e_i$.
These scalars are units because $p>n$, so \eqref{eq:HG} holds with $\eta=e_1$.
\end{proof}

\begin{proposition}[Tame generalised Kummer varieties]
\label{prop:kummer-HG}
Let $A$ be an abelian surface over a perfect field of characteristic $p$, and let $K_n(A)$ be the fibre over zero of $A^{[n+1]}\to A$.
If $p>n+1$, then $K_n(A)$ is smooth and Hodge-good.
\end{proposition}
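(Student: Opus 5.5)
Following the proof of \Cref{prop:hilbert-HG}, the plan is to compute $\HH^\bullet(K_n(A),\Oh)$ through the Hilbert--Chow morphism and invariants under $\mathfrak S_{n+1}$.

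First, smoothness. Since $p\nmid n+1$, the summation map $\Sigma\colon A^{[n+1]}\to A$ is isotrivial. Indeed, the étale multiplication map
\[
  m\colon K_n(A)\times A\to A^{[n+1]},\qquad (\xi,a)\mapsto t_a^*\xi ,
\]
is a Galois cover with group $A[n+1]$, and $A[n+1]$ is étale of order $(n+1)^4$, prime to $p$. So $A^{[n+1]}$ is smooth, being the Hilbert scheme of a smooth surface. Hence $K_n(A)\times A$ is smooth, and therefore $K_n(A)$ is smooth.

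Second, coherent cohomology. By \cite[Theorem~3.2.14]{ChatzistamatiouRulling2011}, applied as in \Cref{prop:hilbert-HG}, $A^{[n+1]}$ has $R\rho_*\Oh=\Oh$ over $A^{(n+1)}$, because $(n+1)!$ is invertible when $p>n+1$. I would rather work with the fibre over $0$ of the symmetric product. Let
\[
  N=\ker\bigl(A^{n+1}\xrightarrow{\ \Sigma\ }A\bigr)\simeq A^{n}.
\]
Then the Hilbert--Chow morphism $K_n(A)\to N/\mathfrak S_{n+1}$ should again have $R\rho_*\Oh=\Oh$. I would justify this either by base change of the Hilbert--Chow statement along the flat fibration $\Sigma$, or directly: $N\to N/\mathfrak S_{n+1}$ is a finite cover of degree prime to $p$, and $K_n(A)$ is a resolution of the quotient singularity. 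Exactness of $\mathfrak S_{n+1}$-invariants then gives an isomorphism of graded algebras
\[
  \HH^\bullet(K_n(A),\Oh)\simeq\HH^\bullet(N,\Oh)^{\mathfrak S_{n+1}},
  \qquad
  \HH^\bullet(N,\Oh)=\Lambda^\bullet\bigl(\HH^1(A,\Oh)\otimes U\bigr),
\]
where $U=\bk^{n+1}/\bk$ is the standard $n$-dimensional representation dual to $N\subset A^{n+1}$. With $\HH^1(A,\Oh)=\bk y\oplus\bk z$, this is
\[
  \Lambda^\bullet(U)\otimes\Lambda^\bullet(U),
\]
with generators $y_j,z_j$ satisfying $\sum_j y_j=\sum_j z_j=0$.

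Third, invariant theory. Over $\bk$ with $|\mathfrak S_{n+1}|$ invertible, the modular characteristic-zero computation carries over verbatim, since it only uses Schur orthogonality, which is valid when $p\nmid|G|$. It gives
\[
  \bigl(\Lambda^a U\otimes\Lambda^b U\bigr)^{\mathfrak S_{n+1}}=
  \begin{cases}
    \bk, & a=b,\\
    0, & a\ne b,
  \end{cases}
\]
because $\Lambda^aU$ is absolutely irreducible and self-dual. Thus the invariants vanish in odd degree and are one-dimensional in each degree $2i$. For the generator I take
\[
  \eta=\sum_j y_j z_j .
\]
It remains to show $\eta^i\ne0$ for $i\le n$. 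Expanding, $\eta^i=i!\,e_i$ with $e_i=\sum_{|J|=i}\prod_{j\in J}y_jz_j$, once computed in the presentation
\[
  \Lambda\bigl(y_1,\dots,y_{n+1},z_1,\dots,z_{n+1}\bigr)\big/\Bigl(\textstyle\sum y_j,\ \sum z_j\Bigr).
\]
Then $i!$ is a unit since $i\le n<p$. Nonvanishing of $e_i$ for $i\le n$ in the quotient algebra must still be checked. I would do this by restricting to the subalgebra generated by $y_1,\dots,y_n,z_1,\dots,z_n$, which is free, eliminating $y_{n+1}$ and $z_{n+1}$. In that subalgebra $e_n$ picks up the monomial $y_1z_1\cdots y_nz_n$ with a nonzero coefficient: this should come out as $(n+1)$, a unit because $p>n+1$. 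For the intermediate $i$, nonvanishing follows from $e_n=\eta^{n}/n!\ne0$.

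The main obstacle is the second step: establishing $R\rho_*\Oh_{K_n(A)}=\Oh$ over $N/\mathfrak S_{n+1}$ in characteristic $p$. I would deduce it from the $A^{[n+1]}$ statement via the étale cover $m$, using that $m$ is compatible with the Hilbert--Chow morphisms and with $N\times A\to A^{n+1}$. The remaining work is then the bookkeeping of $\mathfrak S_{n+1}$- and $A[n+1]$-invariants, where the latter acts trivially on $\HH^\bullet(A,\Oh)$ because translations act trivially on coherent cohomology of an abelian variety.
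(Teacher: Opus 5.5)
Your argument is correct. Its skeleton is the same as the paper's: pass through Hilbert--Chow to $N/\mathfrak S_{n+1}$, use the Chatzistamatiou--R\"ulling vanishing and exactness of invariants to reduce to $\bigl(\Lambda^\bullet(\HH^1(A,\Oh)\otimes U)\bigr)^{\mathfrak S_{n+1}}$, and exhibit a degree-two invariant whose powers span.

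The differences are in how you supply the inputs.

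For smoothness and for $R\rho_*\Oh=\Oh$, you pass through the isotrivialising \'etale cover $K_n(A)\times A\simeq A^{[n+1]}\times_{A,[n+1]}A$. This works. Hilbert--Chow is translation-equivariant, so the square over the \'etale map $N/\mathfrak S_{n+1}\times A\to A^{(n+1)}$ is Cartesian. Flat base change followed by derived restriction to $a=0$ then gives the statement for $K_n(A)$. The paper instead cites Fu--Li for smoothness and applies Chatzistamatiou--R\"ulling directly to the tame quotient.

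The more substantive difference is in counting the invariants. The paper uses Molien's formula in characteristic zero and transports the answer by descending the Reynolds projectors to $\ZZ[1/(n+1)!]$. You count them directly in characteristic $p$ via Schur's lemma. For the case $a\ne b$, self-duality is not enough: you also need the $\Lambda^aU$ to be pairwise non-isomorphic, and dimension does not settle this since $\dim\Lambda^aU=\dim\Lambda^{n-a}U$. It does hold, because they are distinct hook representations and, for $p\nmid(n+1)!$, reduction mod $p$ is a character-preserving bijection on irreducibles. You should state this explicitly.

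Your computation $e_n=(n+1)\,y_1z_1\cdots y_nz_n$, i.e.\ $\eta^n=(n+1)!\,y_1z_1\cdots y_nz_n$, is right. It is the counterpart of the paper's $\eta^n=\pm(n!/m)\operatorname{vol}$, which uses the inverse-form normalisation.

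In short, the paper's route recycles its characteristic-zero calculation. Yours stays entirely in characteristic $p$ and shows exactly where $p>n$ and $p\nmid n+1$ enter.
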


\begin{proof}
Hodge-goodness and smoothness are preserved and detected by field extension, so we may assume that the ground field is algebraically closed.
Put $m=n+1$, $G=\mathfrak S_m$, $B=\ker(A^m\xrightarrow{+}A)$, and $X_0=B/G$.
In characteristic zero, rationality of quotient singularities and the restricted Hilbert--Chow morphism $K_n(A)\to X_0$ give
\begin{equation}\label{eq:kummer-coherent-invariants}
  \HH^\bullet(K_n(A),\Oh)
    \simeq \HH^\bullet(B,\Oh_B)^G.
\end{equation}
This is the $(0,\bullet)$-part of the more general calculation in \cite[Theorem~7]{GottscheSoergel1993}.
If $V=\HH^1(A,\Oh_A)$ and $W_0=\ker(\bk^m\xrightarrow{\sum}\bk)$, then the right-hand side of \eqref{eq:kummer-coherent-invariants} is $\bigl(\bigwedge^\bullet(V\otimes W_0)\bigr)^G$.
In characteristic zero the anticommutative Molien formula \cite[\S2.2, Exercise~(5)]{Sturmfels2008} gives
\[
 \sum_q\dim\bigl(\textstyle\bigwedge^q(V\otimes W_0)\bigr)^G t^q
 =\frac1{m!}\sum_{g\in G}\det(1+tg\mid W_0)^2
 =1+t^2+\cdots+t^{2n}.
\]
The degree-two invariant $\eta$ defined by the inverse of the standard symmetric form on $W_0$ satisfies $\eta^n=\pm(n!/m)\operatorname{vol}$, so its powers generate these invariant lines.

In characteristic $p>m$, this proof is unchanged after two tame modifications.
First, $K_n(A)$ is smooth by \cite[Proposition~6.5]{FuLi2021}, and \eqref{eq:kummer-coherent-invariants} follows from \cite[Theorem~3.2.14]{ChatzistamatiouRulling2011} and exactness of $G$-invariants.
Second, the Reynolds projectors and the preceding calculation descend to $\ZZ[1/m!]$; moreover $n!/m$ is a unit in $\bk$.
Thus the same invariant lines and their generators survive modulo $p$, proving \eqref{eq:HG}.
\end{proof}

\subsection{Relative families and ordinary deformations}
\label{subsec:standard-relative-families}

The next lemma constructs relative symplectic forms for both series in families whose underlying surfaces have ordinary geometric generic fibre.
It also verifies \eqref{eq:ct2} in the stated characteristic ranges by identifying $h^2(T)$ with $h^{1,2}$.

\begin{lemma}\label{lem:ct2 for relative Kummer}
Let $n\ge2$ and let $Z$ be either a $K3$ surface or an abelian surface over $\bk$; in the second case suppose $p\nmid n+1$.
After a finite extension of $\bk$, there is a smooth projective family $\cZ\to B$ over a smooth connected finite-type $\bk$-scheme, having $Z$ among its closed fibres and with ordinary geometric generic fibre.
Write $\cX\to B$ for the associated relative Hilbert scheme $\cZ^{[n]}$ in the first case and for the relative generalised Kummer variety $\cK_n$ in the second.
It is smooth and proper of relative dimension $2n$ with $T_{\cX/B}\simeq\Omega^1_{\cX/B}$.
If $p\ge5$, or if $Z$ is a $K3$ surface and $(p,n)=(3,2)$, then $\cX\to B$ satisfies \eqref{eq:ct2}.
\end{lemma}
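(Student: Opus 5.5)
The plan has three stages: build the family of surfaces, produce the relative symplectic form, and verify \eqref{eq:ct2} by reducing $h^2(T)$ to a Hodge number of the fibre.

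\textbf{Constructing the family.} After a finite extension of $\bk$, I take $\cZ\to B$ to be a smooth family through $Z$ inside a moduli space of polarised surfaces. For $K3$ surfaces I use a connected component of the moduli of polarised $K3$ surfaces of a suitable degree $2d$ containing a polarisation of $Z$. It is smooth over $\bk$ of dimension $19$ after passing to a level-structure cover, since $K3$ deformations are unobstructed. For abelian surfaces I use a component of $\mathcal A_{2,\delta,N}$ containing $Z$, which is smooth. In both cases the ordinary locus is open and nonempty, so it is dense and the geometric generic fibre is ordinary; I would cite Deligne--Illusie, or Nygaard for $K3$ surfaces. I then restrict to an affine connected open $B$ containing the point of $Z$, taken inside a finite-type scheme.

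The relative Hilbert scheme $\cZ^{[n]}\to B$ is smooth and projective, since Fogarty's theorem holds fibrewise. In the abelian case, $\cK_n$ is the fibre over the zero section of the summation map $\cZ^{[n+1]}\to\mathcal A$. It is smooth because that map is smooth when $p\nmid n+1$: by \cite[Proposition~6.5]{FuLi2021} it is isotrivial via the étale multiplication by $n+1$.

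\textbf{Relative symplectic form.} I will use Fu--Li's relative construction, pulling back the relative $2$-form of $\cZ/B$ to $\cZ^n$, descending it to the symmetric product, and extending it over the Hilbert--Chow exceptional locus. The form is nondegenerate on each fibre by \cite{FuLi2021}, so the induced map $T_{\cX/B}\to\Omega^1_{\cX/B}$ is fibrewise an isomorphism of vector bundles, hence an isomorphism.

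\textbf{Verifying \eqref{eq:ct2}.} By the isomorphism above, $h^2(\cX_b,T)=h^{1,2}(\cX_b)$. I will compute $h^{1,2}$ of $S^{[n]}$ and $K_n(A)$ on every closed fibre and check that it is independent of the surface. The tool is the characteristic-$p$ de Cataldo--Migliorini or Göttsche--Soergel decomposition. Alternatively, for $p>2n$, I can degenerate the Hodge spectral sequence via liftability and use Hodge numbers that are constant in characteristic zero. Another route is Hodge symmetry plus $E_1$-degeneration: both $S^{[n]}$ and $K_n(A)$ lift to $\Wit_2$, since their surfaces lift, and have dimension $2n<p$ when $p>2n$. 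Then Deligne--Illusie gives $\sum_{a+b=j}h^{a,b}=b_j$ constant, and upper semicontinuity of each $h^{a,b}$ forces constancy.

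\textbf{Main obstacle.} The difficulty lies in the small primes $p\ge5$ with $p\le 2n$, where Deligne--Illusie is unavailable. There I would compute $\HH^2(\Omega^1)$ directly from the Hilbert--Chow morphism. In the tame range the computation gives $\HH^q(S^{[n]},\Omega^a)$ as $\mathfrak S_n$-invariants of a sum over partitions, as in Göttsche--Soergel, and this requires $p>n$. Its degree-$(1,2)$ part involves only partitions with at most two non-singleton blocks, so denominators like $2$ and $3$ appear. This explains the thresholds $p\ge5$ and the exceptional case $(p,n)=(3,2)$, where the only nontrivial partition is $(2)$. These expressions depend only on the $h^{a,b}$ of the surface, which are constant in $K3$ and abelian families since $h^{0,1}=0$ or Hodge symmetry holds for surfaces. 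This gives constancy of $h^{1,2}$ on closed points.
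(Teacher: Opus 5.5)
\textbf{Gap: constancy of $h^{1,2}$ is not proved for $5\le p\le n$.} Your reduction of \eqref{eq:ct2} to constancy of $h^{1,2}$, via $T_{\cX/B}\simeq\Omega^1_{\cX/B}$, matches the paper. But your argument for that constancy only works in the tame range: $p>n$ for $S^{[n]}$ and $p>n+1$ for $K_n(A)$. The statement, however, asserts \eqref{eq:ct2} for every $p\ge5$ and every $n\ge2$, for instance $p=5$, $n=10$.

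The partition decomposition $\HH^q(S^{[n]},\Omega^r)\simeq\bigoplus_\lambda\HH^{q-c_\lambda}(S^\ell,\Omega^{r-c_\lambda})^{G_\lambda}$ exists only because two things hold. First, the diagonal identity $[\Delta]=\sum_\lambda(a_\lambda|G_\lambda|)^{-1}\Gamma_\lambda\circ{}^t\Gamma_\lambda$ has $p$-integral coefficients for \emph{all} $\lambda\vdash n$. Second, taking invariants is exact. When these projectors do not exist, you cannot keep just the bidegree-$(1,2)$ piece.

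Your claim that this piece only involves denominators like $2$ and $3$ is also false. The contributing partitions are $(1^n)$ and $(2,1^{n-2})$, with groups $\mathfrak S_n$ and $\mathfrak S_{n-2}$ (use $n+1$ in place of $n$ for $K_n(A)$). Their orders are divisible by $p$ exactly in the range you need. So your explanation of the thresholds is wrong, and for $5\le p\le n$ (with $p\nmid n+1$ in the Kummer case) you have no proof. Your remark that Deligne--Illusie is unavailable when $p\le 2n$ is also mistaken.

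\textbf{Missing idea: work only in total degree three.}
\begin{enumerate}
\item Lift the surface fibre to $\Wit(\kappa(\bar b))$ (Deligne--Ogus, resp.\ Oort) and take the relative Hilbert scheme or Kummer variety. This gives a $\Wit$-lift $\mathscr X$ of each closed fibre.
\item The low-degree form of Deligne--Illusie gives $E_1$-degeneration in total degree $3<p$, with no bound on $n$. Hence $\sum_{a+b=3}h^{a,b}(\cX_b)=\dim\HH^3_{\dR}(\cX_b)$.
\item The real input is $\dim\HH^3_{\dR}=b_3$, that is, torsion-freeness of $\HH^3_{\crys}$ and $\HH^4_{\crys}$. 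The paper obtains this from the adjacent-degree Fontaine--Messing criterion (\Cref{prop:app-adjacent}), which needs $3\le p-2$; this is where $p\ge5$ actually comes from. It is combined with torsion-freeness of $\HH^3,\HH^4$ of the complex models: Totaro for $\mathrm{K3}^{[n]}$, and Hartlieb--Verni plus Jiang's motive formula for $K_n$, where the torsion is killed by $n+1$.
\item Then $\sum_{a+b=3}h^{a,b}=b_3\in\{0,8\}$ on every closed fibre. Upper semicontinuity along $\mathscr X/\Wit$ forces $h^{1,2}=0$, resp.\ $4$.
\end{enumerate}
The case $(p,n)=(3,2)$ is simply the tame range.

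\textbf{Smaller points.}
\begin{itemize}
\item The polarized moduli spaces you use need not be smooth at $[Z]$ when $p$ divides the polarization degree. Nonemptiness of the ordinary locus on the component through $Z$ is the nontrivial input (Bragg for $K3$, Norman--Oort for abelian surfaces). The paper avoids needing smoothness by pulling back to the normalization of a curve through $[Z]$ that meets the ordinary locus.
\item In your $p>2n$ alternative, Deligne--Illusie gives $\sum h^{a,b}=\dim\HH^j_{\dR}$, not $b_j$. The equality with $b_j$ is again a torsion-freeness statement, or else requires a relative $\Wit_2$-lift of the family.
\end{itemize}
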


\begin{proof}
We first work over $\overline\bk$.
For an abelian surface, the equicharacteristic deformation theorem of Norman--Oort gives a polarized deformation of $Z$ with ordinary generic fibre \cite{NormanOort1980}.
For a $K3$ surface, choose a primitive ample line bundle: the ordinary locus is open and dense in every irreducible component of the corresponding polarized moduli space, including when $p$ divides the degree \cite[proof of Corollary~7.5]{Bragg2023}.
In either case, an algebraic chart of polarized moduli therefore contains a point representing $Z$ in the closure of the ordinary locus.
Choose an integral curve through this point meeting that locus and pull back the universal family to its normalization.
The resulting base is a smooth connected curve, and the family is smooth and projective, with ordinary geometric generic fibre.
This construction, together with the identification of the chosen fibre with $Z$, descends to a finite extension of $\bk$.
We replace $\bk$ by that extension and write $f\colon\cZ\to B$ for the resulting family; in the abelian case it is an abelian scheme.

The relative Hilbert scheme $\cZ^{[n]}\to B$ of a smooth proper family of surfaces is smooth and proper of relative dimension $2n$.
In the abelian case, put $m=n+1$; since $p\nmid m$ the relative summation morphism $\Sigma\colon\cZ^{[m]}\to\cZ$ is smooth, by the base change along $[m]\colon\cZ\to\cZ$ recalled in \Cref{lem:app-lift}, so $\cK_n=\Sigma^{-1}(0)\to B$ is smooth and proper of relative dimension $2n$.

The sheaf $f_*\omega_{\cZ/B}$ is invertible and its evaluation map $f^*f_*\omega_{\cZ/B}\to\omega_{\cZ/B}$ is an isomorphism, by cohomology and base change and the triviality of the canonical bundle of each surface fibre.
Shrinking $B$ around the chosen point, trivialize this line bundle and let $\eta$ be the resulting relative $2$-form on $\cZ/B$.
The usual Hilbert-scheme construction, followed in the abelian case by restriction to $\cK_n$, gives a relative $2$-form $\sigma$ on $\cX/B$; these constructions commute with base change (cf.\ \cite[Propositions~4.4 and~6.5]{FuLi2021}).

To check nondegeneracy, let $Y$ be a geometric fibre of $\cX/B$.
The canonical-bundle formula for Hilbert schemes of surfaces gives $\omega_Y\simeq\Oh_Y$ in the $K3$ case; in the Kummer case the same formula on the ambient Hilbert scheme and adjunction for the smooth summation map give this triviality.
On the locus of distinct points, $\sigma$ is the sum of the surface forms.
Its restriction to the zero-sum tangent space in the Kummer case is nondegenerate because its orthogonal complement is the diagonal and $m$ is invertible \cite[Lemma~6.6]{FuLi2021}.
Thus the determinant of contraction $\sigma^\flat\colon T_Y\to\Omega^1_Y$ is a nonzero section of $\omega_Y^{\otimes2}\simeq\Oh_Y$.
Since $Y$ is proper and geometrically connected, this section is nowhere vanishing.
Hence $\sigma^\flat$ is an isomorphism on every fibre, and therefore $T_{\cX/B}\simeq\Omega^1_{\cX/B}$.
In particular
\begin{equation}\label{eq:ct2-is-h12}
  \dim_{\kappa(b)}\HH^2\bigl(\cX_b,T_{\cX_b}\bigr)=h^{1,2}(\cX_b)
\end{equation}
for every closed point $b$, so \eqref{eq:ct2} for $\cX\to B$ is the local constancy of $b\mapsto h^{1,2}(\cX_b)$.

It remains to prove \eqref{eq:ct2}.
If $Z$ is a $K3$ surface and $(p,n)=(3,2)$, \Cref{prop:app-hilbert-tame} identifies the Hodge numbers of every fibre with those in characteristic zero; hence $h^{1,2}(\cX_b)=0$, proving \eqref{eq:ct2}.

For the remaining assertion assume $p\ge5$.
For a closed point $b$, put $k_b=\kappa(\bar{b})$ and $X=\cX_b\otimes_{\kappa(b)}k_b$.
Hodge numbers are unchanged by this field extension.
The surface $\cZ_b\otimes_{\kappa(b)}k_b$ admits a smooth projective lift over $\Wit=\Wit(k_b)$: in the abelian case by \Cref{lem:app-lift}, and in the $K3$ case by the Deligne--Ogus lifting theorem \cite[Theorem~2.9]{Liedtke2016}.
Taking the associated relative Hilbert scheme, resp.\ relative generalised Kummer variety, of that lift produces a smooth projective lift $\mathscr X/\Wit$ of $X$; write $K=\operatorname{Frac}(\Wit)$.
In the $K3$ case, \Cref{prop:app-hilbert} gives $\HH^3_{\dR}(X/k_b)=0$ for every $n\ge2$.
In the abelian case, $\HH^3_{\crys}(X/\Wit)$ and $\HH^4_{\crys}(X/\Wit)$ are torsion-free by \Cref{thm:app-torsion-free}\textup{(2)}, so the universal-coefficient sequence \eqref{eq:app-crys-uct} gives
\[
  \dim_{k_b}\HH^3_{\dR}(X/k_b)
   =b_3(\mathscr X_{\overline K})=8
\]
by \cite{GottscheSoergel1993}.
Since $\mathscr X$ supplies a $\Wit_2$-lift and $3<p$, Deligne--Illusie gives $E_1$-degeneration in total degree three \cite[Corollary~2.5]{DeligneIllusie1987}, so $\sum_{a+b'=3}h^{a,b'}(X)$ is $0$ in the $K3$ case and $8$ in the abelian case.
In the $K3$ case the four summands are non-negative with sum zero, so $h^{1,2}(X)=0$.
In the abelian case the same sum for the geometric generic fibre of $\mathscr X$ is $8$ as well, so upper semicontinuity of the individual $h^{a,b'}$ along $\mathscr X/\Wit$ forces $h^{a,b'}(X)=h^{a,b'}(\mathscr X_{\overline K})$ for $a+b'=3$, whence $h^{1,2}(X)=4$.
Either way $h^{1,2}(\cX_b)$ is independent of $b$, and \eqref{eq:ct2} follows from \eqref{eq:ct2-is-h12}.
\end{proof}

\subsection{Hodge-deformations and the height dichotomy}
\label{subsec:standard-hodge-deformations}

\begin{proof}[Proof of \Cref{thm:deformation type of hilbert and kummer}]
Put $d=2n$ and let $Z$ be the standard model $S^{[n]}$ or $K_n(A)$.
By \Cref{prop:hilbert-HG,prop:kummer-HG,lem:ct2 for relative Kummer}, $Z$ is Hodge-good with trivial canonical bundle and a symplectic form.
By \Cref{prop:app-hilbert-tame,thm:app-torsion-free,cor:app-numbers}, its Hodge numbers agree with those of its characteristic-zero model, its Hodge--de Rham spectral sequence degenerates, and its crystalline cohomology is torsion-free.
Thus $Z$ is primitive symplectic, with $h^{2,0}(Z)=1$, and \eqref{eq:app-crys-uct} gives
\begin{equation}\label{eq:standard-hodge-betti}
  \sum_{a+b=j}h^{a,b}(Z)=b_j(Z)\qquad\text{for every }j,
\end{equation}
where $b_j$ denotes the $\ell$-adic Betti number for $\ell\ne p$.

After extending $\bk$ to an algebraic closure, $Z$ also carries a perfect Frobenius-compatible crystalline Beauville--Bogomolov pairing satisfying \eqref{eq:integral-fujiki}, with $c=1$ in the Hilbert case and $c=n+1$ in the Kummer case.
Indeed, the standard models admit smooth projective Witt liftings whose generic-fibre Beauville--Bogomolov lattices have discriminants $2(n-1)$ and $2(n+1)$, respectively, and Fujiki constant $c(2n)!/(2^n n!)$ \cite[Introduction]{Rapagnetta2008}.
The discriminants are units in the stated characteristic ranges.
Torsion-freeness in degrees two and three, integral comparison \cite[Theorem~14.6(iii)]{BMS2018}, and the tensor construction of \cite[proof of Proposition~2.1.5]{Yang2023} therefore carry these self-dual lattices and pairings to crystalline cohomology, compatibly with cup products and with target Frobenius $p^2\Frob$.

Let $g\colon\cY\to B$ be a Hodge-deformation family containing a reference fibre $Y_0$ with these properties.
Constancy on closed points extends to all points by semicontinuity and the Jacobson property.
The fibres are geometrically integral and $h^{d,0}=1$, so $L=g_*\omega_{\cY/B}$ is a line bundle commuting with base change.
The evaluation map $g^*L\to\omega_{\cY/B}$ is nonzero on every fibre, so its zero divisor is flat over $B$ and has open and closed image.
Since it misses $Y_0$, it is empty; hence $\omega_{\cY/B}\simeq g^*L$ and every fibre has trivial canonical bundle.

For each geometric fibre $Y=\cY_{\bar t}$, smooth proper base change, the crystalline universal-coefficient sequence and the Hodge--de Rham spectral sequence give
\[
  b_j(Y)\le\dim_{\kappa(\bar t)}\HH^j_{\dR}(Y)
       \le\sum_{a+b=j}h^{a,b}(Y)=b_j(Y).
\]
Thus $Y$ has $E_1$-degeneration and torsion-free crystalline cohomology in every degree.
Over an algebraic closure of $\bk$, apply \Cref{lem:fujiki-propagation} on the component of the base containing the reference fibre; Galois conjugacy gives the same conclusions on the remaining components.
Every geometric fibre inherits the pairing and its Fujiki identity, and generators $\sigma\in\HH^0(Y,\Omega_Y^2)$ and $\eta\in\HH^2(Y,\Oh_Y)$ satisfy $\sigma^n\ne0$ and $\eta^n\ne0$.
Since $\omega_Y$ is trivial, $\sigma^n$ is nowhere vanishing; invertibility of $n!$ makes $\sigma$ nondegenerate, and $E_1$-degeneration makes it closed.
Hodge constancy and the nonzero powers of $\eta$ give Hodge-goodness, so $Y$ is primitive symplectic.

Iterating along the chain, carrying the integral pairing forward at each common fibre, proves the symplectic and cohomological assertions.
The cohomological properties and triviality of the canonical bundle descend to the original perfect field, where \cite[Corollary~7.19]{BrantnerTaelman2025} gives unobstructed mixed-characteristic formal deformations and the stated liftings.
The Artin--Mazur assertion and the two equivalences follow from \Cref{prop:AM-hodge-good}\textup{(1)} and \Cref{cor:HG-qF-dichotomy}.

Finally, if $p>2n$, \Cref{cor:relative-W2-from-one-fibre} and \Cref{prop:HG-locally-constant}\textup{(ii)}, starting from the standard fibre, successively make each family in a chain satisfying \eqref{eq:ct2} a Hodge-deformation family.
The preceding argument then applies.
\end{proof}

The preceding argument also gives the following refinement of \cite[Lemma~2.3.5]{Yang2023} for smooth bases.

\begin{corollary}\label{cor:k3n-hodge-deformation}
Assume that $\bk$ is algebraically closed, $n\ge2$, and $p>n+1$.
Let $(g\colon\cY\to B,\xi)$ be a primitively polarized Hodge-deformation family of relative dimension $2n$.
If one geometric fibre is of $\mathrm{K3}^{[n]}$-type in the sense of \cite[Definition~1]{Yang2023}, then every geometric fibre is of $\mathrm{K3}^{[n]}$-type in that sense.
If $p>2n$, the hypothesis that the Hodge numbers are constant may be replaced by \eqref{eq:ct2}.
\end{corollary}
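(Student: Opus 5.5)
Following the proof of \Cref{thm:deformation type of hilbert and kummer} along the chain, I first reduce to a single primitively polarized Hodge-deformation family $g\colon\cY\to B$ over a smooth connected base. Suppose one geometric fibre $Y_0$ is of $\mathrm{K3}^{[n]}$-type in Yang's sense. Since $p>n+1>n$, this standard model is Hodge-good (\Cref{prop:hilbert-HG}), primitive symplectic, and has torsion-free crystalline cohomology and $E_1$-degeneration. It also carries the self-dual crystalline Beauville--Bogomolov pairing, with Fujiki constant $c=1$ and discriminant $2(n-1)$, which is a unit because $p>n+1$. The Hodge-deformation argument already written out then shows the following for every geometric fibre $Y$: trivial canonical bundle; $E_1$-degeneration and torsion-free crystalline cohomology, via the squeeze $b_j\le\dim\HH^j_{\dR}\le\sum h^{a,b}=b_j$; and, by \Cref{lem:fujiki-propagation}, the pairing together with its Fujiki identity and primitive symplecticity.

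Next I check the remaining clauses of \cite[Definition~1]{Yang2023} fibrewise. These should be that $Y$ is deformation equivalent, through the polarized family, to a $\mathrm{K3}^{[n]}$ reference, together with conditions on Betti numbers, $\HH^2$ with its BBF form, and the polarization. The Betti numbers are constant by smooth proper base change. The polarization $\xi$ restricts to a primitive class on each fibre, and its BBF square is locally constant because the crystalline pairing varies in the horizontal family via Gauss--Manin parallel transport. I expect that Yang's Lemma~2.3.5 already handles exactly these conditions, but only under stronger hypotheses on the family, such as a $\Wit$-lift or torsion-freeness assumed on all fibres. The refinement then consists in supplying those hypotheses from Hodge constancy and smoothness of $B$, which the previous step provides. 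The main obstacle is matching the precise formulation of Yang's definition, in particular whether it demands a lifting of $(Y,\xi)$ over $\Wit$. If it does, I would invoke \cite[Corollary~7.19]{BrantnerTaelman2025}: this gives unobstructed deformations, so a polarized formal lift exists, and it algebraizes by Grothendieck existence applied to the lifted $\xi$. Identifying the generic fibre as $\mathrm{K3}^{[n]}$-type in characteristic zero would then proceed through the Fujiki constant and the lattice discriminant, using Verbitsky/Markman-type recognition, which may need the polarized-moduli connectedness argument from Yang.

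Finally, for the case $p>2n$ I replace Hodge constancy by \eqref{eq:ct2}. By \Cref{cor:relative-W2-from-one-fibre}, starting from the $\mathrm{K3}^{[n]}$-type fibre, the family lifts locally over $\Wit_2$. Then \Cref{prop:HG-locally-constant}\textup{(ii)}, which needs $2n<p$, makes all Hodge sheaves locally free, so the family is a Hodge-deformation family and the first part applies.
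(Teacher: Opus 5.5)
There is a genuine gap. The hard step is deferred to ``matching the precise formulation of Yang's definition'', and you never name or verify the fibrewise hypothesis that \cite[Lemma~2.3.5]{Yang2023} actually consumes. That lemma needs every geometric fibre to satisfy the hypotheses of \cite[Proposition~2.2.3]{Yang2023}. Besides what you do obtain (trivial canonical bundle, $E_1$-degeneration, torsion-free crystalline cohomology, the propagated pairing and the symplectic form), these include perfectness of the natural pairing
\[
  \HH^1(Y,T_Y)\times\HH^1(Y,\Omega^1_Y)\longrightarrow\HH^2(Y,\Oh_Y).
\]
This is the one genuinely new verification, and it comes from the integral Fujiki relation. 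Put $D=\HH^2_{\dR}(Y)$. Mazur's description of the Hodge filtration and Frobenius compatibility of $q$ give $q(\Fil^2D,\Fil^1D)=0$. Hence $q$ is perfect on $\Fil^1D/\Fil^2D=\HH^1(Y,\Omega^1_Y)$, and $q(\sigma,\widetilde\eta)\ne0$ for a lift $\widetilde\eta$ of a generator of $\HH^2(Y,\Oh_Y)$. In \eqref{eq:integral-fujiki} with $c=1$ only one matching pattern survives, giving
\[
  \operatorname{tr}\bigl(ab\,\sigma^{n-1}\widetilde\eta^{\,n-1}\bigr)=(n-1)!\,q(a,b)\,q(\sigma,\widetilde\eta)^{n-1}\qquad(a,b\in\Fil^1D).
\]
Since $p>n$, this is perfect on $\HH^1(Y,\Omega^1_Y)$. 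Contraction with $\sigma$ identifies the left side, up to the unit $\pm1/n$, with the natural pairing followed by the isomorphism $u\mapsto\operatorname{tr}(\sigma^n\eta^{n-1}u)$. So the natural pairing is perfect.

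Your fallback does not fill this hole. \cite[Corollary~7.19]{BrantnerTaelman2025} makes the deformations of $Y$ unobstructed, but unobstructedness of $Y$ alone does not carry $\xi$ along: the obstruction to extending $\xi$ lies in $\HH^2(Y,\Oh_Y)$. The standard argument that the pair $(Y,\xi)$ has a $\Wit$-flat deformation space uses that contraction with $c_1(\xi)$, $\HH^1(Y,T_Y)\to\HH^2(Y,\Oh_Y)$, is nonzero, and that is exactly where a perfect pairing of the displayed type enters. Moreover, recognising a characteristic-zero lift as $\mathrm{K3}^{[n]}$-type from its Fujiki constant and BBF discriminant is not an available theorem. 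That identification is what \cite[Lemma~2.3.5]{Yang2023} supplies once the hypotheses of \cite[Proposition~2.2.3]{Yang2023} hold on every fibre.

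A smaller point: the reference fibre is only of $\mathrm{K3}^{[n]}$-type, not a Hilbert scheme, so \Cref{prop:hilbert-HG} and the discriminant computation do not apply to it directly. Instead, read off $E_1$-degeneration and torsion-freeness from the lift with unchanged Hodge numbers in Yang's definition via \eqref{eq:app-crys-uct}. Take the pairing with $c=1$ from \cite[Proposition~2.1.5 and Remark~2.1.8]{Yang2023}.

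Your propagation through \Cref{lem:fujiki-propagation} and your treatment of the \eqref{eq:ct2} variant via \Cref{cor:relative-W2-from-one-fibre} and \Cref{prop:HG-locally-constant}\textup{(ii)} agree with the paper.
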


\begin{proof}
To check the fibrewise cohomological conditions below, we may extend $\bk$ so that the reference fibre is over a $\bk$-point.
By definition, it admits a smooth projective mixed-characteristic lifting with unchanged Hodge numbers, so the crystalline universal-coefficient sequence gives $E_1$-degeneration and torsion-free crystalline cohomology on the reference fibre.
Its canonical bundle is trivial.
Under the alternative hypotheses $p>2n$ and \eqref{eq:ct2}, \Cref{lem:relative-W2-lifting} followed by \Cref{prop:HG-locally-constant}\textup{(ii)} therefore shows that $g$ is a Hodge-deformation family.
The canonical-bundle and cohomology arguments in the proof of \Cref{thm:deformation type of hilbert and kummer}, applied to this lifting and then to $g$, show that every geometric fibre $Y$ has trivial canonical bundle, $E_1$-degeneration, and torsion-free crystalline cohomology.
The reference fibre carries a perfect Frobenius-compatible crystalline Beauville--Bogomolov pairing satisfying \eqref{eq:integral-fujiki} with $c=1$ \cite[Proposition~2.1.5 and Remark~2.1.8]{Yang2023}.
By \Cref{lem:fujiki-propagation}, every $Y$ carries such a pairing $q$, a symplectic form $\sigma$, and a class $\eta\in\HH^2(Y,\Oh_Y)$ with $\eta^n\ne0$.

It remains to verify the perfect cup-product pairing required in \cite[Proposition~2.2.3]{Yang2023}.
Put $D=\HH^2_{\dR}(Y)$ and lift $\eta$ to $\widetilde\eta\in D$.
As in the proof of \Cref{lem:fujiki-propagation}, $\Fil^1D=(\Fil^2D)^\perp$, so $q$ induces a perfect pairing on $\Fil^1D/\Fil^2D=\HH^1(Y,\Omega_Y^1)$, and $q(\sigma,\widetilde\eta)\ne0$.
For $a,b\in\Fil^1D$, \eqref{eq:integral-fujiki} gives
\[
 \operatorname{tr}(ab\sigma^{n-1}\widetilde\eta^{\,n-1})
   =(n-1)!\,q(a,b)q(\sigma,\widetilde\eta)^{n-1}.
\]
The right-hand side is a perfect pairing on $\HH^1(Y,\Omega_Y^1)$.
Under contraction with $\sigma$, the left-hand side identifies, up to the unit $\pm1/n$, with the natural pairing
\[
 \HH^1(Y,T_Y)\times\HH^1(Y,\Omega_Y^1)\longrightarrow\HH^2(Y,\Oh_Y),
\]
followed by the isomorphism $u\mapsto\operatorname{tr}(\sigma^n\eta^{n-1}u)$; compare \cite[proof of Corollary~2.1.7]{Yang2023}.
Thus this natural pairing is perfect.
All geometric fibres therefore satisfy the hypotheses of \cite[Proposition~2.2.3]{Yang2023}, and \cite[Lemma~2.3.5]{Yang2023} proves the assertion.
\end{proof}

\subsection{The surface case}
\label{subsec:k3-examples}

The dimension hypothesis $2n\ge4$ in \Cref{thm:main-all-good} is essential.
For a $K3$ surface one has $\chi(X,\Oh_X)=2$, which coincides with the finite non-ordinary value of $E_0$ in \Cref{prop:HK-Euler-slopes}, so the Euler-characteristic contradiction disappears.
$\hgt(X)$ is then the height of the formal Brauer group and takes every value in $\{1,\dots,10\}\cup\{\infty\}$, the finite heights greater than one occurring exactly for the non-ordinary, non-supersingular $K3$ surfaces.
The collapse of the height to $\{1,\infty\}$ is therefore a higher-dimensional phenomenon, starting in dimension four and driven by $\chi(X,\Oh_X)=n+1\ge3$.

\appendix
\section{Crystalline cohomology and Beauville--Bogomolov--Fujiki form}
\label{sec:appendix}

We record torsion-freeness results for generalised Kummer varieties and Hilbert schemes of $K3$ surfaces, independently of the rest of the paper.
In \Cref{subsec:fujiki-propagation} we establish the integral Fujiki propagation lemma used in \Cref{thm:deformation type of hilbert and kummer}.
For the Kummer series, let $A$ be an abelian surface over a perfect field $\bk$ of characteristic $p>0$, put $m=n+1$, let $\Sigma\colon A^{[m]}\to A$ be the Hilbert--Chow morphism followed by addition, and set $X=K_n(A)=\Sigma^{-1}(0)$.
If $p\nmid m$, translation trivialises $\Sigma$ after the finite \'etale base change $[m]\colon A\to A$, so $X$ is smooth of dimension $2n$.

\begin{theorem}
\label{thm:app-torsion-free}
Let $A$ be an abelian surface over a perfect field $\bk$ of characteristic $p$, and let $X=K_n(A)$ with $n\ge1$.
\begin{enumerate}[label=\textup{(\arabic*)},leftmargin=2.2em]
\item If $p>n+1$, then the Hodge--de Rham spectral sequence of $X$ degenerates at $E_1$ in every degree and $\HH^j_{\crys}(X/\Wit(\bk))$ is torsion-free for every $j$.
\item If $p\ge5$ and $p\nmid n+1$, then $\HH^3_{\crys}(X/\Wit(\bk))$ and $\HH^4_{\crys}(X/\Wit(\bk))$ are torsion-free.
\end{enumerate}
\end{theorem}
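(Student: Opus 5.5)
Part (1) follows a standard pattern. Since $X$ lifts to $\Wit$ and $\dim X=2n<p$ fails in general, Deligne--Illusie alone is not enough. We therefore want the identity $\sum_{a+b=j}h^{a,b}(X)=b_j(X)$ for every $j$. Combined with the universal-coefficient sequence
\[
  b_j\le\dim\HH^j_{\dR}(X)\le\sum_{a+b=j}h^{a,b}(X),
\]
it yields $E_1$-degeneration and torsion-freeness at once.

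\emph{Step 1: a lift.} The abelian surface $A$ lifts to an abelian scheme $\mathscr A/\Wit$, for instance the canonical or any Serre--Tate lift. Because $p\nmid m$, the relative summation map $\mathscr A^{[m]}\to\mathscr A$ is smooth; indeed, after the étale base change $[m]$ it becomes the projection $\mathscr K_n\times\mathscr A\to\mathscr A$. This gives a smooth projective lift $\mathscr X=K_n(\mathscr A)$ whose generic fibre is a generalised Kummer variety. Its Betti numbers and Hodge numbers are given by Göttsche--Soergel, and they satisfy the characteristic-zero Hodge symmetry and sum formula.

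\emph{Step 2: compute Hodge numbers of $X$ in characteristic $p$.} I would redo the Göttsche--Soergel decomposition in the tame range. For $p>m$ the group $\mathfrak S_m$, and every stabiliser occurring in the stratification of $A^{(m)}$, has order prime to $p$. The Hilbert--Chow morphism is semismall, and its fibres over the strata are paved by affine spaces (Briançon). Using the result of Chatzistamatiou--Rülling (\cite[Theorem~3.2.14]{ChatzistamatiouRulling2011}) for the structure sheaf, together with a de Cataldo--Migliorini type decomposition for Hodge cohomology via correspondences, we get
\[
  \HH^b(X,\Omega^a_X)\simeq\bigoplus_{\text{partitions}}\bigl(\text{invariants of Hodge cohomology of } B_\nu\text{ with Tate twists}\bigr).
\]
The invariants are exact, and each $B_\nu$ is an abelian variety modulo finite groups of order prime to $p$. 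Abelian varieties have the characteristic-zero Hodge numbers, so $h^{a,b}(X)$ equals the characteristic-zero value. The Hodge numbers of $X$ therefore sum to the Betti numbers of the lift. An alternative I would try first, since it is cheaper, is motivic: Chow motives with $\ZZ[1/m!]$-coefficients, de Cataldo--Migliorini for $A^{[m]}$, restricted to the Kummer fibre. Realising these in both crystalline and Hodge cohomology gives torsion-freeness directly, because each summand is a direct summand of an abelian-variety motive.

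\emph{Step 3 and part (2).} For (2), $p$ may be smaller than $m$, but we only need degrees $3$ and $4$. Since $3,4<p$ and the lift $\mathscr X$ exists, Deligne--Illusie gives degeneration in total degrees $\le4$. Torsion in $\HH^4_{\crys}$ is detected by $\HH^3_{\dR}$ and $\HH^4_{\dR}$ via universal coefficients, so it suffices to show that $\dim\HH^j_{\dR}(X)=b_j$ for $j=3,4$; degree $5$ torsion would also enter, so I would handle it by Poincaré duality from torsion in $\HH^{4n-4}$, or more simply by bounding the Hodge numbers in degrees $3,4$ of the low-codimension strata. The low-degree cohomology of $K_n(A)$ comes only from the strata of codimension at most $2$, namely the open part and the big diagonal. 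Their stabilisers involve only transpositions and $\mathfrak S_3$-type pieces, and $p\ge5$ makes these tame. So the local computation of Step 2 goes through in degrees $\le4$.

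The main obstacle is Step 2, the characteristic-$p$ decomposition theorem for the Hilbert--Chow morphism with integral or tame coefficients. I would rely on the motivic decomposition with $\ZZ[1/m!]$ coefficients, checking that the de Cataldo--Migliorini correspondences are defined over $\ZZ[1/m!]$.
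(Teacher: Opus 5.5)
Your part (1), in the motivic form you say you would try first, is the paper's argument. The de Cataldo--Migliorini incidence correspondences have denominators $a_\lambda|G_\lambda|$ dividing $m!$, so for $p>m$ the diagonal decomposition holds with $\ZZ_{(p)}$-coefficients. Realising it in Hodge, de Rham and crystalline cohomology gives $E_1$-degeneration and exhibits $\HH^j_{\crys}(X/\Wit)$ as a direct summand of the free crystalline cohomology of the $B_\lambda$. The paper makes your phrase ``restricted to the Kummer fibre'' precise by working relatively over $A$ and base changing along $[m]$. Also, a canonical lift exists only for ordinary $A$; in general you need a lift with a polarization, e.g.\ Oort's.

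Part (2) has a genuine gap. The finite groups that must have order prime to $p$ are not the point stabilisers along the strata. They are the global groups $G_\lambda$ acting on the $B_\lambda$. For the partitions that contribute in degrees $\le4$, namely $(1^m)$, $(2,1^{m-2})$, $(3,1^{m-3})$ and $(2,2,1^{m-4})$, these are $\mathfrak S_m$, $\mathfrak S_{m-2}$, $\mathfrak S_{m-3}$ and $\mathfrak S_2\times\mathfrak S_{m-4}$. Your tameness check controls only the multiplicities $a_\lambda=\pm\prod_i\lambda_i$ and the local stabilisers. In the range that (2) adds to (1), namely $5\le p<m$, already $|\mathfrak S_m|=m!$ is divisible by $p$.

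Hence the diagonal identity \eqref{eq:app-diagonal} does not exist over $\ZZ_{(p)}$. Since it is a single global cycle relation, you cannot keep only its low-degree pieces: the $(1^m)$-term carries the coefficient $1/m!$ and acts nontrivially on $\HH^3$ and $\HH^4$. Likewise, $\mathfrak S_m$-invariants are no longer exact on $\bk$-vector spaces or on $\Wit$-lattices. Even where $\mathfrak S_m$ acts freely, the cohomology of the quotient differs from the invariants by group-cohomology terms $\HH^{>0}(\mathfrak S_m,-)$. So neither your Hodge-number count in degrees $3,4$ nor a direct-summand argument survives. Deligne--Illusie only reduces the problem to that count.

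The missing idea is to bypass the stratification. The paper takes the lift $\mathscr K/\Wit$ and shows that the torsion of $\HH^3$ and $\HH^4$ of a complex generalised Kummer variety is killed by $m$. In degree three this is Hartlieb--Verni. In degree four it uses a nested incidence variety of degree $m$ over $X_\CC$, Jiang's motive formula, Totaro, and a fundamental-group computation. Since $p\nmid m$, the groups $\HH^3_{\et}$ and $\HH^4_{\et}$ of $\mathscr K_{\overline K}$ with $\ZZ_p$-coefficients are therefore free. Fontaine--Messing in degree $3\le p-2$, in Min's form that preserves invariant factors, then gives $\dim_\bk\HH^3_{\dR}(X)=b_3$. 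That is where $p\ge5$ really enters.

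Your worry about degree five is unnecessary: by \eqref{eq:app-crys-uct}, $\dim_\bk\HH^3_{\dR}(X)=b_3$ alone forces $\HH^3_{\crys}$ and $\HH^4_{\crys}$ to have no $p$-torsion.
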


Neither part contains the other: \textup{(1)} is unrestricted in the degree but needs $p$ large compared with $n$, whereas \textup{(2)} allows any $p\ge5$ prime to $n+1$, however large $n$ is, at the cost of treating only two degrees.
Both are proved after extending $\bk$ to an algebraic closure, which is harmless because crystalline cohomology commutes with base change along a faithfully flat extension of Witt rings.

\subsection{All degrees in the tame range}

Part \textup{(1)} comes from a motivic decomposition of $X$ into abelian varieties which is integral away from $m!$.

\begin{lemma}[Tame Kummer correspondences]
\label{lem:app-tame}
Let $\bk$ be algebraically closed with $p>m\ge2$.
For a partition $\lambda=(\lambda_1,\dots,\lambda_\ell)$ of $m$ put
\[
  B_\lambda=\ker\Bigl(A^\ell\to A,\ (x_i)\mapsto\textstyle\sum_i\lambda_ix_i\Bigr),
  \qquad c_\lambda=m-\ell,\qquad a_\lambda=(-1)^{c_\lambda}\prod_i\lambda_i ,
\]
and let $G_\lambda$ permute the coordinates belonging to equal parts.
Then $B_\lambda$ is a disjoint union of translates of an abelian variety of dimension $2\ell-2$, and the reduced incidence cycles $\Gamma_\lambda\subset B_\lambda\times X$, defined by $\operatorname{HC}(\xi)=\sum_i\lambda_i[x_i]$, satisfy ${}^t\Gamma_\lambda\circ\Gamma_\mu=0$ for $\lambda\ne\mu$, ${}^t\Gamma_\lambda\circ\Gamma_\lambda =a_\lambda\sum_{g\in G_\lambda}[\operatorname{Graph}(g)]$, and
\begin{equation}\label{eq:app-diagonal}
  [\Delta_X]=\sum_{\lambda\vdash m}
    \frac{1}{a_\lambda|G_\lambda|}\,\Gamma_\lambda\circ{}^t\Gamma_\lambda
\end{equation}
as Chow correspondences with $\ZZ_{(p)}$-coefficients.
\end{lemma}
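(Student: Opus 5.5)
The plan follows the classical proof of the decomposition of generalised Kummer varieties (de Cataldo--Migliorini for Hilbert schemes, Hu/Xu for Kummers), checking that every denominator which appears divides a power of $m!$. Since $p>m$, all such denominators are units in $\ZZ_{(p)}$.

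First I would settle the geometry of $B_\lambda$. The map $A^\ell\to A$, $(x_i)\mapsto\sum_i\lambda_ix_i$, is a surjective homomorphism. Its kernel is an extension of a finite group scheme by an abelian variety of dimension $2\ell-2$. The component group is controlled by $\gcd(\lambda_i)$, which is at most $m<p$, so it is étale and $B_\lambda$ is a disjoint union of translates of the identity component. Next, the incidence cycle $\Gamma_\lambda$ is the closure of the graph over the stratum of $X$ where the Hilbert--Chow image has multiplicity type $\lambda$. Here I would use Briançon irreducibility of punctual Hilbert schemes, valid in characteristic $p>m$, together with the semismallness of Hilbert--Chow. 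These give that the relevant fibres are irreducible of dimension $c_\lambda$, so $\dim\Gamma_\lambda=\dim X$ and the $\Gamma_\lambda$ are correspondences of degree zero.

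For the orthogonality relations I would compute ${}^t\Gamma_\lambda\circ\Gamma_\mu$ as a pushforward of the intersection of $\Gamma_\mu\times B_\lambda$ with $B_\mu\times{}^t\Gamma_\lambda$. The support lies over pairs of points with the same Hilbert--Chow image, so it is empty unless $\lambda=\mu$. When $\lambda=\mu$, the support is the union of the graphs of $G_\lambda$, and each graph appears with multiplicity $\pm\prod_i\lambda_i$. That multiplicity is the top self-intersection of the punctual fibres, computed via the Ellingsrud--Strømme/de Cataldo--Migliorini local calculation, and it yields $a_\lambda$.

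The diagonal identity \eqref{eq:app-diagonal} is then the step I expect to be the main obstacle. My approach would be as follows.
\begin{enumerate}
\item Set $\pi=\sum_\lambda(a_\lambda|G_\lambda|)^{-1}\Gamma_\lambda\circ{}^t\Gamma_\lambda$. The orthogonality relations show that $\pi$ is an idempotent and that $[\Delta_X]-\pi$ is orthogonal to every $\Gamma_\lambda$.
\item Show that $[\Delta_X]-\pi=0$. For this I would use the semismall decomposition theorem in the form of de Cataldo--Migliorini's Chow-theoretic statement. Its proof uses only the stratification, the relevant-strata fibre classes, and the nondegeneracy of the intersection forms $a_\lambda$. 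All of these are integral, with denominators $a_\lambda|G_\lambda|$ dividing a power of $m!$.
\end{enumerate}
Concretely, I would run their argument: filter $X$ by the closed strata, and show by induction on codimension that the kernel of $\bigoplus_\lambda{}^t\Gamma_\lambda$ on Chow groups with $\ZZ_{(p)}$-coefficients vanishes, using localisation sequences and the fact that each stratum is a $\mathfrak S$-quotient-bundle over $B_\lambda/G_\lambda$ with fibre a product of punctual Hilbert schemes. If the localisation argument needs more, I would instead use a Manin identity principle: check that $[\Delta_X]-\pi$ acts as zero on Chow groups of $X\times T$ for all $T$, which again reduces to the stratification.
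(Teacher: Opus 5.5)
There is a genuine gap at the step you yourself flag, the diagonal identity \eqref{eq:app-diagonal}, and the localisation route you propose does not close it.

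Localisation sequences $\mathrm{CH}_*(Z)\to\mathrm{CH}_*(X)\to\mathrm{CH}_*(U)\to0$ are only right exact, so an induction over strata cannot prove injectivity of $\bigoplus_\lambda{}^t\Gamma_\lambda$. Once $z|_U=0$ you only get $z=i_*w$ with $w$ non-unique, and ${}^t\Gamma_\lambda(i_*w)=0$ says nothing directly about $w$ on the next stratum. The equivalent statement (given orthogonality) is surjectivity of $\bigoplus_\lambda\Gamma_{\lambda*}$ on $\mathrm{CH}_*(X\times T)_{\ZZ_{(p)}}$. That would need the Chow groups of the fibrations $X_\lambda\to Y_\lambda$, which are only \'etale-locally trivial with punctual-Hilbert-scheme fibres. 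It would also need the classes of the lower cells of those fibres, which $\Gamma_\lambda$ does not account for and which would have to be produced from other strata. That is essentially the content of the lemma, and the Manin-principle variant faces the same problem.

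The missing idea is dimensional. Every term of \eqref{eq:app-diagonal} is a refined class of dimension $\dim X$ supported on $X\times_{X_0}X$, where $X_0$ is the Hilbert--Chow image of $X$. Semismallness ($\dim Y_\lambda+2c_\lambda=\dim X$ for every stratum) gives $\dim X\times_{X_0}X=\dim X$. So $\mathrm{CH}_{\dim X}(X\times_{X_0}X)$ is the free group on the top-dimensional components, which lie over the strata, and the identity can be checked coefficientwise. The orthogonality relations you derive, read as refined identities with supports in these fibre products, then give $(\Delta_X-\pi)\circ\Gamma_\mu=0$ for all $\mu$. This is a system that is triangular for the closure order on strata, with diagonal entries $a_\mu\in\ZZ_{(p)}^\times$, so every coefficient of $\Delta_X-\pi$ vanishes. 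This is also why $\ZZ_{(p)}$-coefficients come for free: no torsion in Chow groups can intervene. It is what the paper uses when it invokes de Cataldo--Migliorini's diagonal identity as a cycle identity on $A^{[m]}\times_AA^{[m]}$ in a free group of cycles.

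Separately, you never say how the Hilbert-scheme calculations (dimension estimates, multiplicities $a_\lambda$) reach $X$, which is not itself a Hilbert scheme. The paper base-changes along $[m]\colon A\to A$, using $A^{[m]}\times_{A,[m]}A\simeq X\times A$ and $A^\ell\times_{A,[m]}A\simeq B_\lambda\times A$. It then works relatively over $A$ and restricts to the fibre over $0$.

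Two slips also need correcting.
\begin{itemize}
\item For $\lambda\ne\mu$ the support of ${}^t\Gamma_\lambda\circ\Gamma_\mu$ is not empty. For $\mu=(1^m)$ and $\lambda=(2,1^{m-2})$, any $x\in B_\mu$ with $x_1=x_2$ is paired with $(x_1,x_3,\dots,x_m)\in B_\lambda$. The vanishing must come from the support having dimension below the expected one. Likewise, for $\lambda=\mu$ the support contains lower-dimensional pieces besides the graphs.
\item $\dim\Gamma_\lambda=m+\ell-2=\dim X-c_\lambda$, so $\Gamma_\lambda$ is a correspondence of degree $c_\lambda$, not $0$.
\end{itemize}
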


\begin{proof}
If $e=\gcd(\lambda_i)$, an integral change of coordinates on $A^\ell$ turns the weighted sum into $(y_i)\mapsto[e]y_1$, so $B_\lambda\simeq A[e]\times A^{\ell-1}$; as $e\mid m$ and $p>m$, this is smooth of the stated dimension.
To restrict the correspondence calculus to $X$, write $s_\lambda\colon A^\ell\to A$ for the weighted sum and $\widetilde\Gamma_\lambda\subset A^\ell\times_A A^{[m]}$ for the ambient incidence correspondence.
Both $s_\lambda$ and $\Sigma$ are smooth.
Simultaneous translation by $a\in A$ adds $ma$ to their values, so base change along $[m]\colon A\to A$ gives
\[
 A^{[m]}\times_{A,[m]}A\simeq X\times A,
 \qquad
 A^\ell\times_{A,[m]}A\simeq B_\lambda\times A,
\]
and identifies the pulled-back incidence correspondence with $\Gamma_\lambda\times A$.

The Hilbert--Chow calculations can therefore be made relatively over $A$.
Indeed, the two equalities of addition values in a triple product impose independent base conditions, so the absolute composition calculations are the pushforwards of the refined products in the smooth fibre products over $A$.
The dimension estimates and intersection multiplicities of \cite[\S4.3, Lemma~5.1.2 and Propositions~5.1.3--5.1.4]{deCataldoMigliorini2002} give the two transpose-composition identities, with multiplicity $a_\lambda$ on each graph.
The diagonal identity of \cite[Proposition~6.1.5]{deCataldoMigliorini2002} is an equality of cycles supported on $A^{[m]}\times_A A^{[m]}$.
These relative products commute with the flat base change $[m]$ and, under the displayed product identifications, with refined restriction to the fibre over $0$.
Thus they give the asserted identities on $B_\lambda$ and $X$.

If $m_j$ is the multiplicity of $j$ in $\lambda$, then $|a_\lambda||G_\lambda|=\prod_j j^{m_j}m_j!$ divides $m!$.
The dimension calculations and the cycle identity hold with these denominators already before passing to Chow classes; since the group of cycles is free, they hold with $\ZZ_{(p)}$-coefficients.
\end{proof}

\begin{proof}[Proof of \Cref{thm:app-torsion-free}\textup{(1)}]
Integral cycle classes and proper Gysin maps act on Hodge, de Rham and crystalline cohomology, torsion included, and are compatible with composition of correspondences: for Hodge cohomology this is \cite[Theorem~3.1.8]{ChatzistamatiouRulling2011}, for the de Rham and crystalline realisations one uses the cycle classes and trace maps of \cite[II]{Gros1985} together with the comparison of the de Rham--Witt complex with crystalline cohomology \cite[II, Theorem~1.4 and Scholium~2.8]{Illusie1979}, packaged for de Rham--Witt cohomology in \cite[Theorem~3.4.6]{ChatzistamatiouRulling2012}.
Applying these to \Cref{lem:app-tame} gives, with $c_\lambda$ as there,
\begin{equation}\label{eq:app-kummer-hodge}
  \HH^q(X,\Omega^r_X)\simeq\bigoplus_{\lambda\vdash m}
    \HH^{q-c_\lambda}\bigl(B_\lambda,\Omega^{r-c_\lambda}_{B_\lambda}\bigr)^{G_\lambda},
  \qquad
  \HH^j_{\dR}(X/\bk)\simeq\bigoplus_{\lambda\vdash m}
    \HH^{j-2c_\lambda}_{\dR}(B_\lambda/\bk)^{G_\lambda}.
\end{equation}
Each $B_\lambda$ is a union of translates of abelian varieties, whose Hodge--de Rham spectral sequence degenerates in every characteristic, and taking $G_\lambda$-invariants is exact because $p\nmid|G_\lambda|$.
Hence $\dim_\bk\HH^j_{\dR}(X/\bk)=\sum_{r+q=j}h^q(X,\Omega^r_X)$ for every $j$.
In a bounded spectral sequence of finite-dimensional vector spaces a nonzero differential drops the dimension of its source and target diagonals, so equality on every diagonal forces all differentials to vanish.

For the crystalline assertion, put $M_j=\HH^j_{\crys}(X/\Wit)$ and $N_j=\bigoplus_\lambda\HH^{j-2c_\lambda}_{\crys}(B_\lambda/\Wit)$.
The correspondences give $\Wit$-linear maps $v_j\colon M_j\to N_j$, $x\mapsto\bigl(({}^t\Gamma_\lambda)_*x/a_\lambda|G_\lambda|\bigr)_\lambda$, and $u_j\colon N_j\to M_j$, $(y_\lambda)\mapsto\sum_\lambda(\Gamma_\lambda)_*y_\lambda$; all denominators are units of $\Wit$, and \eqref{eq:app-diagonal} gives $u_jv_j=\mathrm{id}_{M_j}$ on all of $M_j$, torsion included.
The crystalline cohomology of an abelian variety is the exterior algebra on its finite free $\HH^1_{\crys}$ \cite[II, (7.1.1)]{Illusie1979}, so $N_j$ is finite free and $M_j$ is a direct summand of it.
\end{proof}

\begin{proposition}[Tame Hilbert schemes in all degrees]
\label{prop:app-hilbert-tame}
Let $S$ be a $K3$ surface over a perfect field $\bk$ of characteristic $p$, and let $n\ge2$.
If $p>n$, then the Hodge--de Rham spectral sequence of $S^{[n]}$ degenerates at $E_1$ and $\HH^j_{\crys}(S^{[n]}/\Wit(\bk))$ is torsion-free for every $j$.
Consequently, the Hodge numbers of $S^{[n]}$ agree with those of a complex variety of $\mathrm{K3}^{[n]}$-type.
\end{proposition}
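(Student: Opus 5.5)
The plan is to imitate the proof of \Cref{thm:app-torsion-free}\textup{(1)}, replacing the Kummer correspondences of \Cref{lem:app-tame} by the de Cataldo--Migliorini correspondences for $S^{[n]}$. We first extend $\bk$ to an algebraic closure, which is harmless for both assertions.

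For each partition $\lambda=(\lambda_1,\dots,\lambda_\ell)$ of $n$, put $S^\lambda=S^\ell$. Let $G_\lambda$ permute equal parts, and let $\Gamma_\lambda\subset S^\lambda\times S^{[n]}$ be the reduced incidence cycle defined by $\operatorname{HC}(\xi)=\sum_i\lambda_i[x_i]$, with $c_\lambda=n-\ell$. By \cite{deCataldoMigliorini2002} these satisfy ${}^t\Gamma_\lambda\circ\Gamma_\mu=0$ for $\lambda\ne\mu$ and ${}^t\Gamma_\lambda\circ\Gamma_\lambda=a_\lambda\sum_{g\in G_\lambda}[\operatorname{Graph}(g)]$, where $|a_\lambda||G_\lambda|=\prod_j j^{m_j}m_j!$ divides $n!$. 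The same computation also gives the diagonal identity
\[
  [\Delta_{S^{[n]}}]=\sum_\lambda\frac{1}{a_\lambda|G_\lambda|}\Gamma_\lambda\circ{}^t\Gamma_\lambda .
\]
As in \Cref{lem:app-tame}, these are identities of cycles, since the intersection multiplicities are computed geometrically, so they hold with $\ZZ_{(p)}$-coefficients once $p>n$. The main obstacle is checking that the dimension estimates and multiplicity computations of de Cataldo--Migliorini are valid in characteristic $p>n$. The relevant ingredients are the irreducibility and dimension of the punctual Hilbert schemes (Briançon's theorem, valid for $p>n$ or in general via Iarrobino-type arguments) and the semismallness of Hilbert--Chow. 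I would verify these stratum by stratum, and the multiplicity $a_\lambda$ comes from the local structure of the small diagonal, which should only need $\lambda_i$ to be invertible.

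Next we apply the realisation functors: Hodge cohomology \cite[Theorem~3.1.8]{ChatzistamatiouRulling2011}, and de Rham and crystalline cohomology via \cite{Gros1985,ChatzistamatiouRulling2012}. This yields
\[
  \HH^q(S^{[n]},\Omega^r)\simeq\bigoplus_\lambda\HH^{q-c_\lambda}(S^\ell,\Omega^{r-c_\lambda})^{G_\lambda},
  \qquad
  \HH^j_{\dR}(S^{[n]})\simeq\bigoplus_\lambda\HH^{j-2c_\lambda}_{\dR}(S^\ell)^{G_\lambda}.
\]
Taking $G_\lambda$-invariants is exact because $p\nmid|G_\lambda|$. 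A $K3$ surface has $E_1$-degeneration and torsion-free crystalline cohomology, so by Künneth the same holds for $S^\ell$. Comparing dimensions diagonal by diagonal then forces all differentials of the Hodge--de Rham spectral sequence of $S^{[n]}$ to vanish. For crystalline cohomology, the maps $u_j,v_j$ built from the correspondences satisfy $u_jv_j=\mathrm{id}$ on $\HH^j_{\crys}(S^{[n]}/\Wit)$, torsion included. Hence this module is a direct summand of the finite free module $\bigoplus_\lambda\HH^{j-2c_\lambda}_{\crys}(S^\ell/\Wit)$, and is therefore free.

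For the consequence, the displayed formula expresses the Hodge numbers of $S^{[n]}$ through the Hodge numbers of $S^\ell$, $G_\lambda$-invariants and the shifts $c_\lambda$. This is exactly Göttsche--Soergel's formula for a complex $K3$ surface. The Hodge numbers of a $K3$ surface coincide in every characteristic, and the dimensions of invariants are computed by the same character formula in characteristic $0$ and in characteristic $p>n$ (Reynolds projector over $\ZZ[1/n!]$). Hence the Hodge numbers of $S^{[n]}$ agree with those of a complex variety of $\mathrm{K3}^{[n]}$-type. Alternatively, torsion-freeness, $E_1$-degeneration and the Deligne--Ogus lift of $S$ give the same conclusion via semicontinuity, as in \eqref{eq:standard-hodge-betti}.
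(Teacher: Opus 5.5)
Your argument for $E_1$-degeneration and torsion-freeness is the paper's proof. It uses the same de Cataldo--Migliorini correspondences with $\ZZ_{(p)}$-coefficients, the same Hodge, de Rham and crystalline realisations, exactness of $G_\lambda$-invariants, and the direct-summand argument for $\HH^j_{\crys}(S^{[n]}/\Wit)$. The paper does not re-verify the de Cataldo--Migliorini dimension and multiplicity computations in characteristic $p$. It simply invokes their cycle identities before passing to Chow classes, so the obstacle you flag is handled there by citation.

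The only divergence is the final assertion. The paper argues exactly as in your ``alternatively''. Since $p>n\ge2$ is odd, $S$ lifts over $\Wit$ and the relative Hilbert scheme lifts $S^{[n]}$. Degeneration, torsion-freeness and \eqref{eq:app-crys-uct} then make each Hodge diagonal sum equal the Betti number on both fibres. Upper semicontinuity forces every Hodge number to agree with that of the geometric generic fibre.

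Your primary route is different and also valid. You compare the characteristic-$p$ decomposition of $\HH^q(S^{[n]},\Omega^r)$ term by term with the G\"ottsche--Soergel decomposition over $\CC$. The Künneth action of $G_\lambda$ on the Hodge cohomology of $S^\ell$ is a signless permutation action, because all Hodge classes of a $K3$ surface sit in even total degree. Its invariant dimensions are therefore given by a Reynolds idempotent over $\ZZ[1/n!]$, identically in characteristic $0$ and $p>n$.

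This route avoids the Deligne--Ogus lifting theorem and the semicontinuity step. The cost is importing the characteristic-zero G\"ottsche--Soergel formula and the characteristic-independence of $K3$ Hodge numbers, which itself rests on $\HH^0(S,\Omega^1_S)=0$ in characteristic $p$. The paper's route compares directly with an actual characteristic-zero model of $S^{[n]}$, and reuses lift-plus-universal-coefficients machinery it needs elsewhere anyway.
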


\begin{proof}
We may extend $\bk$ to an algebraic closure.
For a partition $\lambda=(\lambda_1,\ldots,\lambda_\ell)$ of $n$, put $c_\lambda=n-\ell$ and $a_\lambda=(-1)^{c_\lambda}\prod_i\lambda_i$, and let $G_\lambda$ permute the coordinates with equal parts.
Let $\Gamma_\lambda\subset S^\ell\times S^{[n]}$ be the reduced incidence correspondence defined by $\operatorname{HC}(\xi)=\sum_i\lambda_i[x_i]$.
The Hilbert--Chow correspondence identities give
\[
  [\Delta_{S^{[n]}}]=\sum_{\lambda\vdash n}
  \frac{\Gamma_\lambda\circ{}^t\Gamma_\lambda}
       {a_\lambda|G_\lambda|}.
\]
Here we use the cycle identity of \cite[\S4.3, Propositions~5.1.3--5.1.4 and~6.1.5] {deCataldoMigliorini2002}, before passing to Chow classes.
If $m_j$ is the multiplicity of $j$ in $\lambda$, then $|a_\lambda||G_\lambda|=\prod_j j^{m_j}m_j!$, the order of the centralizer of a permutation of cycle type $\lambda$.
It divides $n!$.
Thus all coefficients in the displayed identity lie in $\ZZ_{(p)}$, and the identity holds with those coefficients: the group of cycles is free on the irreducible subvarieties, so no torsion is lost here.
The transpose compositions for distinct partitions vanish by the dimension calculation in Proposition~5.1.3 of the same source, while Proposition~5.1.4 gives, on the ordered products,
\[
  {}^t\Gamma_\lambda\circ\Gamma_\lambda
    =a_\lambda\sum_{\gamma\in G_\lambda}[\operatorname{Graph}(\gamma)].
\]
These identities likewise hold with $\ZZ_{(p)}$-coefficients.

Apply the Hodge, de Rham and crystalline realization of correspondences used in the proof of \Cref{thm:app-torsion-free}\textup{(1)}.
The resulting decompositions are
\begin{align*}
  \HH^q(S^{[n]},\Omega^r)
   &\simeq\bigoplus_{\lambda\vdash n}
     \HH^{q-c_\lambda}(S^\ell,\Omega^{r-c_\lambda})^{G_\lambda},\\
  \HH^j_{\dR}(S^{[n]})
   &\simeq\bigoplus_{\lambda\vdash n}
     \HH^{j-2c_\lambda}_{\dR}(S^\ell)^{G_\lambda}.
\end{align*}
The Hodge--de Rham spectral sequence of a $K3$ surface degenerates and its crystalline cohomology is free in every degree \cite[Proposition~2.5]{Liedtke2016}.
The same properties hold for its products by the K\"unneth formulas.
Since $|G_\lambda|$ is prime to $p$, taking invariants is exact.
The two displayed decompositions therefore give $E_1$-degeneration for $S^{[n]}$.
The diagonal identity also makes its crystalline cohomology a direct summand of $\bigoplus_\lambda\HH^{j-2c_\lambda}_{\crys}(S^\ell/\Wit)$, proving torsion-freeness.

Since $p>n\ge2$, the characteristic is odd, and $S$ has a projective lift over $\Wit$ by \cite[Theorem~2.9]{Liedtke2016}.
Its relative Hilbert scheme lifts $S^{[n]}$.
Degeneration, torsion-freeness and \eqref{eq:app-crys-uct} show that the sum of the Hodge numbers on each diagonal is the corresponding Betti number on both fibres.
Upper semicontinuity then makes every Hodge number equal on the special and geometric generic fibres, proving the last assertion.
\end{proof}

\subsection{Degrees three and four for \texorpdfstring{$p\ge5$}{p>=5}}

The engine for part \textup{(2)} is the following criterion, which is independent of the geometry.

\begin{proposition}[Fontaine--Messing criterion in adjacent degrees]
\label{prop:app-adjacent}
Let $\mathscr Y/\Wit(\bk)$ be smooth and proper with special fibre $Y$, and let $i\le p-2$.
If $\HH^i_{\et}(\mathscr Y_{\overline K},\ZZ_p)$ and $\HH^{i+1}_{\et}(\mathscr Y_{\overline K},\ZZ_p)$ are torsion-free, then so are $\HH^i_{\crys}(Y/\Wit(\bk))$ and $\HH^{i+1}_{\crys}(Y/\Wit(\bk))$.
\end{proposition}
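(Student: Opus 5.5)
The plan is to use integral $p$-adic Hodge theory in the Fontaine--Laffaille range; I expect the strict weight bound $i+1\le p-1$ in degree $i+1$ to be the main obstacle. Put $\Wit=\Wit(\bk)$, let $\mathscr Y_n=\mathscr Y\otimes\Wit_n$, and use the mod-$p$ comparison. Fontaine--Messing, in the form of \cite[Theorem~14.6(iii)]{BMS2018}, shows the following for an unramified base and cohomological degree $j\le p-2$: the crystalline cohomology $\HH^j_{\crys}(Y/\Wit_n)$ underlies a Fontaine--Laffaille module, and the Fontaine--Laffaille functor sends it to $\HH^j_{\et}(\mathscr Y_{\overline K},\ZZ/p^n)$. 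Since this functor is exact and preserves lengths, lengths of torsion agree on both sides.

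The approach is to compare the universal-coefficient sequences on both sides. On the crystalline side there are short exact sequences
\[
0\to\HH^j_{\crys}(Y/\Wit)\otimes\Wit_n\to\HH^j_{\crys}(Y/\Wit_n)\to\HH^{j+1}_{\crys}(Y/\Wit)[p^n]\to0,
\]
and the analogous sequences hold for étale cohomology with $\ZZ_p$ and $\ZZ/p^n$ coefficients. Take $n=1$ and compare dimensions in degree $i$. The étale side has dimension $b_i$, because $\HH^i_{\et}(\mathscr Y_{\overline K},\ZZ_p)$ and $\HH^{i+1}_{\et}(\mathscr Y_{\overline K},\ZZ_p)$ are both torsion-free. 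The crystalline side has dimension $b_i+t_i+t_{i+1}$, where $t_j=\dim_\bk\HH^j_{\crys}(Y/\Wit)[p]$; here I use that the crystalline and étale ranks agree, via the rational comparison. Since $i\le p-2$, the dimensions $\dim\HH^i_{\crys}(Y/\Wit_1)$ and $\dim\HH^i_{\et}(\ZZ/p)$ coincide, so $t_i=t_{i+1}=0$. Hence both $\HH^i_{\crys}(Y/\Wit)$ and $\HH^{i+1}_{\crys}(Y/\Wit)$ are torsion-free.

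The main obstacle is that the comparison must hold in degree exactly $i$, not in degree $i+1$. The argument above only uses the mod-$p$ comparison in degree $i$: the $[p]$-torsion of degree $i+1$ shows up inside $\HH^i(\,\cdot\,/\Wit_1)$, so it is enough that $i\le p-2$. If the cited form of \cite{BMS2018} only provides an inequality $\length\HH^i_{\et}(\ZZ/p^n)\le\length\HH^i_{\crys}(\Wit_n)$ with equality for $i<p-1$, I would check that the equality indeed holds at $i=p-2$. I would also note that extending $\bk$ to $\overline{\bk}$ is harmless, which allows working with $\overline K$.
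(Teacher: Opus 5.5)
Your proposal is correct and matches the paper's proof. The paper also uses only the degree-$i$ mod-$p$ comparison $\dim_\bk\HH^i_{\dR}(Y/\bk)=\dim_{\Fp}\HH^i_{\et}(\mathscr Y_{\overline K},\Fp)=b_i$, valid for $i\le p-2$, together with the crystalline universal-coefficient sequence, to get $b_i=b_i+t_i+t_{i+1}$; as you observe, this makes the degree-$(i+1)$ range issue disappear. The only difference is the citation: the paper takes the torsion comparison from Min's invariant-factor form of Fontaine--Messing (with Li--Liu as an alternative) rather than from \cite[Theorem~14.6(iii)]{BMS2018}.
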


\begin{proof}
Freeness of the two integral \'etale groups makes the coefficient sequence $0\to\HH^i_{\et}(\ZZ_p)/p\to\HH^i_{\et}(\Fp)\to\HH^{i+1}_{\et}(\ZZ_p)[p]\to0$ give $\dim_{\Fp}\HH^i_{\et}(\mathscr Y_{\overline K},\Fp)=b_i$.
Fontaine--Messing comparison applies in degrees $0\le r\le p-2$ and preserves invariant factors \cite[Theorem~0.3]{Min2021}, whence $\dim_\bk\HH^i_{\dR}(Y/\bk)=b_i$.
Derived crystalline base change gives
\begin{equation}\label{eq:app-crys-uct}
  0\to\HH^i_{\crys}(Y/\Wit)/p\to\HH^i_{\dR}(Y/\bk)
   \to\HH^{i+1}_{\crys}(Y/\Wit)[p]\to0 ,
\end{equation}
so with $t_j=\dim_\bk\HH^j_{\crys}(Y/\Wit)[p]$ one gets $b_i=b_i+t_i+t_{i+1}$, that is $t_i=t_{i+1}=0$.
A finitely generated $\Wit$-module without $p$-torsion is torsion-free.
\end{proof}

\begin{remark}
\label{rem:app-adjacent-LiLiu}
The conclusion of \Cref{prop:app-adjacent} also follows from the torsion crystalline--\'etale comparison of Li--Liu \cite[Theorem~1.2 and Corollary~7.28]{LiLiu2025Comparison}.
For the unramified base $\Wit(\bk)$, their condition $ei<p-1$ is exactly $i\le p-2$; the comparison gives $\dim_\bk\HH^i_{\dR}(Y/\bk)=\dim_{\Fp}\HH^i_{\et}(\mathscr Y_{\overline K},\Fp)$, and \eqref{eq:app-crys-uct} then yields torsion-freeness in degrees $i$ and $i+1$.
\end{remark}

Two inputs remain: a lift of $X$ over $\Wit$, and freeness of the integral \'etale cohomology of a complex generalised Kummer variety in degrees three and four.

\begin{lemma}
\label{lem:app-lift}
An abelian surface $A$ over an algebraically closed field of characteristic $p\ge3$ admits a projective lift to an abelian scheme over $\Wit=\Wit(\bk)$.
Consequently, if $p\nmid m$, then $X$ lifts to a smooth projective $\mathscr K\to\Spec\Wit$.
\end{lemma}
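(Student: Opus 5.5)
The plan has two parts: first lift $A$ together with a polarization to an abelian scheme over $\Wit$, then deduce a lift of $X$ from the relative Kummer construction. For the first part I will use the deformation theory of abelian varieties (Grothendieck--Messing, or Serre--Tate in the ordinary case). The formal deformation space of $A$ is formally smooth of dimension $4$ over $\Wit$. Fix an ample line bundle $L$ on $A$, of degree $d^2$ in the sense $\chi(L)=d$. The locus where $L$ deforms is cut out inside this space by the vanishing of the deformed class in $\HH^2(\Oh)$. By Norman--Oort, for a polarization whose degree is prime to $p$ this locus is formally smooth of relative dimension $3$ over $\Wit$; this is the Norman--Oort / Mumford flatness result for the moduli of polarized abelian varieties with separable polarization.

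I cannot simply assume the polarization degree is prime to $p$. To handle this, I plan to use the fact that, over an algebraically closed field, Norman--Oort prove that the moduli stack $\mathcal A_{g,d}\otimes\ZZ_p$ is flat over $\ZZ_p$ for every $d$. Hence every point of the special fibre lies in the closure of the generic fibre. Taking a finite extension of $\Wit[1/p]$ would then give a lift over a ramified ring, but I want $\Wit$ itself. So I would instead use the simpler route for surfaces. The space $\HH^2(A,\Oh_A)$ is one-dimensional, so the deformation space of the pair $(A,L)$ is a hypersurface in $\mathrm{Spf}\,\Wit[[t_1,\dots,t_4]]$ defined by a single equation $f$. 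The hypersurface $\{f=0\}$ is not contained in $\{p=0\}$, because it has dimension $4$ while its special fibre has dimension $3$ (the $3$-dimensional polarized deformation space of $A$ in characteristic $p$). I then expect to choose a $\Wit$-valued point on it. Since the hypersurface is flat over $\Wit$ of relative dimension $3$, a $\Wit$-point exists provided some component is generically smooth over $\Wit$ along the closed point; I will secure this by replacing $L$ by a suitable line bundle whose class is not divisible by $p$ in $\mathrm{NS}$, which makes $f$ have a nonzero linear term in the $t_i$. Grothendieck algebraization with the ample lift of $L$ then produces a projective abelian scheme $\mathscr A/\Wit$. I expect this step, producing an unramified $\Wit$-point rather than a point over a ramified extension, to be the main obstacle. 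The fallback is the known result (Oort, or Liedtke's survey) that abelian surfaces with $p\ge3$ lift over $\Wit$; in fact every abelian variety admits such a lift without a polarization, and it is the polarized (projective) requirement that forces the argument above.

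For the consequence, form the relative Hilbert scheme $\mathscr A^{[m]}\to\Spec\Wit$, which is smooth and projective, and the relative summation map $\Sigma\colon\mathscr A^{[m]}\to\mathscr A$. Set $\mathscr K=\Sigma^{-1}(0)$. Since $p\nmid m$, the map $[m]\colon\mathscr A\to\mathscr A$ is finite étale. Base changing along it, relative translation gives $\mathscr A^{[m]}\times_{\mathscr A,[m]}\mathscr A\simeq\mathscr K\times_\Wit\mathscr A$, exactly as in the argument recalled before \Cref{thm:app-torsion-free}. This shows that $\Sigma$ is smooth and that $\mathscr K$ is smooth and projective over $\Wit$, with special fibre $K_n(A)=X$.
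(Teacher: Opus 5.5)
\paragraph{Second half.} Your construction of $\mathscr K$ (relative Hilbert scheme, summation map, and the trivialisation after the finite \'etale base change $[m]$) is exactly the paper's argument.

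\paragraph{First half: the gap.} Here the paper simply cites \cite[Proposition~11.1]{Oort1987}: \emph{some} polarization of $A$ lifts together with $A$ over $\Wit$. Your self-contained deformation argument has a genuine gap at the step where you choose $L$ with class not divisible by $p$ in $\mathrm{NS}(A)$ and claim this gives $f$ a nonzero linear term.

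The linear term of $\bar f=f\bmod p$ is the map $\HH^1(A,T_A)\to\HH^2(A,\Oh_A)$ given by contraction with the Hodge Chern class $c_1(L)\in\HH^1(A,\Omega^1_A)\simeq\Hom(\Lie A,\Lie A^\vee)$. Under this identification $c_1(L)$ is $d\phi_L$. So the linear term vanishes exactly when $\phi_L$ factors through the Frobenius $F_A$, and primitivity in $\mathrm{NS}$ does not prevent this.

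For instance, a superspecial $A=E\times E$ carries a polarization $\mu$ with $\ker\mu=\ker F_A\simeq\alpha_p^2$ (the polarizations underlying the Moret-Bailly families). Since $\deg\mu=p^2<p^4$, its class is not divisible by $p$. Yet $d\mu=0$, so $\bar f\in\mathfrak m^2$ and your smoothness argument does not apply.

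Worse, $(E\times E,\mu)$ has no lift over $\Wit$ at all. The kernel of a lifted polarization would be a finite flat lift of $\alpha_p^2$ over the unramified base $\Wit$, and by Oort--Tate/Fontaine theory ($e=1<p-1$, $p\ge3$) none exists. So choosing the polarization is the entire difficulty. Flatness plus generic smoothness would not rescue it either: $\Wit[[t]]/(t^2-p)$ is flat with smooth generic fibre but has no $\Wit$-point.

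\paragraph{What would repair it.} You need an ample $L$ with $c_1(L)\neq0$ in $\HH^1(A,\Omega^1_A)$. Then $\Wit[[t_1,\dots,t_4]]/(f)$ is formally smooth over $\Wit$, a $\Wit$-point exists, and your algebraization step goes through.

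Any ample $L$ with $p\nmid\chi(L)$ qualifies: $c_1(L)^2=2\chi(L)\neq0$ in $\HH^4_{\dR}(A/\bk)$, while $\Fil^2\cdot\Fil^2=0$, so $c_1(L)\notin\Fil^2$. But such an $L$ need not exist, for example if $\mathrm{NS}(A)=\ZZ h$ with $p\mid h^2/2$. In that case an additional argument is required, e.g.\ via the structure of $K(L)$ and its commutator pairing, to produce an ample class with nonzero Hodge Chern class.

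Supplying such a polarization on every abelian surface is precisely what the paper outsources to Oort. Your fallback as phrased, a lift without a polarization, would not suffice: a formal lift over $\mathrm{Spf}\,\Wit$ algebraizes to a projective abelian scheme only if an ample bundle lifts with it.
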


\begin{proof}
By \cite[Proposition~11.1]{Oort1987}, some polarization of $A$ lifts together with $A$ over $\Wit(\bk)$, giving a projective abelian scheme $\mathscr A/\Wit$.
For the consequence, $m$ is a unit in $\Wit$, so $[m]$ is finite \'etale on the lift $\mathscr A$ and translation identifies $\mathscr A\times_\Wit\mathscr K$ with $\mathscr A^{[m]}\times_{\mathscr A,[m]}\mathscr A$, making the summation $\Sigma$ smooth by \'etale descent.
\end{proof}

\begin{proposition}
\label{prop:app-integral}
Let $B$ be a complex abelian surface and $X_{\CC}=K_n(B)$.
Then $m\cdot\HH^3(X_{\CC},\ZZ)_{\mathrm{tors}}=0$ and $m\cdot\HH^4(X_{\CC},\ZZ)_{\mathrm{tors}}=0$.
\end{proposition}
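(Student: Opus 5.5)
The plan is to compare $X_{\CC}=K_n(B)$ with $B^{[m]}$ through the isotrivialisation of the summation map, and to feed in the torsion-freeness of the integral cohomology of Hilbert schemes of points on an abelian surface (Totaro), which I take as the external input. In outline: first express the low-degree torsion of $X_{\CC}$ inside $\HH^*(B\times X_{\CC},\ZZ)$, then transport it to the torsion-free group $\HH^*(B^{[m]},\ZZ)$ by a transfer, and finally sharpen the exponent produced by the transfer from $m^4$ to $m$. This last sharpening is the real content of the statement, and the place where the argument is least certain.

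\textbf{Step 1 (the cover).} Let
\[
  \nu\colon B\times X_{\CC}\to B^{[m]},\qquad (a,\xi)\mapsto t_a(\xi).
\]
This is the finite \'etale Galois cover with group $B[m]$, of degree $m^4$, used in \Cref{lem:app-lift}.

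\textbf{Step 2 (locating the torsion).} Since $X_{\CC}$ is simply connected, $\HH^1(X_{\CC},\ZZ)=0$ and $\HH^2(X_{\CC},\ZZ)$ is torsion-free. Because $\HH^*(B,\ZZ)$ is free, the K\"unneth formula gives
\[
  \HH^3(B\times X_{\CC},\ZZ)_{\mathrm{tors}}=\HH^3(X_{\CC},\ZZ)_{\mathrm{tors}}.
\]
The same formula shows that, in degree four, the torsion of $\HH^4(B\times X_{\CC},\ZZ)$ is $\HH^4(X_{\CC},\ZZ)_{\mathrm{tors}}\oplus\HH^1(B)\otimes\HH^3(X_{\CC})_{\mathrm{tors}}$.

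\textbf{Step 3 (transfer).} Let $t$ be a torsion class coming from $X_{\CC}$ via $\mathrm{pr}_X^*$. Then $\nu_*t$ is torsion in the torsion-free group $\HH^*(B^{[m]},\ZZ)$, hence $\nu_*t=0$. Therefore
\[
  \sum_{g\in B[m]}g^*t=\nu^*\nu_*t=0 .
\]
The group $B[m]$ acts on $B\times X_{\CC}$ by $(a,\xi)\mapsto(a-g,t_g\xi)$. It acts on $\HH^*(X_{\CC},\ZZ)$ through translations, and translations lie in the connected group $B$ acting on $B^{[m]}$. I would therefore first show that $B[m]$ acts trivially on $\HH^{\le4}(X_{\CC},\ZZ)_{\mathrm{tors}}$; the intended route is to argue via the monodromy of $\Sigma$ in the Leray spectral sequence. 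Granting this, the displayed identity yields only $m^4t=0$.

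\textbf{Step 4 (sharpening the exponent).} To improve $m^4$ to $m$, I would avoid the full $B[m]$-cover and instead use the cover of degree $m$ along a single direction. Concretely, choose an elliptic curve or a one-parameter subgroup $C\subset B$. I would then work with the restricted family $\Sigma^{-1}(C)$, or alternatively use the correspondence $B\times X_{\CC}\to B^{[m]}$ together with the diagonal-type map $\xi\mapsto(\xi,\,\cdot\,)$ built from $\operatorname{HC}$.

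The goal in this step is to produce two maps $\alpha\colon\HH^*(B^{[m]})\to\HH^*(X_{\CC})$ and $\beta$ in the opposite direction whose composite on the relevant torsion is multiplication by $m$. The first thing I would try is the projection formula applied with the class $\Sigma^*[\mathrm{pt}]$: the fibre of $\Sigma$ meets the relevant cycle with multiplicity $m$ rather than $m^4$, and this is where the factor $m$ should enter.

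I expect Step 4 to be the main obstacle, since the naive transfer only gives the exponent $m^4$. If this sharpening fails, the weaker statement that torsion is killed by $m^4$ already suffices for the use of the proposition in \Cref{thm:app-torsion-free}(2), because $p\nmid m$.
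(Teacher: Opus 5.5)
Your Steps 1 and 2, and the vanishing $\nu_*t=0$ from Totaro's theorem, are fine. The argument breaks in Step 3, before the sharpening of Step 4 is even needed.

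The identity $\sum_{g\in B[m]}g^*t=0$ only says that the norm kills $t$. To get $m^4t=0$ you need $B[m]$ to act trivially on the torsion, and your justification does not give this. The connected group $B$ acts on $B^{[m]}$, not on $X_{\CC}$, so connectedness only shows that translations act trivially on $\HH^*(B^{[m]},\ZZ)$. The monodromy of $\Sigma$ on $\HH^*(X_{\CC},\ZZ)$ is precisely the $B[m]$-action, and it is genuinely nontrivial. For instance, for $n=2$ G\"ottsche's formula gives $b_4(B^{[3]})=103$. Comparing with
\[
  \HH^4(B^{[3]},\QQ)\simeq\bigoplus_{i}\HH^i(B,\QQ)\otimes\HH^{4-i}(X_{\CC},\QQ)^{B[3]}
\]
shows that only a $28$-dimensional subspace of the $108$-dimensional group $\HH^4(X_{\CC},\QQ)$ is invariant. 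So no monodromy or Leray argument can yield triviality, and nothing in your plan yields it on the torsion.

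Without triviality the transfer is weak. For a prime $\ell\nmid m$, working with $\ZZ_{(\ell)}$-coefficients, it only shows that the $\ell$-primary torsion of $\HH^*(B\times X_{\CC},\ZZ)$ has no nonzero $B[m]$-invariants, and that does not make it vanish. Hence even the bound $m^4$, which you rightly say would suffice for \Cref{thm:app-torsion-free}(2), is not established.

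Step 4 does not supply the missing idea either: a general abelian surface contains no elliptic curve, and a one-parameter subgroup is not algebraic. What is needed is a generically finite map $f$ of degree $m$ onto $X_{\CC}$ from a smooth projective variety whose integral cohomology in the relevant degree is torsion-free. Then $m\alpha=f_*f^*\alpha=0$.

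The paper uses the rational map $B^{[n]}\dashrightarrow X_{\CC}$, $\xi\mapsto\xi\cup\{-\Sigma(\xi)\}$, which has degree $m$.
\begin{itemize}
\item In degree three, birational invariance of $\HH^3(-,\ZZ)_{\mathrm{tors}}$ and Totaro's theorem for $B^{[n]}$ suffice (Hartlieb--Verni).
\item Degree four is not birationally invariant. The paper resolves the map by the zero-sum nested Hilbert scheme $Y=\{\xi\subset\xi':\Sigma(\xi')=0\}$ and identifies $Y\simeq\PP_{U_0}(\mathcal I_0)$ with $U_0\simeq B^{[n]}$. Jiang's integral motive formula then gives $\HH^4(Y,\ZZ)\simeq\HH^4(B^{[n]},\ZZ)\oplus\HH^2(Z,\ZZ)(-1)$. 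The first summand is torsion-free by Totaro, and the second by a fundamental-group computation.
\end{itemize}
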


\begin{proof}
Degree three is \cite[Corollary~2.2]{HartliebVerni2025}, via a dominant rational map of degree $m$ from $B^{[m-1]}$ and the torsion-freeness of the cohomology of the Hilbert scheme.
Degree four is not birationally invariant, so we use instead the zero-sum incidence variety $Y=\{(\xi\subset\xi')\in B^{[n,n+1]}:\Sigma(\xi')=0\}$ and the forgetful morphism $f\colon Y\to X_{\CC}$, which is proper and generically finite of degree $m$.

Set $U=B^{[n]}\times B$ with $s(\xi,x)=\Sigma_n(\xi)+x$ and $U_0=s^{-1}(0)$, so that $U_0\simeq B^{[n]}$ and $Y\simeq\PP_{U_0}(\mathcal I_0)$ for the restriction $\mathcal I_0$ of the universal ideal.
Jiang's hypotheses hold for $\mathcal I_0$: simultaneous translation $\tau_a(\xi,x)=(t_a\xi,x+a)$ satisfies $s\circ\tau_a=s+ma$, so base change along $[m]$ makes the whole configuration a direct product with $B$, and the smoothness and codimension conditions verified for the ambient universal ideal in \cite[Lemma~5.3 and Corollary~5.4]{Jiang2023} descend along this finite \'etale cover.
(This check is what licenses the restriction to the fibre $U_0$; the decomposition is not quoted for it in \cite{Jiang2023}.)
Jiang's integral motive formula \cite[Corollary~4.3]{Jiang2023} then gives $h(Y)\simeq h(U_0)\oplus h(Z)(1)$ with $Z=\PP_{U_0}(\Ext^1(\mathcal I_0,\Oh_{U_0}))$, whence
\[
  \HH^4(Y,\ZZ)\simeq\HH^4(B^{[n]},\ZZ)\oplus\HH^2(Z,\ZZ)(-1).
\]
The first summand is torsion-free by \cite{Totaro2020}.
For the second, $Z$ is birational to $V=\{(\eta,x)\in B^{[n-1]}\times B:\Sigma_{n-1}(\eta)+2x=0\}$, and $V\to B^{[n-1]}$ is the pullback of $[2]$, hence finite \'etale.
If $n\ge3$, \cite[Corollary~1.3]{BiswasHogadi2015} identifies $\pi_1^{\et}(B^{[n-1]})$ with the abelianisation of $\pi_1^{\et}(B)$, hence with $\widehat{\ZZ}^{\,4}$; for $n=2$ the same description follows directly from $B^{[1]}=B$.
Each connected component $V_i$ of $V$ therefore has \'etale fundamental group an open subgroup of $\widehat{\ZZ}^{\,4}$, again isomorphic to $\widehat{\ZZ}^{\,4}$.
Comparison with the topological fundamental group shows that the profinite completion of $\HH_1(V_i,\ZZ)$ is $\widehat{\ZZ}^{\,4}$.
Since $\HH_1(V_i,\ZZ)$ is finitely generated, it has no torsion.
Birational smooth projective complex varieties have isomorphic topological fundamental groups, so $\HH_1(Z,\ZZ)$ is torsion-free as well.
For $n=1$, both $V$ and $Z$ are finite and this conclusion is immediate.
The universal-coefficient theorem now gives $\HH^2(Z,\ZZ)_{\mathrm{tors}}\simeq\Ext^1_\ZZ(\HH_1(Z,\ZZ),\ZZ)=0$.
Thus $\HH^4(Y,\ZZ)$ is torsion-free, so $f^*\alpha=0$ for torsion $\alpha$, and the projection formula gives $m\alpha=f_*f^*\alpha=0$.
\end{proof}

\begin{proof}[Proof of \Cref{thm:app-torsion-free}\textup{(2)}]
Let $\mathscr K/\Wit$ be the lift of \Cref{lem:app-lift}.
Embedding a field of definition of its generic fibre into $\CC$ and comparing with singular cohomology, \Cref{prop:app-integral} shows that the $p$-primary torsion of $\HH^3_{\et}(\mathscr K_{\overline K},\ZZ_p)$ and $\HH^4_{\et}(\mathscr K_{\overline K},\ZZ_p)$ is annihilated by $m$, hence vanishes since $p\nmid m$.
As $3\le p-2$ exactly when $p\ge5$, \Cref{prop:app-adjacent} with $i=3$ applies.
\end{proof}

The Hilbert-scheme series needs no separate integral input, because Totaro's theorem already supplies torsion-freeness in every degree.

\begin{proposition}
\label{prop:app-hilbert}
Let $S$ be a $K3$ surface over a perfect field $\bk$ of characteristic $p\ge5$, put $\Wit=\Wit(\bk)$, and let $n\ge2$.
Then $\HH^3_{\crys}(S^{[n]}/\Wit)$ and $\HH^4_{\crys}(S^{[n]}/\Wit)$ are torsion-free, and $\dim_\bk\HH^3_{\dR}(S^{[n]}/\bk)=0$.
\end{proposition}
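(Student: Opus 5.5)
The plan is to follow the route used for \Cref{thm:app-torsion-free}\textup{(2)}. We lift $S^{[n]}$ over $\Wit$ and read off the integral \'etale cohomology of the complex fibre from Totaro's theorem. Then \Cref{prop:app-adjacent} with $i=3$ transfers torsion-freeness to crystalline cohomology in degrees three and four. The de Rham statement comes out of the same comparison.

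First, reduce to $\bk$ algebraically closed. This is harmless because crystalline and de Rham cohomology commute with the faithfully flat extension of Witt rings and of fields. Since $p\ge5$ is odd, the Deligne--Ogus lifting theorem \cite[Theorem~2.9]{Liedtke2016} gives a projective lift $\mathscr S/\Wit$ of $S$. The relative Hilbert scheme $\mathscr Y=\mathscr S^{[n]}\to\Spec\Wit$ is smooth and projective, with special fibre $S^{[n]}$ and generic fibre the Hilbert scheme of a $K3$ surface over $K$. Embedding a field of definition of the generic fibre into $\CC$, the complex fibre is $S_{\CC}^{[n]}$ for a complex $K3$ surface. Its integral singular cohomology is torsion-free in every degree by \cite{Totaro2020}. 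By Artin comparison, $\HH^3_{\et}(\mathscr Y_{\overline K},\ZZ_p)$ and $\HH^4_{\et}(\mathscr Y_{\overline K},\ZZ_p)$ are therefore torsion-free. The inequality $3\le p-2$ holds exactly because $p\ge5$, so \Cref{prop:app-adjacent} with $i=3$ gives torsion-freeness of $\HH^3_{\crys}(S^{[n]}/\Wit)$ and $\HH^4_{\crys}(S^{[n]}/\Wit)$.

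For the de Rham dimension, apply \eqref{eq:app-crys-uct} with $i=3$. Since $\HH^4_{\crys}[p]=0$, we get $\HH^3_{\dR}(S^{[n]}/\bk)\simeq\HH^3_{\crys}(S^{[n]}/\Wit)/p$. The latter is free of rank $b_3$, using torsion-freeness together with the equality of rank and $\ell$-adic Betti number (crystalline cohomology rationally has the Betti numbers). Alternatively, the proof of \Cref{prop:app-adjacent} gives $\dim_\bk\HH^3_{\dR}=b_3$ directly. Finally, $b_3(S^{[n]}_{\CC})=0$: by G\"ottsche's formula, or by the decomposition of $\HH^*(S^{[n]})$ into invariants of $\HH^*(S^\ell)$ shifted by even degrees, the odd cohomology of $S^{[n]}$ vanishes because a $K3$ surface has no odd cohomology.

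The main obstacle is the input to \Cref{prop:app-adjacent}: the Fontaine--Messing comparison (via \cite{Min2021} or \Cref{rem:app-adjacent-LiLiu}) requires $\mathscr Y$ smooth proper over the unramified base $\Wit$. So one has to check that the Deligne--Ogus lift is genuinely over $\Wit(\bk)$ and not a ramified extension, and that the relative Hilbert scheme of a smooth projective family of surfaces is smooth and projective over it. Both points are standard, but the unramified-base requirement is where the hypothesis $p\ge5$ and the choice of lift must be checked carefully.
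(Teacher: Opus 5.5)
Your proof is correct and follows the paper's argument: reduce to $\bk$ algebraically closed, take the Deligne--Ogus lift $\mathscr S/\Wit$ of \cite[Theorem~2.9]{Liedtke2016} and its relative Hilbert scheme, use Totaro's torsion-freeness for the complex fibre, apply \Cref{prop:app-adjacent} with $i=3$, and read off $\dim_\bk\HH^3_{\dR}(S^{[n]}/\bk)=b_3=0$ from \eqref{eq:app-crys-uct} and the vanishing of odd Betti numbers. The concern in your last paragraph is already handled by the cited lifting theorem, which gives a projective lift over $\Wit(\bk)$ itself once $p$ is odd; the hypothesis $p\ge5$ is used only to place degree three in the Fontaine--Messing range $3\le p-2$.
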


\begin{proof}
Crystalline and de Rham cohomology commute with extension of the perfect ground field, and the corresponding extension of Witt rings is faithfully flat, so we may assume that $\bk$ is algebraically closed.
Since $p$ is odd, \cite[Theorem~2.9]{Liedtke2016} supplies a smooth projective lift $\mathscr S/\Wit$.
Its relative Hilbert scheme is a smooth projective lift of $S^{[n]}$.
The integral cohomology of the Hilbert scheme of points on a complex surface whose own integral cohomology is torsion-free is again torsion-free \cite{Totaro2020}, and that of a $K3$ surface is torsion-free; so every $\HH^j_{\et}$ of the geometric generic fibre with $\ZZ_p$-coefficients is torsion-free, and the degree-by-degree work of \Cref{prop:app-integral} is not needed here.
\Cref{prop:app-adjacent} with $i=3$, available because $3\le p-2$, gives the two torsion-free crystalline groups.
The odd Betti numbers of a variety of $\mathrm{K3}^{[n]}$-type vanish \cite{GottscheSoergel1993}, so \eqref{eq:app-crys-uct} reads $\dim_\bk\HH^3_{\dR}(S^{[n]}/\bk)=b_3=0$.
\end{proof}

\begin{remark}
\label{rem:app-char3}
For generalised Kummer varieties with $p=3$ and $n\ge2$, neither part of \Cref{thm:app-torsion-free} applies.
Degree three lies outside the Fontaine--Messing range $r\le p-2$ used in \Cref{prop:app-adjacent}, so this argument does not establish $\dim_\bk\HH^3_{\dR}(X/\bk)=b_3$ in characteristic three.
In contrast, \Cref{prop:app-hilbert-tame} applies to Hilbert squares of $K3$ surfaces in characteristic three.
\end{remark}

\begin{corollary}
\label{cor:app-numbers}
Let $n\ge2$ and $p>n+1$.
Then $h^{a,b}(X)=h^{a,b}(X_{\CC})$ for all $a,b$; in particular $\dim_\bk\HH^3_{\dR}(X/\bk)=8$ and $\dim_\bk\HH^2(X,T_X)=4$.
The equality $\dim_\bk\HH^3_{\dR}(X/\bk)=8$ also holds under the hypotheses of \Cref{thm:app-torsion-free}\textup{(2)}.
\end{corollary}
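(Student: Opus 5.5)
The plan is to read the statement off from the degeneration and torsion-freeness results already proved, and then compare with a characteristic-zero lift via upper semicontinuity. Here $X=K_n(A)$ with $p>n+1$. By \Cref{thm:app-torsion-free}\textup{(1)}, the Hodge--de Rham spectral sequence of $X$ degenerates at $E_1$ and every $\HH^j_{\crys}(X/\Wit)$ is torsion-free. We first extend $\bk$ to an algebraic closure, which is harmless for all the dimensions involved. Since $p>n+1\ge3$, \Cref{lem:app-lift} gives a smooth projective lift $\mathscr K/\Wit$ of $X$; this uses $p\nmid m$, which holds because $p>m$. Let $K=\operatorname{Frac}\Wit$.

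The key identity is
\[
\sum_{a+b=j}h^{a,b}(X)=\dim_\bk\HH^j_{\dR}(X/\bk)=b_j(\mathscr K_{\overline K}).
\]
The first equality is $E_1$-degeneration. For the second, \eqref{eq:app-crys-uct} together with torsion-freeness gives $\dim_\bk\HH^j_{\dR}=\rank_\Wit\HH^j_{\crys}$. That rank equals the $\ell$-adic Betti number of the special fibre, and smooth proper base change identifies this with the Betti number of the geometric generic fibre.

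On the generic fibre, which is a complex generalised Kummer variety after embedding into $\CC$, Hodge theory gives $\sum_{a+b=j}h^{a,b}=b_j$. Each $h^{a,b}$ is upper semicontinuous along the flat proper family $\mathscr K\to\Spec\Wit$, applied to the locally free sheaves $\Omega^a_{\mathscr K/\Wit}$. So every special-fibre Hodge number is at least its generic-fibre counterpart, while the diagonal sums agree. This forces $h^{a,b}(X)=h^{a,b}(X_\CC)$ for all $a,b$. The only delicate point is bookkeeping: semicontinuity runs special $\ge$ generic, and equal totals then force termwise equality.

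The two numerical values come from G\"ottsche--Soergel \cite{GottscheSoergel1993}, which gives $b_3(K_n(B))=8$ and $h^{1,2}(K_n(B))=4$ for complex Kummer varieties with $n\ge2$. Then $\dim\HH^3_{\dR}(X/\bk)=b_3=8$ follows from the identity above. For the tangent sheaf, the symplectic form gives $T_X\simeq\Omega^1_X$ (\Cref{lem:ct2 for relative Kummer}, fibrewise), so $\dim\HH^2(X,T_X)=h^{1,2}(X)=4$.

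The last assertion is under the hypotheses of \Cref{thm:app-torsion-free}\textup{(2)}, namely $p\ge5$ and $p\nmid n+1$. There $\HH^3_{\crys}$ and $\HH^4_{\crys}$ are torsion-free. Sequence \eqref{eq:app-crys-uct} with $i=3$ then gives $\dim_\bk\HH^3_{\dR}(X/\bk)=\rank\HH^3_{\crys}=b_3=8$, using the lift from \Cref{lem:app-lift} as before.

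I expect no serious obstacle. The main thing to check is that G\"ottsche--Soergel's values $b_3=8$ and $h^{1,2}=4$ are valid for all $n\ge2$ and not only for particular $n$.
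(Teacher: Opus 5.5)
Your proposal is correct. For the numerical values, the isomorphism $T_X\simeq\Omega^1_X$ and the final assertion under \Cref{thm:app-torsion-free}\textup{(2)}, it matches the paper's argument. The difference is in how you obtain $h^{a,b}(X)=h^{a,b}(X_{\CC})$.

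The paper lifts the correspondence decomposition \eqref{eq:app-kummer-hodge} itself. The weighted-sum kernels $\mathscr B_\lambda\subset\mathscr A^\ell$ have finite free Hodge cohomology over $\Wit$, compatible with base change. Averaging over $G_\lambda$ is an idempotent because $|G_\lambda|$ is a unit, and its image is free of the same rank on both fibres. Comparing with the characteristic-zero decomposition then matches the Hodge numbers piece by piece.

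You instead use \Cref{thm:app-torsion-free}\textup{(1)} as a black box and run the numerical squeeze: on each diagonal the Hodge numbers sum to the Betti number on both fibres, and upper semicontinuity along the lift then forces termwise equality. The paper uses exactly this argument for Hilbert schemes in \Cref{prop:app-hilbert-tame} and in \Cref{lem:ct2 for relative Kummer}.

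Your route needs only a lift of $X$. It avoids lifting the $B_\lambda$ and their group actions, and avoids invoking the characteristic-zero decomposition. The paper's route needs no $E_1$-degeneration, crystalline torsion-freeness or semicontinuity for the Hodge numbers. It also gives a finer, base-change-compatible integral decomposition of each Hodge cohomology group.

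Two minor remarks. Citing \Cref{lem:ct2 for relative Kummer} for $T_X\simeq\Omega^1_X$ is valid and not circular, but \cite[Lemma~6.6]{FuLi2021} is the direct source. Your worry about G\"ottsche--Soergel is unfounded: $b_3=8$ and $h^{1,2}=4$ hold for every $n\ge2$. The second value also follows from the first by Hodge symmetry and $h^{0,3}=0$.
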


\begin{proof}
We may extend $\bk$ to an algebraic closure and choose a projective lift $\mathscr A/\Wit$ by \Cref{lem:app-lift}.
For each partition $\lambda$, the weighted-sum kernel $\mathscr B_\lambda\subset\mathscr A^\ell$ is a smooth proper lift of $B_\lambda$, with the same coordinate-permutation action of $G_\lambda$.
Its Hodge cohomology is finite free over $\Wit$ and commutes with base change: as in \Cref{lem:app-tame}, it is a disjoint union of copies of $\mathscr A^{\ell-1}$, whose Hodge cohomology is an exterior algebra on finite free modules.
Since $|G_\lambda|$ is a unit, averaging defines an idempotent on each Hodge cohomology group; its image is finite free, commutes with base change, and has the same rank on the special and generic fibres.
Applying \eqref{eq:app-kummer-hodge} and the corresponding decomposition in characteristic zero therefore gives equality of all Hodge numbers.
The value $b_3=8$ for a complex generalised Kummer variety of dimension $2n\ge4$ is the computation of G\"ottsche--Soergel \cite{GottscheSoergel1993}, so $\dim_\bk\HH^3_{\dR}=8$ by \Cref{thm:app-torsion-free}\textup{(1)}, and $\dim_\bk\HH^2(X,\Omega^1_X)=h^{1,2}=4$.
The $2$-form induced by a nonzero translation-invariant $2$-form on $A$ is symplectic on $X$ because $p\nmid m$ \cite[Proposition~6.5 and Lemma~6.6]{FuLi2021}.
Thus $T_X\simeq\Omega^1_X$, giving $\dim_\bk\HH^2(X,T_X)=4$.
Under the hypotheses of \textup{(2)}, torsion-freeness in degrees three and four makes $\dim_\bk\HH^3_{\dR}(X/\bk)=\rank_\Wit\HH^3_{\crys}(X/\Wit)=b_3$ by the universal-coefficient sequence used in \Cref{prop:app-adjacent}.
\end{proof}

\subsection{Constancy of integral Beauville--Bogomolov--Fujiki form}
\label{subsec:fujiki-propagation}

We prove the constancy lemma used in \Cref{thm:deformation type of hilbert and kummer}.
Its purpose is to control integral cup products throughout a smooth proper family.

\begin{lemma}\label{lem:fujiki-propagation}
Let $\bk$ be algebraically closed of characteristic $p>2$, let $n\ge2$, and let $g\colon\cY\to B$ be smooth proper of relative dimension $2n$, with geometrically connected fibres and $B$ smooth connected of finite type.
Suppose the crystalline cohomology of every geometric fibre is torsion-free in every degree.
Let $c\in\ZZ_{(p)}^\times$.
Assume that a fibre $Y_0$ over a point $b_0\in B(\bk)$ carries a perfect symmetric pairing
\[
 q_0\colon\HH^2_{\crys}(Y_0/\Wit(\bk))^{\otimes2}\longrightarrow\Wit(\bk)
\]
such that $q_0(Fx,Fy)=p^2\Frob(q_0(x,y))$ and
\begin{equation}\label{eq:integral-fujiki}
 \operatorname{tr}(x_1\cdots x_{2n})
   =c\sum_{\mathcal P}\prod_{\{i,j\}\in\mathcal P}q_0(x_i,x_j),
\end{equation}
where $\mathcal P$ runs through partitions of $\{1,\ldots,2n\}$ into unordered pairs.
Then every geometric fibre carries such a perfect pairing, with the same $c$.

If moreover $n<p$ and a geometric fibre $Y$ has $E_1$-degeneration and $h^{2,0}(Y)=h^{0,2}(Y)=1$, then
\[
 \sigma^n\ne0\quad\text{and}\quad\eta^n\ne0
\]
for all nonzero $\sigma\in\HH^0(Y,\Omega_Y^2)$ and $\eta\in\HH^2(Y,\Oh_Y)$.
In particular, $\omega_Y\simeq\Oh_Y$ then implies that $\sigma$ is symplectic.
\end{lemma}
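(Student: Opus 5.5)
The plan has two parts: first spread the pairing over $B$ using the relative crystalline cohomology, then read off the nonvanishing of $\sigma^n$ and $\eta^n$ from the Fujiki identity after reduction mod $p$.

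\emph{Transport of the pairing.} Since every geometric fibre has torsion-free crystalline cohomology, the relative crystalline cohomology $\mathcal H^j=R^jg_{\crys*}\Oh_{\cY/\Wit}$ is a locally free $F$-crystal on $B$ and commutes with base change to each point. Cup product and the trace give horizontal, Frobenius-compatible maps $\mathcal H^2\otimes\mathcal H^2\to\mathcal H^4$ and $\operatorname{tr}\colon\mathcal H^{4n}\to\Oh(-2n)$. The symmetric form $T(x_1,\dots,x_{2n})=\operatorname{tr}(x_1\cdots x_{2n})$ on $\mathcal H^2$ is therefore a horizontal section of $\Sym^{2n}(\mathcal H^2)^\vee$.

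The key algebraic point is that $q$ is recovered from $T$. Polarising \eqref{eq:integral-fujiki} gives
\[
T(x,x,\dots,x,y,y)\ \text{expressed through}\ q(x,x)^{n-1}q(y,y),\ q(x,x)^{n-2}q(x,y)^2,
\]
and after contracting with a dual basis one obtains the identity
\[
q(x,y)\,q^{\ast}=\text{(combinatorial constant)}\cdot\text{contraction of }T.
\]
The constants are products of $c$, of $(2n-1)!!$-type numbers and of $\rank+2k$ factors. I would check these are units under $p>2$; if some factor like $b_2+2n-2$ is problematic, I would instead use the standard fact that $T$ determines $q$ up to scalar via $T(x,\dots,x)=c\,(2n-1)!!\,q(x,x)^n$, and fix the scalar by continuity.

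Either way, $q_0$ is uniquely characterised by $T_{b_0}$. Since $T$ is horizontal, I then define $q$ locally on $B$, or on the formal completion at $b$, as the unique section solving the same equation. Along the connected smooth base this is parallel transport in the $F$-crystal, so one can argue on the crystalline site of $B$: horizontal sections over a connected smooth base are determined by one fibre, and existence follows because the locus where $T$ admits a Fujiki factorisation is closed and open.

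Perfectness and the relation $q(Fx,Fy)=p^2\Frob q(x,y)$ hold at $b_0$, and determinants and Frobenius compatibility are horizontal conditions, so they propagate to every fibre. I expect this step to be the main obstacle: making sense of parallel transport integrally in characteristic $p$ and justifying that the solution $q$ extends over all of $B$ rather than only formally near $b_0$.

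\emph{Nonvanishing of $\sigma^n$ and $\eta^n$.} Reduce mod $p$. By torsion-freeness and the universal-coefficient sequence \eqref{eq:app-crys-uct}, $\HH^2_{\crys}/p=\HH^2_{\dR}(Y)$, and \eqref{eq:integral-fujiki} holds on $\HH^2_{\dR}$ with a perfect form $\bar q$. The Hodge filtration is compatible with cup product, so $\Fil^1\cdot\Fil^{2n}=0$ and $\Fil^2\cdot\Fil^{2n-1}=0$ in top degree. Applying \eqref{eq:integral-fujiki} with $x_i\in\Fil^2$ and $x_j\in\Fil^1$, one checks that $\Fil^1=(\Fil^2)^\perp$, using the dimension count given by $E_1$-degeneration and $h^{2,0}=h^{0,2}=1$.

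For nonzero $\sigma\in\Fil^2$, perfectness of $\bar q$ gives $\tilde\eta$ with $\bar q(\sigma,\tilde\eta)\ne0$; this $\tilde\eta$ lies outside $\Fil^1$, so it lifts a nonzero $\eta$. Since $\bar q(\sigma,\sigma)=0$, \eqref{eq:integral-fujiki} gives $\operatorname{tr}(\sigma^n\tilde\eta^n)=c\,n!\,\bar q(\sigma,\tilde\eta)^n\ne0$, using $n<p$. This forces $\sigma^n\ne0$ in $\HH^0(\Omega^{2n})$ and the image $\eta^n\ne0$ in $\HH^{2n}(\Oh)$, because the pairing $\gr^{2n}\times\gr^0\to\bk$ is Serre duality.

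Finally, when $\omega_Y\simeq\Oh_Y$, the nonzero section $\sigma^n$ of $\omega_Y$ is nowhere vanishing, so $\sigma$ is nondegenerate, that is, symplectic.
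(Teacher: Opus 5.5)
Your endgame is the paper's: reduce \eqref{eq:integral-fujiki} mod $p$, get $\operatorname{tr}(\sigma^n\widetilde\eta^{\,n})=c\,n!\,\bar q(\sigma,\widetilde\eta)^n\ne0$, then use Serre duality and the nowhere-vanishing of $\sigma^n$. The transport step, however, has a genuine gap, exactly where you suspect it.

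\emph{No parallel transport.} A crystal on $B$ gives no parallel transport between distinct closed points. Taylor expansion moves horizontal sections only within a residue disc, and whether $q_0$ extends to a global horizontal section is a monodromy question. Three of your steps do not close this:
\begin{enumerate}
\item Defining $q$ near $b$ as ``the unique solution'' of the Fujiki equation presupposes that $T_b$ factors, which is what must be proved.
\item The claim that the factorisation locus is closed and open is unjustified.
\item The dual-basis contraction is circular, since contracting $T$ needs $q^{-1}$.
\end{enumerate}
Moreover $T$ determines $q$ only up to $\mu_n$. Nothing prevents the monodromy from acting on the line $Kq_0$ through $\mu_n$, so ``fixing the scalar by continuity'' has no content.

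The paper works rationally, in the Tannakian category of convergent isocrystals with fibre functor at $Y_0$. The monodromy group fixes the tensor built from cup product and trace. Hence it preserves the unique line of quadratic $n$-th roots of the diagonal polynomial $c\lambda_nq_0^n$, unique by unique factorisation. This yields a Frobenius-stable rank-one $\mathcal L\subseteq\Sym^2V^\vee$, with $V=\mathcal H^2[1/p](1)$ and $\mathcal L^{\otimes n}\simeq\mathbf 1$. It is a unit-root $F$-isocrystal of finite order, which Crew's equivalence trivialises on a finite \'etale cover. Only there does a horizontal Frobenius-compatible $q$ extending $q_0$ exist. Faithfulness of the fibre functor then gives the Fujiki identity and rational perfectness on every fibre.

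\emph{Integrality.} A horizontal rational pairing need not be integral on the lattice, and ``determinants are horizontal'' does not address this. The paper argues by contradiction. Suppose $Q=p^mq$ is integral and primitive with $m\ge1$. As $p\ne2$, some $v$ has $Q(v,v)\not\equiv0\bmod p$. Then $p^{mn}\operatorname{tr}(v^{2n})=c\lambda_nQ(v,v)^n$ contradicts $v_p(c\lambda_n)\le v_p((2n)!)<2n/(p-1)\le n$, because $A/pA$ is a domain. Perfectness follows because $\det q$ is $p^a$ times a unit, with $a$ locally constant and zero at $b_0$. This is where $p>2$ is genuinely used. (You also only assert local freeness of $\mathcal H^j$; the paper proves it with minimal complexes and constancy of the rational ranks.)

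\emph{Orthogonality.} $\Fil^1=(\Fil^2)^\perp$ does not follow routinely from \eqref{eq:integral-fujiki} mod $p$. The obvious evaluation $\operatorname{tr}(\sigma x^{2n-1})=c\lambda_n\bar q(\sigma,x)\bar q(x,x)^{n-1}$, with $\lambda_n=(2n-1)!!$, is identically zero mod $p$ when $n<p\le 2n-1$ (already for $n=2$, $p=3$), so it gives no information. The paper instead uses Mazur's description $\Fil^iD=\operatorname{im}\bigl(F^{-1}(p^iM)\to M/pM\bigr)$ together with the Frobenius compatibility of $q$. Lifts with $Fx\in p^2M$ and $Fy\in pM$ give $p^2\Frob(q(x,y))\in p^3\Wit$, so $\bar q(\Fil^2,\Fil^1)=0$, and the dimension count finishes.
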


\begin{proof}
Put $\mathcal H^j=R^jg_{\crys*}\Oh$.
Their rationalisations are convergent $F$-isocrystals of constant ranks $b_j$, compatible with base change \cite[Proposition~3.2 and Corollary~6.2]{MorrowCrystallineDirectImages}.
Choose a smooth formal Witt lifting $\operatorname{Spf}A$ of an affine open $U\subseteq B$.
The complex $C=R\Gamma_{\crys}(\cY_U/A)$ is bounded coherent, hence perfect since $A$ is regular, and satisfies derived base change \cite[\S2, proof of Lemma~2.2]{MorrowCrystallineDirectImages}.
For a closed point $b\in U$, with corresponding maximal ideal $\mathfrak m\subset A$, choose a minimal finite free complex $P^\bullet$ representing $C_{\mathfrak m}$, so that its differentials vanish modulo $\mathfrak m$.
Derived base change and crystalline torsion-freeness in degrees $j$ and $j+1$ give
\[
 \rank_{A_{\mathfrak m}}P^j
 =\dim_\bk\HH^j_{\dR}(Y_b)=b_j.
\]
Over $L=\operatorname{Frac}(A_{\mathfrak m})$ we therefore have
\[
 b_j=\dim_L\HH^j(P^\bullet\otimes L)
     =b_j-\rank_Ld^{j-1}-\rank_Ld^j.
\]
Thus every differential is zero over $L$, hence over $A_{\mathfrak m}$.
Since every maximal ideal of $A$ contains $p$, all $\HH^j(C)$ are finite locally free.
Derived base change now identifies the value of $\mathcal H^j$ on any divided-power thickening $T$ over $U$ with $\HH^j(C)\otimes_A\Oh_T$, using a local lifting $T\to\operatorname{Spf}A$.
These identifications are compatible with pullback, so the $\mathcal H^j$ are finite locally free crystals and commute with crystalline base change.

If $\mathcal H^2$ has rank zero, the first assertion is immediate and the additional Hodge hypotheses cannot hold.
We may therefore assume its rank is positive.
Put $K=\Wit(\bk)[1/p]$ and $V=\mathcal H^2[1/p](1)$.
The top cup product and trace define a symmetric tensor $w\colon V^{\otimes2n}\to\mathbf1$.
Work first in the $K$-linear category of underlying convergent isocrystals, with fibre functor at $Y_0$ \cite[Lemma~1.8]{Crew1992}.
The monodromy group preserves $w_0$.
The diagonal polynomial of $w_0$ is
\[
 P_0(v)=c\lambda_n q_0(v,v)^n,
 \qquad \lambda_n=\frac{(2n)!}{2^n n!}.
\]
Over characteristic zero its quadratic-root line $Kq_0$ is unique.
Consequently the monodromy group preserves this line and acts on it through $\mu_n$.
Tannakian duality produces a line subisocrystal $\mathcal L\subseteq\Sym^2V^\vee$ with $\mathcal L^{\otimes n}\simeq\mathbf1$.
Uniqueness of the quadratic-root line makes $\mathcal L$ Frobenius-stable; the displayed trivialisation is Frobenius-compatible.
It is therefore a unit-root $F$-isocrystal of finite order.
By the unit-root equivalence \cite[Theorem~1.3]{Crew1992}, a finite \'etale cover trivialises it.
On a connected component of this cover, choose the resulting horizontal Frobenius-compatible rational pairing $q$ to agree with $q_0$ at a point above $Y_0$.
Faithfulness of the fibre functor shows that $q$ is perfect as a rational isocrystal pairing and satisfies \eqref{eq:integral-fujiki}.

It remains to check the integral lattice; rational nondegeneracy alone would not suffice.
Locally choose $A$ as above with $A/pA$ integral and trivialise $\mathcal H^2_A$.
The coefficients of $q$ belong to $A[1/p]$ \cite[\S1.1, equations~(1.1.5)--(1.1.6)]{Crew1992}.
If $q$ were not integral, choose $m\ge1$ such that $Q=p^mq$ is integral and has a coefficient not divisible by $p$.
Since $p\ne2$, there is a vector $v$ for which $Q(v,v)$ is not divisible by $p$; basis vectors and their pairwise sums suffice.
The diagonal cup polynomial is integral, so
\[
 p^{mn}\operatorname{tr}(v^{2n})=c\lambda_n Q(v,v)^n.
\]
But $A/pA$ is a domain and
\[
 v_p(c\lambda_n)\le v_p((2n)!)<\frac{2n}{p-1}\le n,
\]
contradicting $mn\ge n$.
Hence $q$ is integral.
Its determinant is a unit after inverting $p$.
Since $(p)$ is prime, an integral element invertible in $A[1/p]$ is $p^a$ times a unit.
The exponent $a$ for $\det(q)$ is locally constant on the connected base and is zero at $Y_0$, where $q=q_0$ is perfect.
Thus $q$ is perfect everywhere.
Its restrictions give the asserted pairings on all geometric fibres; the finite \'etale cover does not change this fibrewise conclusion.

Now suppose $Y$ satisfies the additional Hodge hypotheses.
Write $M=\HH^2_{\crys}(Y/\Wit)$ and $D=M/pM=\HH^2_{\dR}(Y)$.
Mazur's description of the Hodge filtration \cite[Theorem~8.26]{BerthelotOgus1978} gives
\[
 \Fil^iD=\operatorname{im}\bigl(F^{-1}(p^iM)\longrightarrow M/pM\bigr).
\]
Thus $q(\Fil^2D,\Fil^1D)=0$: lifts $x,y$ of such classes satisfy $Fx\in p^2M$ and $Fy\in pM$, so Frobenius compatibility gives $p^2\Frob(q(x,y))=q(Fx,Fy)\in p^3\Wit$.
Since $q$ is perfect, $\Fil^2D$ is a line, and $D/\Fil^1D$ is a line, $q$ induces a perfect pairing between these two lines.

Let $\widetilde\eta\in D$ lift $\eta\in D/\Fil^1D=\HH^2(Y,\Oh_Y)$.
The class of $\sigma$ generates $\Fil^2D$, so $q(\sigma,\widetilde\eta)\ne0$ and $q(\sigma,\sigma)=0$.
In \eqref{eq:integral-fujiki} applied to $n$ copies of each class, only the $n!$ pairings matching every $\sigma$ with a $\widetilde\eta$ survive.
Therefore
\begin{equation}\label{eq:fujiki-mixed-top}
 \operatorname{tr}(\sigma^n\widetilde\eta^{\,n})
   =c\,n!\,q(\sigma,\widetilde\eta)^n\ne0.
\end{equation}
This proves $\sigma^n\ne0$.
If $\eta^n=0$, then $\widetilde\eta^{\,n}\in\Fil^1\HH^{2n}_{\dR}(Y)$, and its product with $\sigma^n\in\Fil^{2n}\HH^{2n}_{\dR}(Y)$ belongs to $\Fil^{2n+1}\HH^{4n}_{\dR}(Y)=0$, a contradiction.
Hence $\eta^n\ne0$.
Finally, $\sigma$ is closed by $E_1$-degeneration.
If $\omega_Y$ is trivial, the nonzero section $\sigma^n$ is nowhere vanishing; as $n!$ is invertible, this is equivalent to nondegeneracy of $\sigma$.
\end{proof}

\bibliographystyle{amsalpha}
\bibliography{references}

\end{document}